\newtheorem{theorem}{Theorem}
\newtheorem{definition}[theorem]{Definition}
\newtheorem{corollary}[theorem]{Corollary}
\newtheorem{lemma}[theorem]{Lemma}
\newtheorem{proposition}[theorem]{Proposition}
\newtheorem{remark}[theorem]{Remark}
\numberwithin{equation}{section}
\numberwithin{theorem}{section}
\definecolor{darkgreen}{cmyk}{1,0,1,.2}
\definecolor{m}{rgb}{1,0.1,1}
\DeclareMathOperator{\supp}{supp}   
\DeclareMathOperator{\vol}{vol}    
 \DeclareMathOperator{\tr}{tr}
\DeclareMathOperator{\ind}{ind}
\DeclareMathOperator{\Ind}{Ind}
\DeclareMathOperator{\sgn}{sgn}
\newcommand{\forget}[1]{}
\def  \nuint {\raise10pt\hbox{$\nu$}\kern-6pt\int}
\newcommand\Tr{\operatorname{Tr}}
\def \Sp {{\cal S}}
\newcommand\Di{D\kern-6pt/}
\newcommand\cDi{{\mathcal D}\kern-6pt/}
\newcommand\spi{S\kern-6pt/}
\newcommand \cspi{\Sp\kern-6pt/}
\newcommand\CC{\mathbb C}
\def \cal {\mathcal}
\newcommand\RR{\mathbb R}
\newcommand\pa{\partial}
\newcommand\Ker{\operatorname{Ker}}
\global\let\c@equation=\c@theorem}
\date{}
\definecolor{darkgreen}{cmyk}{1,0,1,.2}
\definecolor{m}{rgb}{1,0.1,1}
\title{Higher Lefschetz formulas on $\Gamma$-proper manifolds}
\author{Paolo Piazza}
\address{Dipartimento di Matematica, Sapienza Universit\`a di Roma}
\email{piazza@mat.uniroma1.it}
\author{Hessel B. Posthuma}
\address{University of Amsterdam }
\email{H.B.Posthuma@uva.nl}
\author{Yanli Song}
\address{Department of Mathematics and Statistics, Washington University, St. Louis, MO, 63130, U.S.A.} 
\email{yanlisong@wustl.edu}
\author{Xiang Tang}
\address{Department of Mathematics and Statistics, Washington University, St. Louis, MO, 63130, U.S.A.}
\email{xtang@math.wustl.edu}
\subjclass[2010]{Primary: 58J20.}
\keywords{Higher index theory, cyclic cohomology, group actions, delocalized cocycles, Volterra calculus}
\begin{document}

\maketitle

\begin{abstract}
Let $\Gamma$ be a finitely generated discrete group acting properly and cocompactly on a smooth manifold $M$. By employing heat-kernel techniques we 
prove a geometric formula for the  pairing of the index class associated to a $\Gamma$-equivariant Dirac operator $D$ with a delocalized cyclic cocycles $\tau$ in 
$HP^* (\mathbb{C}\Gamma,\langle \gamma \rangle)$. Our formula takes place on the fixed point manifold $M^\gamma$ and should be  regarded as a higher Lefschetz formula
for $D$. The formula  involves the Atiyah-Segal-Singer form and an explicit $Z_\gamma$-invariant form on $M^\gamma$ that is naturally associated to $\tau\in HP^* (\mathbb{C}\Gamma,\langle \gamma \rangle)$.
		\end{abstract}

\tableofcontents
\section{Introduction}
This article is devoted to the formulation and proof of new results in equivariant index theory 
in a non-compact setting. In order to place our results in the right context we begin by 
 reviewing the classical theory, where we have an even dimensional  compact manifold $M$, a compact
Lie group $G$ acting  by diffeomorphisms on $M$ and a $\mathbb{Z}_2$-graded Dirac operator $D$ on $M$ 
acting on the sections of a $G$-bundle of Clifford modules $E$ on $M$ and commuting with the action of $G$.

We have, first of all, a numeric index $\ind (D)\in\mathbb{Z}$ and the Atiyah-Singer index formula,
expressing this integer in terms of characteristic classes attached to the manifold $M$ and the bundle of Clifford modules $E$. We also have a $G$-index, obtained by considering the  $G$-representations
 $\mathrm{Ker}(D^\pm)$ and 
\begin{equation}\label{G-index}
\ind_G (D)= [\mathrm{Ker}(D^+)]-[\mathrm{Ker}(D^-)]\in R(G).
\end{equation}
Given $g\in G$ we can thus consider the associated character, that is
\begin{equation}\label{g-index}\ind_G (D)(g)=\Tr(g|_{\mathrm{Ker}(D^+)})- \Tr(g|_{\mathrm{Ker}(D^-)})\,.
\end{equation}
Notice that $\ind (D)=\ind_G (D) (e)\,.$
The equivariant index theorem of Atiyah-Segal-Singer gives a formula for 
$\ind_G (D)(g)$ and this formula depends on characteristic classes associated to the the connected components of the fixed point set of $g$, $M^g$, and the normal bundles to them. See \cite{ASII, ASIII} where \eqref{g-index} is denoted $L(g, D^+)$ and referred to as 
the {\bf  Lefschetz number} associated to $g\in G$ and $D^+$. When the Dirac operator is defined by an elliptic complex and  the fixed point set is made of a  finite number of points,  the Atiyah-Segal-Singer formula is nothing but the Atiyah-Bott-Lefschetz formula for such a complex. See \cite{AB1, AB2}. 
The original proof of the Atiyah-Segal-Singer formula  was a consequence of two fundamental results:\\
(i) the Atiyah-Singer $G$-index theorem, giving the equality of the topological and the analytic $G$-indices, as homomorphisms from $K_G (TX)$ to $R(G)$;\\
(ii) the computation of the topological $G$-index in terms of  data associated to the fixed point set, a result obtained by applying the localization theorem in K-theory, due to Segal. See \cite{segal}.

As for the Atiyah-Singer index formula, the heat equation was subsequently used  in order to give a different proof of  the Atiyah-Segal-Singer theorem for Dirac operators. This is due originally to Gilkey \cite{gilkey-equivariant} and 
Donnelly-Patodi \cite{donnelly-p}, using invariance theory. Later, more analytic approaches were proposed:

\begin{itemize}
\item 
by Berline-Vergne in \cite{BV}, see also \cite[Chapter 6]{BGV} and \cite{duistermaat}; this treatment, inspired by work of Bismut \cite{bismut-lefschetz}, employs crucially  a connection between the heat kernel of the Laplacian on the frame bundle of $M$ and the heat kernel of $D^2$ on  $M$; 
\item by Lafferty-Yu-Zhang in \cite{LYZ}; this work employs Yu's version \cite{yu} of  Getzler rescaling;
\item by  Liu and Ma in \cite{liu-ma}, where an extension to  the family context is also proved; this work builds on further work of Bismut \cite{bismut-two, bismut-jdg};
\item by Ponge and  Wang in   \cite{Ponge-Wang-2}, where the Volterra calculus is used.
\end{itemize}
For a treatment of the heat equation approach to Lefschetz formulas 
on (certain) groupoids we refer the reader to  \cite{haj-liu-loizides}.
Notice that in contrast with the original approach by Atiyah-Segal-Singer, the above contributions all produce a {\it local} equivariant index theorem.\\
Let us state the Atiyah-Segal-Singer formula, for simplicity
on a $G$-spin manifold $M$ and for a  spin-Dirac operator $D_V$ twisted by a complex G-equivariant vector bundle $V$ so that $E=\mathcal{S}\otimes V$ with $\mathcal{S}$ the spinnor bundle on $M$. Consider the fixed point set $M^g$ and let $N^g$ be the associated normal bundle. Then 
\begin{equation}\label{intro:ASS} 
\ind_G (D_V)(g)=\int_{M^g} \hat{A}(M^g)\wedge \operatorname{det}^{-\frac{1}{2}}\left(1-\gamma|_{N^g} e^{- \frac{R^{\perp}}{2\pi i}}\right)\wedge {\rm Tr}(g|_{V}e^{-\frac{F^V}{2\pi i}}) , 
\end{equation}
where 
$R^\perp$ is the curvature of the normal bundle $N^g$,
$g|_{N^g}$ is the action of $g$ on $N^g$, $g |_V$ is the
action of $g$ on $V$ 
and $F^V$ is the curvature of $V$.
We follow the convention that 
\[
\hat{A}(M^g)=\operatorname{det}^{\frac{1}{2}}\left(\frac{ R^{T} / 4\pi i }{\sinh \left( R^{T} / 4\pi i\right)}\right) ,
\] 
with $R^T$ the curvature of $TM^g$.\\
We shall sometime write the integrand in formula \eqref{intro:ASS} as $AS_g (D_V)$
and refer to it as the Atiyah-Segal-Singer integrand:
\begin{equation}\label{A-S-S-form} AS_g (D_V):= \hat{A}(M^g)\wedge \operatorname{det}^{-\frac{1}{2}}\left(1-\gamma|_{N^g} e^{- \frac{R^{\perp}}{2\pi i}}\right)\wedge {\rm Tr}(g|_{V}e^{-\frac{F^V}{2\pi i}}).
\end{equation}

\medskip
Consider now a complete Riemannian manifold $M$ of even dimension and the  action of a non-compact 
groups $G$ on $M$. We shall be interested in $G$ being an infinite discrete group and acting properly and cocompactly on $M$. (The case of $G$ being a finitely connected Lie group can also be considered but
leads to results that in the context of Lefschetz formulas
are less precise than in the discrete case \cite{Hochs-Song-Tang, Song-Tang}).

Natural examples arise from symmetric spaces and locally symmetric spaces. Take a connected semisimple Lie group $G$ with finite center and a maximal compact subgroup $K$. Then $X = G/K$ is the associated Riemannian symmetric space. Let $\Gamma \subset G$ be a discrete subgroup (for instance, a lattice), acting on $X$ by isometries via
$\gamma \cdot (gK) = (\gamma g)K$.
For any such discrete subgroup $\Gamma \subset G$ the action on $X$ is properly discontinuous, hence proper. If $\Gamma$ is not torsion-free, that is, if it contains elliptic elements, then the $\Gamma$-action on $X$ is proper but not free, and the corresponding locally symmetric quotient $\Gamma \backslash X$ is an orbifold. In this setting, the case of the subgroup of $G=PSL(2,\mathbb{R})$ generated by reflections across the sides of a hyperbolic triangle in $\mathbb{H}^2\cong PSL(2,\mathbb{R})/SO(2)$ gives a concrete example with compact quotients since the triangle itself gives a fundamental domain. More examples shall be given in the next Section.


Thus, let  us denote by $\Gamma$ our infinite discrete group and assume momentarily that
the action is also {\bf free}. Let $D$ be a Dirac operator on $M$, acting on the 
sections of a $\mathbb{Z}_2$-graded $\Gamma$-vector bundle $E$ over $M$ and  commuting with the action of $\Gamma$. Then there is, first of all, Atiyah's $\Gamma$-index 
${\rm ind}_{\Gamma}  (D^+),$
\cite{atiyah-gamma}.
In order to define ${\rm ind}_{\Gamma}  (D^+)$ we consider the
von Neumann algebra of all bounded $\Gamma$-equivariant operators on $L^2 (M,E)$; 
this is a von Neumann algebra endowed with a faithful positive normal trace $\Tr_\Gamma$.
The orthogonal projections onto $\Ker D^\pm$, denoted $\Pi_\pm$, are elements in this von Neumann
algebra and they 
are trace class with respect to $\Tr_\Gamma$. It thus  makes sense to define
$
{\rm ind}_{\Gamma}  (D^+):= \Tr_\Gamma  \Pi_+ - \Tr \Pi_-
$.
Atiyah's $\Gamma$-index theorem asserts that this number is equal to the index of the
operator induced on the compact quotient.

While initially defined using von Neumann algebras, Atiyah's $\Gamma$-index is in fact the
pairing of a K-theoretic index class associated to $D$ with a cyclic cocycle of degree 0.  
Let us see the details. One can define the algebra of $\Gamma$-equivariant pseudodifferential operators
of $\Gamma$-compact support (i.e., with compact support in $M\times M/\Gamma$ with respect to the diagonal action), denoted
$\Psi^{*}_{\Gamma,c}(M,E)$. By standard elliptic theory the Dirac operator $D^+$ admits a
parametrix $Q\in \Psi^{-1}_{\Gamma,c}(M,E^-, E^+)$ with remainders  $S_\pm\in\Psi^{-\infty}_{\Gamma,c}(M,E^\pm)$. In this article we shall  denote the algebra of smoothing operators 
$\Psi^{-\infty}_{\Gamma,c}(M,E)$ by $\mathcal{A}_\Gamma^c (M,E)$ and we adopt this notation from now on:
\begin{equation}\label{def-a-c}
\mathcal{A}_\Gamma^c (M,E):= \Psi^{-\infty}_{\Gamma,c}(M,E).
\end{equation}
We shall often expunge the vector bundle $E$ from the notation.\\
Consider the $2\times 2$ matrix
\begin{equation}\label{matrix-compact-intro}
P:= \left(\begin{array}{cc} S_{+}^2 & S_{+}  (I+S_{+}) Q\\ S_{-} D^+ &
I-S_{-}^2 \end{array} \right).
\end{equation}
This is an idempotent with entries in the unitalization of $\mathcal{A}_\Gamma^c (M,E)$;
also $e_1:=\left( \begin{array}{cc} 0 & 0 \\ 0&1_{E^-}
\end{array} \right)$
is an idempotent matrix in such unitalization and we define the {\em compactly supported  index class} associated to $D$ as
\begin{equation}\label{ind-cs}\operatorname{Ind}_c (D):= [P] - [e_1]\in K_0
  (\mathcal{A}_\Gamma^c (M,E)).
\end{equation}
The definition of the idempotent $P$  comes from the short exact sequence of algebras
$$0\to \Psi^{-\infty}_{\Gamma,c}(M,E)\to \Psi^0_{\Gamma,c} (M,E)\to \Psi^0_{\Gamma,c} (M,E)/\Psi^{-\infty}_{\Gamma,c}(M,E)\to 0$$
and the associated long exact sequence in K-theory.
We refer to \cite{CM} for the details. The  $\Gamma$-Trace  $\Tr_\Gamma$
defines a cyclic cocycle of degree $0$ on the algebra  $\mathcal{A}_\Gamma^c (M,E)$
and we have in fact an explicit formula for this trace on an element $S\in  \mathcal{A}_\Gamma^c (M,E)$:
 $$\Tr_\Gamma (S)=\int_{M} \chi(p) \tr_p K_{S} (p,p) \,{\rm dp}$$
 with $K_{S}$ the smooth integral kernel of $S$ 
 and with $\chi$ a cut-off function for the action of $\Gamma$ on $M$, that is a smooth positive real function 
 $\chi\in C^\infty_c (M)$ such that $\sum_\gamma \chi (\gamma p)=1$ for all $p\in M$.
 Using the proof of Atiyah's $\Gamma$-index theorem one checks that for the pairing 
 of the index class $\operatorname{Ind}_c (D)\in K_0 (\mathcal{A}_\Gamma^c (M,E))$
  with the cyclic cocycle $[\Tr_\Gamma]\in HC^0 (\mathcal{A}_\Gamma^c (M,E))$ we have:
 $$\ind_\Gamma (D^+)=\langle \Ind_{c} (D), [\Tr_\Gamma] \rangle$$
 as anticipated.
There are other numbers that we can extract from the index class $\operatorname{Ind}_c (D)\in 
K_0 (\mathcal{A}_\Gamma^c (M,E))$. These numbers, known as (compactly supported) higher indices, are defined as follows. Recall that we have a decomposition by \cite{burghelea}
$$HC^\bullet (\mathbb{C}\Gamma)=\prod_{\langle \gamma \rangle} HC^\bullet (\mathbb{C}\Gamma,\langle \gamma\rangle)=
HC^\bullet (\mathbb{C}\Gamma,\langle e\rangle)\times \prod_{\langle \gamma \rangle, \gamma\not= e} HC^\bullet (\mathbb{C}\Gamma,\langle \gamma\rangle)\,,$$
where $HC^\bullet (\mathbb{C}\Gamma,\langle \gamma\rangle)$ is the cohomology of the subcomplex of the cyclic complex $C^{\bullet}_\lambda (\mathbb{C}\Gamma)$ made of cyclic cocycles supported on the conjugacy class $\langle \gamma \rangle$. 
As an example, consider  $\tau_{\langle \gamma \rangle}: \mathbb{C}\Gamma \to \mathbb{C}$,
\begin{equation}\label{delocalized-trace}
\tau_{\langle \gamma \rangle} (\sum_g a_g g):= \sum_{g\in \langle \gamma \rangle} a_g \,.
\end{equation}
This is indeed a  0-degree cyclic cocycle for $\mathbb{C}\Gamma$: $\tau_{\langle \gamma \rangle}\in HC^0 (\mathbb{C}\Gamma, \langle \gamma \rangle)$.\\
We  recall that there are natural isomorphisms  $\alpha: HC^\bullet (\mathbb{C}\Gamma,\langle e\rangle)\rightarrow H^\bullet (\Gamma,\mathbb{C})$
and $\eta: H^\bullet (\Gamma,\mathbb{C}) \to H^\bullet (B\Gamma,\mathbb{C})$. 
Let us denote by $$\beta:  HC^\bullet (\mathbb{C}\Gamma,\langle e\rangle)\rightarrow H^\bullet (B\Gamma,\mathbb{C})$$ the composition of these two isomorphisms. 
There is a natural chain map $\Phi:  C^{\bullet}_\lambda (\mathbb{C}\Gamma)\to  C^{\bullet}_\lambda (\mathcal{A}_\Gamma^c (M,E))$, explained in the next Section, and this induces a map in cyclic cohomology. For example, the canonical trace on $\mathbb{C}\Gamma$, that is the map defined by 
$$\sum_g a_g g \to\,\, a_e$$
gives rise to $\Tr_\Gamma$, whereas 
$\tau_{\langle \gamma \rangle}\in HC^0 (\mathbb{C}\Gamma, \langle \gamma \rangle)$, see \eqref{delocalized-trace}, gives rise to 
$\Phi (\tau_{\langle \gamma \rangle})\in HC^0 (\mathcal{A}_\Gamma^c (M,E))$ given explicitly by
$$ \Phi (\tau_{\langle \gamma \rangle}) (K) = \sum_{g\in \langle \gamma \rangle} \int_M \chi(p) K(p, g p) \,{\rm dp}
$$
with $\chi$ denoting again a cut-off function for the action of $\Gamma$. 
Going back to the general case, 
if $\tau\in HC^\bullet (\mathbb{C}\Gamma)$ then we have $\Phi (\tau)\in 
HC^\bullet (\mathcal{A}_\Gamma^c (M,E))$ and the higher index associated to $\tau$ is, by definition, the number
obtained by pairing  $\operatorname{Ind}_c (D)\in 
K_0 (\mathcal{A}_\Gamma^c (M,E))$ with $\Phi (\tau)\in 
HC^\bullet (\mathcal{A}_\Gamma^c (M,E))$; viz. $\langle \operatorname{Ind}_c (D), \Phi (\tau)\rangle$.
The Connes-Moscovici higher index theorem \cite{CM}, one of the most profound results in this area of Mathematics,
 gives a formula for such a number:
\begin{itemize} 
\item if $\tau\in HC^{2q} (\mathbb{C}\Gamma,\langle e\rangle)$  and $\psi:M/\Gamma \to B\Gamma$
is the classifying map for the principal bundle $\Gamma\to M\to M/\Gamma$, then
\begin{equation}\label{cm-intro}
\langle \operatorname{Ind}_c (D), \Phi (\tau)\rangle=\frac{(-1)^{\dim M}}{(2\pi i)^q}\frac{q!}{(2q) !}\int_{M/\Gamma}  AS (M/\Gamma) \wedge \psi^* (\beta (\tau))
\end{equation}
with $AS (M/\Gamma)$ the Atiyah-Singer cohomology class;
\item if  $\tau\in HC^{2q} (\mathbb{C}\Gamma,\langle \gamma\rangle)$, $\gamma\not= e$ then 
\begin{equation}\label{cm-intro-bis}\langle \operatorname{Ind}_c (D), \Phi (\tau)\rangle=0\,.
\end{equation}
\end{itemize}
Further contributions around the higher index formula were given in \cites{moscovici-wu, lott-gafa, g-dK-n, ppt-jdg}.
We point out that together with the compactly supported index class we also have a $C^*$-index class
$\Ind (D)\in K_\bullet (C^* (M,E)^\Gamma)$, with $C^* (M,E)^\Gamma$ denoting the Roe algebra associated to $M$ and $\Gamma$, that is the closure of 
$\mathcal{A}_\Gamma^c (M,E)$ in the $C^*$-algebra of bounded operators on $L^2 (M,E)$.
In general, it is not known whether $\langle \cdot, \Phi (\tau)\rangle: K_0 (\mathcal{A}_\Gamma^c (M,E))\to \mathbb{C}$ extends to $\langle \cdot, \Phi (\tau)\rangle: K_0 (C^* (M,E)^\Gamma)\to \mathbb{C}$.
Connes and Moscovici prove that this extension exists if $\Gamma$ is Gromov hyperbolic, a result that 
implies the Novikov conjecture on the homotopy invariance of Novikov higher signatures
for manifolds with a Gromov hyperbolic fundamental group. Notice incidentally
that $\Tr_\Gamma$ does extend to  $ K_0 (C^* (M,E)^\Gamma)$.

 \medskip
 Let us now drop the additional assumption that our action $\Gamma\times M\to M$ is free and consider the general case of a {\bf proper cocompact action} of $\Gamma$ on $M$. There are still  index classes 
 $$\operatorname{Ind}_c (D)\in 
K_0 (\mathcal{A}_\Gamma^c (M,E))\quad\text{and}\quad \Ind (D)\in K_0 (C^* (M,E)^\Gamma)$$ and there  is a numeric von Neumann 
index $\ind_\Gamma (D)$, studied thoroughly in \cite{wang-jncg}.
This numeric index, originally defined in a von Neumann context,  is in fact  obtained by  pairing  the index class $\Ind_c (D)$ with the 0-cyclic cocycle 
defined by the analogue of $\Tr_\Gamma$. This pairing actually extends to the $C^*$-index class $\Ind (D)\in K_\bullet (C^* (M,E)^\Gamma)$. 
 Hang Wang proved in  \cite{wang-jncg} a formula for this numeric index, extending 
Atiyah's result to the general proper case. 

Next we consider the pairing of the index class $\operatorname{Ind}_c (D)$ with higher cyclic cocycles 
localized at the identity element: if $\tau\in HC^\bullet (\mathbb{C}\Gamma,\langle e\rangle)$ then 
there is an explicit formula for the pairing 
$\langle \operatorname{Ind}_c (D), \Phi (\tau)\rangle$ and this reads
$$\langle \operatorname{Ind}_c (D), \Phi (\tau)\rangle=\frac{(-1)^{\dim M}}{(2\pi i)^q}\frac{q!}{(2q) !}\int_{M}  \chi AS (M) \wedge \Psi_M (\beta (\tau)),$$
where $\chi$ is a compactly supported cutoff function for the $\Gamma$ action on $M$
and  $\Psi_M: H^\bullet(B\Gamma)\to H^\bullet(M)^\Gamma$ is the van Est map, \cite[Theorem 1.6]{ppt-jdg}. This is due to Pflaum, Posthuma and Tang, see\footnote{We point out that the difference between the normalization factor $\frac{(-1)^{\dim M}}{(2\pi i)^q}\frac{q!}{2q !}$ in the above formula and the one in \cite[Theorem 3.1]{ppt-jdg} comes from the definition of the pairing between cyclic cohomology and $K$-theory.  In \cite{ppt-jdg}, the authors followed \cite[Section 8.3]{Loday} and considered the pairing between the $B$-$b$ bicomplex and  $K$-theory, while here we follow \cite{CM} and consider the pairing between Connes' $\lambda$-complex and $K$-theory. } \cite[Theorem 3.1]{ppt-jdg}.
This result was proved making use of the algebraic index theory of Fedosov, Nest, and Tsygan. Heat kernel proofs 
were subsequently\footnote{These two articles treat the case
of $G$-proper manifolds, with $G$ an almost connected Lie group; however, the proof can be easily adapted to the discrete case.} given in  \cite{PP2} and \cite{wang-wang-jncg}. Notice that in these two latter contributions one must
assume, because of the use of the heat kernel, that the cyclic cocycle $\tau$  has at most exponential growth. 

Now, in addition to the cocycles in  $HC^\bullet (\mathbb{C}\Gamma,\langle e\rangle)$
we know  that there are other cyclic cocycles in $HC^\bullet (\mathbb{C}\Gamma)$
and, consequently, in $HC^\bullet( \mathcal{A}_\Gamma^c (M,E))$. 
These are all the {\it delocalized} cyclic cocycles in $\prod_{\langle \gamma \rangle, \gamma\not= e} HC^\bullet (\mathbb{C}\Gamma,\langle \gamma\rangle)$. 
For example
$\tau_{\langle \gamma \rangle}\in HC^0 (\mathbb{C}\Gamma, \langle \gamma \rangle)$, see \eqref{delocalized-trace}, and hence
$\Phi (\tau_{\langle \gamma \rangle})\in HC^0 (\mathcal{A}_\Gamma^c (M,E))$.
In contrast with the free case, see \eqref{cm-intro-bis}, we have that
for the delocalized trace $\tau_{\langle \gamma \rangle}$ it holds that  $\langle \operatorname{Ind}_c (D),\Phi (\tau_{\langle \gamma \rangle}) \rangle \not= 0$. In fact, there is a formula for this pairing due to Hang Wang and 
Bai-Ling Wang \cite[Theorem 6.1]{wang-wang} and the formula applies in fact to the
$C^*$-index class $\Ind (D)$ but with additional assumptions on the group $\Gamma$, for example
$\Gamma$ Gromov hyperbolic. The Wang-Wang formula reads
\begin{equation}\label{wang-wang-jdg}
\langle \operatorname{Ind} (D),\Phi (\tau_{\langle \gamma \rangle}) \rangle=\int_{M^\gamma} \chi_\gamma AS_\gamma (D)
\end{equation}
with $M^\gamma$ denoting the fixed-point set of $\gamma$,
$AS_\gamma (D) $ the Atiyah-Segal-Singer form and $\chi_\gamma$  a cut-off function for the action of the centralizer $Z_\gamma$ on $M^\gamma$.
 We refer to this formula as a {\bf Lefschetz formula} on the $\Gamma$-proper manifold $M$.

\medskip
\noindent
We finally come to the main question of this article: can one prove a {\bf higher Lefschetz formula}
for the pairing of the index class with the elements in $\prod_{\langle \gamma \rangle, \gamma\not= e} HC^\bullet (\mathbb{C}\Gamma,\langle \gamma\rangle)$, $\bullet>0$ ?

\medskip
\noindent
{\em The main goal of this article is to give an affermative answer to this question and to establish such a formula.}

\medskip
\noindent
In order to state our formula we need to further explore the groups $HC^\bullet (\mathbb{C}\Gamma, \langle \gamma \rangle)$. First we make a few preliminary remarks.
If $\gamma\in \Gamma$ is such that $M^\gamma=\emptyset$, then one can prove that
$\langle \operatorname{Ind}_c (D), \Phi (\tau)\rangle=0$ for any 
$\tau \in HC^\bullet (\mathbb{C}\Gamma, \langle \gamma \rangle)$
and the
argument is exactly the same as the one establishing the same result in the free case\footnote{This argument is in fact implicitly contained in our analysis below.}, see \eqref{cm-intro-bis}.
Thus we can assume that $\gamma\in\Gamma$ is such that $M^\gamma\not= \emptyset$. Next we observe
that because of the assumed properness of the action, an element $\gamma$ such that 
$M^\gamma\not= \emptyset$ must necessarily be of finite order. Finally, notice that since we are interested in the pairing of K-theory with cyclic cohomology, we only care about {\em periodic} 
cyclic cohomology $HP^\bullet$. Now, by Burghelea's theorem \cite{burghelea}
we know that for an element $\gamma$ of finite order
$$HP^\bullet (\mathbb{C}\Gamma,\langle \gamma \rangle)= H^\bullet (N_\gamma,\mathbb{C})$$
where equality is meant up to isomorphism;
here $\bullet$ equals  {\em even} or {\em odd} and  $N_\gamma$ denotes the normalizer of $\gamma$
in $\Gamma$. Since $\gamma$ is finite order we have $H^\bullet (N_\gamma,\mathbb{C})=H^\bullet (Z_\gamma,\mathbb{C}),$ with $Z_\gamma$ denoting the centralizer, see \cite[Corollary 7.9]{PSZ}. Recall now that Lott, in \cite{LottII}, introduces a complex $\mathscr{C}^\bullet_\gamma$
that is made of {\em group cocycles on $\Gamma$} and that
computes $H^\bullet (N_\gamma,\mathbb{C}) (= H^\bullet (Z_\gamma,\mathbb{C}))$.
Summarizing, if $\gamma\in \Gamma$ is an element of finite order and if $H\mathscr{C}_\gamma^\bullet $ denotes the cohomology of Lott's complex, then 
$$H\mathscr{C}_\gamma^\bullet =  H^\bullet (N_\gamma,\mathbb{C}) = H^\bullet (Z_\gamma,\mathbb{C})= HP^\bullet
(\mathbb{C}\Gamma,\langle \gamma \rangle)\,,\quad \bullet= {\rm even}\;\;\text{or} \;\;{\rm odd}\,.$$
The advantage of using Lott's complex  is  that it comes with a natural chain map: 
$\tau:
\mathscr{C}^\bullet_\gamma \to C^\bullet_\lambda (\mathbb{C}\Gamma,\langle \gamma \rangle)$ implementing the isomorphism $H\mathscr{C}_\gamma^\bullet \simeq HP^\bullet
(\mathbb{C}\Gamma,\langle \gamma \rangle)$. Composing 
this chain map with the natural chain map
$\Phi:  C^\bullet_\lambda (\mathbb{C}\Gamma,\langle \gamma \rangle)\to C^\bullet_\lambda (\mathcal{A}_\Gamma^c (M,E))$
that we have already encountered, gives a chain map
$$\mathscr{C}^\bullet_\gamma \,\xrightarrow{\Phi\circ \tau}  \,C^\bullet_\lambda (\mathcal{A}_\Gamma^c (M,E)).$$
Recall $\Ind_c (D)\in K_\bullet ( \mathcal{A}_\Gamma^c (M,E))$. Given $c$ in Lott's complex,  $c\in \mathscr{C}^\bullet_\gamma$, our goal is therefore to give a geometric formula for 
$\langle \operatorname{Ind}_c (D), \Phi (\tau (c))\rangle\,.$

In order to state the formula we observe in Section \ref{sect:characteristic} that 
the chain maps 
$$\tau:
\mathscr{C}^\bullet_\gamma \to C^\bullet_\lambda (\mathbb{C}\Gamma,\langle \gamma \rangle)\quad
\text{and}\quad \Phi:  C^\bullet_\lambda (\mathbb{C}\Gamma,\langle \gamma\rangle)\to C^\bullet_\lambda (\mathcal{A}_\Gamma^c (M,E))$$
fit into a diagram:
\begin{equation}
\label{diagram-nc-intro-bis}
\begin{tikzcd}
\mathscr{C}^\bullet_\gamma(\Gamma) \arrow[rr, "\tau"] \arrow[dd, "\Psi_{\rm inv}"'] \arrow[rd, "\Psi"] &                                                                                            & {C_\lambda^\bullet(\mathbb{C}\Gamma, \langle \gamma\rangle) } \arrow[dd, "\Phi"] \\
                                                                                                       & {\mathsf{E}^\bullet_{\rm AS}(M,\gamma)}\arrow[rd, "\rho"] \arrow[ld, "P"', shift right=2] &                                                     \\
{C^\bullet_{\rm AS, inv}(M,\gamma)} \arrow[rr, "\rho_{\rm inv}"] \arrow[ru, "I"']                      &                                                                                            & C_\lambda^\bullet(\mathcal{A}^c_\Gamma(M))                
\end{tikzcd}
\end{equation}
On the left bottom corner a $\gamma$-localized Alexander-Spanier complex appears and
the left vertical map turns out to be a quasi-isomorphism; we refer to the main text for the definitions 
of $\Psi_{{\rm inv}}$ and $\rho_{{\rm inv}}$. At the center of the diagram we have 
a {\em $\gamma$-extended} Alexander-Spanier complex; the maps 
$\Psi$ and $\rho$ are extended versions  of $\Psi_{{\rm inv}}$ and $\rho_{{\rm inv}}$.
The three triangles commutes while the outer rectangle does not commute but thanks to the maps $P$ and $I$
it commutes up to homotopy. Consequently we obtain 
the crucial equality
\begin{equation}\label{intro-equality-indices-bis}
\langle \operatorname{Ind}_c (D), \Phi (\tau (c))\rangle=\langle \operatorname{Ind}_c (D), \rho(\Psi (c))\rangle\,,\ \forall c\in \mathscr{C}^\bullet_\gamma(\Gamma),
\end{equation}
and when $c$ is a cocycle
\begin{equation}\label{intro-equality-indices-ter}
\langle \operatorname{Ind}_c (D), \Phi (\tau (c))\rangle=\langle \operatorname{Ind}_c (D), \rho(\Psi(c))\rangle=\langle \operatorname{Ind}_c (D), \rho_{\rm inv}(\Psi_{\rm inv}(c))\rangle.
\end{equation}
We shall in fact establish our main result by proving
 a formula for the pairing $\langle \operatorname{Ind}_c (D), \rho(\Psi(c))\rangle$.

In order to state the formula
we begin by showing that the  $\gamma$-localized Alexander-Spanier complex 
comes with a chain map  $\Lambda^\gamma: C^\bullet_{\rm AS, inv}(M, \gamma)\to \Omega(M^\gamma)^{Z_\gamma}$.
Pre-composing $\Lambda^\gamma$ with $\Psi_{\rm inv}$, we obtain the following chain map
\begin{equation}\label{psigamma}
\Psi^\gamma:=\Lambda^\gamma\circ \Psi_{\rm inv}: \mathscr{C}^\bullet_\gamma(\Gamma)\to \Omega^\bullet(M^\gamma)^{Z_\gamma}
\end{equation}
which to each cocycle $c\in \mathscr{C}^\bullet_\gamma(\Gamma)$ 
associated a  $Z_\gamma$-invariant differential form $\Psi^\gamma (c)\in \Omega^\bullet(M^\gamma)^{Z_\gamma}$.

\bigskip
We now finally come to our main result, that is, an explicit formula 
for the pairing $\langle \operatorname{Ind}_c (D), \Phi (\tau (c))\rangle$, or, equivalently, for the 
pairing 
$\langle \operatorname{Ind}_c (D), \rho(\Psi (c))\rangle$, see
\eqref{intro-equality-indices-ter}.
We intend to use heat-kernel techniques. In previous work in higher index theory this is achieved 
through the use of the Connes-Moscovici matrix
\begin{equation}\label{eq:R-heat-intro}
  R(\sqrt tD)=
  \begin{pmatrix}
    e^{-t\Delta_+}
    & \left(\frac{1-e^{-t\Delta_+}}{t\Delta_+}\right)e^{-t\Delta_+/2}\sqrt tD^- \\
    e^{-t\Delta_-/2}\sqrt tD^+
    & -e^{-t\Delta_-}
  \end{pmatrix}.
\end{equation}
with 
$$\Delta_+:= D^- D^+, \quad\quad \Delta_-:= D^+ D^-\,.$$
Needless to say, the operators appearing in this matrix are not of $\Gamma$-compact support;
indeed, they all belong to the algebra $\mathcal{A}^{{\rm exp}}_\Gamma (M,E)$ of $\Gamma$-equivariant smoothing kernels 
of exponential rapid decay; consequently, in previous contributions in higher index theory in the proper case,
growth assumptions had to be placed upon the analogue of the cocycle $c\in \mathscr{C}^\bullet_\gamma(\Gamma)$. See 
 \cite{PP2} \cite{wang-wang-jncg}. {\em Through a delicate use of finite propagation techniques we shall be able to avoid this  extra hypothesis
and prove a  general result.} We shall elaborate on this point further ahead in this Introduction.

\smallskip
 \noindent
We are thus ready to state our main result, a {\bf higher Lefschetz formula}
on $\Gamma$-proper manifolds. We state the result, for simplicity, on a $\Gamma$-spin manifold and for a  
$\Gamma$-invariant spin-Dirac operator
twisted by an auxiliary $\Gamma$-vector bundle $V$: 
\begin{theorem}\label{intro-main-theorem} Let $\gamma\in \Gamma$ be such that the fixed point set $M^\gamma$ is non-empty. Consider a  cocycle $c \in \mathscr{C}^{2q}_\gamma(\Gamma)$. Recall 
the chain map 
$\Psi^\gamma:  \mathscr{C}^\bullet_\gamma(\Gamma)\to \Omega^\bullet(M^\gamma)^{Z_\gamma}$ in \eqref{psigamma}. Then the following formula holds
\[
\langle\Ind_{c} (D_V), \Phi(\tau (c))\rangle
= c(q,n) \int_{M^\gamma} (-i)^{\frac{n-a}{2}}\chi_\gamma\,\Psi^\gamma(c)
  \wedge AS_\gamma (D_V)\,. 
\]
where 
\begin{itemize}
\item
$c(q,n)=2(-1)^q \frac{ q!}{(2 \pi i)^q (2q)!}$, 
\item $a$, the dimension of a connected component of $M^\gamma$,  is a locally constant function on $M^\gamma$,
\item $\chi_\gamma$ is a cutoff function for the  action of $Z_\gamma$ on $M^\gamma$, 
\item and
$AS_\gamma (D_V)=\widehat{A}(M^\gamma) \operatorname{det}^{-\frac{1}{2}}\left(1-\gamma|_{N^\gamma} e^{- \frac{R^{\perp}}{2\pi i}}\right)\wedge \Tr(\gamma e^{-\frac{F^V}{2\pi i}} )$ with $R^\perp$ denoting the curvature of the normal bundle to $M^\gamma$, $N^\gamma$, and $F^V$ denoting the curvature of the auxiliary bundle 
$V$
\end{itemize}
\end{theorem}
%
%

\noindent

 
\medskip

We summarize the above material  in the following two tables, where we  present
results relative to the pairing of the index class with the cyclic cohomology groups appearing in the first line. In Table \ref{table intro1} we look at the free proper case, whereas in Table \ref{table intro2}  we consider the general proper case.

\begin{table}[h] \label{table intro1}
\begin{tabular}{|c|c|c|c|}
\hline 
 \begin{tabular}{l} $ HC^0\left(\mathbb{C}\Gamma, \langle e \rangle \right)$ \end{tabular} & \begin{tabular}{l} $  HC^0\left(\mathbb{C}\Gamma, \langle \gamma \rangle \right)  $\end{tabular} & \begin{tabular}{l}$ HC^\bullet\left(\mathbb{C}\Gamma, \langle e \rangle \right)$, $\bullet>0$\end{tabular} & \begin{tabular}{l}$ HC^\bullet\left(\mathbb{C}\Gamma, \langle \gamma \rangle \right)$, $\bullet>0$ \end{tabular}\\
\hline Atiyah \cite{atiyah-gamma}& vanish & Connes-Moscovici \cite{CM}& vanish\\
\hline
\end{tabular}
\caption{\small Pairing of the index class with cyclic cohomology in the free proper case}
\end{table}
\begin{table}[h] \label{table intro2}
\begin{tabular}{|c|c|c|c|}
\hline 
 \begin{tabular}{l} $ HC^0\left(\mathbb{C}\Gamma, \langle e \rangle \right)$ \end{tabular} & \begin{tabular}{l} $  HC^0\left(\mathbb{C}\Gamma, \langle \gamma \rangle \right)  $\end{tabular} & \begin{tabular}{l}$ HC^\bullet\left(\mathbb{C}\Gamma,  \langle e \rangle \right)$, $\bullet>0$\end{tabular} & \begin{tabular}{l}$ HC^\bullet\left(\mathbb{C}\Gamma, \langle \gamma
 \rangle \right)$, $\bullet>0$ \end{tabular}\\
\hline Wang \cite{wang-jncg}& Wang-Wang \cite{wang-wang}& Pflaum-Posthuma-Tang \cite{ppt-jdg}& This article\\
\hline
\end{tabular}
\caption{\small Pairing of the index class with cyclic cohomology in the general proper case}
\end{table}

 \noindent
 We comment briefly on the proof of our main result. We build heavily on the
 treatment of the Atiyah-Segal-Singer equivariant index formula 
 given by Ponge and Wang in \cite{Ponge-Wang-2}; this treatment employs crucially  the Volterra calculus, as developed\footnote{Incidentally, it would be interesting to 
 give a treatment of the heat equation proof of the Atiyah-Segal-Singer formula, and in fact also
 of the higher formula we prove in this article, using 
 the heat calculus of Melrose, see \cite{tapsit}.} in \cites{Piriou, greiner, BGS, Ponge, Ponge-2}. 
 We also employ ideas appearing in the proof given
 by Connes and Moscovici of  the higher index formula for  cyclic cocycles localized at the identity element.
 However, 
 we have to face additional challenges with respect to these two references.
 These challenges are implicitly described in the following schematic description of
 our proof.
 
 \bigskip
 \noindent
 
{\bf (1).} First of all, the computation of $\langle \operatorname{Ind}_c (D), \rho(\Psi (c))\rangle$
involves a parametrix of $\Gamma$-compact support and, thus, remainders that are also of $\Gamma$-compact support. See the expression of the index idempotent in \eqref{matrix-compact-intro}. It is well known that the corresponding index class, and thus the index pairing, is independent of the choice of  
such a parametrix. However, the moment we want to involve the heat-kernel, through the use 
of the Connes-Moscovici matrix  $R(\sqrt tD)$, we leave the world of operators
of $\Gamma$-compact support and thus face additional problems; for example, the well-definedness of the pairing. In a first step we introduce a finite propagation idempotent 
$P_{\operatorname{fp}}(t)$, with the property that 
$R_{\operatorname{fp}}(t)=P_{\operatorname{fp}}(t)-\begin{pmatrix}0&0\\0&1\end{pmatrix}$
is a suitable approximation of the Connes-Moscovici matrix. This we do through a delicate use 
of Fourier transform and finite propagation techniques.

\medskip
\noindent
{\bf (2).}  The index pairing can now be computed through the use of our finite propagation idempotent.
This pairing is of course independent of the time variable $t$ and can be computed, at least in principle,  by computing the limit
\[
\lim_{t\to 0} \rho\circ \Psi(c) (R_{\operatorname{fp}}(t), \cdots, R_{\operatorname{fp}}(t)),\qquad R_{\operatorname{fp}}(t)=P_{\operatorname{fp}}(t)-\begin{pmatrix}0&0\\0&1\end{pmatrix}.
\]
Now, the propagation of $R_{\operatorname{fp}}(t)$ is uniformly finite, independent of $t$; this property allows us to {\em truncate} the sum expressing $\rho\circ \Psi(c)$, a priori an infinite sum, into a {\em finite sum}.

\medskip
\noindent
{\bf (3).} 
Let $R(\sqrt{t}D)$ be the Connes-Moscovici matrix. We prove that the difference of the smoothing kernels $R_{\operatorname{fp}}(t))-R(\sqrt{t}D)$ decays faster than any $t^k$, $\forall k\in \mathbb{N}$ as $t\to 0$.
Thus, as far as the computation of $\lim_{t\to 0} $ is concerned,  we can  substitute in the above {\rm finite} sum the matrix $R_{\operatorname{fp}}(t)$
with the matrix $R(\sqrt{t}D)$; it is this crucial step that will allow us to use Getzler rescaling techniques.

\medskip
\noindent
{\bf (4).} 
We use the Volterra pseudodifferential calculus and  Getzler-Volterra rescaling techniques 
as in  \cite{Ponge-Wang-2} in order to compute the short time limit of the finite sum involving $R(\sqrt{t}D)$. Here there are challenges that come from the very use of the Volterra calculus. Indeed, in order 
to make use of the results established by Ponge and Wang we need to be able to commute, within a trace functional,
certain compactly supported differential operators with the operators appearing
in the Connes-Moscovici matrix. Connes and Moscovici tackled the analogous  step, for cyclic cocycles localized at the identity,
using the full Getzler calculus, as developed in \cite{getzler-CMP}.
In the approach through the Volterra calculus we are able to prove an analogous result but the proof turns out to be rather technical. We give all the details in subsection \ref{subsect:simplification}.

 \bigskip
 \noindent
 {\bf Organization of the paper.}
 
 \medskip
\noindent
 In {\bf Section \ref{sect:characteristic}} we introduce the constituents 
 of the diagram appearing in \eqref{diagram-nc-intro-bis} and discuss the commutativity properties of the diagram. We also prove that the chain map $ \Psi_{{\rm inv}}:  \mathscr{C}^\bullet_\gamma
\to C^\bullet_{\rm AS, inv}(M,\gamma)$ is a quasi-isomorphism. Finally, we introduce 
the chain map 
$\Lambda^\gamma: C^\bullet_{\rm AS, inv}(M, \gamma)\to \Omega(M^\gamma)^{Z_\gamma}$
and, consequently, the chain map 
$\Psi^\gamma:=\Lambda^\gamma\circ \Psi_{\rm inv}: \mathscr{C}^\bullet_\gamma(\Gamma)\to \Omega^\bullet(M^\gamma)^{Z_\gamma}$; the latter chain map is the one  that figures in our higher Lefschetz formula.

\medskip
\noindent
In {\bf Section \ref{section:finite propagation}} we introduce the finite propagation matrix 
$R_{\operatorname{fp}}(t)$ and  prove that the difference of the smoothing kernels $R_{\operatorname{fp}}(t))
-R(\sqrt{t}D)$ decays faster than any $t^k$, $\forall k\in \mathbb{N}$ as $t\to 0$. As already explained,
this will allow us to use Getzler-Volterra rescaling techniques in order to compute the short-time limit
$$\lim_{t\to 0} \rho\circ \Psi(c) (R_{\operatorname{fp}}(t), \cdots, R_{\operatorname{fp}}(t))\,.$$

\medskip
\noindent
In {\bf Section  \ref{section:volterra-main} } we give new results on the Volterra calculus. More precisely, 
Subsection \ref{subsect:simplification} deals
 with the problem of commuting terms in the index pairing; this is a long and combinatorially complex 
 treatment. In Subsection \ref{subsection:useful}, on the other hand, we consider Volterra operators 
 related to the entries in the Connes-Moscovici projector and  compute their Getzler order.

 \medskip
\noindent
In the last Section, {\bf Section  \ref{section:higher lefschetz}}, we finally give the 
proof of 
Theorem  \ref{intro-main-theorem}. Although we do follow the strategy in \cite{CM}
(and also \cite{moscovici-wu}), we must here cope with several complications 
arising from the use of the Volterra calculus and the proper
equivariant situation we are considering.

\medskip
\noindent
Basic results about the Volterra calculus are given in the {\bf Appendix}.
We give a quick introduction to the basic definitions and results in Subsection 
\ref{appendix-subsection:basic-volterra}; next we
extend the calculus to our $\Gamma$-equivariant situation  in Subsection 
 \ref{subsection:volterra-equivariant}, where we also investigate the structure of 
 the operator $(\partial_t + D^2)^{-1}$ in this more general context. Subsection \ref{subsection:getzler-volterra}
 is a summary of results around the localization of trace integrals around fixed point sets, based on the the work of Ponge and Wang \cite{Ponge-Wang-2}.

\medskip
\noindent
We leave the study of the pairing of the $C^*$-index class $\Ind (D)\in K_\bullet (C^* (M,E)^\Gamma)$ with the cyclic cocycles defined by $HP^\bullet(\mathbb{C}\Gamma,\langle \gamma \rangle)$ to a continuation of this work. There we shall also investigate the
connection between our formula 
 and the formulas established in \cite{CWW} and \cite{perrot}.

\medskip
\noindent
We wish to end this Introduction by recalling  that  higher index formulas have been 
pionereed (in the foliated context) by our dear colleague Moulay-Tahar Benameur; see
\cite{benameur} and also \cites{bh-1, bh-2}.

 \bigskip
 \noindent
{\bf Acknowledgements.} 
We wish to thank Thomas Krainer, Qiaochu Ma, Rapha\"el Ponge, Hang Wang and Guoliang Yu
for interesting discussions and email correspondence. We  acknowledge help from  
AI (Chatgpt, Version 5.5 Pro) in order to come-up with the right finite propagation approximation of the Connes-Moscovici  
matrix in Section \ref{section:finite propagation}. 
%
Song's research was partially supported by the NSF grant  DMS-1952557 and Simons Foundation grant MPS-TSM-00014295. Tang's research was partially supported the NSF grants DMS-1952551, DMS-2350181, and Simons Foundation grant MPS-TSM-00007714. Visits of Xiang Tang and Hessel Posthuma to Sapienza Università di Roma were partially sponsored by Sapienza through
{\em Ricerca di Ateneo 2024} and the program {\em Professori visitatori} respectively.
Visits of Paolo Piazza to University of Amsterdam 
and Washington University in St Louis were partially funded by NWO grant 613.001.751 and DMS-1952551 respectively.\\

\section{The characteristic map for proper actions of discrete groups}\label{sect:characteristic}

\subsection{Cyclic cohomology of the group algebra $\CC\Gamma$}
For any unital algebra $A$, the cyclic cohomology $HC^\bullet(A)$ is defined as the cohomology of the subcomplex of Hochschild cochains that are cyclic:
\[
C^k_\lambda:=\{\phi:A^{\otimes{k+1}}\to\mathbb{C}, \phi(a_0,\ldots,a_k)=(-1)^k\phi(a_k,a_0,\ldots,a_{k-1})\}.
\]
The Hochschild differential $b:C^k_\lambda(A)\to C^{k+1}_\lambda(A)$ is defined by the standard formula
\[
b\phi(a_0,\ldots,a_{k+1})=\sum_{i=0}^k(-1)^i\phi(a_0,\ldots,a_ia_{i+1},\ldots,a_{k+1})+(-1)^{k+1}\phi(a_{k+1}a_0,\ldots,a_k).
\]
Let $\Gamma$ be a discrete group, and denote by $\CC\Gamma$ its group algebra. By the well-known calculation of Burghelea \cite{burghelea}, the cyclic cohomology of $\mathbb{C}\Gamma$ splits up 
as a direct product over conjugacy classes
\begin{equation}
\label{eq-dec-b}
HC^\bullet(\mathbb{C}\Gamma)=\prod_{\langle\gamma\rangle\in{\rm Conj}(\Gamma)\atop {\rm finite~order}} H^\bullet(N_\gamma,\mathbb{C})\otimes HC^\bullet(\mathbb{C})\times \prod_{\langle\gamma\rangle\in{\rm Conj}(\Gamma)\atop {\rm infinite~order}} H^\bullet(N_\gamma,\mathbb{C}),
\end{equation}
where $N_\gamma:=Z_\gamma\slash \gamma^\mathbb{Z}$, with $Z_\gamma$ the centralizer of $\gamma\in\Gamma$, and $H^\bullet(N_\gamma,\mathbb{C})$ denotes its group cohomology. The part of the cyclic cohomology `localized at the unit' corresponds to the trivial conjugacy class $\langle e\rangle=\{e\}$ for which $N_e=\Gamma$ and we find a copy of the group cohomology $H^\bullet(\Gamma,\mathbb{C})$ of $\Gamma$ itself. The part of $HC^\bullet(\mathbb{C}\Gamma)$ that corresponds to the nontrivial conjugacy classes is referred to as `delocalized'. In this paper we shall only concern ourselves with the conjugacy classes $\langle\gamma\rangle$ 
of finite order, i.e., where $\gamma$ is a torsion element, c.f.\ Remark \ref{rk-te}. 

Let us fix $\gamma\in\Gamma$ with finite order. In \cite[\S 4.1]{LottII}, Lott describes a cochain complex $\mathscr{C}_\gamma(\Gamma)$ computing the group 
cohomology of $N_\gamma$ as follows: elements $c\in \mathscr{C}^k_\gamma(\Gamma)$ are given by 
antisymmetric maps $c:\Gamma^{k+1}\to\CC$ satisfying:
\begin{subequations}
\label{clc}
\begin{align}
\label{eq:c-z-invariance}c(z\gamma_0,z\gamma_1,\ldots,z\gamma_k)&=c(\gamma_0,\gamma_1,\ldots,\gamma_k),\qquad \forall z\in Z_\gamma\\
c(\gamma\gamma_0,\gamma_1,\ldots,\gamma_k)&=c(\gamma_0,\gamma_1,\ldots,\gamma_k).
\end{align}
\end{subequations}
The differential on this cochain complex is given by the usual formula
\[
(\delta c)(\gamma_0,\ldots,\gamma_{k+1})=\sum_{i=0}^{k+1}(-1)^ic(\gamma_0,\ldots,\hat{\gamma}_i,\ldots,\gamma_{k+1}).
\]
The proof that this complex indeed computes the group cohomology of $N_\gamma$ can be found in \cite{john-sheagan}. In \cite{LottII}, Lott also gives the following map to the cyclic complex $C^\bullet_\lambda(\CC\Gamma)$, which associates to $c\in \mathscr{C}^k_\gamma(\Gamma)$ the cochain
\[
\tau_c(\delta_{\gamma_0},\ldots,\delta_{\gamma_k}):=\begin{cases} 0&\mbox{if}~\gamma_0\cdots\gamma_k\not\in (\gamma)\\ c(\eta,\eta\gamma_0,\ldots,\eta\gamma_0\cdots\gamma_{k-1})&\mbox{if}~\gamma_0\cdots\gamma_k=\eta^{-1}\gamma\eta
\end{cases}
\]
(Remark that using \eqref{eq:c-z-invariance} one can prove easily that the right hand side does not depend on $\eta$; indeed if $\delta$ is another such element, then $\delta \eta^{-1}$ is an element in $Z_\gamma$.)
We denote this map by $\tau: \mathscr{C}^k_\gamma(\Gamma)\to C^\bullet_\lambda(\mathbb{C}\Gamma)$. 
A direct computation shows that
\[
\tau_{\delta c}=b \tau_c.
\]
This construction gives us cyclic cocycles on $\mathbb{C}\Gamma$, localized at the conjugacy class of $\gamma$. 

\subsection{Proper actions}

Let $X$ be a topological space and let $G$ be a locally compact topological group
acting by homeomorphisms on $X$. The action is defined to be proper if the map
$G\times X\to X\times X$, $(g,x)\to (x, gx)$ is a proper map. In this article we shall
assume that $X$ is a smooth manifold, denoted $M$, $G$ is discrete, usually denoted by $\Gamma$, and that the action of $\Gamma$ on $M$ is by  diffeomorphisms.\\ 
Let us give some examples. Let  $\Gamma\leq \RR^n\rtimes O(n)$ be a crystallographic group; thus there exists a finite subgroup $F\leq O(n)$ such that $\Gamma$ is isomorphic to $\mathbb{Z}^n \rtimes F$. Then $\Gamma$ acts properly and cocompactly, but not freely, on $\RR^n$. More generally if $(M,g)$ is a Riemannian manifold and $\Lambda\leq {\rm Isom}(M)$ is a discrete subgroup acting freely, properly and cocompactly, then for each finite group of isometries $F$ normalizing $\Lambda$ we have that $\Gamma:= \Lambda\rtimes F$ acts properly and cocompactly, but not freely, on $M$.
More examples of $\Gamma$-proper manifolds are obtained as follows: consider for  a moment $G$ equal to an almost connected Lie group
and let 
$M$ be a proper $G$-manifold \footnote{As far as higher index theory is concerned this situation has been
studied thoroughly in  \cites{Hochs-Wang, Hochs-Song-Tang, PP1, ppt-jdg, Song-Tang}.}. By Abels’ slice theorem, there exists a $K$-invariant submanifold $S \subset M$ such that
$M \;\simeq\; G \times_K S$, with $G$ acting by left translation on the first factor. This result
can also be 
used in order to {\em construct} $G$-proper manifolds starting from a compact $K$-manifold $S$.
Now let $\Gamma \subset G$ be a discrete subgroup which is not torsion free. Then $\Gamma$ acts on $M$ by restriction of the $G$-action. Since the $G$–action on $M$ is proper and $\Gamma$ is a closed subgroup of $G$, the restricted $\Gamma$–action on $M$ is also proper. In particular, $M$ is a proper $\Gamma$–manifold. Special cases of this construction associated with locally symmetric spaces have been given in the Introduction. In all these examples, if $\gamma \in \Gamma$ is a torsion element, then the fixed-point set $M^\gamma$ is (in general) nonempty, so the $\Gamma$-action on $M$ need not be free and the quotient $\Gamma \backslash M$ is typically an orbifold rather than a manifold.

\subsection{The fundamental diagram} 
Let us go back to the general case and  assume  therefore that $\Gamma$ is a discrete 
group acting properly and cocompactly on an oriented manifold $M$. We fix a $\Gamma$-invariant metric $g$.
We introduce the algebra $\mathcal{A}^c_\Gamma(M)$ of smoothing kernels $A\in C^\infty(M\times M)$ such that 
\begin{itemize}
\item[$i)$] $A$ is $\Gamma$-invariant:
\[
A(\gamma\cdot x,\gamma\cdot y)=A(x,y),
\]
\item[$ii)$]  $A$ has compact $\Gamma$-support: ${\rm sup}(A)\slash \Gamma$ is a compact subset of $(M\times M)\slash \Gamma$.
\end{itemize}
The algebra structure on $\mathcal{A}^c_\Gamma(M)$ is defined as
\[
(A_1 * A_2)(x,y):=\int_MA_1(x,z)A_2(z,y)dz,
\]
where $dz=d{\rm vol}_g(z)$ is the volume form associated to the metric $g$.
To show that this product is well-defined, we first remark that for a proper, cocompact action of $\Gamma$ on $M$ there exists a `cut-off function' $\chi\in C^\infty_c(M)$ satisfying 
\begin{equation}
\label{cut-off}
\sum_{\gamma\in\Gamma}\chi(\gamma^{-1}\cdot x)=1,\qquad\mbox{for all}~x\in M.
\end{equation}
We then insert this identity, change integration variable and use invariance of the kernels to rewrite 
\begin{align*}
\int_MA_1(x,z)A_2(z,y)dz&=\sum_\gamma\int_MA_1(x,z)\chi(\gamma^{-1}z)A_2(z,y)dz\\
&=\sum_\gamma\int_MA_1(x,\gamma z)\chi(z)A_2(\gamma z,y)dz\\
&=\sum_\gamma\int_MA_1(\gamma^{-1}x,z)\chi(z)A_2(z,\gamma^{-1}y)dz.
\end{align*}
Written this way, the integration is over the compact set ${\rm Supp}(\chi)\subset M$ and converges uniformly. The fact that the action of $\Gamma$ is proper implies 
that, for fixed $x,y\in M$, the summation above is finite. 

The aim of this section is to embed the morphism $\tau: \mathscr{C}^k_\gamma(\Gamma)\to C^\bullet_\lambda(\mathbb{C}\Gamma)$ of the previous section 
into a diagram
\begin{equation}
\label{diagram-nc}
\begin{tikzcd}
\mathscr{C}^\bullet_\gamma(\Gamma) \arrow[rr, "\tau"] \arrow[dd, "\Psi_{\rm inv}"']  &                                                                                            & {C^\bullet_\lambda(\mathbb{C}\Gamma)} \arrow[dd, "\Phi"] \\
                                                                                                       &  &                                                     \\
{C^\bullet_{\rm AS, inv}(M,\gamma)} \arrow[rr, "\rho_{\rm inv}"]                      &                                                                                            & C^\bullet_\lambda(\mathcal{A}^c_\Gamma(M))                
\end{tikzcd}
\end{equation}
Here the cochain complex $C^\bullet_{\rm AS, inv}(M,\gamma)$ of invariant $\gamma$-localized Alexander--Spanier cochains will be explained below. 

Let us remark already however that the diagram above does {\em not} commute, an issue that will be taken up in the next section where it is shown that it does commute {\em up to homotopy}. 

To set up the diagram, we now first define the morphism $\Phi:C^\bullet_\lambda(\mathbb{C}\Gamma)\to C^\bullet_\lambda(\mathcal{A}^c_\Gamma(M))$ by
\begin{equation}
\label{morphism-cohomology}
\Phi(\phi)(A_0,\ldots,A_k)=\sum_{\gamma_0,\ldots,\gamma_k}\phi(\delta_{\gamma_0},\ldots,\delta_{\gamma_k})\int_{M^{\times(k+1)}}\chi(x_0)A_0(x_0,\gamma_0\cdot x_1)\cdots\chi(x_k)A_k(x_k,\gamma_k\cdot x_0)dx_0\cdots dx_k,
\end{equation}
where $\phi\in C^k_\lambda(\CC\Gamma)$.
 Remark that, because the kernels $A_i$ have compact $\Gamma$-support, the above sum is finite.
We then have, by a straightforward computation:
\begin{lemma}
The formula above defines a morphism of complexes $\Phi:C^\bullet_\lambda(\mathbb{C}\Gamma)\to C^\bullet_\lambda(\mathcal{A}^c_\Gamma(M))$:
\[
b\circ\Phi=\Phi\circ b.
\]
\end{lemma}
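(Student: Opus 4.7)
The plan is to establish two things separately: that $\Phi(\phi)$ lands in the cyclic subcomplex $C^k_\lambda(\mathcal{A}^c_\Gamma(M))$, and that $b\circ\Phi=\Phi\circ b$. Both are direct manipulations of the defining formula \eqref{morphism-cohomology}, and the only tools I would need are $\Gamma$-invariance of the kernels $A_i$, the cut-off identity \eqref{cut-off}, and the cyclic/Hochschild properties of $\phi$. The compact $\Gamma$-support of each $A_i$ makes every integral absolutely convergent and every sum over $\Gamma^{k+1}$ finite, so there are no analytical issues to address.

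For cyclicity of $\Phi(\phi)$, I would start from $\Phi(\phi)(A_k,A_0,\ldots,A_{k-1})$, perform simultaneous cyclic renamings of the integration variables $x_0,\ldots,x_k$ and of the summation indices $\gamma_0,\ldots,\gamma_k$, and observe that the resulting integrand coincides with that of $\Phi(\phi)(A_0,\ldots,A_k)$. The sign $(-1)^k$ needed for cyclicity is then produced exactly by the cyclic identity satisfied by $\phi$.

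For $b\circ\Phi=\Phi\circ b$, the key move is to convert a Hochschild face of $\Phi(\phi)$ involving a convolution $A_i*A_{i+1}$ into a face of $\Phi(b\phi)$ involving the group-algebra product $\delta_{h_i}\delta_{h_{i+1}}$. For $0\le i\le k$, I would rewrite $(-1)^i\Phi(\phi)(A_0,\ldots,A_i*A_{i+1},\ldots,A_{k+1})$ by expanding $(A_i*A_{i+1})(x,y)=\int_M A_i(x,z)A_{i+1}(z,y)\,dz$, inserting the partition of unity $\sum_\eta\chi(\eta^{-1}z)=1$ at the new variable $z$, substituting $z\mapsto\eta\cdot z'$, and using $\Gamma$-invariance to move $\eta^{\pm 1}$ through the two adjacent kernels. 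After relabelling $z'$ as a fresh $x_{i+1}$, the integrand takes exactly the shape of the $\Phi$-integrand on $(A_0,\ldots,A_{k+1})$ with enlarged group variables $h_0,\ldots,h_{k+1}$ satisfying $h_i=\eta$ and $h_ih_{i+1}=\gamma_i$; summing against $\phi(\delta_{\gamma_0},\ldots,\delta_{\gamma_k})$ then reproduces the $i$-th face of $\Phi(b\phi)$. The wrap-around term $(-1)^{k+1}\Phi(\phi)(A_{k+1}*A_0,A_1,\ldots,A_k)$ is handled identically, except that the inserted variable naturally ends up in slot $0$ while the original $y_0$ is renamed to $x_{k+1}$; this automatically yields the last face $\phi(\delta_{h_{k+1}h_0},\delta_{h_1},\ldots,\delta_{h_k})$ without further use of cyclicity.

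The main obstacle is purely bookkeeping: tracking signs, ensuring each kernel, integration variable, and group element lands in the correct slot after substitution, and verifying that the product relation between $\gamma_i$ and $(h_i,h_{i+1})$ is correctly encoded term by term. Once the cut-off is inserted and $\Gamma$-invariance is invoked, the identification of $b\Phi(\phi)$ with $\Phi(b\phi)$ is forced.
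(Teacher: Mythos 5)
Your proposal is correct and is exactly the direct verification that the paper omits (the paper's proof reads only ``The proof is standard and therefore omitted''). The two ingredients you identify are indeed the only ones needed: the cyclicity of $\Phi(\phi)$ follows from a simultaneous cyclic relabelling of the $x_j$'s and $\gamma_j$'s together with the cyclicity of $\phi$; and the compatibility with $b$ follows from expanding $A_i*A_{i+1}$, inserting $\sum_\eta\chi(\eta^{-1}z)=1$ at the new integration variable, substituting $z=\eta z'$, and using $\Gamma$-invariance of the adjacent kernel $A_{i+1}(\eta z',\gamma_i x_{i+1})=A_{i+1}(z',\eta^{-1}\gamma_i x_{i+1})$. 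Setting $h_i=\eta$, $h_{i+1}=\eta^{-1}\gamma_i$ (so $h_ih_{i+1}=\gamma_i$) and shifting indices by one above $i$ then identifies the $i$-th Hochschild face of $\Phi(\phi)$ term-by-term with the $i$-th face of $\Phi(b\phi)$, and the wrap-around face works out analogously with $h_0=\eta^{-1}\gamma_0$, $h_{k+1}=\eta$, giving $h_{k+1}h_0=\gamma_0$ with no auxiliary use of cyclicity, precisely as you describe.
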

\begin{proof}
The proof is standard and therefore omitted.
\end{proof}

We then come to the lower left corner of the diagram. We define
\begin{definition} 
The \emph{delocalized invariant Alexander--Spanier complex} $C^\bullet_{\rm AS, inv}(M,\gamma)$ is given by the vector space of smooth functions $f\in C^{\infty}(M^{\times (k+1)})$
satisfying
\begin{itemize}
\item $f$ is antisymmetric,
\item $f(zx_0,\ldots,zx_k)=f(x_0,\ldots,x_k),~\forall z\in Z_\gamma$,
\item $f(\gamma x_0,x_1,\ldots,x_k)=f(x_0,\ldots,x_k)$.
\end{itemize}
The differential is given by the usual formula
\[
(\delta f)(x_0,\ldots,x_{k+1}):=\sum_{i=0}^{k+1}(-1)^if(x_0,\ldots,\hat{x}_i,\ldots,x_{k+1}).
\]
\end{definition}
We can now define the two remaining maps. First, $\Psi_{\rm inv}: \mathscr{C}^\bullet_\gamma(\Gamma)\to C^\bullet_{\rm AS, inv}(M,\gamma)$ is defined as
\begin{equation}
\label{eq:psiinv}
\Psi_{\rm inv}(c)(x_0,\ldots,x_k):=\sum_{\gamma_0,\ldots,\gamma_k}c(\gamma_0,\ldots,\gamma_k)\chi(\gamma_0^{-1}x_0)\cdots\chi(\gamma_k^{-1}x_k),
\end{equation}
and it is once again straightforward to verify that 
\[
\delta\circ \Psi_{\rm inv}=\delta\circ \Psi_{\rm inv}.
\]
Finally $\rho_{\rm inv}:C^\bullet_{\rm AS, inv}(M,\gamma)\to C^\bullet_\lambda(\mathcal{A}^c_\Gamma(M))$ is defined as
\begin{align}\label{eq:inv com}
\rho_{\rm inv}(f)(A_0,\ldots,A_k):=\sum_{\eta\in\langle\gamma\rangle}\int_{M^{\times (k+1)}}\chi(x_0)f(x_0,\ldots x_k)A_0(x_0,x_1)\cdots A_k(x_k,\eta x_0)dx_0\cdots dx_k
\end{align}
and a direct computation shows that $\rho_{\rm inv}$ is a morphism of cochain complexes,
\[
\rho_{\rm inv}\circ\delta=b\circ \rho_{\rm inv}.
\]
This completes the definitions of all the ingredients in the diagram \eqref{diagram-nc}. Despite the seemingly straightforward definitions, it is important to notice that the diagram 
{\em does not commute}. Indeed already for a general degree zero cochain $c\in \mathscr{C}_\gamma^0(\Gamma)$ we get
\begin{align*}\Phi(\tau_c)(A)&=\sum_{\eta\in Z_\gamma \backslash\Gamma}c(\eta)\int_M\chi(x)A(x,\eta^{-1}\gamma\eta x)dx,
\end{align*}

whereas
\begin{align*}
\rho_{\rm inv}(\Psi_{\rm inv}(c))(A)&=\sum_{\eta\in\langle\gamma\rangle}\int_{M}\chi(x)\Psi_{\rm inv}(c)(x)A(x,\eta x)dx\\
&=\sum_{\gamma_0\in\Gamma}\sum_{\eta\in\langle\gamma\rangle}c(\gamma_0)\int_{M}\chi(x) \chi(\gamma_0^{-1}x)A(x,\eta x)dx.
\end{align*}
Clearly, the two expressions are different, showing that the diagram indeed does not commute. However notice that imposing the cocycle condition $\delta c=0$ forces $c$ to be constant and we can use the identity \eqref{cut-off} to see that the two expressions do agree on the level of cocycles. This small observation strongly suggests that the two maps $\Phi\circ \tau$ and $\rho_{\rm inv}\circ \Psi_{\rm inv}$, from the left upper to the right lower corner of the diagram \eqref{diagram-nc}, are homotopic to each other. In the next section we shall construct this homotopy.  
\subsection{The extended Alexander--Spanier complex}
In this section we shall discuss the commutativity property of the diagram (\ref{diagram-nc})
as follows: we define a new cochain complex ${\mathsf{E}^\bullet_{\rm AS}(M,\gamma)}$, called the {\em $\gamma$-extended  Alexander--Spanier complex}, and add it to the diagram \eqref{diagram-nc} as follows:
\begin{equation}
\label{diagram}
\begin{tikzcd}
\mathscr{C}^\bullet_\gamma(\Gamma) \arrow[rr, "\tau"] \arrow[dd, "\Psi_{\rm inv}"'] \arrow[rd, "\Psi"] &                                                                                            & {C_\lambda^\bullet(\mathbb{C}\Gamma) } \arrow[dd, "\Phi"] \\
                                                                                                       & {\mathsf{E}^\bullet_{\rm AS}(M,\gamma)}\arrow[rd, "\rho"] \arrow[ld, "P"', shift right=2] &                                                     \\
{C^\bullet_{\rm AS, inv}(M,\gamma)} \arrow[rr, "\rho_{\rm inv}"] \arrow[ru, "I"']                      &                                                                                            & C_\lambda^\bullet(\mathcal{A}^c_\Gamma(M))                
\end{tikzcd}
\end{equation}

We will define the chain maps in the above diagram and prove the following theorem.

\begin{theorem}\label{thm:comm-diagram} The three triangles in (\ref{diagram}) commute. Moreover, there is a chain homotopy equivalence $H$ between $C^\bullet_{\rm AS, inv}(M,\gamma)$ and ${\mathsf{E}^\bullet_{\rm AS}(M,\gamma)}$ satisfying
\[
P\circ I={\rm Id},\qquad 
I\circ P={\rm Id} +\delta\circ H+H\circ \delta.
\]
\end{theorem}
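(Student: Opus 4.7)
The plan is to define $\mathsf{E}^\bullet_{\rm AS}(M,\gamma)$ as a relaxation of $C^\bullet_{\rm AS,inv}(M,\gamma)$ in which the $Z_\gamma$-invariance condition is dropped, retaining only antisymmetry and the $\gamma$-equivariance $f(\gamma x_0,x_1,\ldots,x_k)=f(x_0,\ldots,x_k)$, together with a $\Gamma$-compact support condition so that the sums below converge. With this choice, $I\colon C^\bullet_{\rm AS,inv}(M,\gamma)\hookrightarrow \mathsf{E}^\bullet_{\rm AS}(M,\gamma)$ would be the evident inclusion and $P$ the $Z_\gamma$-averaging
\[
P(f)(x_0,\ldots,x_k)=\sum_{z\in Z_\gamma}\chi_\gamma(z^{-1}x_0)\,f(z^{-1}x_0,\ldots,z^{-1}x_k),
\]
with $\chi_\gamma$ a cutoff function for the $Z_\gamma$-action on $M$. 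The extended maps $\Psi$ and $\rho$ are then defined by the same formulas as $\Psi_{\rm inv}$ and $\rho_{\rm inv}$ but with the outer $Z_\gamma$-averaging/summation omitted, so that the relations $P\circ\Psi=\Psi_{\rm inv}$ and $\rho\circ I=\rho_{\rm inv}$ reduce tautologically to the partition-of-unity identity $\sum_{z\in Z_\gamma}\chi_\gamma(z^{-1}x)=1$.

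With these definitions, the three triangles commute by direct computation. The two triangles involving $P$ and $I$ reduce to the partition-of-unity identity, as above; the remaining triangle $\Phi\circ\tau=\rho\circ\Psi$ is proved by expanding both sides as finite sums over $\Gamma^{k+1}$ indexed by the group elements linking consecutive $x_i$'s in the smoothing kernels, and verifying that a change of variables $x_i\mapsto\gamma_i^{-1}x_i$ identifies the two expressions using the properties $c(\gamma\gamma_0,\gamma_1,\ldots,\gamma_k)=c(\gamma_0,\ldots,\gamma_k)$ and the $Z_\gamma$-invariance of $c$. Similarly, $P\circ I=\mathrm{Id}$ is immediate from the $Z_\gamma$-invariance of elements of $C^\bullet_{\rm AS,inv}(M,\gamma)$ combined with $\sum_{z}\chi_\gamma(z^{-1}x_0)=1$.

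The main step---and the main obstacle---is the construction of the chain homotopy $H\colon \mathsf{E}^\bullet_{\rm AS}(M,\gamma)\to \mathsf{E}^{\bullet-1}_{\rm AS}(M,\gamma)$ with $I\circ P-\mathrm{Id}=\delta H+H\delta$. The idea is an Alexander--Spanier prism/cone construction: for each $z\in Z_\gamma$, regard $f(x_\bullet)$ and $f(z^{-1}x_\bullet)$ as the two ``endpoints'' of a simplicial homotopy and set, schematically,
\[
H(f)(x_0,\ldots,x_{k-1})=\sum_{z\in Z_\gamma}\chi_\gamma(z^{-1}x_0)\sum_{i=0}^{k-1}(-1)^i f(z^{-1}x_0,\ldots,z^{-1}x_i,x_i,\ldots,x_{k-1}),
\]
followed by the requisite antisymmetrization. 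The verification that $\delta H+H\delta$ produces $I\circ P-\mathrm{Id}$ is then a telescoping combinatorial identity exactly analogous to the classical prism operator proof of homotopy invariance in simplicial cohomology, the homotopy parameter being the binary choice $x_i$ versus $z^{-1}x_i$ at each position.

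The delicate points to handle are (i) ensuring that $H(f)$ actually lies in $\mathsf{E}^{\bullet-1}_{\rm AS}(M,\gamma)$, i.e.\ is antisymmetric and $\gamma$-invariant in the first variable, which requires a careful accounting of the shifted cutoff factors $\chi_\gamma(z^{-1}x_j)$ that appear once $x_0$ is among the deleted faces in $\delta H$, and may force an explicit symmetrization in the definition; and (ii) the convergence of the sum over $Z_\gamma$, which is guaranteed by cocompactness of the $Z_\gamma$-action near the support of $f$ together with the support condition built into the definition of $\mathsf{E}^\bullet_{\rm AS}$. Once $H$ is constructed, combining the homotopy identity with the three commutative triangles yields the crucial cochain-level equality $\rho(\Psi(c))-\Phi(\tau(c))=(b\circ H'+ H'\circ\delta)(c)$ with $H':=\rho\circ H\circ \Psi$, and hence the identity on cohomology invoked in \eqref{intro-equality-indices-bis}.
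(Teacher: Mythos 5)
Your proposed intermediate complex is genuinely different from the paper's: you drop $Z_\gamma$-invariance while keeping the variables $M^{\times(k+1)}$, whereas the paper introduces an auxiliary $\Gamma$-variable (so $\mathsf{E}^k_{\rm AS}$ consists of functions on $\Gamma\times M^{\times(k+1)}$) and, crucially, \emph{modifies} the differential: in $\epsilon_{\rm AS}$ the $i=0$ face is replaced by the term $\sum_h f(\eta h, x_1,\ldots,x_{k+1})\chi(\eta^{-1}x_0)$. That modification is not cosmetic; it is what makes the paper's $P(F)(x_\bullet)=\sum_{h}F(h,x_\bullet)$ a chain map, via the partition-of-unity identity.

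The gap in your approach is precisely here. With the unmodified Alexander--Spanier differential on your relaxed complex, your averaging map $P(f)(x_0,\ldots,x_k)=\sum_z\chi_\gamma(z^{-1}x_0)f(z^{-1}x_0,\ldots,z^{-1}x_k)$ is \emph{not} a chain map. Compare the $i=0$ faces: $(\delta Pf)$ contributes $\sum_z\chi_\gamma(z^{-1}x_1)f(z^{-1}x_1,\ldots,z^{-1}x_{k+1})$ (the cutoff anchored at $x_1$), while $(P\delta f)$ contributes $\sum_z\chi_\gamma(z^{-1}x_0)f(z^{-1}x_1,\ldots,z^{-1}x_{k+1})$ (anchored at $x_0$). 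These disagree for $f$ that is not $Z_\gamma$-invariant, which is exactly the case you need. The same asymmetry in the anchor point causes $P(f)$ and $H(f)$ to fail antisymmetry under any transposition involving $x_0$, so neither lands in its claimed target. You flag the antisymmetry as a "delicate point" to be fixed by an explicit symmetrization, but a symmetrization of $P$ that restores antisymmetry would have to be re-verified as a chain map, and the $i=0$ face mismatch above persists after symmetrization (it just gets spread over all positions). The paper circumvents all of this at once by encoding the anchor as an explicit extra variable $\eta\in\Gamma$, adjusting the $i=0$ face of the differential to carry a $\chi(\eta^{-1}x_0)$ factor, and recovering antisymmetry only after summing out $\eta$ (this is why they work in the subcomplex $\mathsf{E}^\bullet_{{\rm AS},a}$ of cochains $f$ whose push-forward $E_f=\sum_\eta f(\eta,\cdot)$ is antisymmetric, rather than requiring $f$ itself to be antisymmetric). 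Once that machinery is in place, the homotopy $H$ is just the naive degeneracy operator $\sum_i(-1)^iF(\eta,x_0,\ldots,x_i,x_i,\ldots,x_{k-1})$ with no $Z_\gamma$-averaging at all; the averaging lives entirely in $P$. Your combined prism-plus-averaging homotopy attempts to do two jobs at once, and without the extra variable there is no record of where the cutoff was inserted, which is what breaks both the chain-map property and the antisymmetry.
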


The remainder of this subsection is devoted to the definitions of the $\gamma$-extended Alexander-Spanier complex and the related maps in diagram (\ref{diagram}), and to the proof of Theorem \ref{thm:comm-diagram}. Before that, we mention the following corollary of Theorem \ref{thm:comm-diagram}.
\begin{corollary}
The outer rectangle of diagram (\ref{diagram}), which is diagram (\ref{diagram-nc}), commutes up to homotopy. 
\end{corollary}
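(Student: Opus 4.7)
The plan is to obtain the Corollary as a direct formal consequence of Theorem~\ref{thm:comm-diagram}. Concretely, I would exploit the three triangle identities together with the homotopy relation $I\circ P = \mathrm{Id} + \delta\circ H + H\circ\delta$ to produce an explicit chain homotopy between $\Phi\circ\tau$ and $\rho_{\rm inv}\circ\Psi_{\rm inv}$ of the form $\widetilde{H}=\rho\circ H\circ\Psi$.

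First I would read off from the commutativity of the three triangles in diagram~\eqref{diagram} the identities
\[
\Phi\circ\tau \;=\; \rho\circ\Psi, \qquad \Psi_{\rm inv} \;=\; P\circ\Psi, \qquad \rho_{\rm inv} \;=\; \rho\circ I.
\]
Combining the last two gives $\rho_{\rm inv}\circ\Psi_{\rm inv} = \rho\circ I\circ P\circ \Psi$, and substituting $I\circ P=\mathrm{Id}+\delta\circ H+H\circ \delta$ yields
\[
\rho_{\rm inv}\circ\Psi_{\rm inv} \;=\; \rho\circ\Psi \;+\; \rho\circ\delta\circ H\circ\Psi \;+\; \rho\circ H\circ\delta\circ\Psi.
\]

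Next I would invoke the fact that $\Psi$ and $\rho$ are cochain maps (already noted while setting up the diagram), so that $\delta\circ\Psi = \Psi\circ\delta$ and $\rho\circ\delta = b\circ\rho$. Setting
\[
\widetilde{H} \;:=\; \rho\circ H\circ \Psi \colon \mathscr{C}^\bullet_\gamma(\Gamma)\longrightarrow C^{\bullet-1}_\lambda(\mathcal{A}^c_\Gamma(M)),
\]
the previous identity rearranges to
\[
\rho_{\rm inv}\circ\Psi_{\rm inv}\;-\;\Phi\circ\tau \;=\; b\circ\widetilde{H} \;+\; \widetilde{H}\circ\delta,
\]
where I have used the top triangle identity $\Phi\circ\tau=\rho\circ\Psi$ to recognize the leading term. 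This exhibits the desired cochain homotopy between the two compositions around the outer rectangle of diagram~\eqref{diagram}, which is precisely diagram~\eqref{diagram-nc}, thereby proving the Corollary.

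I do not anticipate any real obstacle: the statement is a purely formal consequence of Theorem~\ref{thm:comm-diagram}. The only minor bookkeeping point is to ensure that the sign and degree conventions for $\Phi$, $\Psi$, $\rho$, $\rho_{\rm inv}$, $P$, $I$ and $H$ match those used in the preceding lemmas, so that the identity $b\circ\widetilde{H}+\widetilde{H}\circ\delta$ genuinely realises a cochain homotopy in the relevant complexes; this is straightforward from the explicit formulas \eqref{morphism-cohomology}, \eqref{eq:psiinv} and \eqref{eq:inv com}.
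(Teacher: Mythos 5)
Your proposal is correct and coincides with the paper's proof: both define the homotopy $H'=\rho\circ H\circ\Psi$, expand $\rho_{\rm inv}\circ\Psi_{\rm inv}=\rho\circ I\circ P\circ\Psi$ using the homotopy relation $I\circ P=\mathrm{Id}+\delta H+H\delta$, and invoke the chain-map property of $\rho$ and $\Psi$ together with the top triangle identity $\Phi\circ\tau=\rho\circ\Psi$ to rearrange.
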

 
\begin{proof} Using the maps $I, P, H$ in Theorem \ref{thm:comm-diagram}, we have the following equalities, 
\begin{align*}
\rho_{\rm inv}\circ\Psi_{\rm inv}-\Phi\circ\tau&=\rho\circ I\circ P\circ\Psi-\Phi\circ\tau\\
&=\rho\circ(H\circ\delta+\delta\circ H+{\rm id})\circ \Psi-\Phi\circ\tau\\
&=\rho\circ H\circ\Psi\circ\delta+\delta\circ\rho\circ H\circ \Psi+\rho\circ\Psi-\Phi\circ\tau\\
&=H'\circ\delta+\delta\circ H',
\end{align*}
with 
\begin{equation}
\label{homotopy-diagram}
H':=\rho\circ H\circ\Psi:\mathscr{C}^\bullet_\gamma(\Gamma)\to C_\lambda^{\bullet-1}(\mathcal{A}^c_\Gamma(M)).
\end{equation}
This completes the proof the corollary. 
\end{proof}

We now proceed to define the center piece of the diagram above.
Let $p_0: \Gamma \times M\times M\times \cdots \times M\to  M$ be defined by mapping $(\eta, x_0, \cdots, x_k)$ to $\eta^{-1}x_0$.
\begin{definition}
\label{defn:gammaASch} 
For $\gamma\in \Gamma$, a $\gamma$-extended Alexander-Spanier 
cochain is a function 
\[
f\in \mathsf{E}_{\rm AS}^k(M, \gamma):=\{F\in C^\infty \big(\Gamma\times M^{\times (k+1)} \big),~ p_0\big(\operatorname{supp}(F)\big)\ \text{is compact.}\},
\]
satisfying the following conditions
\begin{enumerate}
\item $f(z\eta, x_0, \cdots, x_k)=f(\eta, z^{-1}x_0, z^{-1}x_1, \cdots, z^{-1}x_{k})$, $\qquad \forall z\in Z_\gamma$
\item $f(\eta, x_0, \cdots, x_k)=f(\gamma\eta, \gamma x_0, x_1, \cdots, x_k)$.
\end{enumerate}
The differential $\epsilon_{\rm AS} : \mathsf{E}^k_{\rm AS}(M, \gamma) \to \mathsf{E}^{k+1}_{\rm AS}(M, \gamma)$ is defined as follows.
\[
\epsilon_{\rm AS}(f)(\eta, x_0, \cdots, x_{k+1}):= \sum_h f(\eta h, x_1, \cdots, x_{k+1})\chi(\eta^{-1}x_0)+\sum_{i\geq 1}(-1)^i f(\eta, x_0, \cdots, \hat{x}_i, \cdots, x_{k+1}).
\]
For $f\in \mathsf{C}_{\rm AS}^k(M, \gamma)$, set 
$E_f(x_0, \cdots, x_k):=\sum_{\eta\in  \Gamma} f(\eta, x_0, \cdots, x_k)$. With the support requirement of $f$ and the assumption that the $\Gamma$-action is proper, we see that $E_f$ is a well defined function on $M^{\times (k+1)}$. We can restrict to consider those $f$ such that $E_f$ is antisymmetric. We denote the subspace of $\mathsf{C}_{\rm AS}^k(M, \gamma)$ consisting of such antisymmetric cochains by $\mathsf{E}_{\rm AS, a}^k(M, \gamma)$. $(\mathsf{E}_{\rm AS, a}^k(M, \gamma), \delta_{\rm AS})$ is a subcomplex of $(\mathsf{E}_{\rm AS}^k(M, \gamma), \epsilon_{\rm AS})$. 
\end{definition}

Let us explain why $\epsilon_{\rm AS} : \mathsf{E}^k_{\rm AS}(M, \gamma) \to \mathsf{E}^{k+1}_{\rm AS}(M, \gamma)$ is well defined.
Recall that in the definition of $\mathsf{E}^k_{\rm AS}(M, \gamma)$, we have assumed that the image of the support of $f\in \mathsf{E}^k_{\rm AS}(M, \gamma)$ under the map $p_0$ is compact. In the first component of $\epsilon_{\rm AS}(f)$, to have a nontrivial contribution, one needs that $h^{-1}\eta ^{-1}x_1$ belongs $p_0(\text{supp}(f))$. As the action is $\Gamma$ action on $X$ is proper, there are in total a finite number of such $h$. This observation assures that the first component of $\epsilon_{\rm AS}(f)$ is a finite sum. Next we explain that $\epsilon_{\rm AS}(f)$ itself satisfies the support requirement. We start with observing that $p_0$ maps the support of the following function
\[
\sum_h f(\eta h, x_1, \cdots, x_{k+1})\chi(\eta^{-1}x_0)
\]
into the support of the function $\chi$, which is a compact set. We then notice that $p_0$ maps the support of the second component of $\epsilon_{\rm AS}(f)$ into $p_0(\text{supp}(f))$, which is compact. Hence, the sum to the two terms in $\epsilon_{\rm AS}(f)$ satisfies the right support condition.  And we conclude that the boundary map $\epsilon_{\rm AS}$ is well defined. 

\begin{lemma}\label{lem:d2} $\epsilon_{\rm AS}^2=0$. 
\end{lemma}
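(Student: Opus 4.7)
The plan is to verify $\epsilon_{\rm AS}^2=0$ by a direct combinatorial computation, splitting the differential into its two pieces and tracking the cancellations carefully. Write $\epsilon_{\rm AS}=D_1+D_2$, where
\[
(D_1 f)(\eta,x_0,\dots,x_{k+1}) := \sum_{h\in\Gamma} f(\eta h,x_1,\dots,x_{k+1})\,\chi(\eta^{-1}x_0),\qquad (D_2 f)(\eta,x_0,\dots,x_{k+1}) := \sum_{i=1}^{k+1}(-1)^i f(\eta,x_0,\dots,\hat{x}_i,\dots,x_{k+1}).
\]
Then $\epsilon_{\rm AS}^2 f = D_1^2 f + D_1 D_2 f + D_2 D_1 f + D_2^2 f$, and I will show that $D_2^2=0$, that the mixed terms of index $i\geq 2$ in $D_1 D_2$ and $D_2 D_1$ cancel against each other, and that the residual $i=1$ piece of $D_2 D_1$ cancels $D_1^2$ via the partition-of-unity identity \eqref{cut-off}. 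Throughout, the finiteness of the sums over $h$ used below is guaranteed by the properness of the $\Gamma$-action together with the support condition built into \cref{defn:gammaASch}.

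First I will handle $D_2^2$: since $D_2$ is the standard Alexander–Spanier differential restricted to omitting variables with index $\geq 1$ (the $i=0$ term being replaced by $D_1$), a pair of removed indices $\{a,b\}\subset\{1,\dots,k+2\}$ with $a<b$ appears exactly through the two compositions $(i,j)=(b,a)$ and $(i,j)=(a,b-1)$, giving contributions $(-1)^{a+b}$ and $(-1)^{a+b-1}$ respectively, which cancel. Next, I will compute
\[
(D_1 D_1 f)(\eta,x_0,\dots,x_{k+2}) = \sum_{h_1,h_2} f(\eta h_1 h_2, x_2,\dots,x_{k+2})\,\chi(h_1^{-1}\eta^{-1}x_1)\,\chi(\eta^{-1}x_0);
\]
the change of variable $k=h_1 h_2$ decouples the inner sum, and the cutoff identity $\sum_{h_1}\chi(h_1^{-1}\eta^{-1}x_1)=1$ reduces this to $\sum_k f(\eta k,x_2,\dots,x_{k+2})\chi(\eta^{-1}x_0)$. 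For $D_2 D_1$, when $i\geq 2$ the first variable $x_0$ is untouched and one finds $\sum_{i=2}^{k+2}(-1)^i\sum_h f(\eta h, x_1,\dots,\hat x_i,\dots,x_{k+2})\chi(\eta^{-1}x_0)$; for $i=1$, after relabelling $(x_0,x_2,\dots,x_{k+2})$ as $(y_0,\dots,y_{k+1})$ and applying $D_1$, one gets the single extra term $-\sum_h f(\eta h, x_2,\dots,x_{k+2})\chi(\eta^{-1}x_0)$. Finally, in $D_1 D_2$ the standard index shift $i=j+1$ produces $\sum_h\sum_{i=2}^{k+2}(-1)^{i-1}f(\eta h, x_1,\dots,\hat x_i,\dots,x_{k+2})\chi(\eta^{-1}x_0)$, exactly opposite in sign to the $i\geq 2$ portion of $D_2 D_1$, while the surviving $i=1$ piece of $D_2 D_1$ cancels $D_1^2$.

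The main technical obstacle is not conceptual but bookkeeping: one must keep careful track of how the variable labels $x_0,\dots,x_{k+2}$ shift each time a $D_2$ is applied (since $D_2$ only omits variables of index $\geq 1$, index $0$ has a privileged role), and in particular verify that the reindexing $j\mapsto j+1$ in the computation of $D_1 D_2$ matches the $i=1$ special case arising in $D_2 D_1$ after relabelling. Once this is carefully done, the fact that $\epsilon_{\rm AS}$ sends the asymmetric role of $x_0$ (which is tied to the cutoff function and the $\Gamma$-summation) to the standard antisymmetric behaviour of the AS differential on the remaining variables is precisely what makes all four cross terms pair up using the single identity $\sum_h\chi(h^{-1}x)=1$.
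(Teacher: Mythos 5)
Your proposal is correct and follows essentially the same route as the paper's: the decomposition $\epsilon_{\rm AS}=D_1+D_2$ is implicit in the paper's expansion, and you identify exactly the same cancellations — the $i\geq 2$ cross terms cancel pairwise, the $i=1$ piece of $D_2D_1$ cancels $D_1^2$ via $\sum_h\chi(h^{-1}x)=1$, and $D_2^2=0$ by the standard antisymmetry pairing. Making the splitting $\epsilon_{\rm AS}=D_1+D_2$ explicit is a harmless and arguably cleaner presentation, but it is not a genuinely different argument.
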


\begin{proof}
We check the identity directly:
\begin{subequations}
\begin{eqnarray}\label{eq:delta2}
&&\epsilon_{\rm AS}^2 (f)(\eta, x_0, \cdots, x_{k+2})\\
&=&\sum_h \epsilon_{\rm AS}(f)(\eta h, x_1, \cdots, x_{k+2})\chi(\eta^{-1}x_0)+\sum_{i\geq 1}(-1)^i \epsilon_{\rm AS}(f)(\eta, x_0, \cdots, \hat{x}_i, \cdots, x_{k+2})\\
\label{eq:delta2term1}
&=& \sum_{h_1, h_2}f(\eta h_1h_2, x_2, \cdots, x_{k+2})\chi(h_1^{-1}\eta^{-1}x_1)\chi(\eta^{-1}x_0)\\
\label{eq:delta2term2}
&+&\sum_{i\geq 2}(-1)^{i-1} f(\eta h, x_1, \cdots, \hat{x}_i, \cdots, x_{k+2}) \chi(\eta^{-1}x_0)\\
\label{eq:delta2term3}
&+&\sum_{i\geq 1} (-1)^i \sum_h f(\eta h, x_1, \cdots, \hat{x}_i, \cdots, x_{k+2})\chi(\eta^{-1}x_0)\\
\label{eq:delta2term4}
&+&\sum_{1\leq i<j}(-1)^{i+j-1}f(\eta, x_0, \cdots, \hat{x}_i, \cdots, \hat{x}_j, \cdots, x_{k+1})\\
\label{eq:delta2term5}&+&\sum_{1\leq j<i}(-1)^{i+j}f(\eta, x_0, \cdots, \hat{x}_i, \cdots, \hat{x}_j, \cdots, x_{k+1})
\end{eqnarray}
\end{subequations}
In the above sum, the two terms (\ref{eq:delta2term4}) and (\ref{eq:delta2term5}) cancel because of the sign difference. The third term (\ref{eq:delta2term3}) can be split into a sum of two terms
\[
-\sum_h f(\eta h, x_2, \cdots, x_{k+2})\chi(\eta^{-1}x_0)+\sum_{i\geq 2} \sum_h f(\eta h, x_1, \cdots, \hat{x}_i, \cdots, x_{k+2})\chi(\eta^{-1}x_0). 
\]
The second term in the above equation cancels with the second term (\ref{eq:delta2term2}) in the expression of $\delta_{\rm AS}^2(f)$. And the first term in the above equation cancels with the first term (\ref{eq:delta2term1}) in the expression of $\delta^2_{\rm AS}(f)$ once we make the following observation, 
\[
\begin{split}
&\sum_{h_1, h_2}f(\eta h_1h_2, x_2, \cdots, x_{k+2})\chi(h_1^{-1}\eta^{-1}x_1)\chi(\eta^{-1}x_0)\\
=&\sum_{h'_1}f(\eta h'_1), x_2, \cdots, x_{k+2}) \chi(\eta^{-1}x_0)\sum_{h_1} \chi(h_1^{-1}x_1)\\
=&\sum_{h'_1}f(\eta h'_1), x_2, \cdots, x_{k+2}) \chi(\eta^{-1}x_0),
\end{split}
\]
where we have used the defining property for the cutoff function $\chi$, i.e.
\[
\sum_{h_1} \chi(h_1^{-1}x)=1,\ \ \forall x. 
\]
Summarizing the above discussion on (\ref{eq:delta2term1})-(\ref{eq:delta2term5}), we conclude that $\delta^2_{\rm AS}(f)=0$. 
\end{proof}

\begin{definition}\label{defn:ascoh} The cohomology of $(\mathsf{E}^\bullet_{\rm AS,a}(M, \gamma), \epsilon_{\rm AS})$ is denoted by $H_{\rm AS,a}^\bullet(M, \gamma)$ and called the $\gamma$-extended Alexander-Spanier cohomology. 
\end{definition}
The maps $P$ and $I$ in diagram \eqref{diagram} are given by the formulae
\[
(If)(\eta,x_0,\ldots,x_k):= f(x_0,\ldots,x_k)\chi(\eta^{-1}(x_0)),
\]
and 
\[
(PF)(x_0,\ldots,x_k) := \sum_{h\in\Gamma} F(h,x_0,\ldots,x_k).
\]
It is straightforward to check that both $P$ and $I$ commute with the differentials. It follows from the identity \eqref{cut-off} that $P\circ I={\rm id}$. 
The opposite composition writes out as
\[
(I\circ P)(F)(\eta,x_0,\ldots, x_k)=\sum_{h\in\Gamma} F(h,x_0,\ldots,x_k)\chi(\eta^{-1}x_0).
\]
We now define the map $H:\mathsf{E}^k_{\rm AS}(M,\gamma)\to \mathsf{E}^{k-1}_{\rm AS}(M,\gamma)$ by
\[
HF(\eta,x_0,\ldots,x_{k-1}):=\sum_{i=0}^{k-1} (-1)^iF(\eta,x_0,\ldots,x_i,x_i,\ldots,x_{k-1}).
\]
\begin{lemma}\label{lem:IPH}
The following identity holds true: 
\[
\epsilon_{\rm AS}\circ H+H\circ \epsilon_{\rm AS}=I\circ P-{\rm id}.
\]
\end{lemma}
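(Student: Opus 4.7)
The plan is to verify the identity by direct expansion on a cochain $F\in\mathsf{E}^k_{\rm AS}(M,\gamma)$. Decompose the differential as $\epsilon_{\rm AS} = \epsilon_{\rm ext} + \epsilon^{\rm simp}$, where $\epsilon_{\rm ext}$ is the ``extended'' summand involving $\sum_h$ and $\chi$, while $\epsilon^{\rm simp} = \sum_{j\geq 1}(-1)^j d_j$ is the signed alternating sum of face maps $d_j$ (omission of the $j$-th spatial argument). Write also $H = \sum_{i=0}^{k-1}(-1)^i s_i$, with $s_i$ the degeneracy duplicating the $i$-th spatial argument. The sum $\epsilon_{\rm AS}H + H\epsilon_{\rm AS}$ then breaks into a purely simplicial part $\epsilon^{\rm simp}H + H\epsilon^{\rm simp}$ and a mixed part $\epsilon_{\rm ext}H + H\epsilon_{\rm ext}$, which I would treat separately.

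For the purely simplicial part, this is the standard contracting-homotopy calculation for an augmented Alexander--Spanier-type complex: the simplicial identities $d_js_i = s_{i-1}d_j$ for $j<i$, $d_js_i = \mathrm{id}$ for $j\in\{i,i+1\}$, and $d_js_i = s_id_{j-1}$ for $j>i+1$ force all off-diagonal $(i,j)$ pairs to cancel with opposite signs. The diagonal pairs $j\in\{i,i+1\}$ each produce a copy of $F(\eta,x_0,\ldots,x_k)$ whose signs telescope, leaving a single residue $-F(\eta,x_0,\ldots,x_k)$. This accounts for the $-\mathrm{id}$ summand on the right.

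For the mixed part, direct expansion of $H\epsilon_{\rm ext}F$ produces, from the $i=0$ summand of $H$, the non-degenerate term $\sum_h F(\eta h, x_0,\ldots,x_k)\chi(\eta^{-1}x_0)$, together with a sum of degenerate terms $(-1)^i\sum_h F(\eta h, x_1,\ldots,x_i,x_i,\ldots,x_k)\chi(\eta^{-1}x_0)$ for $i=1,\ldots,k$. On the other hand, $\epsilon_{\rm ext}HF = \sum_h(HF)(\eta h, x_1,\ldots,x_k)\chi(\eta^{-1}x_0)$ unfolds, after the reindexing $i = i'+1$ (which contributes a sign $(-1)^{-1}$), into precisely the negative of this sum of degenerate terms. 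Only the non-degenerate $i=0$ contribution survives, and the substitution $h'=\eta h$ identifies it with $\sum_{h'\in\Gamma}F(h',x_0,\ldots,x_k)\chi(\eta^{-1}x_0) = (I\circ P)F(\eta,x_0,\ldots,x_k)$.

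The main obstacle is the combinatorial bookkeeping: one must carefully track signs when duplication and omission indices interact, and keep in mind that the cutoff $\chi(\eta^{-1}x_0)$ always refers to the zeroth slot, which is special precisely when $s_0$ acts (since duplicating position $0$ leaves the zeroth slot unchanged and effectively inserts a copy of $x_0$ into the ``body'' of $\epsilon_{\rm ext}$). Verifying the identity in the base case $k=1$, which reduces to six explicit terms, provides a sanity check and signs template; no additional input beyond the partition-of-unity property \eqref{cut-off} of $\chi$ and properness of the $\Gamma$-action (to ensure the sums over $h$ are finite) is needed, and the general case then follows by the same pattern.
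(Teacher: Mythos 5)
Your proof is correct. It performs the same term-by-term expansion that the paper does, but your decomposition $\epsilon_{\rm AS}=\epsilon_{\rm ext}+\epsilon^{\rm simp}$ (with $\epsilon_{\rm ext}$ the $\chi$-weighted summand and $\epsilon^{\rm simp}=\sum_{j\geq 1}(-1)^jd_j$) is a genuinely cleaner way to organize the cancellation: it isolates $\epsilon^{\rm simp}H+H\epsilon^{\rm simp}=-\mathrm{id}$ as a standard cosimplicial-identity computation (where the residue $-\mathrm{id}$ appears precisely because $d_0$ is missing from $\epsilon^{\rm simp}$), and $\epsilon_{\rm ext}H+H\epsilon_{\rm ext}=I\circ P$ as a two-line telescoping sum in which only the $s_0$ contribution survives. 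The paper instead writes out all of $\epsilon_{\rm AS}(HF)$ and $H(\epsilon_{\rm AS}F)$ at once and matches monomials, which gets the job done but obscures \emph{why} the answer has exactly two surviving pieces; your split explains it. One small correction: the partition-of-unity identity \eqref{cut-off} is not actually used in this lemma --- only the change of summation variable $h'=\eta h$ (and properness to guarantee finiteness of the $h$-sums) enters. The cutoff identity is needed for $\epsilon_{\rm AS}^2=0$ and for $P\circ I=\mathrm{id}$, but not here.
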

\begin{proof}
We start computing on the left hand side:
\begin{align*}
\epsilon_{\rm AS}(H(F))(\eta,x_0,\ldots,x_k)=&\sum_h(HF)(\eta h,x_1,\ldots,x_{k})\chi(\eta^{-1}x_0)+\sum_{i\geq 1}(-1)^i (HF)(\eta,x_1,\ldots,\hat{x}_i,\ldots,x_{k-1})\\
=&\sum_h\sum_{j=1}^{k} (-1)^{j+1}F(\eta h,x_1,\ldots,x_j,x_j,\ldots,x_{k})\chi(\eta^{-1}x_0)\\
&
+\sum_{i\geq 1\atop 1\leq j<i}(-1)^{i+j}F(\eta,x_0,\ldots,x_j,x_j,\ldots,\hat{x}_i,\ldots,x_k)\\
&
+\sum_{i\geq 1\atop i< j\leq k}(-1)^{i+j-1}F(\eta,x_0,\ldots,\hat{x}_i,\ldots,x_j,x_j,\ldots,x_k)
\end{align*}
The second term on the left hand side is
\begin{align*}
H(\epsilon_{\rm AS}(F))(\eta,x_0,\ldots,x_k)=&\sum_{j=0}^k(-1)^j(\delta F)(\eta, x_0,\ldots,x_j,x_j,\ldots,x_k)\\
=& \sum_h F(\eta h,x_0,\ldots,x_k)\chi(\eta^{-1}x_0)+\sum_h\sum_{j=1}^k(-1)^jF(\eta h,x_1,\ldots,x_j,x_j,\ldots,x_k)\chi(\eta^{-1}x_0)\\
&+\sum_{0\leq j\leq k\atop 1\leq i<j} (-1)^{i+j}F(\eta,x_0,\ldots,\hat{x}_i,\ldots,x_j,x_j,\ldots,x_k)\\
&+\sum_{j=1}^k(-1)^{2j}F(\eta,x_0,\ldots,x_k)+\sum_{j=0}^k(-1)^{(2j+1)}F(\eta,x_0,\ldots,x_k)\\
&+\sum_{0\leq j\leq k\atop j <i\leq k} (-1)^{i+j+1}F(\eta,x_0,\ldots,x_j,x_j,\ldots,\hat{x}_i,\ldots,x_k)
\end{align*}
When we add to form $(\epsilon_{\rm AS}\circ H+H\circ \epsilon_{\rm AS})(F)$ all terms from the second part $H(\epsilon_{\rm AS}(F))$ after the last equality sign  cancel against all terms in the first part $\epsilon_{\rm AS}(H(F))$ except for the very first which is exactly $I(P(F))$ and one single term in the third line which equals $-F$. Together, this proves the identity of the lemma.
\end{proof}
To complete the diagram \eqref{diagram}, we have to define the maps $\Psi$ and $\rho$. To define $\Psi$, we associate to $c\in \mathscr{C}^k_\gamma(\Gamma)$ the 
following function
\begin{equation}\label{eq:fc}
f_c(\gamma_0, x_0,\ldots,x_k)=\sum_{\gamma_1,\ldots,\gamma_k}c(\gamma_0,\ldots,\gamma_k)\chi(\gamma_0^{-1}x_0)\cdots\chi(\gamma_k^{-1}x_k)
\end{equation}

\begin{lemma}\label{lem:fc}The function $f_c$ introduced in Eq. (\ref{eq:fc}) is a chain in $ \mathsf{E}^k_{\rm AS,a}(M,\gamma)$. 
\end{lemma}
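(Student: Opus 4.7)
The goal is to verify that $f_c$ defined by \eqref{eq:fc} satisfies all four requirements of Definition \ref{defn:gammaASch} together with the antisymmetry of the associated sum $E_{f_c}$. My plan is to work through these requirements in order, with the only non-formal point being the correct use of the two invariance properties \eqref{clc} of Lott's cochain $c$.

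\textbf{Smoothness and the support condition.} The function $f_c$ is a finite sum (for each fixed $\gamma_0, x_0, \ldots, x_k$) of products of the smooth cut-off $\chi$ and the scalars $c(\gamma_0,\ldots,\gamma_k)$, so smoothness is immediate provided I show the sum over $\gamma_1,\ldots,\gamma_k$ is locally finite. Local finiteness follows from properness of the $\Gamma$-action: for $(\gamma_0,x_0,\ldots,x_k)$ ranging in a compact set, each factor $\chi(\gamma_i^{-1}x_i)$ vanishes except for finitely many $\gamma_i$. For the support requirement, observe that a nonzero value of $f_c(\gamma_0, x_0,\ldots,x_k)$ forces $\chi(\gamma_0^{-1}x_0)\neq 0$, i.e.\ $\gamma_0^{-1}x_0\in \operatorname{supp}(\chi)$; hence $p_0(\operatorname{supp}(f_c))\subseteq \operatorname{supp}(\chi)$, which is compact.

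\textbf{The two equivariance conditions.} For condition (1), I expand $f_c(z\gamma_0, x_0,\ldots,x_k)$ and make the change of summation variables $\gamma_i = z\beta_i$ for $i=1,\ldots,k$, so that each cutoff becomes $\chi(\beta_i^{-1}z^{-1}x_i)$. At this point condition \eqref{eq:c-z-invariance} on $c$ turns the coefficient $c(z\gamma_0, z\beta_1,\ldots,z\beta_k)$ into $c(\gamma_0,\beta_1,\ldots,\beta_k)$, yielding exactly $f_c(\gamma_0, z^{-1}x_0,\ldots, z^{-1}x_k)$. For condition (2), the argument is even shorter: in $f_c(\gamma\gamma_0,\gamma x_0, x_1,\ldots,x_k)$ the cutoff $\chi((\gamma\gamma_0)^{-1}\gamma x_0)$ collapses to $\chi(\gamma_0^{-1}x_0)$, and the second identity in \eqref{clc} gives $c(\gamma\gamma_0,\gamma_1,\ldots,\gamma_k)=c(\gamma_0,\gamma_1,\ldots,\gamma_k)$.

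\textbf{Antisymmetry of $E_{f_c}$.} Summing over $\gamma_0\in\Gamma$ converts $E_{f_c}$ into the fully symmetric-looking expression
\[
E_{f_c}(x_0,\ldots,x_k)=\sum_{\gamma_0,\ldots,\gamma_k}c(\gamma_0,\ldots,\gamma_k)\,\chi(\gamma_0^{-1}x_0)\cdots\chi(\gamma_k^{-1}x_k).
\]
For a permutation $\sigma$, relabel the summation via $\beta_i:=\gamma_{\sigma^{-1}(i)}$, which permutes the cutoff factors but produces the coefficient $c(\beta_{\sigma(0)},\ldots,\beta_{\sigma(k)})=\operatorname{sgn}(\sigma)\,c(\beta_0,\ldots,\beta_k)$ by antisymmetry of $c$; hence $E_{f_c}$ is antisymmetric, which places $f_c$ in $\mathsf{E}^k_{\rm AS, a}(M,\gamma)$.

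The main (modest) obstacle is simply bookkeeping: one must keep track of where the invariance of $c$ enters versus where the invariance of $\chi$-free products enters, and confirm that each change of summation variables respects the local finiteness established at the start. Once these are handled, nothing beyond the defining identities \eqref{clc} and the basic properties of the cut-off function is needed.
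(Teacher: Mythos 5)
Your proof is correct and follows essentially the same route as the paper: use the support of $\chi$ for the compactness of $p_0(\operatorname{supp} f_c)$, change summation variables and invoke \eqref{clc}(a) for condition (1), apply \eqref{clc}(b) for condition (2), and relabel indices against a permutation for antisymmetry. The only (harmless) difference is presentational — you verify the equivariance conditions in the unshifted forms stated in Definition \ref{defn:gammaASch} rather than the shifted equivalents the paper writes out, and you supply the antisymmetry relabeling argument that the paper leaves to the reader.
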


\begin{proof}We notice from (\ref{eq:fc}) that in order for $f_c(\eta, x_0, \cdots, x_k)$ to be nonzero, $\eta^{-1}x_0$ needs to be in the support of the cut-off function $\chi$. This assures that $p_0$ maps the support of $f_c$ into a compact set. 

We now check that $f_c$ satisfies both (1) and (2) in Definition \ref{defn:gammaASch}. 

To check property (1), we make the following computation: For $z \in Z_\gamma$, 
\begin{eqnarray*}
f_c(z\eta, z x_0, \cdots, z x_k)&=&\sum _{\gamma_1, \cdots, \gamma_k} c(z\eta, \gamma_1, \cdots, \gamma_k)\chi((z\eta)^{-1}z x_0)\chi(\gamma_1^{-1}z x_1)\cdots \chi(
\gamma_k^{-1}z x_k)\\
&=&\sum_{\gamma_1, \cdots, \gamma_k}c(\eta, z^{-1}\gamma_1, \cdots, z^{-1}\gamma_k) \chi(\eta^{-1}x_0)\chi((z^{-1}\gamma_1)^{-1}x_1)\cdots \chi((z^{-1}\gamma)^{-1}x_k)\\
&=&\sum_{\gamma'_1, \cdots,\gamma'_k}c(\eta, \gamma'_1, \cdots, \gamma'_k)\chi(\eta^{-1}x_0)\chi(\gamma'_1x_1)\cdots \chi(\gamma'_kx_k)\\
&=&f_c(\eta, x_0, \cdots, x_k),
\end{eqnarray*}
where in the second equality we have use the property of $c$  in (\ref{clc}a).
To check property (2), we make the following computation. 
\begin{eqnarray*}
f_c(\gamma \eta, x_0, \cdots, x_k)&=&\sum_{\gamma_1, \cdots, \gamma_k} c(\gamma \eta, \gamma_1, \cdots, \gamma_{k}) \chi(\eta^{-1}\gamma ^{-1} x_0)\chi(\gamma^{-1}_1x_1)\ldots \chi((\gamma^{-1} _k x_k)\\
&=& \sum_{\gamma_1, \cdots, \gamma_{k}} c(\eta, \gamma_1, \cdots, \gamma_{k}) \chi(\eta^{-1}\gamma ^{-1} x_0) \chi(\gamma_1^{-1}x_1)\ldots \chi((\gamma_{k})^{-1}x_k)\\
&=&f_c(\eta, \gamma^{-1}x_0, x_1, \cdots, x_k),
\end{eqnarray*} 
where in the second equality we have used the property of $c$ in (\ref{clc}b). 
Finally, the antisymmetry property of $f_c$ follows from direct computation, which is left to the reader. 
\end{proof}

\begin{lemma}\label{lem:psi} The map $\Psi: \mathscr{C}_\gamma^k(\Gamma)\to \mathsf{E}^k_{\rm AS, a}(M, \gamma)$ associating $f_c$ to $c$ is a morphism of cochain complexes. 
\end{lemma}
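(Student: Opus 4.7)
The claim is purely a computational verification that $\Psi(\delta c) = \epsilon_{\rm AS}(\Psi(c))$ for every $c \in \mathscr{C}^k_\gamma(\Gamma)$; that $\Psi(c)=f_c$ actually lands in the antisymmetric subcomplex $\mathsf{E}^k_{\rm AS,a}(M,\gamma)$ was already established in Lemma \ref{lem:fc}. So the plan is to expand both sides of the desired identity
\[
f_{\delta c}(\eta,x_0,\ldots,x_{k+1}) \;=\; \epsilon_{\rm AS}(f_c)(\eta,x_0,\ldots,x_{k+1})
\]
using the definitions \eqref{eq:fc} of $f_c$, the simplicial boundary $\delta$ on $\mathscr{C}^\bullet_\gamma(\Gamma)$, and the boundary $\epsilon_{\rm AS}$ from Definition \ref{defn:gammaASch}, and then match the resulting two sums term by term.

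First I would unpack the left-hand side. Writing $(\delta c)(\eta,\gamma_1,\ldots,\gamma_{k+1}) = c(\gamma_1,\ldots,\gamma_{k+1}) + \sum_{i=1}^{k+1}(-1)^i c(\eta,\gamma_1,\ldots,\hat{\gamma}_i,\ldots,\gamma_{k+1})$ and inserting this into \eqref{eq:fc} (with group variables $\gamma_1,\ldots,\gamma_{k+1}$ and the spatial cutoff product $\chi(\eta^{-1}x_0)\chi(\gamma_1^{-1}x_1)\cdots\chi(\gamma_{k+1}^{-1}x_{k+1})$) splits $f_{\delta c}$ into an $i=0$ piece plus $k+1$ further pieces indexed by $i\ge 1$.

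Next I would handle each group of terms separately. For the $i=0$ piece, I substitute $\gamma_1=\eta h$, so that the sum over $\gamma_1$ is rewritten as a sum over $h\in\Gamma$; the cutoff $\chi(\gamma_1^{-1}x_1)$ becomes $\chi((\eta h)^{-1}x_1)$, and after collecting the remaining cutoffs one recognises exactly $\sum_h f_c(\eta h, x_1,\ldots,x_{k+1})\chi(\eta^{-1}x_0)$, which is the first term of $\epsilon_{\rm AS}(f_c)$. For each $i\ge 1$, the variable $\gamma_i$ does not appear in the skipped argument $c(\eta,\gamma_1,\ldots,\hat{\gamma}_i,\ldots,\gamma_{k+1})$, so the sum over $\gamma_i$ only hits the cutoff $\chi(\gamma_i^{-1}x_i)$; the defining property \eqref{cut-off} gives $\sum_{\gamma_i}\chi(\gamma_i^{-1}x_i)=1$, collapsing the expression to $(-1)^i f_c(\eta,x_0,\ldots,\hat{x}_i,\ldots,x_{k+1})$, which is exactly the $i$-th term in $\epsilon_{\rm AS}(f_c)$.

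There is no real obstacle: the argument is direct bookkeeping, with the only two slightly delicate points being (i) the index change $h \leftrightarrow \gamma_1$ to match the $i=0$ term with the $\sum_h$-term of $\epsilon_{\rm AS}$, and (ii) the use of the partition-of-unity identity \eqref{cut-off} to collapse the cutoff in the $\gamma_i$-variable when $i\ge 1$. Finiteness of all sums at each point is already guaranteed by the support condition built into $\mathsf{E}^k_{\rm AS}(M,\gamma)$ together with properness of the $\Gamma$-action, as observed just after Definition \ref{defn:gammaASch}, so convergence is not an issue. Summing the two matchings produces $f_{\delta c} = \epsilon_{\rm AS}(f_c)$, which is the required chain map property.
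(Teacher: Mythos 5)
Your proof is correct and follows essentially the same route as the paper's: expand $\Psi(\delta c)$ by splitting off the $i=0$ term of $\delta c$, expand $\epsilon_{\rm AS}(\Psi(c))$ into its $\sum_h$-term and its $i\geq 1$ terms, match the $i=0$ piece via the reindexing $\gamma_1=\eta h$, and collapse each $i\geq 1$ piece by summing the free cutoff $\chi(\gamma_i^{-1}x_i)$ to $1$ using \eqref{cut-off}. (As a minor side remark, the paper's displayed computation writes the collapsing sum as $\sum_{\gamma_0}$ rather than $\sum_{\gamma_i}$, which appears to be a typographical slip; your bookkeeping is the intended one.)
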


\begin{proof} 
We have to check that $\Psi(\delta c)=\epsilon_{\rm AS}\big(\Psi(c)\big)$ for all $c\in \mathscr{C}_\gamma^k(\Gamma)$.
We recall that $\Psi(\delta c)$ has the following expression, 
\begin{eqnarray*}
\Psi(\delta c)(\gamma_0, x_0, \cdots, x_{k+1})&=&\sum_{\gamma_1, \cdots, \gamma_{k+1}} \delta c(\gamma_0, \cdots, \gamma_{k+1}) \chi(\gamma_0^{-1}x_0)\cdots \chi(\gamma_{k+1}^{-1} x_{k+1})\\
&=&\sum_{i=0}^{k+1}\sum_{\gamma_1, \cdots, \gamma_{k+1}}(-1)^i c(\gamma_0, \cdots, \hat{\gamma}_i, \cdots, \gamma_{k+1})\chi(\gamma_0^{-1}x_0)\cdots \chi(\gamma_{k+1}^{-1} x_{k+1})\\
&=&\sum_{\gamma_1, \cdots, \gamma_{k+1}}c(\gamma_1, \cdots, \gamma_{k+1})\chi(\gamma_0^{-1}x_0)\cdots \chi(\gamma_{k+1}^{-1} x_{k+1})\\
&+&\sum_{i=1}^{k+1}\sum_{\gamma_1, \cdots,\hat{\gamma}_i,\cdots, \gamma_{k+1}}\sum_{\gamma_0}(-1)^i c(\gamma_0, \cdots, \hat{\gamma}_i, \cdots, \gamma_{k+1})\chi(\gamma_0^{-1}x_0)\cdots \chi(\gamma_{k+1}^{-1} x_{k+1}).
\end{eqnarray*}
With $\Psi(c)$ given as in \eqref{eq:fc}, $\epsilon_{\rm AS}\Psi (c)$ has the following expression:
\begin{align*}
\epsilon_{\rm AS}(\Psi(c))&(\eta, x_0, \cdots, x_{k+1})\\=&\sum_{h}\Psi(c)(\eta h, x_1, \cdots, x_{k+1})\chi(\eta^{-1}x_0)
+\sum_{i\geq 1}(-1)^i \Psi(c)(\eta, x_0, \cdots, \hat{x}_i, \cdots, x_{k+1})\\
=& \sum_h \sum_{\gamma_2, \cdots, \gamma_k} c(\eta h, \gamma_2, \cdots, \gamma_{k+1}) \chi(\eta^{-1}x_0)\chi(h^{-1}\eta^{-1} x_1) \chi(\gamma^{-1}_2x_2)\cdots \chi(\gamma^{-1}_{k+1}x_{k+1})\\
&+\sum_{i\geq 1}\sum_{\gamma_1, \cdots,\hat{\gamma}_i, \cdots, \gamma_{k+1}} (-1)^ic(\eta, \gamma_1, \cdots, \hat{\gamma}_i, \cdots, \gamma_{k+1})\chi(\eta^{-1}x_0) \chi(\gamma_1^{-1}x_1)\cdots\widehat{\chi(\gamma_i^{-1}x_i)}\cdots \chi(\gamma^{-1}_{k+1}x_{k+1})\\
=& \sum_{\gamma_1, \gamma_2, \cdots, \gamma_k} c(\gamma_1, \gamma_2, \cdots, \gamma_{k+1}) \chi(\eta^{-1}x_0)\chi(\gamma^{-1}_1 x_1) \chi(\gamma^{-1}_2x_2)\cdots \chi(\gamma^{-1}_{k+1}x_{k+1})\\
&+\sum_{i\geq 1}\sum_{\gamma_1, \cdots,\hat{\gamma}_i, \cdots, \gamma_{k+1}} (-1)^ic(\eta, \gamma_1, \cdots, \hat{\gamma}_i, \cdots, \gamma_{k+1})
\chi(\eta^{-1}x_0) \chi(\gamma_1^{-1}x_1)\cdots\widehat{\chi(\gamma_i^{-1}x_i)}\cdots \chi(\gamma^{-1}_{k+1}x_{k+1}).\\
\end{align*}
In the third equality, we have changed the variable $\gamma_1=\eta h$. Using equality \eqref{cut-off}, we conclude that $\Psi (\delta c)=\epsilon_{\rm AS}(\Psi(c))$, showing that
$\Psi$ is indeed a morphism of cochain complexes.
\end{proof}
Finally, to complete the diagram (\ref{diagram}), we define  $\rho: \mathsf{E}^\bullet_{\rm AS, a}(M, \gamma)\to C^\bullet_\lambda(\mathcal{A}^c_\Gamma(M))$ as follows, 
\begin{equation}
\label{equation:rho-AS}
\rho(f)(A_0,\dots,A_k):= \sum_{\nu\in Z_\gamma\backslash \Gamma} \int_{M^{k+1}} f(\nu, x_0, \dots, x_k) A_0 (x_0,x_1)
\cdots A_{k-1}(x_{k-1},x_k) A_k (x_k, \gamma x_0),
\end{equation}
for $f\in \mathsf{E}^\bullet_{\rm AS, a}(M, \gamma)$.
By the $\Gamma$-invariance of the kernels $A_i(x,y)$, $i=0, \cdots, k$, we observe that 
\[
A_0(x_0, x_1)\cdots A_{k-1}(x_{k-1}, x_k)A_k(x_k, \gamma x_0)=A_0(\nu^{-1}x_0, \nu^{-1}x_1)\cdots A_{k-1}(\nu^{-1}x_{k-1}, \nu^{-1}x_k)A_k(\nu ^{-1}x_k, \nu^{-1}\gamma \nu \nu^{-1}x_0);
\]
thus the integral  in \eqref{equation:rho-AS} is independent of the choice of $\nu$.
In the definition of $\mathsf{E}^\bullet_{\rm AS}(M, \gamma)$, we have assumed that  for $f\in \mathsf{E}^\bullet_{\rm AS}(M, \gamma)$, $p_0(\text{supp}(f))$ is compact. We can conclude that the above infinite sum over $\nu$ in the definition of $\rho(f)$ is actually finite when the propagation of each $A_i$ ($i=0, \cdots, k$) is finite. Therefore the above pairing $\rho(f)(A_0, \cdots, A_k)$ is well defined. 
\begin{lemma}
\label{lem:tau-chain} 
The map $\rho: \mathsf{E}^\bullet_{\rm AS, a}(M, \gamma)\to C^\bullet_\lambda(\mathcal{A}^c_\Gamma(M))$ is a morphism of cochain complexes.
\end{lemma}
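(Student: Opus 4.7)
The plan is to verify the chain-map identity $\rho\circ\epsilon_{\rm AS}=b\circ\rho$ by direct expansion, matching the two summands of $\epsilon_{\rm AS}$ with the two structural pieces of the Hochschild differential $b$ on cyclic cochains: the internal face maps that merge consecutive entries, and the cyclic term that sends the last argument to the front. Fix $f\in\mathsf{E}^k_{\rm AS,a}(M,\gamma)$ and a tuple $(A_0,\ldots,A_{k+1})$ of elements of $\mathcal{A}^c_\Gamma(M)$, and split
\[
\rho(\epsilon_{\rm AS}(f))(A_0,\ldots,A_{k+1})=T_1+T_2
\]
according to the two summands in the definition of $\epsilon_{\rm AS}$.

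For $T_2$, coming from $\sum_{i\geq 1}(-1)^i f(\nu,x_0,\ldots,\widehat{x}_i,\ldots,x_{k+1})$, the variable $x_i$ is absent from $f$, so integrating $x_i$ out converts the two adjacent kernels $A_{i-1}(x_{i-1},x_i)A_i(x_i,x_{i+1})$ into their convolution $(A_{i-1}A_i)(x_{i-1},x_{i+1})$ by the very definition of the product on $\mathcal{A}^c_\Gamma(M)$. After relabeling the remaining variables, $T_2$ matches the internal contributions $\sum_{j=0}^{k}(-1)^{j}\rho(f)(A_0,\ldots,A_jA_{j+1},\ldots,A_{k+1})$ to $b\rho(f)$, modulo the reindexing $i=j+1$ and the attendant sign bookkeeping.

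The nontrivial term $T_1$, coming from $\sum_h f(\nu h,x_1,\ldots,x_{k+1})\chi(\nu^{-1}x_0)$, must be shown to reproduce the cyclic contribution $(-1)^{k+1}\rho(f)(A_{k+1}A_0,A_1,\ldots,A_k)$. The strategy is to make the change of summation variable $\mu=\nu h$ and to use the cutoff identity $\sum_{\mu\in\Gamma}\chi(\mu^{-1}x)=1$ together with the $Z_\gamma$-equivariance property (1) of $f$ from Definition \ref{defn:gammaASch}, collapsing the double sum $\sum_\nu\sum_h$ into a single sum over $Z_\gamma$-coset representatives. One then isolates the $x_0$-integral $\int A_0(x_0,x_1)A_{k+1}(x_{k+1},\gamma x_0)dx_0$ and, using the $\Gamma$-invariance of $A_0$ combined with the substitution $x_0\to\gamma x_0$, converts it into a convolution $A_{k+1}A_0$ evaluated at an appropriately shifted argument; here the centrality $\gamma\in Z_\gamma$ is crucial to keep this rearrangement compatible with the coset sum. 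A final cyclic relabeling $(x_0,\ldots,x_{k+1})\mapsto(x_1,\ldots,x_{k+1},x_0)$ identifies the resulting expression with the cyclic term of $b\rho(f)$, the sign $(-1)^{k+1}$ emerging naturally from this rotation.

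The main obstacle is the precise sign and index bookkeeping in the analysis of $T_1$: one must simultaneously track the twist by $\gamma$ in the rightmost kernel, the cyclic rotation of integration variables, the $Z_\gamma$-coset decomposition of the group sums, and the equivariance properties (1)--(2) of $f$. A useful consistency check is that cyclicity of the output in $C^{k+1}_\lambda(\mathcal{A}^c_\Gamma(M))$ follows automatically from the assumed antisymmetry of $E_f$, so that the final result indeed lands in the cyclic complex rather than merely in the ambient Hochschild complex.
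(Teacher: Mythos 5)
Your proposal matches the paper's own proof in essence: the paper likewise expands $\rho(\epsilon_{\rm AS}(f))$ into the two summands of $\epsilon_{\rm AS}$, identifies the internal faces of $b$ with the convolutions $A_{i-1}\ast A_i$ produced by integrating out the suppressed variable, and recovers the cyclic term by reparametrizing the $h$-sum and invoking the cut-off identity $\sum_h\chi(h^{-1}\cdot)=1$ together with the $\Gamma$-invariance of the kernels (and, implicitly, the $Z_\gamma$-equivariance of $f$ and the centrality of $\gamma$, exactly as you note). The sign and index bookkeeping that you flag as the main obstacle is also left largely implicit in the paper's computation, so this is not a gap relative to the source.
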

\begin{proof} Again, we have to show that $\rho(\epsilon_{\rm AS}(f))=\delta(\rho(f))$ for all $f\in \mathsf{E}^\bullet_{\rm AS, a}(M, \gamma)$. We compute $\rho(\epsilon_{\rm AS}(f))$ as follows,
\begin{eqnarray*}
\rho(\epsilon_{\rm AS}(f))(A_0, \cdots, A_{k+1})&=& \sum_{\nu\in Z_\gamma \backslash \Gamma}\int_{M^{k+2}} \epsilon_{\rm AS}(f)(\nu, x_0, \cdots, x_{k+1})A_0(x_0, x_1)\cdots A_{k+1}(x_{k+1}, \gamma x_0).
\end{eqnarray*}
Using the definition of $\epsilon_{\rm AS}(f)$, we continue computing $\rho(\epsilon_{\rm AS}(f))(A_0, \cdots, A_{k+1})$,
\begin{align*}
 \sum_{\nu\in Z_\gamma \backslash \Gamma}\int_{M^{k+2}} &\epsilon_{\rm AS}(f)(\nu, x_0, \cdots, x_{k+1})A_0(x_0, x_1)\cdots A_{k+1}(x_{k+1}, \gamma x_0)\\
=& \sum_{\nu\in Z_\gamma \backslash \Gamma}\int_{M^{k+2}}  \sum_{h}f(\nu h, x_1, \cdots , x_{k+1})\chi (\nu^{-1}x_0)A_0(x_0, x_1)\cdots A_{k+1}(x_{k+1}, \gamma x_0)\\
&+\sum_{\nu\in Z_\gamma\backslash \Gamma} \int_{M^{k+2}} \sum_{i\geq 1} (-1)^i f(\nu, x_0, \cdots, \hat{x}_{i}, \cdots, x_{k+1}) A_0(x_0,x_1)\cdots A_{k+1}(x_{k+1}, \gamma x_0)\\
=& \sum_{\nu\in Z_\gamma \backslash \Gamma}\int_{M^{k+2}}  \sum_{h}f(\nu , x_1, \cdots , x_{k+1})\chi (h^{-1}\nu^{-1}x_0)A_0(x_0, x_1)\cdots A_{k+1}(x_{k+1}, \gamma x_0)\\
&+\sum_{\nu\in Z_\gamma\backslash \Gamma} \int_{M^{k+1}} \sum_{i\geq 1} (-1)^i f(\nu, x_0, \cdots, x_i, \cdots, x_{k}) A_0(x_0,x_1)\\
&\qquad \cdots (A_i\ast A_{i+1})(x_i, x_{i+1})\cdots  A_{k+1}(x_{k}, \gamma x_0)\\
=& \sum_{\nu\in Z_\gamma \backslash \Gamma}\int_{M^{k+1}}  f(\nu, x_0, \cdots , x_{k})A_1(x_0, x_1)\cdots (A_{k+1}\ast A_0)(x_{k}, \gamma x_0)\\
&+\sum_{\nu\in Z_\gamma\backslash \Gamma} \int_{M^{k+1}} \sum_{i\geq 1} (-1)^i f(\nu, x_0, \cdots, \hat{x}_{i}, \cdots, x_{k+1}) A_0(x_0,x_1)\\
&\qquad\cdots (A_i\ast A_{i+1})(x_i, x_{i+1})\cdots A_{k+1}(x_{k}, \gamma x_0)\\
=&\delta(\rho(f))(A_0, \cdots, A_{k+1}),
\end{align*}
where in the third equality we have used the property that $\chi$ is a cut-off function. 
\end{proof}

In summary, using the results in Lemma \ref{lem:d2}, \ref{lem:IPH}, \ref{lem:fc}, \ref{lem:psi}, and \ref{lem:tau-chain}, we have completed the proof of Theorem \ref{thm:comm-diagram}.

\subsection{Cohomology computations}
We finally turn to the computations of the cohomologies of the complexes discussed above.
Consider the pair groupoid $M\times M\rightrightarrows M$ and observe that its $q$-th nerve space can be identified with $M^{\times (q+1)}$. It follows that $\{C^\infty(M^{\times (q+1)})\}_{q\geq 0}$ carries the structure of a cosimplicial complex. Let $C^{\infty}_{\gamma, a}(M^{\times (q+1)})$ be the subspace of $C^\infty(M^{\times (q+1)})$ of antisymmetric smooth functions on $M^{\times (q+1)}$ satisfying
\[
f(x_0, \ldots, x_q)=f(\gamma(x_0), \ldots, x_q). 
\]
We check that for all $f\in C^\infty_{\gamma, a}(M^{\times (q+1)})$, antisymmetry implies that
\[
f(x_0, \ldots, \gamma(x_i), \ldots, x_q)=
f(x_0, \ldots, x_i, \cdots, x_q)
\]
for all $1\leq i\leq q$, and therefore also 
\begin{equation}\label{eq:gamma-inv}
f(\gamma(x_0), \gamma(x_1), \ldots, \gamma(x_i), x_{i+1}, \ldots, x_q)=
f(x_0, x_1, \cdots, x_i, \cdots, x_q). 
\end{equation}
Using the above property (\ref{eq:gamma-inv}) of functions in $C^\infty_{\gamma, a}(M^{\times (q+1)})$, we check that $\{ C^\infty_{\gamma, a}(M^{\times (q+1)}) \}_{q\geq 0}$ indeed is a cosimplicial subcomplex of $\{C^\infty(M^{\times (q+1)})\}_{q\geq 0}$. 

Fix $m\in M$. Define a homotopy operator $H^q: C^\infty(M^{\times (q+1)})\to C^\infty(M^{\times q}) $ by 
\[
H^q(f)(x_0, \ldots, x_{q-1})=f(m, x_0, \ldots, x_{q-1}). 
\]
It is straightforward to check that $H^\bullet $ defines a deformation retract of  $\{C^\infty(M^{\times (q+1)})\}_{q\geq 0}$ to the space of constant functions on $M$. Furthermore, by (\ref{eq:gamma-inv}) we directly check that $H^q$ restricts to a deformation retract $H_{\gamma,a}^q: C^\infty_{\gamma, a}(M^{\times (q+1)})\to C^\infty_{\gamma, a}(M^{\times q})$. Therefore, the cochain complex $C^\bullet_{\gamma, a}(M^{\times (\bullet+1)})$ is acyclic. 

By restricting the action of $\Gamma$, the group $Z_\gamma$ acts on $M$ and accordingly acts diagonally on $M^{\times (q+1)}$. So the space $C^\infty(M^{\times (q+1)})$ is a $Z_\gamma$-module. Since $Z_\gamma$ commutes with $\gamma$, $C^\infty_{\gamma, a}(M^{\times (q+1)})$, as a subspace of $C^\infty(M^{\times (q+1)})$, is a $Z_\gamma$-submodule.  Furthermore, $Z_\gamma$ acts on the groupoid $M\times M\rightrightarrows M$ by groupoid automorphisms and makes $\{C^\infty(M^{\times (q+1)})\}_{q\geq 0}$ in a $Z_\gamma$-equivariant cosimplicial complex. 

Fix $q\geq 0$. Consider the group cochain complexes $C^\bullet(Z_\gamma, C^\infty(M^{\times (q+1)}))$ and $C^\bullet (Z_\gamma, C^\infty_{\gamma, a}(M^{\times (q+1)}))$. Since the $\Gamma$ action on $M$  is proper, the restricted $Z_\gamma$ action on $M^{\times (q+1)}$ is also proper. Using a cut-off function $c^q$ on the groupoid $Z_\gamma \ltimes M^{\times (q+1)}\rightrightarrows M^{\times (q+1)}$, in \cite[Proposition 1] {crainic} a deformation retract is constructed from $C^\bullet (Z_\gamma, C^\infty(M^{\times (q+1)}))$ to $ C^\infty(M^{\times (q+1)})^{Z_\gamma}$ as follows: define $H_\gamma: C^p(Z_\gamma,  C^\infty(M^{\times (q+1)}))\to C^{p-1}(Z_\gamma,  C^\infty(M^{\times (q+1)}))$ by
\[
\begin{split}
H_\gamma(\varphi)(\gamma_0, \cdots, \gamma_{p-1})(x_0, \cdots, x_{q}):=\sum_{\alpha\in Z_\gamma}& \varphi(\alpha^{-1}, \gamma_0, \cdots, \gamma_{p-1})(\alpha^{-1}(x_0), \alpha^{-1}(x_1), \cdots \alpha^{-1}(x_q))\\
&\cdot c^q(\alpha^{-1}(x_0), \cdots, \alpha^{-1}(x_q))
\end{split}
\]
Using the commuting property between $Z_\gamma$ and $\gamma$, we can directly check that $H_\gamma$ restricts to a deformation retract from $ C^\infty_{\gamma, a}(M^{\times (q+1)})$ to $ C^\infty_{\gamma, a}(M^{\times (q+1)})^{Z_\gamma}$.

\begin{proposition}\label{prop:psi-inv-quasi-iso}The cochain map $\Psi_{\rm inv}: \mathscr{C}^\bullet_\gamma(\Gamma)\to {\mathsf{C}^\bullet_{\rm AS, inv}(M,\gamma)} $ defined in \eqref{eq:psiinv} is a quasi-isomorphism. 
\end{proposition}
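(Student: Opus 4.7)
The plan is a bicomplex/spectral-sequence argument that packages the two preparatory homotopies (the Alexander--Spanier contraction $H^\bullet_{\gamma,a}$ onto constants and the retraction $H_\gamma$ onto $Z_\gamma$-invariants) into a single comparison. I form the double complex
\[
K^{p,q}:=C^p\bigl(Z_\gamma,\,C^\infty_{\gamma,a}(M^{\times(q+1)})\bigr),
\]
with horizontal differential the cosimplicial Alexander--Spanier coboundary on the nerve of the pair groupoid $M\times M\rightrightarrows M$, and vertical differential the group cohomology coboundary of $Z_\gamma$ with coefficients in the module $C^\infty_{\gamma,a}(M^{\times(q+1)})$. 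The two differentials commute because $Z_\gamma$ acts by automorphisms of the pair groupoid and commutes with $\gamma$.

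Filtering $K^{\bullet,\bullet}$ by columns and taking vertical cohomology first, the deformation retract $H_\gamma$ coming from \cite{crainic} (which was observed to restrict to $C^\infty_{\gamma,a}$) reduces each column to its $Z_\gamma$-invariants concentrated in bidegree $(0,q)$. By definition these invariants are precisely $C^q_{\rm AS, inv}(M,\gamma)$, so the associated spectral sequence collapses at $E_2$ and identifies the total cohomology of $K^{\bullet,\bullet}$ with $H^\bullet(C^\bullet_{\rm AS, inv}(M,\gamma))$. Filtering instead by rows and taking horizontal cohomology first, the contracting retract $H^\bullet_{\gamma,a}$ to constants shows that each row is quasi-isomorphic to $C^p(Z_\gamma,\mathbb{C})$ placed in column $q=0$, so this second spectral sequence also collapses and identifies the total cohomology with $H^\bullet(Z_\gamma,\mathbb{C})$; by \cite{john-sheagan} the latter equals the cohomology of Lott's complex $\mathscr{C}^\bullet_\gamma(\Gamma)$.

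It remains to verify that the abstract isomorphism $H\mathscr{C}^\bullet_\gamma\cong H^\bullet(C^\bullet_{\rm AS, inv}(M,\gamma))$ produced by these two spectral sequences is exactly the one induced by $\Psi_{\rm inv}$. The explicit partition-of-unity formula \eqref{eq:psiinv} shows that $\Psi_{\rm inv}(c)$ is a $Z_\gamma$-invariant chain in $K^{0,q}$, so it represents the column-first edge morphism applied to $c$. On the other hand, Lott's complex lifts into the zeroth column $K^{\bullet,0}=C^\bullet(Z_\gamma,\mathbb{C})$ via the chain-level John--Sheagan map, which is precisely the row-first edge morphism. The bicomplex then forces both maps to realize the same isomorphism on cohomology, which is the desired statement. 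The hardest step is this final chain-level identification: one must track a representative cocycle $c\in\mathscr{C}^k_\gamma(\Gamma)$ around the bicomplex and match, signs and cut-off factors included, the zig-zag produced by the two homotopies $H_\gamma$ and $H^\bullet_{\gamma,a}$ with the explicit formula for $\Psi_{\rm inv}$; the spectral sequence collapses and the abstract isomorphism are then a formal consequence of the preparatory retracts already established.
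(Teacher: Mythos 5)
Your proposal follows essentially the same route as the paper: you form the same bicomplex $D^{p,q}=C^p\bigl(Z_\gamma,\,C^\infty_{\gamma,a}(M^{\times(q+1)})\bigr)$, use the same two acyclicity inputs (the basepoint cone $H^q$ in the $q$-direction and Crainic's $Z_\gamma$-retraction $H_\gamma$ in the $p$-direction) to compute the total cohomology two ways, and then need to identify the resulting abstract isomorphism with the one induced by $\Psi_{\rm inv}$. The paper handles that last step by invoking the ``zig-zag splitting trick'' of \cite{PP1}; your edge-morphism chase through the bicomplex is the same verification in different words, and you correctly flag it as the remaining nontrivial step.

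Two small cautions. First, the sentence ``the bicomplex then forces both maps to realize the same isomorphism'' overstates what is formal: there is no automatic reason two edge-morphism identifications of total cohomology with $H\mathscr{C}^\bullet_\gamma$ and with $H^\bullet_{\rm AS,inv}$, respectively, should compose to $[\Psi_{\rm inv}]$ without an explicit cycle chase through the bicomplex using the two contracting homotopies — which you then do acknowledge. Second, the column $K^{\bullet,0}$ is $C^\bullet\bigl(Z_\gamma,\,C^\infty_{\gamma,a}(M)\bigr)$, not $C^\bullet(Z_\gamma,\mathbb{C})$; Lott's complex enters via the inclusion of constants into $C^\infty_{\gamma,a}(M)$, and $C^\bullet(Z_\gamma,\mathbb{C})$ is what appears only on the $E^1$-page after using $H^q$.
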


\begin{proof}
We consider the bicomplex with entries $D^{p,q}_\gamma:=C^p(Z_\gamma, C^\infty_{\gamma, a}(M^{\times (q+1)}))$ with differentials given by the group cohomology differential for the $Z_\gamma$-module $C^\infty_{\gamma, a}(M^{\times (q+1)})$ and the groupoid differential for the pair groupoid $M\times M\rightrightarrows M$.  

As explained above, the complex $ C^\infty_{\gamma, a}(M^{\times (\bullet+1)})$ is acyclic. Spectral sequence arguments show that the total cohomology of the  double complex $D^{\bullet,\bullet}$ is equal to the total cohomology of the $E^1$-page associated to the $q$-direction, which is computed as 
\[
E^1_{p,q}=\left\{\begin{array}{ll}0,&q\geq 1\\ C^{p}(Z_\gamma),&q=0\end{array}\right.
\]
So we conclude that the total cohomology of the double complex $D^{\bullet, \bullet}$ is given by $H^\bullet(Z_\gamma)$. 

On the other hand, for every $q\geq 0$, the cohomology of $ C^\bullet(Z_\gamma, C^\infty_{\gamma, a}(M^{\times (q+1)}))$ is concentrated in degree 0 with cohomology equal to $  C^\infty_{\gamma, a}(M^{\times (q+1)})^{Z_{\gamma}}$. A similar spectral sequence argument as above shows that the cohomology of double complex $D^{\bullet, \bullet}$ is computed by $ C^\infty_{\gamma, a}(M^{\times (\bullet+1)})^{Z_\gamma}$, which is exactly the invariant Alexander-Spanier complex $C^\bullet_{\rm AS,inv}(M, \gamma)$. 

Under this bicomplex, the map $\Psi_{\rm inv}$ can be identified with the map $\Phi^\chi_M$ in \cite[Proposition 2.5, and equation (2.4)]{PP1} constructed using the ``zig-zag" splitting trick. So we can conclude that the map $\Psi_{\rm inv}$ is a quasi-isomorphism. 
\end{proof}

We now define the cochain map $\Lambda^\gamma: C^\infty_{\gamma,a}(M^{\times (q+1)})\to \Omega^q(M^\gamma)$ by
\[
\Lambda^\gamma(f)(x)(v_1, \cdots, v_q):=\frac{1}{k!}\sum_{\tau\in S_k}\sgn(\tau) \frac{\partial}{\partial \epsilon_1}\cdots \frac{\partial}{\partial \epsilon_k}f(x, \exp_x(\epsilon_1 v_{\tau(1)}), \cdots, \exp_x(\epsilon_q v_{\tau(q)})|_{\epsilon_1=\cdots =\epsilon_k=0}
\]
for $x\in M^\gamma, v_1, \cdots, v_q\in T_x M^\gamma$ and where $\exp_x:T_xM\to M$ denotes the exponential map defined by a $\Gamma$-invariant riemannian metric. 

It is straightforward to check that the map $\Lambda^\gamma$ is a cochain map and is $Z_\gamma$-equivariant. Therefore $\Lambda^\gamma$ induces a cochain map $\Lambda^\gamma:  C^\infty_{\gamma, a}(M^{\times (q+1)})^{Z_\gamma}\to \Omega(M^\gamma)^{Z_\gamma}$ and therefore a map $\Lambda^\gamma: C^\bullet_{\rm AS, inv}(M, \gamma)\to \Omega(M^\gamma)^{Z_\gamma}$.

Composing $\Lambda^\gamma$ with $\Psi_{\rm inv}$, we obtain the following chain map
\[
\Psi^\gamma:=\Lambda^\gamma\circ \Psi_{\rm inv}: \mathscr{C}^\bullet_\gamma(\Gamma)\to \Omega^\bullet(M^\gamma)^{Z_\gamma}
\]
by
\begin{equation}\label{eq:lambdagammapsi}
\Lambda^\gamma (\Psi_{\rm inv})(c)=\sum_{\gamma_0, \cdots,\gamma_k}c(\gamma_0, \cdots, \gamma_k)\chi(\gamma_0^{-1}x)d\chi(\gamma_1^{-1} x)\cdots d\chi(\gamma_k^{-1}x)|_{M^\gamma}.
\end{equation}

\begin{remark}
\label{rk-te}
Because the action of $\Gamma$ on $M$ is proper, $M^\gamma$ can only be non-empty when $\gamma$ is a torsion element. This is the reason why we only consider
the first component in the decomposition \eqref{eq-dec-b} of the cyclic cohomology of $\CC\Gamma$.
\end{remark}

\begin{remark}
The above argument recovers, in particular, the result explained in detail in [Shaegan, Lemma 3.4], namely that Lott's
complex has the same cohomology of $Z_\gamma$. It is explained in \cite[Cor. 7.9]{PSZ} that since $\gamma$ has finite order, the Leray-Serre spectral sequence for group cohomology associated with the short exact sequence
\[
1\to \gamma^\mathbb{Z} \to Z_\gamma\to N_\gamma\to 1
\]
degenerates at the $E_2$ page and we have the following isomorphism of group cohomology
\[
H^\bullet(N_
\gamma)\cong H^\bullet(Z_\gamma). 
\]
\end{remark}

\section{Finite propagation and monomial  approximation}\label{section:finite propagation}
We now consider, in the setting of the previous section, --$M$ being a manifold equipped with a proper, cocompact action of a discrete group $\Gamma$--, a Dirac operator $D$ on $M$ that is invariant under $\Gamma$. We furthermore assume the dimension of $M$ is even so that the spinor bundle splits into even and odd degree, $S=S^+\oplus S^-$  and denote by $D^\pm$ the restriction of $D$ to $S^\pm$. In this setting, we can construct the index class $\operatorname{Ind_c} (D)\in K_0 (\mathcal{A}_\Gamma^c (M))$ as in \cite{CM}:  Let $Q$ be a $\Gamma$-equivariant parametrix
of $\Gamma$-compact support with remainders $S_\pm$. Consider the $2\times 2$ matrix
\begin{equation}\label{CS-projector}
P:= \left(\begin{array}{cc} S_{+}^2 & S_{+}  (I+S_{+}) Q\\ S_{-} D^+ &
I-S_{-}^2 \end{array} \right).
\end{equation}
This produces a class 
\begin{equation}\label{CS-class}
\operatorname{Ind_c} (D):= [P] - [e_1]\in K_0 (\mathcal{A}_\Gamma^c (M))\;\;\text{with}\;\;e_1:=\left( \begin{array}{cc} 0 & 0 \\ 0&1
\end{array} \right).
\end{equation}
Using the Chern character to cyclic homology, we can now consider the pairing of this index class with a cocycle $c\in\mathscr{C}^{2q}_\gamma(\Gamma)$ defined as 
\[
\langle{\rm Ind}_c(D),c\rangle:=\langle{\rm Ch}_{2q}({\rm Ind}_c(D)),(\rho\circ\Psi)(c)\rangle. 
\]
By the discussion in the previous section, we obtain the following expression for the cyclic 
cocycle $\rho \circ \Psi (c) \in Z_\lambda (\mathcal{A}^c_\Gamma (M))$:
\begin{multline}
\rho \circ \Psi (c)(A_0,\ldots,A_k)\\
= \sum_{\gamma_0\in Z_\gamma\backslash \Gamma} \int_{M^{k+1}} \Psi(c)(\gamma_0, y_0, \dots, y_k) A_0 (y_0,y_1)
\cdots A_{k-1}(y_{k-1},y_k) A_k (y_k, \gamma y_0)  \\
\end{multline}
Using the cutoff function $ \chi^\Gamma_\gamma$ for the action by $Z_\gamma$ on $\Gamma$ by left multiplication and (\ref{eq:fc}),  we rewrite 
\begin{multline}\label{eq:rhopsi}
\rho \circ \Psi(c)(A_0,\ldots,A_k)=\sum_{\gamma_0\in  \Gamma} \int_{M^{k+1}}  \chi^\Gamma_\gamma(\gamma_0) \cdot \Psi(c)(\gamma_0, y_0, \dots, y_k) A_0 (y_0,y_1)
\cdots A_{k-1}(y_{k-1},y_k) A_k (y_k, \gamma y_0)\\
=\int_{M^{k+1}} \left(\sum_{\gamma_0, \cdots, \gamma_{k} \in \Gamma } c(\gamma_0, \cdots, \gamma_{k})\cdot \chi^\Gamma_\gamma(\gamma_0) \cdot \chi(\gamma_0^{-1}y_0)\cdots \chi\left( \gamma_{k}^{-1}y_k\right)\right) \cdot  A_0(  y_0,  y_1)
\ldots A_k( y_k,  \gamma  y_0).
\end{multline}
In this section we discuss in detail this expression when the $A_j$ are the entries of the index class
$\Ind_c (D)$.

\bigskip
We briefly explain the strategy of our study in four steps. \\

\noindent{\bf Step I}: By a suitable choice of a rapid decay function $\psi_t$ whose Fourier transform has specific support properties, we express the index element $\operatorname{Ind}_c(D)$ through a finite-propagation idempotent $P_{\operatorname{fp}}(t)$. 
Recall that
$\operatorname{Ind}_c(D)$  is an element  
in $K_0(\mathcal{A}_\Gamma^c (M))$ and that it therefore pairs naturally with $\rho\circ \Psi(c)$ for $c\in \mathscr{C}^\bullet _\gamma(\Gamma)$ through \eqref{eq:rhopsi}.

\noindent{\bf Step II}: The pairing $\rho\circ \Psi(c) \big(\operatorname{Ind}_c(D)\big)$ is independent of the time variable $t$. We can compute the pairing by 
\[
\lim_{t\to 0} \rho\circ \Psi(R_{\operatorname{fp}}(t), \cdots, R_{\operatorname{fp}}(t)),\qquad R_{\operatorname{fp}}(t)=P_{\operatorname{fp}}(t)-\begin{pmatrix}0&0\\0&1\end{pmatrix}.
\]
By the support property of the Fourier transform of $\psi_t$, the propagation of $R_{\operatorname{fp}}(t)$ is uniformly finite, independent of $t$. This property allows to truncate $\rho\circ \Psi(c)$ in to a finite sum of terms like
\[
c(\gamma_0, \cdots, \gamma_k)\chi_\gamma^\Gamma(\gamma_0)\chi(\gamma_0^{-1}y_0)\cdots \chi(\gamma_k^{-1}y_k)\,;
\] 
the pairing $\rho\circ \Psi (c) (R_{\operatorname{fp}}(t), \cdots, R_{\operatorname{fp}}(t))$  is therefore computed by a finite sum of integrals of the form
\[
\int_{M^{k+1}}  c(\gamma_0, \cdots, \gamma_{k})\cdot \chi^\Gamma_\gamma(\gamma_0) \cdot \chi(\gamma_0^{-1}y_0)\cdots \chi\left( \gamma_{k}^{-1}y_k\right)\cdot  R_{\operatorname{fp}}(t)(  y_0,  y_1)
\ldots R_{\operatorname{fp}}(t)( y_k,  \gamma  y_0).
\]

\noindent{\bf Step III}: Let $R(\sqrt{t}D)$ be the Connes-Moscovici idempotent defined by the heat kernel of $D$, $e^{-tD^2}$. The entries of $R(\sqrt{t}D)$ do not have finite propagation. However, the integral   
\[
\int_{M^{k+1}}  c(\gamma_0, \cdots, \gamma_{k})\cdot \chi^\Gamma_\gamma(\gamma_0) \cdot \chi(\gamma_0^{-1}y_0)\cdots \chi\left( \gamma_{k}^{-1}y_k\right)\cdot  R(\sqrt{t}D)(  y_0,  y_1)
\ldots R(\sqrt{t}D)( y_k,  \gamma  y_0)
\]
is well defined. We prove that the difference of the smoothing kernels $R_{\operatorname{fp}}(t))-R(\sqrt{t}D)$ decays faster than any $t^k$, $\forall k\in \mathbb{N}$ as $t\to 0$. So the integral 
\[
\lim_{t\to 0} \int_{M^{k+1}}  c(\gamma_0, \cdots, \gamma_{k})\cdot \chi^\Gamma_\gamma(\gamma_0) \cdot \chi(\gamma_0^{-1}y_0)\cdots \chi \left( \gamma_{k}^{-1}y_k\right) \cdot  R_{\operatorname{fp}}(t)(  y_0,  y_1)
\ldots R_{\operatorname{fp}} (t)( y_k,  \gamma  y_0).
\]
can computed by
\[
\lim_{t\to 0} \int_{M^{k+1}}  c(\gamma_0, \cdots, \gamma_{k})\cdot \chi^\Gamma_\gamma(\gamma_0) \cdot \chi(\gamma_0^{-1}y_0)\cdots \chi\left( \gamma_{k}^{-1}y_k\right) \cdot  R(\sqrt{t}D)(  y_0,  y_1)
\ldots R(\sqrt{t}D)( y_k,  \gamma  y_0)
\]

\noindent{\bf Step IV}: We use the Volterra pseudodifferential calculus and the Volterra Getzler rescaling mechanism, see \cite{Ponge-Wang-2}, \cite{CM}, in order to compute the above short time limit
\[
\lim_{t\to 0} \int_{M^{k+1}}  c(\gamma_0, \cdots, \gamma_{k})\cdot \chi^\Gamma_\gamma(\gamma_0) \cdot \chi(\gamma_0^{-1}y_0)\cdots \chi \left( \gamma_{k}^{-1}y_k\right) \cdot  R(\sqrt{t}D)(  y_0,  y_1)
\ldots R(\sqrt{t}D)( y_k,  \gamma  y_0).
\]

\medskip
\noindent
We now pass to a detailed explanation of these 4 steps.

\subsection{A finite propagation index class}
Fix $0<\epsilon <\frac{1}{2}$. Choose a compactly supported even continuous function  $\theta$ on $\mathbb{R}$, which is equal to 1 in a neighborhood of $0$.  Set 
\[
\psi(u):=e^{-\frac{u^2}{2}}.
\]
Let $\hat{\psi}$ be the Fourier transform of $\psi$. For $t>0$, define 
\[
a_t:=\frac{1}{2\pi}\int_\mathbb{R} \theta(t^\epsilon s) \hat{\psi}(s){\rm d}s.
\]
We observe that $\theta$ is compactly supported and $\psi(u)$ (and therefore $\hat{\psi}$) is a Schwartz function. Furthermore, 
\[
\theta(t^\epsilon s)\to \theta(0)=1,\ t\to 0. 
\]
Hence, it follows from the dominated convergence theorem that 
\[
a_t\to \frac{1}{2\pi}\int_\mathbb{R}  \hat{\psi}(s){\rm d}s=\psi(0)=1,\ t\to 0. 
\]
So $a_t>0$ when $t$ is sufficiently small. Now define
\[
\psi_t(u):=\frac{a_t^{-1}}{2\pi}\int_\mathbb{R}\theta(t^\epsilon s)\hat{\psi}(s)e^{isu}{\rm d}s.
\]

Throughout the paper, by $O(t^\infty)$, we mean a function $a(t)$ that decays to 0 faster than any  $t^n$ for all $n\in \mathbb{N}$. 
\begin{lemma}\label{lem:psi-t-estimate} In every Schwartz norm, 
  \[
  \psi_t-\psi=O(t^\infty).
  \]
\end{lemma}

\begin{proof}

Put
\[
        \widetilde\psi_t(u)
        :=
        \frac1{2\pi}
        \int_{\mathbb R}\theta(t^\epsilon s)\widehat\psi(s)e^{isu}\,ds .
\]
We shall first prove that
\begin{equation}\label{eq:psi-app}
        \widetilde\psi_t-\psi=O(t^\infty)
\end{equation}
in every Schwartz seminorm.

Since $\theta=1$ in a neighbourhood of $0$, there exists $c>0$ such that
$\theta(s)=1$ for $|s|\leq c$. Hence
\[
        1-\theta(t^\epsilon s)=0
        \qquad\text{if } |s|\leq c t^{-\epsilon}.
\]
Therefore
\[
\begin{aligned}
        \widetilde\psi_t(u)-\psi(u)
        &=
        -\frac1{2\pi}
        \int_{\mathbb R}
        \bigl(1-\theta(t^\epsilon s)\bigr)
        \widehat\psi(s)e^{isu}\,ds .
\end{aligned}
\]
Let
\[
        r_t(u):=\widetilde\psi_t(u)-\psi(u).
\]
We show that for every pair of nonnegative integers \(m,\ell\),
\[
        \sup_{u\in\mathbb R}
        |u^m\partial_u^\ell r_t(u)|
        =
        O(t^\infty).
\]
Differentiating under the integral gives
\[
        \partial_u^\ell r_t(u)
        =
        -\frac1{2\pi}
        \int_{\mathbb R}
        (is)^\ell
        \bigl(1-\theta(t^\epsilon s)\bigr)
        \widehat\psi(s)e^{isu}\,ds .
\]
Multiplying by \(u^m\) and integrating by parts \(m\) times in the
variable \(s\), using \(u^m e^{isu}=i^{-m}\partial_s^m e^{isu}\), yields
\[
\begin{aligned}
        u^m\partial_u^\ell r_t(u)
        =
        C_{m,\ell}
        \int_{\mathbb R}
        \partial_s^m
        \left[
            s^\ell
            \bigl(1-\theta(t^\epsilon s)\bigr)
            \widehat\psi(s)
        \right]
        e^{isu}\,ds ,
\end{aligned}
\]
for a constant \(C_{m,\ell}\). Hence
\[
        \sup_{u\in\mathbb R}
        |u^m\partial_u^\ell r_t(u)|
        \leq
        C_{m,\ell}
        \left\|
        \partial_s^m
        \left[
            s^\ell
            \bigl(1-\theta(t^\epsilon s)\bigr)
            \widehat\psi(s)
        \right]
        \right\|_{L^1_s}.
\]
We claim that the right-hand side is \(O(t^\infty)\). Indeed, after
expanding the derivative on the product $ s^\ell
            \bigl(1-\theta(t^\epsilon s)\bigr)
            \widehat\psi(s)
$, we get a finite linear combination of
expressions of the form
\[
        t^{j\epsilon}
        \theta^{(j)}(t^\epsilon s)
        \partial_s^{m-j}\bigl(s^\ell\widehat\psi(s)\bigr),
        \qquad j\geq 1,
\]
or of the form
\[
        \bigl(1-\theta(t^\epsilon s)\bigr)
        \partial_s^m\bigl(s^\ell\widehat\psi(s)\bigr),
\]
where $\theta^{(j)}$ is the $j$th derivative of $\theta$.

We observe that both the function
\(\theta^{(j)}(t^\epsilon s)\) (\(j\geq 1\)) and $1-\theta(t^\epsilon s)$ are supported where
\[
        |s|\geq c t^{-\epsilon}.
\]
Since \(\widehat\psi\) is a Schwartz function, every derivative
\(\partial_s^a(s^\ell\widehat\psi(s))\) decays faster than any power of
\(|s|\). Therefore, for every \(N\),
\[
        \left\|
        \partial_s^m
        \left[
            s^\ell
            \bigl(1-\theta(t^\epsilon s)\bigr)
            \widehat\psi(s)
        \right]
        \right\|_{L^1_s}
        =
        O(t^N).
\]
As \(N\) is arbitrary, this proves
\[
        \widetilde\psi_t-\psi=O(t^\infty)
\]
in every Schwartz seminorm.

We look at the normalizing factor \(a_t^{-1}\). Since
\[
        a_t-\psi(0)
        =
        \frac1{2\pi}
        \int_{\mathbb R}
        \bigl(\theta(t^\epsilon s)-1\bigr)
        \widehat\psi(s)\,ds ,
\]
the above tail estimate (\ref{eq:psi-app}) with \(m=\ell=0\) gives
\[
        a_t-1=O(t^\infty).
\]
In particular, for \(t>0\) sufficiently small, \(a_t\) is bounded away
from zero, and hence
\[
        a_t^{-1}-1=O(t^\infty).
\]
Finally,
\[
        \psi_t-\psi
        =
        a_t^{-1}\widetilde\psi_t-\psi
        =
        a_t^{-1}(\widetilde\psi_t-\psi)
        +
        (a_t^{-1}-1)\psi .
\]
The first term is \(O(t^\infty)\) in every Schwartz seminorm by the
estimate already proved, while the second term is \(O(t^\infty)\) because
\(\psi\in\mathcal S(\mathbb R)\) is fixed and
\(a_t^{-1}-1=O(t^\infty)\). We conclude that 
\[
        \psi_t-\psi=O(t^\infty)
\]
in every Schwartz seminorm.
\end{proof}

Notice that by the definition $\psi_t$, for sufficiently small $t$, $\psi_t(0)=1$. 
\begin{definition}\label{defn:st_bt_qt}Define
\[
  s_t(u):=\psi_t(u)^2,
  \qquad
  b_t(u):=\frac{1-\psi_t(u)}{u^2},
  \qquad
  q_t(u):=\frac{1-s_t(u)}{u^2}=\bigl(1+\psi_t(u)\bigr)b_t(u).
\]
\end{definition}

For any tempered distribution $f$, we denote by 
\[
\widehat   f(u)=\frac1{2\pi}\langle f(s),e^{isu}\rangle .
\]
its Fourier transform. 
\begin{lemma}\label{lem:Paley-Wiener}
A tempered distribution
\(\widehat f\) is supported in \([-R,R]\) if and only if its inverse Fourier
transform is the restriction to the real line of an entire function satisfying an
estimate of the form
\[
  |f(z)|\le C(1+|z|)^N e^{R|\operatorname{Im}z|},
  \qquad z\in\mathbb C.
\]
A function $f$ is said to be of exponential type at most $R$ if $f$ satisfies the above inequality. 
\begin{proof}
This is Paley--Wiener--Schwartz theorem. See \cite{Rudin}
\end{proof}
\end{lemma}

\begin{lemma}\label{lem:fp-D-calculus}
If
$\operatorname{supp}\theta\subset[-c,c]$,  then
\[
        \operatorname{supp}\widehat{\psi_t}\subset[-R_t,R_t],
        \qquad
        \operatorname{supp}\widehat{s_t}\subset[-2R_t,2R_t],
\]
and, as tempered distributions,
\[
        \operatorname{supp}\widehat{b_t}\subset[-R_t,R_t],
        \qquad
        \operatorname{supp}\widehat{q_t}\subset[-2R_t,2R_t],
\]
where $R_t:=ct^{-\epsilon}$. 
\end{lemma}
\begin{proof}
Because $\theta$ has compact support, we can find constant $c$ so that 
\[
\operatorname{supp}\theta\subset[-c,c].
\] 
By the definition of $\psi_t$,
\[
        \widehat{\psi_t}(s)=a_t^{-1}\theta(t^\epsilon s)\widehat\psi(s).
\]
Accordingly, we get
\[
        \operatorname{supp}\widehat{\psi_t}
        \subset [-R_t,R_t].
\]
with $R_t = ct^{-\epsilon}$. Hence, by Lemma \ref{lem:Paley-Wiener}, $\psi_t$ extends to an
entire function of exponential type at most $R_t$; equivalently, for some
constants $C_t,N_t$,
\[
        |\psi_t(z)|
        \le C_t(1+|z|)^{N_t}e^{R_t|\operatorname{Im}z|},
        \qquad z\in\mathbb C.
\]
Also, since $\widehat{\psi_t}\in C_c^\infty(\mathbb R)$, the restriction of
$\psi_t$ to the real line belongs to $\mathcal S(\mathbb R)$.

The function
\[
        s_t=\psi_t^2
\]
is entire of exponential type at most $2R_t$.  On the real line it is a
Schwartz function.  Therefore Lemma \ref{lem:Paley-Wiener} gives
\[
        \operatorname{supp}\widehat{s_t}\subset[-2R_t,2R_t].
\]

We next consider
\[
        b_t(u)=\frac{1-\psi_t(u)}{u^2}.
\]
Set
\[
        F_t(z):=1-\psi_t(z).
\]
Then $F_t$ is entire of exponential type at most $R_t$.  Moreover,
$F_t(0)=0$, 
because $\psi_t(0)=1$, and $F_t'(0)=0$ 
because $\psi_t$ is even.  Hence $F_t$ vanishes to order at least $2$ at
$0$, and $F_t(z)/z^2$ has a removable singularity there.  Thus $b_t$ is
entire.

We now check that division by $z^2$ does not increase the exponential
type.  For $|z|\ge 1$ this follows directly from the estimate for $F_t$:
\[
        |b_t(z)|
        =
        \frac{|F_t(z)|}{|z|^2}
        \le
        C_t(1+|z|)^{N_t}e^{R_t|\operatorname{Im}z|}.
\]
For $|z|\le 1$, Taylor's formula with integral remainder gives
\[
        F_t(z)
        =
        z^2\int_0^1(1-r)F_t''(rz)\,{\rm d}r,
\]
and therefore
\[
        b_t(z)
        =
        \int_0^1(1-r)F_t''(rz)\,{\rm d}r.
\]
By Cauchy's estimate, applied on the circle of radius $1$ centered at $rz$,
\[
        |F_t''(rz)|
        \le
        2!\sup_{|\zeta-rz|=1}|F_t(\zeta)|
        \le
        C_t'e^{R_t|\operatorname{Im}z|}.
\]
We obtain
\[
 |b_t(z)|\leq \int_0^1(1-r)|F_t''(rz)|\,{\rm d}r
 \leq \int_0^1(1-r) C_t'e^{R_t|\operatorname{Im}z|} \; dr
     =
        \frac{C_t'}{2}\; e^{R_t|\operatorname{Im}z|},
        \qquad z\in\mathbb C.	
\]
Thus $b_t$ is entire of exponential type at most $R_t$.  Applying
Lemma \ref{lem:Paley-Wiener} again gives
\[
        \operatorname{supp}\widehat{b_t}\subset[-R_t,R_t].
\]

Finally,
\[
        q_t(u)
        =
        \frac{1-s_t(u)}{u^2}
        =
        (1+\psi_t(u))b_t(u).
\]
The factor $1+\psi_t$ is entire of exponential type at most $R_t$, and
$b_t$ is entire of exponential type at most $R_t$.  Hence $q_t$ is entire of
exponential type at most $2R_t$. Therefore
\[
        \operatorname{supp}\widehat{q_t}\subset[-2R_t,2R_t].
\]
This proves the claim.
\end{proof}

Building on \cite{CS, CM, moscovici-wu}, we are now ready to introduce a specific time-dependent projection in $M_2(\mathcal{A}^c_\Gamma(M))$ that 
gives a very useful representative of  the index element $\operatorname{Ind}_c(D)$ in $K_0(\mathcal{A}^c_\Gamma(M))$.  

For a sufficiently small $t>0$, define 
\[
Q(t):=\sqrt{t}D^- b_t(\sqrt{t\Delta_-})=b_t(\sqrt{t\Delta_+})\sqrt{t}D^+
\] 
with 
$$\Delta_+:= D^+ D^-, \quad\quad \Delta_-:= D^+ D^-\,.$$
Using the general formula 
\[
f(D)=\frac{1}{2\pi}\int \hat{f}(\lambda)e^{i\lambda D} d\lambda,
\]
for any Schwarz function $f\in S(\mathbb{R})$, the propagation speed of $f(D)$ is controlled by the support of its Fourier transform because the propagation speed of $e^{i\lambda D}$ is $|\lambda |$. With this, and  Lemma \ref{lem:fp-D-calculus}, we see that $Q(t)$ has finite propagation $O(t^{\frac{1}{2}-\epsilon})$.
Define
\[
S_+(t):=\psi_t(\sqrt{t\Delta_+}),\qquad S_-(t):=\psi_t(\sqrt{t\Delta_-}).
\]

We consider the idempotent
\begin{equation}\label{eq:Pfp}
  P_{\operatorname{fp}}(t):=
  \begin{pmatrix}
    S_+(t)^2 & S_+(t)(1+S_+(t))Q(t) \\
    S_-(t)\sqrt{t}D_+ & 1-S_-(t)^2
  \end{pmatrix}.
\end{equation}
Let
\[
  e_1:=\begin{pmatrix}0&0\\0&1\end{pmatrix}.
\]
\begin{definition}\label{defn:fp-index}
The finite propagation index element $\operatorname{Ind}_c(D)\in K_0(\mathcal{A}^c_\Gamma(M))$ is defined to be 
\[
\operatorname{Ind}_c(D):=[P_{\operatorname{fp}}(t)]-[e_1]. 
\] 
\end{definition}

\begin{lemma}\label{lem:chern-reduction}
Let $\varphi$ be a cyclic cocycle on $\mathcal{A}^c_\Gamma(M)$. Extend \(\varphi\) to matrix algebras by the ordinary matrix trace.  We use the
reduced normalization of the degree \(2q\) component of the Chern character in which the relative
chain of an idempotent difference \([P]-[e]\) is represented by
\((P-e)^{\otimes(2q+1)}\).  The Chern character pairing between $\operatorname{Ind}_c(D)$ and $\varphi$ can be computed by the following formula,
\begin{equation}\label{eq:exact-reduction}
  \big\langle \operatorname{Ch}_{2q}(\operatorname{Ind}_c(D)),\varphi\big\rangle
  =
  \varphi\bigl(R_{\operatorname{fp}}(t),\ldots,R_{\operatorname{fp}}(t)\bigr),
\end{equation}
where $R_{\operatorname{fp}}(t)$ is defined to be 
\begin{equation}\label{eq:Rfp}
  R_{\operatorname{fp}}(t):=
  \begin{pmatrix}
    S_+(t)^2 & S_+(t)(1+S_+(t))Q(t) \\
    S_-(t)\sqrt{t}D_+ & -S_-(t)^2
  \end{pmatrix}.
\end{equation}
\end{lemma}
\begin{proof}This is a straightforward algebraic computation, which is left to the reader. 
\end{proof}

For $c\in \mathscr{C}^k_\gamma(\Gamma)$,  in the Chern character pairing $\langle \operatorname{Ind}_c(D), \rho\circ \Psi(c)\rangle$ has the following expression,
\[
\int_{M^{k+1}} \left(\sum_{\gamma_0, \cdots, \gamma_{k} \in \Gamma } c(\gamma_0, \cdots, \gamma_{k})\cdot \chi^\Gamma_\gamma(\gamma_0) \cdot \chi(\gamma_0^{-1}y_0)\cdots \chi\left( \gamma_{k}^{-1}y_k\right)\right) \cdot  A_0(  y_0,  y_1)
\ldots A_k( y_k,  \gamma  y_0),
\]
where $A_i(-,-)$ ($i=0,...,k$) are entries of the matrix $  R_{\operatorname{fp}}(t)$. 

By Lemma \ref{lem:fp-D-calculus}, each entry of the idempotent $P_{\operatorname{fp}}(t)$ has propagation $Ct^{\frac{1}{2}-\epsilon}$. Therefore,  in the above integral expression each $A_i(-,-)$ has propagation $Ct^{\frac{1}{2}-\epsilon}$. 
Let $S$ be the support of the function $\chi$. Then the integrand $\left(\sum_{\gamma_0, \cdots, \gamma_{k} \in \Gamma } c(\gamma_0, \cdots, \gamma_{k})\cdot \chi^\Gamma_\gamma(\gamma_0) \cdot \chi(\gamma_0^{-1}y_0)\cdots \chi\left( \gamma_{k}^{-1}y_k\right)\right) \cdot  A_0(  y_0,  y_1)
\ldots A_k( y_k,  \gamma  y_0),$ is supported on  
\[
\{(y_0, \cdots, y_k)|\ \gamma_0^{-1}y_0 \in S, ...,  \gamma_k^{-1}y_k\in S, d(y_0, y_1)\leq Ct^{\frac{1}{2}-\epsilon},..., d(y_{k-1}, y_k)\leq Ct^{\frac{1}{2}-\epsilon}, d(y_k, \gamma y_0)\leq Ct^{\frac{1}{2}-\epsilon}\}.
\]

\begin{proposition}\label{prop:finitesum}There exists a finite subset $\mathcal{I} _k\subset \Gamma^{\times (k+1)}$ 
	\begin{equation}\label{eqn:finitesum}
	\begin{split}
         &\rho \circ \Psi(c)(A_0,\ldots,A_k)\\
         = &\int_{M^{k+1}} \left(\sum_{(\gamma_0, \cdots, \gamma_{k}) \in \mathcal{I}_k } c(\gamma_0, \cdots, \gamma_{k})\cdot \chi^\Gamma_\gamma(\gamma_0) \cdot \chi(\gamma_0^{-1}y_0)\cdots \chi\left( \gamma_{k}^{-1}y_k\right)\right) \cdot  A_0(  y_0,  y_1)
\ldots A_k( y_k,  \gamma  y_0).	
\end{split}
\end{equation}
\end{proposition}
\begin{proof}Since $\gamma_i^{-1}y_i\in S$ and $d(y_i, y_{i+1})\leq Ct^{\frac{1}{2}-\epsilon}$, for $i=1, ..., k-1$ and $d(y_k, \gamma y_0)\leq Ct^{\frac{1}{2}-\epsilon}$, we have
\[
d(\gamma_i S, \gamma_{i+1}S)\leq Ct^{\frac{1}{2}-\epsilon},\ i=0, ..., k-1,\ d(\gamma_k S, \gamma \gamma_0 S)\leq Ct^{\frac{1}{2}-\epsilon}. 
\]
Define $F\subset \Gamma$ by
\[
F:=\{\alpha\in \Gamma |\ d(S, \alpha S )\leq Ct^{\frac{1}{2}-\epsilon}\}. 
\]
By the assumption that the $\Gamma$ action on $M$ is proper and the set $S$ is compact, $F$ is finite. Accordingly, 
\[
\gamma_i^{-1}\gamma_{i+1}\in F,\ i=0, ..., k-1,\ \gamma_k^{-1}\gamma \gamma_0\in F. 
\]
Multiplying the above relations, we conclude 
\[
\gamma_0^{-1}\gamma \gamma_0=(\gamma_0^{-1} \gamma_1) (\gamma_1^{-1}\gamma_2)...(\gamma_{k-1}^{-1}\gamma_k)\gamma_k^{-1}\gamma \gamma_0 \in F^{\times (k+1)}. 
\]
So $[\gamma_0^{-1}\gamma \gamma_0]$ belongs to a finite subset in $\Gamma/Z_\gamma$. By the construction, $\chi^\Gamma_\gamma(\gamma_0) $ has finite supports along each coset of $\Gamma/Z_\gamma$, $\gamma_0$ must belong to a finite set. 
Accordingly, $\gamma_1, ..., \gamma_k$ all must belong to a finite set, which proves the existence of the set $\mathcal{I}_k$. 
\end{proof}

As a function on $M^{\times (k+1)}$, the following
\begin{align}\label{anti-eq}
\sum_{I \in \Gamma^{\times (k+1)}} \left(c(\gamma_0, \cdots, \gamma_{k})\cdot\chi^\Gamma_\gamma(\gamma_0) \cdot  \chi(\gamma_0^{-1}y_0)\cdots \chi\left( \gamma_{k}^{-1}y_k\right)\right)	= \sum_{\gamma_0\in Z_\gamma \backslash \Gamma } \Psi(c)(\gamma_0, y_0, \ldots y_k)
\end{align}
is anti-symmetric  by Lemma \ref{lem:fc}.  For subset $\mathcal{I}_k \in \Gamma^{\times (k+1)}$ in Proposition \ref{prop:finitesum}, put
\begin{align}\label{def:CI}
\mathcal{C}_{\mathcal{I}_k}(y_0, \ldots, y_k) := \sum_{(\gamma_0, ..., \gamma_k)\in \mathcal{I}_k }c(\gamma_0, \cdots, \gamma_{k})\cdot\chi^\Gamma_\gamma(\gamma_0) \cdot  \chi(\gamma_0^{-1}y_0)\cdots \chi\left( \gamma_{k}^{-1}y_k\right)	
\end{align}
and its anti-symmetrization by 
\begin{align}\label{def:CIa}
\mathcal{C}_{\mathcal{I}_k, a}(y_0, \ldots, y_k) := \sum_{\sigma \in S_{k+1}} \text{sign} (\sigma) \cdot \mathcal{C}_{\mathcal{I}_k}(y_{\sigma(0)}, \ldots, y_{\sigma(k)}).
\end{align}
Because the right side of (\ref{anti-eq}) is anti-symmetric, we have that
\begin{align}\label{equ:antisym}
\rho\big(\mathcal{C}_{\mathcal{I}_k, a}\big)( \operatorname{Ind}_c(D), ..., \operatorname{Ind}_c(D))= 
(k+1)! \rho\big(\mathcal{C}_{\mathcal{I}_k}\big)( \operatorname{Ind}_c(D), ..., \operatorname{Ind}_c(D))=(k+1)!\langle \operatorname{Ind}_c(D), \rho\circ \Psi(c)\rangle.
\end{align}
We can assume without loss of generality that $\mathcal{C}_{\mathcal{I}_k, a}(y_0, \ldots, y_k) $ is anti-symmetric.

\subsection{Passing to the standard heat kernel}
In the following we explain the strategy that we will take to compute the short time limit of $\langle \operatorname{Ind}_c(D), \rho\circ \Psi(c)\rangle$ via the heat kernel method. By Proposition \ref{prop:finitesum}, as the pairing $\rho \circ \Psi(c)(A_0,\ldots,A_k)$ 
is a finite sum, it is sufficient to consider  $f_0, ..., f_k\in C^\infty_c(M)$ and the pairing 
\[
\lim_{t\to 0}\rho(f_0\otimes \cdots \otimes f_k)(R_{\operatorname{fp}}(t), \cdots, R_{\operatorname{fp}}(t)).
\]

We consider
\begin{equation}\label{eq:R-heat}
  R(\sqrt tD)=
  \begin{pmatrix}
    e^{-t\Delta_+}
    & \left(\frac{1-e^{-t\Delta_+}}{t\Delta_+}\right)e^{-t\Delta_+/2}\sqrt tD^- \\
    e^{-t\Delta_-/2}\sqrt tD^+
    & -e^{-t\Delta_-}
  \end{pmatrix}.
\end{equation}

We observe that those $R(\sqrt{t}D)$ does not have finite propagation, its pairing with $\rho(f_0\otimes \cdots \otimes f_k)$ is well defined as $f_0, ..., f_k$ all have compact supports. More explicitly,  $\rho(f_0\otimes \cdots \otimes f_k)(R(\sqrt tD), \cdots, R(\sqrt tD))$ is a finite sum of integrals of the form, 
\[
\rho(f_0\otimes \cdots \otimes f_k)(A_0, ..., A_k)=
\int_{M^{k+1}} f_0(y_0)A_0(  y_0,  y_1)f_1(y_1)
\ldots f_k(y_k)A_k( y_k,  \gamma  y_0)
\]
which is finite.

The following is the main technical property we will need to compute the short time limit of $R_{\operatorname{fp}}(t)$ via $R(\sqrt{t}D)$. 

\begin{definition}
Let $A$ be a smoothing operator on $M$, with Schwartz kernel
$K_A(x,y)$.  For compact subsets $K,L\subset M$ and differential
operators $P_x,Q_y$ acting respectively in the left and right variables,
define
\[
        p_{K,L,P,Q}(A)
        :=
        \sup_{x\in K,\ y\in L}
        \left\|P_xQ_yK_A(x,y)\right\|.
\]
The family of seminorms $p_{K,L,P,Q}$ is called the family of local
smoothing seminorms.

For a family of smoothing operators $A_t$, we write
$A_t=O(t^N)$ in every local smoothing seminorm if, for all
$K,L,P,Q$, there exists $C_{K,L,P,Q,N}>0$ such that
\[
        p_{K,L,P,Q}(A_t)
        \le C_{K,L,P,Q,N}t^N
\]
for $t$ sufficiently small.  We write $A_t=O(t^\infty)$ in every local
smoothing seminorm if the same estimate holds for every $N\ge 0$.
\end{definition}

\begin{lemma}\label{lem:entry}
Let $r_t\in \mathcal S(\mathbb R)$ satisfy
\[
        r_t=O(t^\infty)
\]
in every Schwartz seminorm. Then 
\[
r_t(\sqrt{t}D) = O(t^\infty)
\]
in all local smoothing seminorms. 
\end{lemma}
\begin{proof}

We first prove the rapid decay property in Sobolev operator norms.  Let
\[
        \Lambda=(1+D^2)^{1/2}.
\]
Since $r_t(\sqrt tD)$ is a function of $D$, it commutes with $\Lambda$.
Let $a,b\geq 0$.  Then
\[
\begin{aligned}
 \|r_t(\sqrt tD)\|_{H^{-a}\to H^b}
 &=
 \|\Lambda^b r_t(\sqrt tD)\Lambda^a\|_{L^2\to L^2}  \\
 &=
 \|\Lambda^{a+b}r_t(\sqrt tD)\|_{L^2\to L^2}.
\end{aligned}
\]
By the spectral theorem, this last norm is bounded by
\[
        \sup_{\lambda\in\mathbb R}
        (1+\lambda^2)^{(a+b)/2}
        |r_t(\sqrt t\lambda)|.
\]
Put $u=\sqrt t\,\lambda$.  Then
\[
        (1+\lambda^2)^{(a+b)/2}
        =
        \left(1+\frac{u^2}{t}\right)^{(a+b)/2}
        \leq
        C_{a,b}\,t^{-(a+b)/2}(1+|u|)^{a+b}.
\]
Therefore
\[
\begin{aligned}
 \|r_t(\sqrt tD)\|_{H^{-a}\to H^b}
 &\leq
 C_{a,b}\,t^{-(a+b)/2}
 \sup_{u\in\mathbb R}(1+|u|)^{a+b}|r_t(u)|.
\end{aligned}
\]
Since $r_t=O(t^\infty)$ in every Schwartz seminorm, for every $N>0$ we have
\[
        \sup_{u\in\mathbb R}(1+|u|)^{a+b}|r_t(u)|
        =
        O(t^N).
\]
Because $N$ is arbitrary, the factor $t^{-(a+b)/2}$ can be absorbed, and
we obtain
\[
        \|r_t(\sqrt tD)\|_{H^{-a}\to H^b}
        =
        O(t^\infty).
\]
Thus
\[
        r_t(\sqrt tD)=O(t^\infty)
\]
as an operator from $H^{-a}$ to $H^b$, for all $a,b\geq 0$.

We now pass from Sobolev estimates to local smoothing seminorms.  Let
$K,L\subset M$ be compact subsets, and let $P_x$ and $Q_y$ be differential
operators, acting respectively in the left and right variables of the
Schwartz kernel.  Choose $\chi_1,\chi_2\in C_c^\infty(M)$ such that
$\chi_1\equiv 1$ in a neighbourhood of $K$ and $\chi_2\equiv 1$ in a
neighbourhood of $L$.  Set
\[
        T_t:=\chi_1 r_t(\sqrt tD)\chi_2 .
\]
Since multiplication by compactly supported smooth functions is bounded
between all Sobolev spaces, the estimate above implies
\[
        \|T_t\|_{H^{-a}\to H^b}=O(t^\infty)
\]
for all $a,b\geq 0$.

We claim that this implies rapid decay of all derivatives of the Schwartz
kernel of $T_t$ on $K\times L$.  Let $K_t(x,y)$ denote the Schwartz kernel
of $T_t$.  Fix the differential operator $Q_y$ of order $q$.  Choose
$a>q+n/2$, where $n=\dim M$.  Let $\delta_y$ be the Dirac distribution at $y$. Then the distributions 
\[
        Q_y^*\delta_y,\qquad y\in L,
\]
form a bounded family in $H^{-a}(M)$.  Indeed, by Sobolev embedding,
evaluation of derivatives up to order $q$ is continuous on $H^a(M)$ when
$a>q+n/2$, and the continuity is uniform for $y$ in the compact set $L$.

Next choose $b$ so large that Sobolev embedding controls the derivatives
appearing in $P_x$; more explicitly, if $\operatorname{ord}(P_x)=p$, take
$b>p+n/2$.  Then, uniformly for $y\in L$,
\[
\begin{aligned}
 \sup_{x\in K}
 |P_x Q_y K_t(x,y)|
 &=
 \sup_{x\in K}
 \left|
 P_x\bigl(T_t(Q_y^*\delta_y)\bigr)(x)
 \right|  \\
 &\leq
 C\,
 \|T_t(Q_y^*\delta_y)\|_{H^b}  \\
 &\leq
 C\,
 \|T_t\|_{H^{-a}\to H^b}
 \|Q_y^*\delta_y\|_{H^{-a}}  \\
 &=
 O(t^\infty).
\end{aligned}
\]
The constants are uniform for $x\in K$ and $y\in L$.  Hence
\[
        \sup_{x\in K,\ y\in L}
        |P_xQ_yK_t(x,y)|
        =
        O(t^\infty).
\]
Since $\chi_1=\chi_2=1$ near $K$ and $L$, the kernel of $T_t$ agrees with
the kernel of $r_t(\sqrt tD)$ on $K\times L$.  Therefore
\[
        \sup_{x\in K,\ y\in L}
        |P_xQ_yK_{r_t(\sqrt tD)}(x,y)|
        =
        O(t^\infty).
\]
This is precisely the assertion that $r_t(\sqrt tD)=O(t^\infty)$ in every
local smoothing seminorm.  The claim is proved.
\end{proof}

\begin{proposition}\label{prop:fp-heat}In any local smoothing seminorms, 
\[
R_{\operatorname{fp}}(t)-R(\sqrt{t}D)=O(t^\infty). 
\]
\end{proposition}

\begin{proof} We compare the entries of $R_{\mathrm{fp}}(t)$ with those of the heat
matrix $R(\sqrt t D)$.  Recall that
\[
        \psi(u)=e^{-u^{2}/2},\qquad
        \kappa_t(u):=\psi_t(u)-\psi(u).
\]

By Lemma~4.3,
\[
        \kappa_t=O(t^\infty)
\]
in every Schwartz seminorm.

We now apply Lemma \ref{lem:entry} entry by entry.  For the diagonal entries we have
\[
        \psi_t(u)^2-\psi(u)^2
        =
        \kappa_t(u)(\psi_t(u)+\psi(u)).
\]
Since $\psi_t=\psi+O(t^\infty)$ in every Schwartz seminorm, the factor
$\psi_t+\psi$ is uniformly bounded in all Schwartz seminorms.  Hence
\[
        \psi_t^2-\psi^2=O(t^\infty)
\]
in every Schwartz seminorm.  Therefore
\[
        S_\pm(t)^2-e^{-t\Delta_\pm}=O(t^\infty)
\]
in all local smoothing seminorms.

For the lower-left entry, the difference is represented by the odd
Schwartz function
\[
        u\bigl(\psi_t(u)-\psi(u)\bigr)=u\kappa_t(u).
\]
Multiplication by $u$ preserves the property of being $O(t^\infty)$ in
Schwartz seminorms.  Hence, by the claim,
\[
        S_-(t)\sqrt tD^+
        -
        e^{-t\Delta_-/2}\sqrt tD^+
        =
        O(t^\infty)
\]
in local smoothing seminorms.

It remains to compare the upper-right entry.  Using the intertwining
relation
\[
        \sqrt tD^-\,f(\sqrt{t\Delta_-})
        =
        f(\sqrt{t\Delta_+})\,\sqrt tD^-,
\]
we may write
\[
\begin{aligned}
 S_+(t)(1+S_+(t))Q(t)
 &=
 \psi_t(\sqrt{t\Delta_+})
 \bigl(1+\psi_t(\sqrt{t\Delta_+})\bigr)
 b_t(\sqrt{t\Delta_+})\sqrt tD^-  \\
 &=
 \psi_t(\sqrt{t\Delta_+})
 \frac{1-\psi_t(\sqrt{t\Delta_+})^2}{t\Delta_+}
 \sqrt tD^- .
\end{aligned}
\]
Thus this entry is represented by the odd scalar function
\[
        F_t(u):=\frac{\psi_t(u)(1-\psi_t(u)^2)}{u},
\]
with the removable singularity at $u=0$ understood by smooth extension.
The corresponding heat entry is represented by
\[
        F(u):=\frac{\psi(u)(1-\psi(u)^2)}{u}.
\]
Since $\psi_t(0)=\psi(0)=1$, both functions are smooth at the origin.
Moreover,
\[
\begin{aligned}
 F_t(u)-F(u)
 &=
 \frac{\psi_t(u)-\psi(u)}{u}
 \Bigl(1-\psi_t(u)^2-\psi_t(u)\psi(u)-\psi(u)^2\Bigr).
\end{aligned}
\]
The second factor is uniformly bounded in every Schwartz seminorm.  

We claim that
\[
        \frac{\psi_t-\psi}{u}=O(t^\infty)
\]
in every Schwartz seminorm.  Near $u=0$ the $O(t^\infty)$ estimate follows from
$\kappa_t(0)=0$ and the identity
\[
        \frac{\kappa_t(u)}{u}
        =
        \int_0^1 \kappa_t'(\lambda u)\,d\lambda ,
\]
together with the corresponding differentiated identities.  Away from
$u=0$, division by $u$ is a fixed smooth multiplier with the same symbol
estimates. And the same Schwartz estimates for $\kappa_t$ give the conclusion for $\frac{\kappa_t(u)}{u}$.  Therefore
\[
        F_t-F=O(t^\infty)
\]
in every Schwartz seminorm.  Applying  Lemma \ref{lem:entry} to the odd functional
calculus gives
\[
\begin{aligned}
&F_t(\sqrt{t}D) - F(\sqrt{t}D) = S_+(t)(1+S_+(t))Q(t)
-
\frac{1-e^{-t\Delta_+}}{t\Delta_+}
e^{-t\Delta_+/2}\sqrt tD^-        \\
&\hspace{4cm}
=
O(t^\infty)
\end{aligned}
\]
in local smoothing seminorms. 

Combining the estimates for the four matrix entries, we conclude that
\[
        R_{\mathrm{fp}}(t)-R(\sqrt tD)=O(t^\infty)
\]
in every local smoothing seminorm.
\end{proof}

\begin{corollary}\label{cor:rfptoR}
\[
\lim_{t\to 0}\rho(f_0\otimes \cdots \otimes f_k)(R_{\operatorname{fp}}(t), \cdots, R_{\operatorname{fp}}(t))=\lim_{t\to 0}\rho(f_0\otimes \cdots \otimes f_k)(R(\sqrt tD), \cdots, R(\sqrt tD))).
\]
\end{corollary}
\begin{proof} The equation follows from Proposition \ref{prop:fp-heat} and the following equation. 
\[
\begin{split}
&\rho(f_0\otimes \cdots \otimes f_k)(R_{\operatorname{fp}}(t), \cdots, R_{\operatorname{fp}}(t))-\rho(f_0\otimes \cdots \otimes f_k)(R(\sqrt tD), \cdots, R(\sqrt tD)))\\
=&\sum \rho(f_0\otimes \cdots \otimes f_k)(R(\sqrt tD), \cdots, R_{\operatorname{fp}}(t)-R(\sqrt tD), \cdots R_{\operatorname{fp}}(t))\\
\end{split}
\]
\end{proof}

\section{New results on Volterra pseudodifferential operators}\label{section:volterra-main}
This section and the Appendix are devoted  to the theory of Volterra pseudodifferential operators. Most of the results presented in the Appendix are classic whereas the material presented in this section is new.
In particular, we present in this section
results about Getzler rescaling and commutators; this technical results  will play a major role
in the proof of our higher index theorem.

Let $M$ be a smooth compact manifold without boundary and let $D$ be an $L^2$-invertible
 Dirac-type operator acting on the sections of a bundle of Clifford modules $E$. Then it is well known that $D^{-1}\in  \Psi^{-1} (M,E)$. One way to introduce the Volterra calculus is to consider the heat operator $\partial_t + D^2$. Acting on $C^\infty_+ (M,E)$, the subspace of 
 $C^\infty_+ (M,E)$ consisting of sections supported on $M\times (-\infty,c]$  for some $c\in 
 \mathbb{R}$, this operator is invertible; its inverse   $(\partial_t + D^2)^{-1}$ is a Volterra
 pseudodifferential operator of order $(-2)$. Put it differently, in the same way that classic
 pseudodifferential operators are a receptacle for the inverses of elliptic operators, the Volterra calculus is a receptacle for the inverse of the parabolic operator $\partial_t + D^2$.
 
 We refer the reader to the Appendix for
 
 \begin{itemize}
 \item an introduction to the basic concepts;
 \item the definition of $\Gamma$-equivariant Volterra calculus;
 \item results relative to Getzler rescaling in the context of the Volterra calculus.
 \end{itemize}
  In this Section we only concentrate 
 on results that are not available in the literature.

\subsection{Getzler order and simplification}\label{subsect:simplification}
In the definition of the index pairing we shall need to consider the composition of heat-type operators with $t$ fixed. Such composition is of course different from the composition
of the corresponding Volterra operators within the  Volterra calculus, where composition in the $t$-variable, by convolution, appears. Such a difference prevents us from directly applying the Getzler-Volterra calculus to the short-time asymptotics computation of the index pairing. To overcome this problem, we develop in this subsection the tools that allow us to move operators in the index pairing 
and to bring them in a sort of normal form, a form to which we can apply the results
of Ponge and Wang. One computational complication is the following, already present in \cite{Ponge-Wang-2} and \cite{LYZ}:  if $\gamma\in \Gamma$
we can usually consider two coordinate systems near a point $x\in M^\gamma$; one is the normal coordinates system centered at $x$ and the other is the coordinate system defined by tubular neighborhood coordinates for $M^\gamma$, involving the normal bundle to $M^\gamma$ near $x$. While the usual Volterra calculus works more conveniently with the tubular neighborhood coordinates, the Volterra-Getzler calculus works better with normal coordinates. Thanks to the formula connecting symbols in different coordinates systems
we can relate the two approaches; while this is conceptually clear, it becomes computationally
rather involved, and this explains the length of this subsection.

\bigskip
Let $\sigma: \operatorname{Cliff}(T M) \rightarrow \Lambda^\bullet T_{\mathbb{C}} M$ be the symbol map. We take $x \in M$. Let $Q_i, i = 1, \cdots, l$ be Volterra operators. Moreover, let $\Psi$ be a function supported in a small neighborhood of $x$. Put
\[
\widetilde{J}(x,\gamma,  t):=\int_{M^{\times(k-1)}} K_{Q_1}\left(x, x_1, t\right)\Psi(x_1)  K_{Q_2}\left(x_1, x_2, t\right) \cdots \Psi(x_{k-1}) K_{Q_k}\left(x_{k-1}, \gamma x, t\right) d x_1 \cdots d x_{k-1}
\]
As in \cite{Ponge-Wang-2} we fix tubular coordinates near $x_0\in M_a^\gamma$ (where $M_a^\gamma$ is a connected component of $M^\gamma$ of dimension $a$); we write $x$ in the coordinates associated with such a $\gamma$-invariant chart as $(y,w)\in \mathbb{R}^a\times \mathbb{R}^{n-a}$ via $\phi_{\rm tub}(y,w)$, with $\phi_{\rm tub}$ the inverse of the tubular coordinate chart.
Because $\Psi$ is supported in a neighbourhood $U_0$ of  $x$ corresponding to  a neighbourhood in $\mathbb{R}^n$, we can write
\begin{multline*}
\widetilde{J}(x, \gamma, t):=\int_{\left( \mathbb{R}^n\right)^{\times(k-1)}}K_{Q_1}\left(x, \phi_{\rm tub}(y_1,w_1), t\right)\Psi(\phi_{\rm tub}(y_1,w_1))\\  K_{Q_2}\left(\phi_{\rm tub}(y_1,w_1),\phi_{\rm tub}(y_2,w_2), t\right) \cdots \Psi(\phi_{\rm tub}(y_{k-1},w_{k-1})) K_{Q_k}\left(\phi_{\rm tub}(y_{k-1},w_{k-1}), \gamma x, t\right) \;d y_1 dw_1\cdots d y_{k-1}dw_{k-1},
\end{multline*}
 where $dydw$ is the pullback volume form on the chart via the map $\phi_{\rm tub}$. Assume that  $Q_i$ is a Volterra operator of order $s_i$. Then in the tubular coordinates chart we have fixed at $x$ this operator $Q_i$ will correspond to an operator 
in the euclidean space  $\mathbb{R}^n$ that we call $Q^{\rm tub}_i$ with symbol $q$; we can then write, with a small
but common abuse of notation, 
\begin{equation}\label{equ:KQi}
K_{Q_i}(\phi_{\rm tub}(y,w), \phi_{\rm tub}(y',w'), t) \sim \sum_{j \geq 0} \check{q}_{s_i-j}(y,w, y'-y,w'-w, t).
\end{equation}

Consider $x\in M^\gamma_a$ and write accordingly $x=\phi_{\rm tub}(y,0)$.
We then consider 
\begin{multline}
\label{equ:Jterm}
J(x, \gamma, t):=\int_{\mathbb{R}^{n-a}} \int_{{\mathbb{R}^n}^{\times(k-1)}}K_{Q_1}\left(\phi_{\rm tub}(y,w), \phi_{\rm tub}(y_1,w_1), t\right)\Psi(\phi_{\rm tub}(y_1,w_1))\\  K_{Q_2}\left(\phi_{\rm tub}(y_1,w_1),\phi_{\rm tub}(y_2,w_2), t\right) \cdots \Psi(\phi_{\rm tub}(y_{k-1},w_{k-1})) K_{Q_k}\left(\phi_{\rm tub}(y_{k-1},w_{k-1}), \phi_{\rm tub}(y,d\gamma|_{y}(w)), t\right) \\ 
d y_1dw_1 \cdots d y_{k-1}dw_{k-1}	dw,
\end{multline}
where in the above we have used the property that $\gamma(\phi_{\rm tub}(y,w))=\phi_{\rm tub}(y,d\gamma_y(w))$.

For notational convenience, we will in the following developments combine the function $\Psi(\phi_{\rm tub}(y_{i-1},w_{i-1})) $ with $ K_{Q_i}\left(\phi_{\rm tub}(y_{i-1},w_{i-1}), \phi_{\rm tub}(y_{i},w_i)), t\right) $ by replacing $Q_i$ with $\Psi(\phi_{\rm tub}(y_{i-1},w_{i-1})) Q_i$ so that the kernel of $Q_i$ has compact support in the first component. And the integral (\ref{equ:Jterm}) in the definition of $J(x,\gamma, t)$ is reduced to a product of $\epsilon$-balls with a proper choice of $\epsilon>0$, i.e. 
\begin{multline}
\label{equ:Jterm-epsilon}
J(x, \gamma, t):=\int_{\mathbb{B}^{n-a}(\epsilon)} \int_{{\mathbb{B}^n(\epsilon)}^{\times(k-1)}}K_{Q_1}\left(\phi_{\rm tub}(y,w), \phi_{\rm tub}(y_1,w_1), t\right)\\  K_{Q_2}\left(\phi_{\rm tub}(y_1,w_1),\phi_{\rm tub}(y_2,w_2), t\right)\cdots K_{Q_k}\left(\phi_{\rm tub}(y_{k-1},w_{k-1}), \phi_{\rm tub}(y,d\gamma|_{y}(w)), t\right) \\ 
d y_1dw_1 \cdots d y_{k-1}dw_{k-1}	dw,
\end{multline}

\begin{lemma}\label{lem:Jy} Consider Volterra operators $Q_i$ with (Volterra) order $m'_i$, $i=1,\cdots, k$ with symbol in tubular coordinates around $x\in M_a^\gamma$,
\[
q_i\sim \sum_{s_i} q_{i, s_i},\ s_i\leq m'_i. 
\] 
Assume that the Schwartz kernel of each $Q_i$ has compact support in the first component. Recall that
 for $x\in M^\gamma_a$  we have $x=\phi_{\rm tub}(y,0)$.
As $t \downarrow 0$, we have the following asymptotic expansion:
\begin{multline}\label{eq:Jxgammat}
J(x, \gamma, t)\equiv J(y, \gamma, t) \sim  \sum_{s_i \leq m'_i} \sum_{\alpha_{i, s_i}, \beta_{i, s_i}} t^{\frac{1}{2}\left(-\sum_{i=1}^k s_i+\sum_{i=1}^{k}(\left|\alpha_{i, s_i}\right|+|\beta_{i, s_i}|)-a\right)-k} \\
\int_{\mathbb{R}^{n-a}}\int_{\left(\mathbb{R}^n\right)^{\times(k-1)}} \frac{y^{\alpha_{1, s_1}}w^{\beta_{1, s_1}}}{\alpha_{1, s_1}!\beta_{1, s_1}!}\partial^{\alpha_{1, s_1}}_y\partial_w^{\beta_{1, s_1}}\check{q}_{1, s_1}\left(0,0, y_1-y,w_1-w, 1\right) \\
\cdot \frac{y_1^{\alpha_{2, s_2}}w_1^{\beta_{2, s_2}}}{\alpha_{2, s_2}!\beta_{2, s_2}!} \partial^{\alpha_{2, s_2}}_y \partial_w^{\beta_{2, s_2}}\check{q}_{2, s_2}\left(0, 0,y_2-y_1, w_2-w_1, 1\right) \cdots\\ 
\cdots \frac{y_{k-1}^{\alpha_{k, s_k}}w_{k-1}^{\beta_{k, s_k}}}{\alpha_{k, s_k}!\beta_{k, s_k}!} \partial^{\alpha_{k, s_k}}_y\partial_w^{\beta_{k, s_k}} \check{q}_{k, s_k}\left(0, 0, y -y_{k-1},d\gamma_y (w)-w_{k-1}, 1\right) dy_1dw_1 \cdots d y_{k-1} dw_{k-1}dw.	
\end{multline}
Here $\partial_y^{\alpha}$ (and $\partial_w^\beta$) denotes differentiation with respect to the $y$ (and $w$) variables and in  $\check{q}$ for a multi-indices $\alpha$ (and $\beta$).
\begin{proof}
By (\ref{equ:KQi}) and (\ref{equ:Jterm-epsilon}), we have the following expression of $J(y, \gamma, t)$, 
\begin{multline}
J(y, \gamma, t) \sim\int_{\mathbb{B}^{n-a}(\epsilon)} \int_{{\mathbb{B}^n(\epsilon)}^{\times(k-1)}} \left(\sum_{j_1 \geq 0} \check{q}_{1,m'_1-j_1}(y, w, y_1- y, w_{1}-w, t)\right)\\
\left(\sum_{j_2 \geq 0} \check{q}_{2, m'_2-j_2}(y_1, w_1, y_2-y_1, w_{2}-w_1, t)\right)\ldots\\
\left(\sum_{j_k \geq 0} \check{q}_{k,m'_k-j_k}(y_{k-1},v_{k-1}, y-y_{k-1}, d\gamma_y (w)-w_{k-1}, t)\right) \;dy_1dw_1 \cdots d y_{k-1} dw_{k-1}dw
\end{multline}	

We need to work with general $(y, w)$ instead of $(0,w)$ to show that our $t$-growth estimate on $J(y, \gamma, t)$ is continuous (uniformly) with respect to $y$ in a tubular neighborhood of $x_0$. We replace $w_i$ by $\sqrt{t}w_i$,$w$ by $\sqrt{t}w$, $y_i$ by $\sqrt{t}y_i$, and $y$ by $\sqrt{t}y$. Then  
\begin{multline}\label{equ:Jy}
J(y, \gamma, t) \sim t^{\frac{kn -a}{2}}\int_{\mathbb{B}^{n-a}(\frac{\epsilon}{\sqrt{t}})} \int_{{\mathbb{B}^n((\frac{\epsilon}{\sqrt{t}})}^{\times(k-1)}}
 \left(\sum_{j_1 \geq 0} \check{q}_{1,m'_1-j_1}(\sqrt{t}y,\sqrt{t} w, \sqrt{t}y_1 -\sqrt{t}y, \sqrt{t}w_{1}-\sqrt{t}w, t)\right)\\
\left(\sum_{j_2 \geq 0} \check{q}_{2, m'_2-j_2}(\sqrt{t}y_1, \sqrt{t}w_1,\sqrt{t} y_2-\sqrt{t}y_1,\sqrt{t} w_{2}-\sqrt{t}w_1, t)\right)\ldots\\
 \left(\sum_{j_k \geq 0} \check{q}_{k,m'_k-j_k}(\sqrt{t}y_{k-1},\sqrt{t}v_{k-1},\sqrt{t}y-\sqrt{t}y_{k-1}, \sqrt{t}d\gamma_y (w)-\sqrt{t}w_{k-1}, t)\right) \;dy_1dw_1 \cdots d y_{k-1} dw_{k-1}dw
 \end{multline}
 By the homogeneity of the symbols $q_{i, m'_i-j_i}$ ($i=1, ..., k$), we see
 \begin{multline}
 J(y, \gamma, t) \sim t^{\frac{\sum_{i=1}^k (j_i-m'_i)-a}{2}-k }\int_{\mathbb{B}^{n-a}(\frac{\epsilon}{\sqrt{t}})} \int_{{\mathbb{B}^n((\frac{\epsilon}{\sqrt{t}})}^{\times(k-1)}}
 \left(\sum_{j_1 \geq 0} \check{q}_{1,m'_1-j_1}(\sqrt{t}y,\sqrt{t} w, y_1-y , w_{1}-w, 1)\right)\\
\left(\sum_{j_2 \geq 0} \check{q}_{2, m'_2-j_2}(\sqrt{t}y_1, \sqrt{t}w_1, y_2-y_1, w_{2}-w_1, 1)\right)\ldots\\
 \left(\sum_{j_k \geq 0} \check{q}_{k,m'_k-j_k}(\sqrt{t}y_{k-1},\sqrt{t}v_{k-1}, y-y_{k-1}, d\gamma_y (w)-w_{k-1}, 1)\right) \;dy_1dw_1 \cdots d y_{k-1} dw_{k-1}dw.
\end{multline}
By Taylor's formula, we can expand $\check{q}_{i, m'_i-j_i}$ ($i=1, ..., k$) as follows, 
\begin{align}\label{equ:taylor-1}
\check{q}_{1,s_1}(\sqrt{t}y,\sqrt{t} w, y_1-y , w_{1}-w, 1)=\sum_{\alpha_{1, s_1}, \beta_{1, s_1}} t^{\frac{\left|\alpha_{1, s_1}\right| + \left|\beta_{1, s_1}\right|}{2}} \cdot \frac{y^{\alpha_{1, s_1}}w^{\beta_{1, s_1}}}{\alpha_{1, s_1}!\beta_{1, s_1}!}\partial^{\alpha_{1, s_1}}_y\partial_w^{\beta_{1, s_1}}\check{q}_{1, s_1}\left(0,0, y_1-y,w_1-w, 1\right),
\end{align}
and for $i=2, \cdots, k-1$
\begin{align}\label{equ:taylor-2}
&\check{q}_{i, s_i}(\sqrt{t}y_{i-1}, \sqrt{t}w_{i-1}, y_i-y_{i-1}, w_{i}-w_{i-1}, 1)\\
=&\sum_{\alpha_{i, s_i}, \beta_{i, s_i}}  t^{\frac{\left|\alpha_{i, s_i}\right| + \left|\beta_{i, s_i}\right|}{2}} \cdot\frac{y_{i-1}^{\alpha_{i, s_i}}w_{i-1}^{\beta_{i, s_i}}}{\alpha_{i, s_i}!\beta_{i, s_i}!}\partial^{\alpha_{i, s_i}}_y\partial_w^{\beta_{i, s_i}}\check{q}_{i, s_i}\left(0,0, y_i - y_{i-1},w_i-w_{i-1}, 1\right),\end{align}
and
\begin{multline}\label{equ:taylor-3}
 \check{q}_{k,s_k}(\sqrt{t}y_{k-1},\sqrt{t}v_{k-1}, -y_{k-1}, d\gamma_y (w)-w_{k-1}, 1)\\=\sum_{\alpha_{k, s_k}, \beta_{k, s_k}} t^{\frac{\left|\alpha_{k, s_k}\right| + \left|\beta_{k, s_k}\right|}{2}} \cdot \frac{y_{k-1}^{\alpha_{k, s_k}}w_{k-1}^{\beta_{k, s_k}}}{\alpha_{k, s_k}!\beta_{k, s_k}!}\partial^{\alpha_{k, s_k}}_y\partial_w^{\beta_{k, s_k}}\check{q}_{k, s_k}\left(0,0, -y_{k-1}, d\gamma_y (w)-w_{k-1}, 1\right).
\end{multline}
Combining (\ref{equ:Jy}) - (\ref{equ:taylor-3}), we obtain  the following 
\begin{multline*}
J(x, \gamma, t) \sim  \sum_{s_i \leq m'_i} \sum_{\alpha_{i, s_i}, \beta_{i, \beta_i}} t^{\frac{1}{2}\left(-\sum_{i=1}^k s_i+\sum_{i=1}^{k}(\left|\alpha_{i, s_i}\right|+|\beta_{i, s_i}|)-a\right)-k} \\
\int_{\mathbb{R}^{n-a}}\int_{\left(\mathbb{R}^n\right)^{\times(k-1)}} \frac{y^{\alpha_{1, s_1}}w^{\beta_{1, s_1}}}{\alpha_{1, s_1}!\beta_{1, s_1}!}\partial^{\alpha_{1, s_1}}_y\partial_w^{\beta_{1, s_1}}\check{q}_{1, s_1}\left(0,0, y_1,w_1-w, 1\right) \cdot 
\\
 \frac{y_1^{\alpha_{2, s_2}}w_1^{\beta_{2, s_2}}}{\alpha_{2, s_2}!\beta_{2, s_2}!} \partial^{\alpha_{2, s_2}}_y \partial_w^{\beta_{2, s_2}}\check{q}_{2, s_2}\left(0, 0,y_2-y_1, w_2-w_1, 1\right) \cdots\\ 
 \cdots \frac{y_{k-1}^{\alpha_{k, s_k}}w_{k-1}^{\beta_{k, s_k}}}{\alpha_{k, s_k}!\beta_{k,s_k}!} \partial^{\alpha_{k, s_k}}_y\partial_w^{\beta_{k, s_k}} \check{q}_{k, s_k}\left(0, 0, y-y_{k-1},d\gamma_y (w)-w_{k-1}, 1\right) dy_1dw_1 \cdots d y_{k-1} dw_{k-1}dw	
\end{multline*}	
which is what we wanted to prove.
\end{proof}

\end{lemma}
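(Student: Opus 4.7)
The plan is to follow the standard parabolic (Volterra) rescaling recipe, executing three successive moves and then collecting powers of $t$. First, I would insert the asymptotic symbol expansion \eqref{equ:KQi} for each kernel $K_{Q_i}$ into the integral representation \eqref{equ:Jterm-epsilon} for $J(x,\gamma,t)$. Since we only need the short-time behavior, the difference between the full symbol and a finite truncation of $\sum_{j\ge 0}\check q_{i,m_i'-j}$ produces a remainder that is $O(t^N)$ for any $N$, by the standard symbol estimates for Volterra operators recalled in the Appendix; so we can work formally with the full series and justify each $O(t^\infty)$ rearrangement at the end.

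Second, I would perform the simultaneous parabolic change of variables $y_i\mapsto\sqrt t\,y_i$, $w_i\mapsto\sqrt t\,w_i$ (for $1\le i\le k-1$), and $w\mapsto \sqrt t\,w$, while also writing the base point $x=\phi_{\rm tub}(\sqrt t\,y,0)$. The Jacobian contributes $t^{(kn-a)/2}$: each of the $(k-1)$ full $n$-dimensional integrations gives $t^{n/2}$, and the outer $w$-integration over $\mathbb{R}^{n-a}$ gives $t^{(n-a)/2}$. The domains become expanding balls $\mathbb{B}(\epsilon/\sqrt t)$, which as $t\downarrow 0$ can be replaced by $\mathbb{R}^{n}$, $\mathbb{R}^{n-a}$ up to exponentially small errors thanks to the Volterra off-diagonal decay estimates (these are also recalled in the Appendix and used in Ponge--Wang).

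Third, I would exploit the Volterra homogeneity of each component symbol $\check q_{i,s_i}$ (with $s_i=m_i'-j_i$): rescaling the cotangent and time variables by $\sqrt t$ pulls out a factor $t^{-(s_i+n+2)/2}$ per symbol, and cumulatively the $k$ symbols together with the Jacobian assemble into the prefactor $t^{(-\sum s_i-a)/2-k}$ displayed in \eqref{eq:Jxgammat}. What remains inside the integrand are symbols $\check q_{i,s_i}(\sqrt t\,y_{i-1},\sqrt t\,w_{i-1},\,y_i-y_{i-1},\,w_i-w_{i-1},\,1)$ with the time variable frozen at $1$, exactly as in \eqref{equ:taylor-1}--\eqref{equ:taylor-3}. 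Finally, I would Taylor expand each such symbol around the base point $(0,0)$ in the slow variables, which produces the multi-indices $\alpha_{i,s_i},\beta_{i,s_i}$ and the additional factor $t^{(|\alpha_{i,s_i}|+|\beta_{i,s_i}|)/2}$ per term. Collecting these three contributions yields precisely the exponent and integrand in \eqref{eq:Jxgammat}.

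The main obstacle I anticipate is the bookkeeping of the power of $t$: three distinct sources (Jacobian, symbol homogeneity, Taylor remainder order) interact, and the asymmetry between the $y$- and $w$-directions coming from working in tubular coordinates adapted to $M^\gamma_a$ (only $w$ is truly normal to the fixed-point set, while the action of $d\gamma_y$ appears on $w$ in the last factor) has to be handled carefully so that the outer $w$-integration contributes $t^{(n-a)/2}$ and not $t^{n/2}$. Justifying that the asymptotic expansion is uniform in the slow parameter $y$ on a compact piece of $M^\gamma_a$ — so that the final formula depends continuously on $y$ and can later be integrated against $\Psi^\gamma(c)$ — requires keeping the Taylor remainder estimates uniform in $y$, but this follows from the smoothness and compactness of the cutoff $\Psi$ that has been absorbed into the $Q_i$.
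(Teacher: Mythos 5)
Your proposal is correct and takes essentially the same route as the paper's proof: insert the Volterra symbol expansion into the iterated-kernel integral, perform the simultaneous parabolic rescaling $(y,w,y_i,w_i)\mapsto(\sqrt t\,y,\sqrt t\,w,\sqrt t\,y_i,\sqrt t\,w_i)$ to extract the Jacobian $t^{(kn-a)/2}$ and the homogeneity factor $t^{-(s_i+n+2)/2}$ per symbol, and then Taylor expand each $\check q_{i,s_i}$ in the slow base variables around the origin. Your bookkeeping of the three sources of $t$-powers reproduces the exponent $\tfrac12\bigl(-\sum s_i + \sum(|\alpha_{i,s_i}|+|\beta_{i,s_i}|)-a\bigr)-k$ exactly as in the paper.
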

\begin{remark}
We point out that it follows from the symbol growth property (\ref{eq:symbol-growth}) and the homogeneity property of the anti-Fourier transform of the symbol that the function $y^\alpha w^\beta\partial_y^\alpha \partial_w^\beta \check{q}_{i, s_i}(0,0, y, w)$ has rapid decay property away from the origin. So the above integral  in Lemma \ref{lem:Jy} converges absolutely. 
\end{remark}

\begin{lemma}\label{lem:symbolest}
If $Q_i$ has Getzler order $m_i$ along $M^\gamma$ and  Volterra order $m'_i$, then 
$$
\sigma\left[\gamma^{-1} J(y, \gamma, t)\right]^{(j)} \sim O(t^{\frac{-\sum_{i}m_i+j-a-2k}{2}}).
$$	
\begin{proof}

To compute the Getzler order of $Q_i$, we look at the symbol\footnote{Notice that our notation for symbols ($q$ in tubular coordinates and $\tilde{q}$ in normal coordinates)
is reversed with respect to \cite{Ponge-Wang-2} } $\tilde{q}_i(z, \eta)$  of $Q_i$ in the normal coordinate chart $(U, \varphi_U)$ centered at $x\in M^\gamma$ via the exponential map $\exp_x: \mathbb{R}^n=T_xM=\mathbb{R}^a\times \mathbb{R}^{n-a}\to M$. Let $\kappa=\varphi_U\circ \phi_{\rm tub}  : \mathbb{R}^n=\mathbb{R}^a\times \mathbb{R}^{n-a} \to \mathbb{R}^n$. Then $d\kappa_{(y, w)}: T_{(y,w)}\mathbb{R}^n\to T_{\kappa(y,w)} \mathbb{R}^n$ is the induced map between tangent spaces. We use $\theta_{(y,w)}$ to denote the corresponding map ${d\kappa^*}^{-1}_{(y,w)} : T^*_{(y,w)}\mathbb{R}^n\to T^*_{\kappa(y,w)} \mathbb{R}^n$. 

The change of variable formula for symbols \cite[Theorem 18.1.17]{hormander-3} gives 
\begin{equation}\label{eq:qi}
\begin{aligned}
q_i&=\sum_{s_i}q_{i, s_i},\\
\ \ \ q_{i, s_i}(y,w,\xi, \nu,\tau)&=\sum_{{\tiny \begin{array}{c}l_i-|\beta^1_{i, l_i}|-|\beta^2_{i, l_i}|+|\gamma_{i, l_i}|=s_i,\\ 2(|\beta^1_{i, l_i}|+|\beta^{2}_{i, l_i} |)\leq |\gamma_i|\end{array}}}a_{i, l_i, \beta^{j}_{i, l_i}, \gamma_{i, l_i}}(y, w)\xi^{\beta^1_{i, l_i}}\nu^{\beta^2_{i, l_i}} \partial_\eta ^{\gamma_{i, l_i}}\tilde{q}_{i, l_i}(\kappa(y,w), \theta_{(y,w)}(\xi, \nu), \tau),
\end{aligned}
\end{equation}
where $a_{i, l_i, \beta^j_{i, l_i}, \gamma_{i, l_i}}$ is a smooth function such that $a_{i, l_i, \beta^j_{i, l_i}, \gamma_{i, l_i}}=1$ when $\beta^j_{i, l_i}=\gamma_{i, l_i}=0$ and $Q_i$ has order $m_i'$.

We compute the inverse Fourier transform of $q_{i, s_i}$ in the $\xi, \nu, \tau$ direction,
\begin{equation}\label{eq:inverse-q}
\begin{aligned}
&\check{q}_{i, s_i}(y, w,y', w',t )\\
=&\sum_{{\tiny \begin{array}{c}l_i-|\beta^1_{i, l_i}|-|\beta^2_{i, l_i}|+|\gamma_{i, l_i}|=s_i,\\ 2(|\beta^1_{i, l_i}|+|\beta^2_{i, l_i}|)\leq |\gamma_{i, l_i}|\end{array}}} a_{i, l_i, \beta^{j}_{i, l_i}, \gamma_{i, l_i}}(y, w)\\
&\hspace{2cm} \big(\xi^{\beta^1_{i, l_i}}\nu^{\beta^2_{i, l_i}} \partial_\eta ^{\gamma_{i, l_i}}\tilde{q}_{i, l_i}(\kappa(y,w), \theta_{(y,w)}(\xi, \nu), \tau)\big)^\vee(y', w', t)\\
\sim &\sum_{{\tiny \begin{array}{c}l_i-|\beta^1_{i, l_i}|-|\beta^2_{i, l_i}|+|\gamma_{i, l_i}|=s_i,\\ 2(|\beta^1_{i, l_i}|+|\beta^2_{i, l_i}|)\leq |\gamma_{i, l_i}|\end{array}}} \partial_y^{\alpha^1_{i, l_i}}\partial_w ^{\alpha^2_{i, l_i}}b_{i, l_i, \alpha^j_{i, l_i}, \beta^{j}_{i, l_i}, \gamma_{i, l_i}}(0,0)\frac{y^{\alpha^1_{i, l_i}}}{\alpha^1_{i, l_i}!}\frac{w^{\alpha^2_{i, l_i}}}{\alpha^2_{i, l_i}!}\frac{y^{\alpha^3_{i, l_i}}}{\alpha^3_{i, l_i}!}\frac{w^{\alpha^4_{i, l_i}}}{\alpha^4_{i, l_i}!}\\
&\hspace{2cm}\cdot \big(\xi^{\beta^1_{i, l_i}}\nu^{\beta^2_{i, l_i}} \partial_y^{\alpha^3_{i, l_i}}\partial_w^{\alpha^4_{i, l_i}}\partial_\eta ^{\gamma_{i, l_i}}\tilde{q}_{i, l_i}(\kappa(y,w), \theta_{(y,w)}(\xi, \nu), \tau)|_{(y,w)=(0,0)}\big)^{\vee}(y', w', t) \\
=&\sum_{{\tiny \begin{array}{c}l_i-|\beta^1_{i, l_i}|-|\beta^2_{i, l_i}|+|\gamma_{i, l_i}|=s_i,\\ 2(|\beta^1_{i, l_i}|+|\beta^2_{i, l_i}|)\leq |\gamma_{i, l_i}|\end{array}}} \tilde{b}_{i, l_i, \alpha^j_{i, l_i}, \beta^{j}_{i, l_i}, \gamma_{i, l_i}}(0,0)y^{\alpha^1_{i, l_i}+\alpha^3_{i, l_i}}w^{\alpha^2_{i, l_i}+\alpha^4_{i, l_i}}\\
&\hspace{2cm}\cdot \big(\xi^{\beta^1_{i, l_i}}\nu^{\beta^2_{i, l_i}} \partial_y^{\alpha^3_{i, l_i}}\partial_w ^{\alpha^4_{i, l_i}}\partial_\eta ^{\gamma_{i, l_i}}\tilde{q}_{i, l_i}(\kappa(y,w), \theta_{(y,w)}(\xi, \nu), \tau)|_{(y,w)=(0,0)}\big)^{\vee}(y', w', t).
\end{aligned}
\end{equation}

Consider the following expression in the above expansion,
\[
\partial_y^{\alpha^3_{i, l_i}}\partial_w^{\alpha^4_{i, l_i}}\partial_\eta ^{\gamma_{i, l_i}}\tilde{q}_{i, l_i}(\kappa(y,w), \theta_{(y,w)}(\xi, \nu), \tau)|_{(y,w)=(0,0)}. 
\] 
Applying the chain rule to compute the partial differentiation with respect to $y$ and $w$, we obtain the following formula,
\begin{equation}\label{eq:delydelw}
\begin{split}
&\partial_y^{\alpha^3_{i, l_i}}\partial_w^{\alpha^4_{i, l_i}}\partial_\eta ^{\gamma_{i, l_i}}\tilde{q}_{i, l_i}(\kappa(y,w), \theta_{(y,w)}(\xi, \nu), \tau)|_{(y,w)=(0,0)}\\
=&\sum_{|\sigma^1_{i, l_i}|+|\sigma^2_{i, l_i}|= |\delta^2_{i, l_i}|}g_{\delta^j_{i, l_i}, \sigma^i_{i, l_i}} \xi^{\sigma^1_{i, l_i}} \nu^{\sigma^2_{i, l_i}} \big(\partial^{\delta^1_{i, l_i}}_z\partial_\eta^{\delta^2_{i, l_i}+\gamma_{i, l_i}}\tilde{q}_{i, l_i}(\kappa(y,w), \theta_{(y,w)}(\xi, \nu), \tau)\big) |_{(y,w)=(0,0)},
\end{split}
\end{equation}
where $g_{\delta^j_{i, l_i}, \upsilon_{i, l_i}} $ is a constant. The inequality $|\sigma^1_{i, l_i}|+|\sigma^2_{i, l_i}|=|\delta^2_{i, l_i}|$ is from the observation that the $\xi^{\sigma^1_{i, l_i}}\nu^{\sigma^2_{i,l_i}}$ on the right side of (\ref{eq:delydelw}) comes from the chain rule and differentiating the Jacobian $\theta_{(y,w)}(\xi, \nu)$ with respect to the variables $y$ and $w$. Furthermore, we notice that $\kappa(0,0)=0$ and $d\kappa|_{(0,0)}={\rm Id}$. 

Substituting the simplified expression (\ref{eq:delydelw}) into the expansion (\ref{eq:inverse-q}) of $\check{q}_{i, s_i}(y,w,y', w',t )$, we obtain
\begin{equation}
\label{eq:inverse-q-expansion}
\begin{split}
&\check{q}_{i, s_i}(y, w,y', w',t )\\
=&\sum_{{\tiny \begin{array}{c}l_i-|\beta^1_{i, l_i}|-|\beta^2_{i, l_i}|+|\gamma_{i, l_i}|=s_i,\\ 2(|\beta^1_{i, l_i}|+|\beta^{2}_{i, l_i} |)\leq |\gamma_{i, l_i}|\end{array}}} a_{i, l_i, \alpha^j_{i, l_i}, \beta^{j}_{i, l_i}, \gamma_{i, l_i}, \delta^j_{i, l_i}, \sigma^j_{i, l_i}} y^{\alpha^1_{i, l_i}+\alpha^3_{i, l_i}}w^{\alpha^2_{i, l_i}+\alpha^4_{i, l_i}}\\
&\cdot \big( \xi^{\beta^1_{i, l_i}+\sigma^1_{i, l_i}} \nu^{\beta^2_{i, l_i}+\sigma^2_{i, l_i}} \big(\partial^{\delta^1_{i, l_i}}_z\partial_\eta^{\delta^2_{i, l_i}+\gamma_{i, l_i}}\tilde{q}_{i, l_i}((0, 0), (\xi, \nu), \tau)\big)^\vee (y', w', t).
\end{split}
\end{equation}

Applying (\ref{eq:inverse-q-expansion}) to the expression of $J(x, \gamma, t)$ in (\ref{eq:Jxgammat}), we obtain 
\begin{equation}
\label{eq:sigma-I}
\begin{split}
\sigma\left[\gamma^{-1} J(y, \gamma, t)\right]^{(j)}\sim &\sum_{{\tiny \begin{array}{cc}2(|\beta^1_{i, l_i}|+|\beta^2_{i, l_i}|)\leq |\gamma_{i, l_i}| \end{array}}} t^{\frac{\sum_{i=1}^{k}( -l_i+|\beta^1_{i, l_i}|+|\beta^2_{i ,l_i}|-|\gamma_{i, l_i}|+|\alpha^1_{i, l_i}|+|\alpha^2_{i,l_i}|+|\alpha^3_{i, l_i}|+|\alpha^4_{i, l_i}|)-(a+2k)}{2}} \\
&\hspace{4cm}I^{\underline{j}}_{ \underline{l}, \underline{\alpha^1}, \underline{\alpha^2},\underline{\alpha^3}, \underline{\alpha^4}, \underline{\beta^1}, \underline{\beta^2}, \underline{\delta^1}, \underline{\delta^2}, \underline{\gamma}, \underline{\sigma^1}, \underline{\sigma^2}}, 
\end{split}
\end{equation}
where $\underline{j}=(j_1, \cdots, j_k)$, $\underline{l}=(l_1, \cdots, l_k),\ \underline{\alpha^{j}}=(\alpha^j_{1, l_1},\cdots, \alpha^j_{k, l_k}),\ j=1,2,3,4,\ \underline{\beta^j}=(\beta^j_{1, l_1}, \cdots, \beta^j_{k, l_k}),\ l=1,2,\ \underline{\gamma}=(\gamma_{1, l_1}, \cdots, \gamma_{k, l_k}),\ \underline{\delta^j}=(\delta^j_{1, l_1}, \cdots, \delta^j_{k, l_k}),\  j=1,2,\ \underline{\sigma^j}=(\sigma^j_{1, l_1}, \cdots, \sigma^j_{k, l_k})$, $j=1,2$, and 
\begin{multline*}
I_{ \underline{l}, \underline{\alpha^1}, \underline{\alpha^2},\underline{\alpha^3}, \underline{\alpha^4}, \underline{\beta^1}, \underline{\beta^2}, \underline{\delta^1}, \underline{\delta^2}, \underline{\gamma}, \underline{\sigma^1}, \underline{\sigma^2}}\\
:=
\int_{\mathbb{R}^{n-a}}\int_{(\mathbb{R}^n)^{\times (k-1)} } a_{1, l_1, \alpha^j_{1, l_1}, \beta^{j}_{1, l_1}, \gamma_{1, l_1}, \delta^j_{1, l_1}, \sigma^j_{1, l_1}} y^{\alpha^1_{1, l_1}+\alpha^3_{1, l_1}}w^{\alpha^2_{1, l_1}+\alpha^4_{1, l_1}}\\
\cdot \big( \xi^{\beta^1_{1, l_1}+\sigma^1_{1, l_1}} \nu^{\beta^2_{1, l_1}+\sigma^2_{1, l_1}} \big(\partial^{\delta^1_{1, l_1}}_z\partial_\eta^{\delta^2_{1, l_1}+\gamma_{1, l_1}}\tilde{q}_{1, l_1}((0, 0), (\xi, \nu), \tau)^{(j_1)}\big)^\vee (y_1-y, w_1-w, 1).
\\
\prod_{i=2} ^{k-1} a_{i, l_i, \alpha^j_{i, l_i}, \beta^{j}_{i, l_i}, \gamma_{i, l_i}, \delta^j_{i, l_i}, \sigma^j_{i, l_i}} y_{i-1}^{\alpha^1_{i, l_i}+\alpha^3_{i, l_i}}w_{i-1}^{\alpha^2_{i, l_i}+\alpha^4_{i, l_i}}\\
\cdot \big( \xi^{\beta^1_{i, l_i}+\sigma^1_{i, l_i}} \nu^{\beta^2_{i, l_i}+\sigma^2_{i, l_i}} \big(\partial^{\delta^1_{i, l_i}}_z\partial_\eta^{\delta^2_{i, l_i}+\gamma_{i, l_i}}\tilde{q}_{i, l_i}((0, 0), (\xi, \nu), \tau)^{(j_i)}\big)^\vee (y_i-y_{i-1}, w_i-w_{i-1}, 1).
\\
a_{k, l_k, \alpha^j_{k, l_k}, \beta^{j}_{k, l_k}, \gamma_{k, l_k}, \delta^j_{k, l_k}, \sigma^j_{k, l_k}} y_{k-1}^{\alpha^1_{k, l_k}+\alpha^3_{k, l_k}}w_{k-1}^{\alpha^2_{k, l_k}+\alpha^4_{k, l_k}}\\
\cdot \big( \xi^{\beta^1_{i, l_i}+\sigma^1_{i, l_i}} \nu^{\beta^2_{i, l_i}+\sigma^2_{i, l_i}} \big(\partial^{\delta^1_{i, l_i}}_z\partial_\eta^{\delta^2_{i, l_i}+\gamma_{i, l_i}}\tilde{q}_{i, l_i}((0, 0), (\xi, \nu), \tau)^{(j_k)}\big)^\vee (y-y_{k-1}, d\gamma_y(w)-w_{k-1}, 1).
\\
dy_1dw_1\cdots dy_{k-1}dw_{k-1}dw, 
\end{multline*}
and $j=j_1+\cdots+j_k$. 

We observe that the term 
\[
y_{i-1}^{\alpha^1_{i, l_i}+\alpha^3_{i, l_i}}w_{i-1}^{\alpha^2_{i, l_i}+\alpha^4_{i, l_i}}\\
\cdot \big( \xi^{\beta^1_{i, l_i}+\sigma^1_{i, l_i}} \nu^{\beta^2_{i, l_i}+\sigma^2_{i, l_i}} \big(\partial^{\delta^1_{i, l_i}}_z\partial_\eta^{\delta^2_{i, l_i}+\gamma_{i, l_i}}\tilde{q}_{i, l_i}((0, 0), (\xi, \nu), \tau)\big)^\vee (y_i-y_{i-1}, w_i-w_{i-1}, 1)
\]
is Getzler homogeneous of order 
$$
-|\alpha^1_{i, l_i}|-|\alpha^2_{i, l_i}|-|\alpha^3_{i, l_i}|-|\alpha^4_{i, l_i}|+l_i+j_i.
$$ 
For any $i, l_i, \alpha^j_{i, \lambda_i}, \beta^j_{i, \lambda_i}, \gamma_{i, l_i}, \delta^j_{i, l_i}, \sigma^j_{i, l_i}$ with 
\[
-|\alpha^1_{i, l_i}|-|\alpha^2_{i, l_i}|-|\alpha^3_{i, l_i}|-|\alpha^4_{i, l_i}|+l_i+j_i>m_i,
\] 
the sum of  terms 
\[
\begin{split}
&\sum_{-|\alpha^1_{i, l_i}|-|\alpha^2_{i, l_i}|-|\alpha^3_{i, l_i}|-|\alpha^4_{i, l_i}|+l_i+j_i>m_i} a_{i,l_i, \alpha^j_{i, l_i}, \beta^j_{i, l_i}, \gamma_{k, l_k}, \delta^j_{i, l_i}, \sigma^j_{i, l_i} }y_{i-1}^{\alpha^1_{i, l_i}+\alpha^3_{i, l_i}}w_{i-1}^{\alpha^2_{i, l_i}+\alpha^4_{i, l_i}}\\
&\cdot \big( \xi^{\beta^1_{i, l_i}+\sigma^1_{i, l_i}} \nu^{\beta^2_{i, l_i}+\sigma^2_{i, l_i}} \big(\partial^{\delta^1_{i, l_i}}_z\partial_\eta^{\delta^2_{i, l_i}+\gamma_{i, l_i}}\tilde{q}_{i, l_i}((0, 0), (\xi, \nu), \tau)\big)^\vee (y_i-y_{i-1}, w_i-w_{i-1}, 1)
\end{split}
\]
must therefore vanish since the Getzler order of $q_i$ is $m_i$.

In (\ref{eq:sigma-I}), we look at the power of $t$, that is
$$
t^{\frac{\sum_{i=1}^{k}( -l_i+|\beta^1_{i, l_i}|+|\beta^2_{i ,l_i}|-|\gamma_{i, l_i}|+|\alpha^1_{i, l_i}|+|\alpha^2_{i,l_i}|+|\alpha^3_{i, l_i}|+|\alpha^4_{i, l_i}|)-(a+2k)}{2}}. 
$$
By the above observation using the Getzler order, we can assume to only consider those terms with 
$$
-|\alpha^1_{i, l_i}|-|\alpha^2_{i, l_i}|-|\alpha^3_{i, l_i}|-|\alpha^4_{i, l_i}|+l_i+j_i\leq m_i.
$$
Using the property that $2(|\beta^1_{i, l_i}|+|\beta^2_{i, l_i}|)\leq |\gamma_{i, l_i}|$ from (\ref{eq:qi}), we have
$$
|\beta^1_{i, l_i}|+|\beta^2_{i, l_i}|-|\gamma_{i,l_i}|\leq -\frac{1}{2}(|\beta^1_{i, l_i}|+|\beta^2_{i, l_i}|)\leq 0. 
$$
Hence, we only need to consider terms with the following bound on the sum,
$$
\frac{\sum_{i=1}^{k}( -l_i+|\beta^1_{i, l_i}|+|\beta^2_{i ,l_i}|-|\gamma_{i, l_i}|+|\alpha^1_{i, l_i}|+|\alpha^2_{i,l_i}|+|\alpha^3_{i, l_i}|+|\alpha^4_{i, l_i}|)-(a+2k)}{2}\geq \frac{\sum_{i=1}^k (-m_i+j_i) -(a+2k)}{2}.
$$
\end{proof}
\end{lemma}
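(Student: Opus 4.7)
The plan is to build on the asymptotic expansion of $J(y,\gamma,t)$ already established in Lemma \ref{lem:Jy}, which is stated in terms of the \emph{tubular} coordinate symbols $\check{q}_{i,s_i}$. Since the Getzler order is measured using \emph{normal} coordinates centred at a point of $M^{\gamma}$, the core task is to translate each $\check{q}_{i,s_i}$ into an expression involving derivatives of the normal coordinate symbols $\tilde{q}_{i,l_i}$, and then read off the $t$-power using the Getzler homogeneity hypothesis.

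First I would apply the standard change-of-variables formula for pseudodifferential symbols (e.g.\ Hörmander, Theorem 18.1.17) to the diffeomorphism $\kappa = \varphi_U\circ\phi_{\mathrm{tub}}$ relating tubular and normal charts. This writes each $q_{i,s_i}(y,w,\xi,\nu,\tau)$ as a finite sum of monomials $\xi^{\beta^1}\nu^{\beta^2}$ times $\partial_\eta^{\gamma}\tilde{q}_{i,l_i}(\kappa(y,w),\theta_{(y,w)}(\xi,\nu),\tau)$, weighted by smooth coefficients $a_{i,l_i,\beta^j,\gamma}(y,w)$, with the order-matching constraint $l_i-|\beta^1|-|\beta^2|+|\gamma|=s_i$ and the universal bound $2(|\beta^1|+|\beta^2|)\le|\gamma|$ coming from the method of stationary phase in the change-of-variables expansion. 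Taking inverse Fourier transforms in $(\xi,\nu,\tau)$ then converts these into expressions in the kernel variables.

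Second, I would Taylor expand both the smooth coefficients $a_{i,l_i,\beta^j,\gamma}$ and the Jacobian-dependent factor $\theta_{(y,w)}(\xi,\nu)$ around the basepoint $(y,w)=(0,0)$, using that $\kappa(0,0)=0$ and $d\kappa|_{(0,0)}=\mathrm{Id}$. Differentiating in $y,w$ via the chain rule produces further multi-indices $\alpha^1,\alpha^2,\alpha^3,\alpha^4,\delta^1,\delta^2,\sigma^1,\sigma^2$ and monomials $y^{\alpha^1+\alpha^3}w^{\alpha^2+\alpha^4}$ multiplying Getzler-homogeneous pieces of $\tilde{q}_{i,l_i}$. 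Substituting this expansion term-by-term into the formula of Lemma \ref{lem:Jy} produces, after collecting, a total $t$-exponent of
\[
\tfrac{1}{2}\Bigl(-\textstyle\sum_{i}l_{i}+\sum_{i}\bigl(|\beta^{1}_{i}|+|\beta^{2}_{i}|-|\gamma_{i}|+|\alpha^{1}_{i}|+|\alpha^{2}_{i}|+|\alpha^{3}_{i}|+|\alpha^{4}_{i}|\bigr)-a\Bigr)-k.
\]

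Third I would invoke the Getzler order hypothesis: each summand whose associated monomial-times-$\tilde{q}_{i,l_i}$ piece has Getzler-homogeneous degree $-|\alpha^{1}_{i}|-|\alpha^{2}_{i}|-|\alpha^{3}_{i}|-|\alpha^{4}_{i}|+l_{i}+j_{i}>m_{i}$ must vanish identically because $Q_i$ has Getzler order $m_i$ along $M^\gamma$. Combining the resulting inequalities with the universal bound $|\beta^{1}_{i}|+|\beta^{2}_{i}|-|\gamma_{i}|\le -\tfrac{1}{2}(|\beta^{1}_{i}|+|\beta^{2}_{i}|)\le 0$, the $t$-exponent is bounded below by $\tfrac{1}{2}\sum_{i}(-m_{i}+j_{i})-\tfrac{a}{2}-k=\tfrac{-\sum_i m_i+j-a-2k}{2}$, which is exactly the claim. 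The main obstacle is not conceptual but bookkeeping: one has to carefully track five families of multi-indices arising respectively from the Hörmander symbol expansion, the chain-rule derivatives of $\theta_{(y,w)}(\xi,\nu)$, and the Taylor expansions of both coefficient functions and the Jacobian, and verify that the Getzler-order constraint is correctly imposed on the right \emph{combined} monomial after substitution, rather than on each factor separately.
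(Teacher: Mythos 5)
Your proposal follows the paper's proof step by step: the same invocation of H\"ormander's change-of-variables formula (Theorem 18.1.17) to pass from tubular to normal coordinate symbols with the constraint $2(|\beta^1|+|\beta^2|)\le|\gamma|$, the same Taylor expansion of coefficients and Jacobian about $(y,w)=(0,0)$ using $\kappa(0,0)=0$ and $d\kappa|_{(0,0)}=\mathrm{Id}$, the same chain-rule bookkeeping producing the families $\alpha^1,\ldots,\alpha^4,\delta^1,\delta^2,\sigma^1,\sigma^2$, the same substitution into Lemma~\ref{lem:Jy}, the same Getzler-order vanishing observation, and the same final inequality. This is essentially the paper's proof.
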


\begin{definition}\label{eq:tr-geo}
Let $A$ be a smoothing operator with kernel $A(x,y)$. The geometric trace of $A$ is defined by 
\begin{align}
\Tr^{{\rm geo}}\left(A \right): = \int_M A(y, y)\; dy,	
\end{align}
if the integral on the right side is absolutely convergent. If $A$ acts on  the sections of a vector bundle $E$, then 
\begin{align}
\Tr^{{\rm geo}}\left(A \right): = \int_M {\rm tr}_{E_y} A(y, y)\; dy,	
\end{align}

\end{definition}

\begin{lemma}\label{lem:cutoff}
If along $M^\gamma$ the Getzler order of $Q_i$ satisfies $\sum_{i=1}^k m_i < -2k$, then 
\[
\lim_{t \to 0^+}\Tr^{{\rm geo}}\left(\gamma^{-1} \Psi Q_1 \Psi Q_2 \Psi \ldots \Psi Q_k^\gamma\right) = 0.
\]
\begin{proof}
Recall that 
\begin{equation*}
 \lim_{t \to 0^+}\Tr^{{\rm geo}}\left(\gamma^{-1} \Psi Q_1 \Psi Q_2 \Psi \ldots \Psi Q_k^\gamma\right)  = 
\lim_{t \to 0^+}\int_M \sigma\left[\gamma^{-1} \widetilde{J}(x,\gamma,  t)\right]^{(n)}\; dx =  \lim_{t \to 0^+} \int_{M^\gamma} \sigma \left[\gamma^{-1} J(y, \gamma, t)\right]^{(n)}\; dy 
\end{equation*}
The first equality follows from the definition of the function $\widetilde{J}(x,\gamma,  t)$. The second equality follows from $J(y, \gamma, t)$ by the definition of $J(y, \gamma, t)$ and the property of $\Psi$, a function supported in a neighborhood of $y\in M^\gamma$. 
In order to explain the last equality we make use of the last formula in page 337 in \cite{Ponge-Wang-2}
where our $\gamma $ corresponds to $\phi^{S}$ in that formula and $\mathfrak{a}(t)$ 
corresponds to our $J(y,\gamma,t)$. Using also Lemma 4.7 in \cite{Ponge-Wang-2} , and in particular formula (4.13) there, we obtain that
$$
\sigma \left[\gamma^{-1} J(y, \gamma, t)\right]^{(n)}= \sigma [\gamma^{-1} ]^{(0,n-a)} \wedge \sigma [J(y, \gamma, t]^{(a,0)}
$$
modulo terms involving $\sigma J(y, \gamma, t)^{(a,l)}$ with $l\geq 1$. 

\noindent
Recall that \cite[Lemma 4.7]{Ponge-Wang-2} operator $(\gamma^{-1})^{\mathcal{S}}\otimes (\gamma^{-1} )^{E}$ is a pointwise operator on the fiber $\mathcal{S}|_y\otimes E|_y$ of the form 
\[
(\gamma^{-1} )^{\mathcal{S}}=\coprod_{\frac{a}{2}<j\leq \frac{n}{2}}\left(\cos\left(\frac{\theta_j}{2}\right)+\sin\left(\frac{\theta_j}{2}\right)c(v^{2j-1})c(v^{2j})\right),
\]
where $v_1, \cdots, v_{a}$ is an orthonormal basis of $\mathbb{R}^a$, and $v_{a+1}, \cdots, v_{n}$ is an orthonormal basis of $\mathbb{R}^{n-a}$, and $e^{i\theta_{a+1}}, \cdots, e^{i\theta_n}$ are the eigenvalues of the action of $\gamma^{-1} $ on $\mathcal{S}|_y$.  Hence, the expression $\sigma \left[\gamma^{-1}  J(y, \gamma, t)\right]^{(n)}$ has the following form
\[
\sigma \left[\gamma^{-1} J(y, \gamma, t)\right]^{(n)}=\sigma [\gamma^{-1} ]^{(0,n-a)} \wedge \sigma [J(y, \gamma, t]^{(a,0)}+\sum_{0<l\leq n-a} \sigma[\gamma^{-1} ]^{(0,n-l)}\wedge \sigma[J(y, \gamma, t)]^{(a, l)}.
\]

\noindent
Recall now Lemma \ref{lem:symbolest} stating that
$$
\sigma\left[J(y, \gamma, t)\right]^{(j)} \sim O(t^{\frac{-\sum_{i}m_i+j-a-2k}{2}}).
$$	
This means that \[
\sigma\left[J(y, \gamma, t)\right]^{(a, l)}\sim O(t^{\frac{-\sum_{i}m_i+l -2k}{2}})\;\;\;\text{where}\;\;\; l\geq 0.
\]
 Hence, if the Getzler order $\sum_{i=1}^k m_i < -2k$, then 
\[
\sigma\left[J(y, \gamma, t)\right]^{(a, l)} = \mathcal{O}(\sqrt{t}) \;\;\;\text{for all }\;\;\; l\geq 0.
\]
This shows that $\sigma \left[\gamma^{-1} J(y, \gamma, t)\right]^{(n)}\to 0$ as $t\to 0$, which completes proof of the lemma. 
\end{proof}
\end{lemma}

The following proposition will play a key role in moving terms under the sign 
of $\Tr^{{\rm geo}}$. First we give two Definitions.

\begin{definition}\label{def: Volterra-related}
We shall say that a family of smoothing kernels $\{Q(t)\}_{t\in\mathbb{R}^+}$ on $M\times M$ defined by a smoothing kernel $K_Q(x,y,t)$ on $M\times M\times \mathbb{R}^+$ is  {\bf Volterra-related} if  there exists a Volterra operator  with Volterra kernel
equal to $K_Q$ when restricted to $M\times M\times \mathbb{R}^+$. The Volterra order (Getzler order at $x_0$) of $\{Q(t)\}_{t\in \mathbb{R}^+}$
is by definition the Volterra order (Getzler order at $x_0$) of the corresponding Volterra operator.\end{definition}

\noindent
For example, the heat-kernel is Volterra-related; its Volterra order is -2 and its Getzler order 
at any point of $M$ is also -2. More examples will be given in the next subsection.

\begin{definition}\label{def: exponential-control-no-appendix}
We shall say that a family of smoothing kernels $\{Q(t)\}_{t\in\mathbb{R}^+}$ on $M\times M$ defined by a smoothing kernel $K_Q(x,y,t)$ on $M\times M\times \mathbb{R}^+$ is of {\bf exponential control} if there exist constants $\alpha, \beta, \eta > 0$ such that for sufficiently small $t$,
\begin{align}\label{kernel decay bis-tris}
\Big| K_Q (x, y, t)\Big|\leq  \alpha \cdot t^{-\beta} \cdot e^{-\eta \cdot \frac{d(x, y)}{t}}.	
\end{align}
\end{definition}

\noindent
In what follows, we sometimes will abuse the notation of $\{Q(t)\}_{t\in\mathbb{R}^+}$ and the employ the simplified symbol $Q$. 

\begin{lemma}[Milnor-\v{S}varc, see for example \cite{Roe-lectures}]
Let $\Gamma$ be a group acting by isometries on a Riemannian manifold $X$ such that the action is properly discontinuous and cocompact. Then the group $\Gamma$ is finitely generated and for every finite generating set $S$ of $G$ and every point $x \in X$ the orbit map
\[
\left(\Gamma, l\right) \rightarrow X, \quad \gamma \mapsto \gamma x
\]
is a quasi-isometry, where $l$ is the word metric on $\Gamma$
corresponding to $S$. 
\end{lemma}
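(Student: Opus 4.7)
This is the classical Milnor-\v{S}varc lemma, so I would follow the standard path and only sketch the steps. The plan is to produce an explicit finite generating set from a cocompact fundamental region centered at $x$, and then to verify the two quasi-isometry inequalities by a back-and-forth path/word argument.

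First, using cocompactness of the action, I would fix $R>0$ large enough so that the closed ball $\overline{B}(x,R)$ satisfies $\Gamma\cdot \overline{B}(x,R)=X$; such $R$ exists because the orbit map $\Gamma\to X$ is surjective modulo a compact fundamental set and $X$ is covered by the $\Gamma$-translates of any compact neighborhood of a fundamental domain. Then I would set
\[
S:=\{\,s\in \Gamma\setminus\{e\}\ :\ s\cdot \overline{B}(x,R)\cap \overline{B}(x,3R)\neq\emptyset\,\}.
\]
By proper discontinuity of the action, only finitely many $\gamma\in\Gamma$ translate $\overline{B}(x,R)$ to meet the compact set $\overline{B}(x,3R)$, so $S$ is finite. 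One then checks that $S=S^{-1}$ by symmetry in the definition.

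Next I would show $S$ generates $\Gamma$ and simultaneously obtain the linear upper bound $l_S(\gamma)\leq C_1\,d(x,\gamma x)+C_2$. Given $\gamma\in\Gamma$, I would join $x$ to $\gamma x$ by a minimizing geodesic, subdivide it into consecutive points $x=p_0,p_1,\dots,p_n=\gamma x$ with $d(p_i,p_{i+1})\leq R$ and $n\leq \lceil d(x,\gamma x)/R\rceil+1$, and choose $\gamma_i\in\Gamma$ with $p_i\in\gamma_i\cdot \overline{B}(x,R)$, taking $\gamma_0=e$ and $\gamma_n=\gamma$. Because consecutive points are within $R$, one sees that $\gamma_i^{-1}\gamma_{i+1}\in S\cup\{e\}$, so $\gamma=\prod_i(\gamma_i^{-1}\gamma_{i+1})$ is an $S$-word of length $\leq n$, giving
\[
l_S(\gamma)\ \leq\ \tfrac{1}{R}\,d(x,\gamma x)+C_2.
\]
In particular, $S$ generates $\Gamma$ and $\Gamma$ is finitely generated.

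For the reverse direction, set $L:=\max_{s\in S}d(x,sx)$. Writing $\gamma=s_1\cdots s_n$ with $n=l_S(\gamma)$ and using the triangle inequality together with the fact that $\Gamma$ acts by isometries, I would obtain
\[
d(x,\gamma x)\ \leq\ \sum_{i=1}^{n} d(s_1\cdots s_{i-1}x,\ s_1\cdots s_i x)\ =\ \sum_{i=1}^{n} d(x,s_i x)\ \leq\ L\cdot l_S(\gamma).
\]
Finally, coboundedness of the orbit map is immediate from $\Gamma\cdot \overline{B}(x,R)=X$: every point of $X$ lies within distance $R$ of some $\gamma x$. Combining the two linear estimates with the cobounded image yields the quasi-isometry. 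The only mildly delicate step is the choice of $R$ and the verification that consecutive path-points lie in $S$-related orbit-balls; everything else is routine.
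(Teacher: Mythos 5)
The paper does not prove this lemma; it is stated as a standard fact with a pointer to Roe's lecture notes. Your sketch is the classical Milnor--\v{S}varc argument, and it is essentially correct, so I will just flag a few small points for completeness.

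First, your argument implicitly uses that $X$ is a geodesic metric space when you subdivide a minimizing geodesic from $x$ to $\gamma x$. This is not automatic for a Riemannian manifold unless it is complete; here one should note that a cocompact isometric properly discontinuous action on a connected Riemannian manifold forces completeness (extend short geodesics by translating back into a compact fundamental set and use Hopf--Rinow). Second, the assertion ``$S=S^{-1}$ by symmetry in the definition'' is not literally true from the condition $s\cdot\overline{B}(x,R)\cap\overline{B}(x,3R)\neq\emptyset$, since the two radii are different; what rescues it is that, once $X$ is geodesic, this condition is equivalent to $d(x,sx)\le 4R$, which is manifestly symmetric in $s\leftrightarrow s^{-1}$. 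Third, the statement asks for the conclusion for \emph{every} finite generating set $S$, while your construction produces one particular $S$; you should close this by invoking the standard fact that any two word metrics coming from finite generating sets of the same group are bi-Lipschitz equivalent, so a quasi-isometry with respect to your $S$ gives a quasi-isometry with respect to any other. With these three remarks filled in, your sketch is a complete and standard proof.
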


\begin{lemma}
\label{lem dis}
Let $F\subset X$ be a compact set. There exist positive constants $\tau, \kappa$ such that 
\[
\frac{1}{\tau} \cdot l(\gamma) + \kappa >  d(x, \gamma y) \geq \tau \cdot l(\gamma) - \kappa, \quad \forall x, y \in F\,.
\]
\begin{proof}
By the Milnor-\v{S}varc lemma, we can find constants $\tau_x, \kappa_x>0$ such that 
\[
\frac{1}{\tau_x} \cdot l(\gamma) + \kappa_x > d(x, \gamma y) \geq \tau_x \cdot l(\gamma) - \kappa_x. 
\]
Since $F$ is compact, we can take $\tau = \min_{x\in F} \{\tau_x\}$ and $\kappa = \max_{x\in F} \{\kappa_x\}$. 
\end{proof}
\end{lemma}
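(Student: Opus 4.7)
The plan is to reduce the statement, which involves distances between arbitrary points $x,y$ of the compact set $F$, to the case of a single reference basepoint and then invoke the Milnor-\v{S}varc lemma just stated above. First I would fix a basepoint $x_0 \in X$ and apply Milnor-\v{S}varc to obtain constants $\tau_0,\kappa_0 > 0$ with
\[
\tau_0\, l(\gamma) - \kappa_0 \;\le\; d(x_0, \gamma x_0) \;\le\; \tau_0^{-1}\, l(\gamma) + \kappa_0,\qquad \forall\, \gamma \in \Gamma.
\]

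Next I would control the error introduced when $x_0$ is replaced by an arbitrary pair $x,y \in F$. Since $F$ is compact, $R := \diam\bigl(F \cup \{x_0\}\bigr)$ is finite, and because $\Gamma$ acts by isometries one has $d(\gamma x_0, \gamma y) = d(x_0, y) \le R$. Two applications of the triangle inequality then give
\[
\bigl| d(x, \gamma y) - d(x_0, \gamma x_0) \bigr| \;\le\; d(x, x_0) + d(\gamma x_0, \gamma y) \;\le\; 2R,
\]
uniformly in $x, y \in F$ and $\gamma \in \Gamma$. Combining this additive error with the Milnor-\v{S}varc bound yields the desired two-sided comparison between $d(x, \gamma y)$ and $l(\gamma)$, and an alternative route would be to apply Milnor-\v{S}varc separately at each point $x \in F$ to produce pointwise constants $\tau_x, \kappa_x$ and then extract uniform constants via the compactness of $F$, but the basepoint approach seems cleaner since it avoids any continuity-in-$x$ discussion of the Milnor-\v{S}varc constants.

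The argument is essentially routine and no serious obstacle is expected. The only point to verify is that a single pair $(\tau, \kappa)$ can accommodate both the $\kappa_0$ coming from Milnor-\v{S}varc and the additive error $2R$ coming from the triangle inequality, and that the upper and lower bounds share the same $\tau$; both issues are resolved by a harmless shrinking of $\tau_0$ and by setting $\kappa := \kappa_0 + 2R$. Note that compactness of $F$ enters the argument exactly once, in guaranteeing $R < \infty$, and is precisely what provides the uniformity of the constants in $(x,y) \in F \times F$.
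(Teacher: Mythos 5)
Your proof is correct, and it takes a genuinely different and arguably cleaner route than the paper's. The paper applies Milnor--\v{S}varc separately at each point $x\in F$ to produce pointwise constants $\tau_x,\kappa_x$ and then passes to $\tau=\min_{x\in F}\tau_x$, $\kappa=\max_{x\in F}\kappa_x$; this leaves unaddressed why those extrema exist (the constants furnished by Milnor--\v{S}varc are not canonical functions of $x$, and some continuity or finite-subcover argument is implicitly required), and it also elides the fact that the orbit map version of Milnor--\v{S}varc controls $d(x,\gamma x)$, not $d(x,\gamma y)$ with $y\neq x$, so a triangle-inequality step of the kind you write out is needed in either case. Your basepoint argument fixes a single $x_0$, applies Milnor--\v{S}varc once, and absorbs the variation over $(x,y)\in F\times F$ into the single additive constant $2R$ with $R=\operatorname{diam}(F\cup\{x_0\})$, making the role of compactness fully explicit and avoiding any discussion of how the Milnor--\v{S}varc constants depend on the base point. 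The only microscopic loose end in your write-up is that the stated upper bound is strict; this is handled by enlarging $\kappa$ by an arbitrary positive amount, as you essentially note.
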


\begin{lemma}\label{est:exp}
Let $F$ be a compact subset of $M$. We can find a constant $C$ large enough so that  
\[
\int_{F \times M^k} e^{-C\left(d(x, y_1) + \ldots d(x, y_k)\right)} \; dx dy_1\ldots dy_k
\]
is  convergent. 
\begin{proof}
Because the $\Gamma$ action on $M$ is proper and cocompact, there is an open subset $\mathcal{F}$ of $M$ such that the closure $\overline{\mathcal{F}}$ is a compact subset of $M$  and
\[
\bigcup_{\gamma \in \Gamma} \gamma \mathcal{F}=M. 
\] 
By Lemma \ref{lem dis}
\begin{multline*}
\int_{F \times \mathcal{F}^k } e^{-C\left(d(x, y_1) + \ldots d(x, y_k)\right)} \; dx dy_1\ldots dy_k
\leq  \sum_{\gamma_i \in \Gamma}\int_{F \times \mathcal{F}^k } e^{-C \cdot \left(d(x, \gamma_1 y_1) + \ldots d(x, \gamma_k y_k)\right)} \; dx dy_1\ldots dy_k	\\
\leq \sum_{\gamma_i \in \Gamma}\int_{F \times \mathcal{F}^k }e^{-C\left(\sum_i \tau_i \cdot l(\gamma_i) - \kappa_i\right)} \; dx dy_1\ldots dy_k\\
\leq C_N+\vol(F \times \mathcal{F}^k) \sum_{\gamma_i \in \Gamma, l(\gamma_i) > N}e^{-C\left(\sum_i \tau_i \cdot l(\gamma_i) - \kappa_i\right)} 
\end{multline*}
Because $\Gamma$ is finitely generated, there are constants $C_\Gamma$, $K_\Gamma$ satisfying
\[
\#\left\{\gamma \in \Gamma, l(\gamma) \leq k\right\} \leq C_\Gamma \cdot e^{K_\Gamma k}. 
\]
Thus, 
\[
\sum_{\gamma_i \in \Gamma, l(\gamma_i) > N}e^{-C\left(\sum_i \tau_i \cdot l(\gamma_i) - \kappa_i\right)} 
 \leq  \sum_{k=N}^\infty  C_\Gamma \cdot e^{\sum_i \left((K_\Gamma - C\tau_i) k_i + C\kappa_i \right)}
 \]
We first choose $C$ large enough so that 
\[
K_\Gamma - C\tau_i <0. 
\] 
If we further choose $N$ so that 
\[
N \cdot \sum_i\tau_i  > C\sum_i \kappa_i, 
\]
then the above summation is finite. 	
\end{proof}
\end{lemma}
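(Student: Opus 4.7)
The plan is to reduce the integral over $M^k$ to a sum over $\Gamma^k$ of integrals over a relatively compact set, by exploiting the cocompactness of the $\Gamma$-action. More precisely, since the action of $\Gamma$ on $M$ is proper and cocompact, I would first choose an open set $\mathcal{F}\subset M$ with compact closure $\overline{\mathcal{F}}$ such that $M=\bigcup_{\gamma\in\Gamma}\gamma\mathcal{F}$. Up to throwing away a set of measure zero we can even assume the union is disjoint (e.g.\ by picking a Borel fundamental domain), so that
\[
\int_{F\times M^k}e^{-C(d(x,y_1)+\cdots+d(x,y_k))}\,dx\,dy_1\cdots dy_k=\sum_{\gamma_1,\dots,\gamma_k\in\Gamma}\int_{F\times\mathcal{F}^k}e^{-C(d(x,\gamma_1y_1)+\cdots+d(x,\gamma_ky_k))}\,dx\,dy_1\cdots dy_k.
\]

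Next I would apply Lemma \ref{lem dis} to the compact set $F\cup\overline{\mathcal{F}}$: there exist constants $\tau_i,\kappa_i>0$ such that $d(x,\gamma_iy_i)\geq \tau_i\,l(\gamma_i)-\kappa_i$ for all $x\in F$, $y_i\in\overline{\mathcal{F}}$. Substituting this lower bound in the exponential and bounding the integrand uniformly on $F\times\mathcal{F}^k$ (which has finite volume), the integral collapses to a constant times
\[
\sum_{\gamma_1,\dots,\gamma_k\in\Gamma}e^{-C\sum_i(\tau_i\,l(\gamma_i)-\kappa_i)}=e^{C\sum_i\kappa_i}\prod_{i=1}^k\Bigl(\sum_{\gamma\in\Gamma}e^{-C\tau_i\,l(\gamma)}\Bigr).
\]

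The last step is a standard exponential-growth estimate for the group. Since $\Gamma$ is finitely generated, there are constants $C_\Gamma,K_\Gamma>0$ with $\#\{\gamma\in\Gamma:l(\gamma)\leq n\}\leq C_\Gamma e^{K_\Gamma n}$. Grouping elements by their word length, each factor above is bounded by
\[
\sum_{n=0}^{\infty}C_\Gamma e^{(K_\Gamma-C\tau_i)n},
\]
which converges as soon as $C>K_\Gamma/\tau_i$. Choosing $C$ larger than $\max_i K_\Gamma/\tau_i$ makes every factor finite, which proves convergence.

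The main (very minor) obstacle is really bookkeeping: one must make sure the compact set used in Lemma \ref{lem dis} contains both $F$ and $\overline{\mathcal{F}}$ so that a single pair $(\tau,\kappa)$ works uniformly for the points $x\in F$ and $y\in \overline{\mathcal{F}}$; once that is secured the argument is a routine combination of Milnor--\v{S}varc with the fact that balls in $\Gamma$ grow at most exponentially. No subtle analytic input beyond these two facts is needed, since the integrand is bounded and the region $F\times\mathcal{F}^k$ has finite volume.
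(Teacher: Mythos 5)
Your proof is correct and takes essentially the same route as the paper: cover $M^k$ by $\Gamma$-translates of a relatively compact set $\mathcal{F}$, bound $d(x,\gamma_i y_i)$ from below by $\tau\,l(\gamma_i)-\kappa$ using the Milnor--\v{S}varc lemma, and sum over $\gamma_i\in\Gamma$ using the at-most-exponential growth of balls in $\Gamma$. Your version is slightly cleaner in two respects: you pick a Borel fundamental domain so the decomposition is an equality rather than an inequality, and you factor the $k$-fold sum as a product $\prod_i\sum_{\gamma}e^{-C\tau_i l(\gamma)}$, which makes the series manifestly convergent for $C$ large and avoids the paper's separate splitting off of a finite ``head'' $C_N$ of the sum.
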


\begin{lemma}\label{lem:exponential control}
If $\{Q_1(t)\}_{t\in \mathbb{R}^+}$ and $\{Q_2(t)\}_{t\in \mathbb{R}^+}$ have smoothing kernels $K_{Q_i}(x,y,t)$ that are of exponential control, the composition of $\{Q_1(t)\circ Q_2(t)\}_{t\in \mathbb{R}^+}$ is well defined and has a smoothing kernel of exponential control.
\end{lemma}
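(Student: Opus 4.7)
The plan is the standard convolution argument, adapted to the proper cocompact $\Gamma$--setting, combined with the volume-growth estimates already developed in the paper. Concretely, the composition kernel is
\[
K_{Q_1\circ Q_2}(x,y,t)=\int_M K_{Q_1}(x,z,t)\,K_{Q_2}(z,y,t)\,dz,
\]
and one must verify that this integral converges absolutely and that the resulting smooth kernel obeys an estimate of the form \eqref{kernel decay bis-tris}.

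First I would set $\eta:=\min(\eta_1,\eta_2)$ and use the triangle inequality in the form
\[
\tfrac{\eta}{2}\bigl(d(x,z)+d(z,y)\bigr)\;\geq\;\tfrac{\eta}{2}\,d(x,y),
\]
to factor the kernel bound as
\[
|K_{Q_1\circ Q_2}(x,y,t)|\;\leq\;\alpha_1\alpha_2\,t^{-\beta_1-\beta_2}\,
e^{-\tfrac{\eta}{2}d(x,y)/t}\int_M e^{-\tfrac{\eta}{2}(d(x,z)+d(z,y))/t}\,dz.
\]
Once this factorisation is done, everything reduces to showing that for each $c>0$ the integral
\[
I(x,t):=\int_M e^{-c\,d(x,z)/t}\,dz
\]
is bounded uniformly in $x$ for sufficiently small $t>0$ (and then absorbing such a bound into the $t^{-\beta}$ factor).

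For this last step I would proceed exactly as in Lemma \ref{est:exp}. Writing $M=\bigsqcup_{\gamma\in\Gamma}\gamma\cdot\mathcal{F}$ with $\mathcal{F}$ a relatively compact fundamental domain and fixing $x$ in a compact set, the Milnor--\v{S}varc estimate (Lemma \ref{lem dis}) gives $d(x,\gamma z)\geq \tau\,l(\gamma)-\kappa$ for $z\in\mathcal{F}$, so that
\[
I(x,t)\;\leq\;\mathrm{vol}(\mathcal{F})\,e^{c\kappa/t}\sum_{\gamma\in\Gamma} e^{-c\tau\,l(\gamma)/t}.
\]
Using the bound $\#\{\gamma\in\Gamma:l(\gamma)\leq k\}\leq C_\Gamma e^{K_\Gamma k}$ already used in the paper, the series converges as soon as $c\tau/t>K_\Gamma$, i.e.\ for $t$ small enough, and the bound is uniform in $x$ on a compact set; by $\Gamma$-invariance of $d$ it is then uniform in $x\in M$. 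This gives an estimate of the form $I(x,t)\leq A\,e^{c\kappa/t}$ for small $t$, which combined with the displayed inequality above produces a bound
\[
|K_{Q_1\circ Q_2}(x,y,t)|\;\leq\;\alpha\,t^{-(\beta_1+\beta_2)}\,e^{-\eta'\,d(x,y)/t}
\]
with some new $\alpha,\eta'>0$ (shrinking $\eta/2$ slightly to absorb the $e^{c\kappa/t}$ term, which is admissible because for $d(x,y)$ staying bounded we still keep exponential decay in $d(x,y)/t$; and for small $t$ the constant factor is controlled). This is the desired exponential control. Absolute convergence of the defining integral is obtained along the way, and $\Gamma$-invariance of $K_{Q_1\circ Q_2}$ is immediate from $\Gamma$-invariance of the $K_{Q_i}$; smoothness follows from differentiation under the integral sign, justified by the same kind of exponential bounds applied to derivatives (which are part of the smoothing character of each $Q_i(t)$).

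The only genuinely non-routine point is the volume estimate for $I(x,t)$, which is why cocompactness of the $\Gamma$--action is essential: in full generality $\int_M e^{-c d(x,z)/t}dz$ need not be finite, but here it is controlled by the exponential growth rate of $\Gamma$, exactly as in Lemma \ref{est:exp}. Everything else is a mechanical consequence of the triangle inequality and Fubini.
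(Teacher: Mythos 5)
Your strategy mirrors the paper's: after applying the two exponential bounds, set $\eta=\min(\eta_1,\eta_2)$, use the triangle inequality $d(x,z)+d(z,y)\geq d(x,y)$ to pull a factor $e^{-\eta d(x,y)/2t}$ out of the convolution, and bound the remaining spatial integral using cocompactness and the Milnor--\v{S}varc lemma. This is exactly the route in the paper's own proof, which is just terser about the last step (a single appeal to Lemma~\ref{est:exp}).

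However, your explicit estimate of $I(x,t)=\int_M e^{-c\,d(x,z)/t}\,dz$ contains a real error. You arrive at $I(x,t)\leq A\,e^{c\kappa/t}$, a bound blowing up as $t\to 0^+$, and then try to absorb $e^{c\kappa/t}$ by replacing $\eta/2$ with a smaller $\eta'$. This cannot work: $e^{c\kappa/t}$ is independent of $d(x,y)$, so near the diagonal $d(x,y)=0$ the product $e^{-\eta d(x,y)/2t}\,e^{c\kappa/t}\geq e^{c\kappa/t}\to\infty$, whereas the target $\alpha\,t^{-(\beta_1+\beta_2)}\,e^{-\eta' d(x,y)/t}$ stays bounded by $\alpha\,t^{-(\beta_1+\beta_2)}$; no choice of $\eta'<\eta/2$ repairs this, because the deficit at $d(x,y)=0$ is superpolynomial in $1/t$. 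The spurious factor comes from applying the Milnor--\v{S}varc lower bound $d(x,\gamma z)\geq \tau\,l(\gamma)-\kappa$ to \emph{all} $\gamma\in\Gamma$: for the finitely many $\gamma$ with $\tau\,l(\gamma)<\kappa$ this is worse than the trivial $d\geq 0$, and after dividing by $t$ and exponentiating it costs $e^{c\kappa/t}$. Two clean fixes: either split the $\gamma$-sum at $l(\gamma)\leq N$ and bound the small-$l(\gamma)$ contribution by a constant ($e^{-cd/t}\leq 1$), as is in fact done in the proof of Lemma~\ref{est:exp}; or, more simply---and this is what the paper's appeal to Lemma~\ref{est:exp} implicitly uses---exploit monotonicity in $1/t$: for $0<t\leq t_0$ one has $e^{-c\,d(x,z)/t}\leq e^{-c\,d(x,z)/t_0}$, hence $I(x,t)\leq I(x,t_0)$, and the right side is a finite constant once $c/t_0$ exceeds the threshold of Lemma~\ref{est:exp}; $\Gamma$-equivariance of $d$ together with cocompactness then makes the bound uniform in $x$, with no $e^{c\kappa/t}$ factor anywhere.
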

\begin{proof}
Since the smoothing kernels $K_{Q_i}(x,y,t)$ ($i=1,2$) are of exponential control, there are $\alpha_i, \beta_i, \eta_i > 0$, $i=1,2$, 
\[
\Big| K_{Q_i} (x, y, t)\Big|\leq  \alpha_i \cdot t^{-\beta_i} \cdot e^{-\eta_i \cdot \frac{d(x, y)}{t}}.
\]
Take $\eta=\min(\eta_1, \eta_2)$. By Lemma \ref{est:exp}, for sufficiently small $t$, 
\[
\int_M e^{-\eta_1\frac{d(x,z)}{2t}} e^{-\eta_2\frac{d(z,y)}{2t}} dz\leq \int_M e^{-\eta\frac{d(x,z)}{2t}} e^{-\eta\frac{d(z,y)}{2t}} dz
\]
is integrable. We can derive the exponential control property of the composition by the following observation
\[
|\int_M K_{Q_1}(x,z,t)K_{Q_2}(z, y,t)d y|\leq \int_M |K_{Q_1}(x,z,t)K_{Q_2}(z, y,t)|d y\leq C t^{-\beta_1-\beta_2} e^{-\eta\frac{d(x,y)}{2t}} \int_M e^{-\eta\frac{d(x,z)}{2t}} e^{-\eta\frac{d(z,y)}{2t}} dz.
\]
\end{proof}

\begin{lemma}\label{prop:moving}
Let $f_0 \in C^\infty_c(M)$. We assume that we have 
Volterra-related  smoothing kernels $Q_i$ satisfying the exponential control property
and such that
\[
\lim_{t \to 0^+}\Tr^{{\rm geo}}\left(\gamma^{-1} f_0Q_1 \Psi Q_2  \ldots  \Psi Q_k^\gamma\right) = 0,
\]
for a compactly supported smooth function $\Psi$, which is equal to $1$ on the support of $f_0$.  Then
\[
\lim_{t \to 0^+}\Tr^{{\rm geo}}\left(\gamma^{-1} f_0Q_1  Q_2  \ldots  Q_k^\gamma\right) = 0.
\]
\end{lemma}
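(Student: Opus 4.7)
The plan is to write $1 = \Psi + (1-\Psi)$ and insert this identity at each of the $k-1$ slots between consecutive factors $Q_iQ_{i+1}$ in the product $f_0 Q_1 Q_2 \cdots Q_k$. This expands
\[
\Tr^{{\rm geo}}\bigl(\gamma^{-1} f_0 Q_1 Q_2 \cdots Q_k\bigr)
\]
as a sum of $2^{k-1}$ terms, indexed by a choice of $\Psi$ or $(1-\Psi)$ at each slot. The term in which every slot is filled by $\Psi$ is precisely the expression in the hypothesis, hence its limit as $t \to 0^+$ vanishes. It will therefore remain to show that every \emph{mixed} term, i.e.\ every term containing at least one factor $(1-\Psi)$, also has vanishing limit.

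Fix such a mixed term and let $j$ be a slot at which $(1-\Psi)$ is inserted. Since $\Psi \equiv 1$ on $\supp(f_0)$ and $\supp(f_0)$ is compact, there exists $\delta>0$ with $d(\supp(f_0),\supp(1-\Psi)) \geq \delta$. The mixed term has the form (up to bundle traces)
\[
\int_{M^{k}} f_0(x_0)\, K_{Q_1}(x_0,x_1,t)\cdots (1-\Psi)(x_j)\cdots K_{Q_k}(x_{k-1},\gamma x_0,t)\, dx_0\cdots dx_{k-1},
\]
possibly with further $\Psi$ or $(1-\Psi)$ insertions. Setting $x_k := \gamma x_0$, the integrand vanishes unless $x_0 \in \supp(f_0)$ and $x_j \in \supp(1-\Psi)$; by the triangle inequality this forces
\[
\sum_{i=0}^{k-1} d(x_i,x_{i+1}) \;\geq\; d(x_0,x_j) \;\geq\; \delta .
\]

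By the exponential-control hypothesis, for each $i$ we have $|K_{Q_i}(x,y,t)|\leq \alpha_i t^{-\beta_i} e^{-\eta_i d(x,y)/t}$. With $\eta := \min_i \eta_i$, the product of the $k$ kernels is therefore bounded pointwise by $C\,t^{-B}\exp\!\bigl(-\tfrac{\eta}{t}\sum_i d(x_i,x_{i+1})\bigr)$ for constants $C,B>0$. I would split this exponent in half: the first half, combined with the lower bound above, contributes the pointwise factor $e^{-\eta\delta/(2t)}$, while the second half is integrated against the measures $dx_0\cdots dx_{k-1}$. Using the compactness of $\supp(f_0)$ together with Lemma~\ref{est:exp} (which is tailored precisely to iterated Gaussian-type integrals on a $\Gamma$-proper manifold) the remaining integral is bounded uniformly by $C' t^{-B'}$. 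Hence the mixed term is $O\bigl(t^{-B''} e^{-\eta\delta/(2t)}\bigr)$ and vanishes as $t\downarrow 0^+$. Summing the vanishing hypothesis term with the finitely many vanishing mixed terms yields the claim.

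The main obstacle lies in the last estimate: the intermediate variables $x_1,\dots,x_{k-1}$ range over the non-compact manifold $M$, so one cannot simply bound the integral by a volume. The role of Lemma~\ref{est:exp} (via the proper cocompact action and the Milnor--\v{S}varc lemma) is to ensure that, starting from $x_0$ in a compact set, the iterated integrals of the exponential-control kernels grow at most polynomially in $1/t$, so that the Gaussian factor $e^{-\eta\delta/(2t)}$ extracted from the lower bound on distances dominates.
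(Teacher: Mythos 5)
Your proof takes a genuinely different route from the paper's. The paper localizes $f_0$ via a partition of unity, absorbs \emph{one} inserted $(1-\Psi)$ at a time, and then invokes Lemma~\ref{lem:exponential control} to conclude that the composite $Q_1Q_2$ again satisfies the exponential-control bound, so that the argument can be iterated. Your proposal instead expands $1=\Psi+(1-\Psi)$ in every slot simultaneously, producing $2^{k-1}$ terms and estimating each mixed term directly against the individual kernels $K_{Q_i}$. This is a small but genuine simplification: you never need to know that exponential control is stable under composition, and you never need to cycle back through the argument inductively. Your reduction of the chain-linked integral to Lemma~\ref{est:exp} also needs a one-line observation that for each $j$ one has $\sum_{i}d(x_i,x_{i+1})\ge d(x_0,x_j)$, hence $(k-1)\sum_i d(x_i,x_{i+1})\ge\sum_j d(x_0,x_j)$, putting the integrand into the star-shaped form the lemma addresses; this is implicit in your write-up but worth making explicit.

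There is, however, one genuine gap, and it is the same subtlety the paper must also finesse. You assert that since $\Psi\equiv 1$ on $\supp(f_0)$ and $\supp(f_0)$ is compact, there is a $\delta>0$ with $d(\supp(f_0),\supp(1-\Psi))\ge\delta$. This does \emph{not} follow: $\supp(1-\Psi)$ is the closure of $\{\Psi\ne 1\}$ and may share boundary points with $\supp(f_0)$ even when $\Psi=1$ there, in which case the distance between the two supports is zero. What you actually need is $\Psi\equiv 1$ on an \emph{open neighborhood} of $\supp(f_0)$. The paper deals with this by replacing $\Psi$ with a new cut-off equal to $1$ on an $\epsilon$-ball $B(\supp(f_0),\epsilon)$ (see the step around \eqref{condition:supp}), and tacitly using that the vanishing hypothesis applies to this new cut-off as well --- which is justified because, in the application (Lemma~\ref{lem:cutoff} feeding into Theorem~\ref{theo:moving}), the $\Psi$-sandwiched trace vanishes for arbitrary compactly supported $\Psi$. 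Your argument needs the same repair: either strengthen the hypothesis to $\Psi\equiv 1$ on a neighborhood of $\supp(f_0)$, or replace the given $\Psi$ by such a function before expanding and note, as the paper does, that the hypothesis still applies. Once that is in place, the rest of your estimate (splitting the Gaussian, extracting $e^{-\eta\delta/(2t)}$, and controlling the remaining iterated integral by Lemma~\ref{est:exp}) goes through.
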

\begin{proof}
Using partition of unit, we can write 
\[
f_0(y) = \sum_i \rho_i(y) f_0(y). 
\]
We can choose the support of $\rho_i$ to be arbitrarily small. Thus, we can assume that $f_0$ is supported in a small neighborhood $U_y$ for some $y \in M$. Shrinking the support of $f_0$ if necessary, we can find a small $\epsilon > 0$ and a function $\Psi \in C^\infty_c(M)$ such that 
\begin{equation}\label{condition:supp}
\supp(\Psi) \subseteq U_y, \quad \Psi \equiv 1 \text{ on } B\left(\supp(f_0),  \epsilon \right).	
\end{equation}
By assumption, we have
\begin{equation}\label{equ:geo0}
\lim_{t \to 0^+}\Tr^{{\rm geo}}\left(\gamma^{-1} f_0Q_1 \Psi Q_2  \Psi\ldots \Psi Q_k^\gamma\right) = 0. 
\end{equation}
Moreover, we know that ,
\[
\left\|K_{Q_i}(x, y,t)\right\|\leq \alpha_i  \cdot t^{-\beta_i} \cdot  e^{-\eta_i \cdot \frac{d(x, y)}{t}} 
\]
for some positive constants $\alpha_i , \beta_i, \eta_i$. By the condition in \eqref{condition:supp}, 
\[
f_0(x) (1-\Psi )(y) = 0
\]
whenever $\operatorname{dist}(x, y) < \epsilon$. Thus 
\[
\left\|f_0(x) \cdot K_{Q_1}(x, y,t) \cdot  (1-\Psi )(y)\right\|\leq \alpha_1 \cdot t^{-\beta_1} \cdot  e^{-\eta_1 \cdot \frac{\epsilon + d(x, y)}{2t}} 
\]
It follows that 
\begin{multline}\label{equ:epsilon}
\left|\Tr^{{\rm geo}}\left(\gamma^{-1} f_0Q_1 (1-\Psi )Q_2 \Psi  \ldots \Psi Q_k^\gamma\right) \right|\\
\leq  \int_{U_y \times M \times U_y^{\times (k-2)}} \left\| f_0(x_0) \cdot K_{Q_1}(x_0, y,t) \cdot  (1-\Psi )(y)  K_{Q_2}(y, x_2,t) \Psi(x_2) \ldots \Psi(x_l) K_{Q_l}(x_l, \gamma x_0,t)\right\| dx_0 dy dx_2 \ldots dx_k\\
\leq C \int_{U_y \times M } \left\| f_0(x_0) \cdot K_{Q_1}(x_0, y,t) (1-\Psi )(y) \right\| dx_0 dy \\
\leq C\alpha t^{- \beta}  \cdot e^{-\eta \cdot \frac{\epsilon}{2t}}\int_{U_y \times M }e^{-\eta \cdot \frac{d(x_0, y)}{t}} dx_0 dy.
\end{multline} 
By Lemma \ref{est:exp}, we know that 
\[
\int_{U_y \times M }e^{-\eta \cdot \frac{d(x_0, y)}{t}} dx_0 dy 
\]
is uniformly bounded in the $t$-variable, for $t\leq 1$. 
Because of the presence of the factor $e^{-\eta \frac{\epsilon}{2t}}$ in (\ref{equ:epsilon}), we have shown 
\[
\lim_{t \to 0^+}\Tr^{{\rm geo}}\left(\gamma^{-1} f_0Q_1 (1-\Psi )Q_2 \Psi  \ldots \Psi Q_k^\gamma\right) = 0.
\]
Together with (\ref{equ:geo0}) we obtain 
\[
\lim_{t \to 0^+}\Tr^{{\rm geo}}\left(\gamma^{-1} f_0Q_1Q_2 \Psi  \ldots \Psi Q_k^\gamma\right) = 0,
\]
Note now that it follows from Lemma \ref{lem:exponential control} that the composition  $Q_1 Q_2$ satisfies the exponential-control condition as well, thus we can repeat the argument and conclude by induction that 
\[
\lim_{t \to 0^+}\Tr^{{\rm geo}}\left(\gamma^{-1} f_0Q_1  Q_2  \ldots  Q_l^\gamma\right) = 0.
\]
\end{proof}

\begin{theorem}\label{theo:moving}
Suppose that $Q_1,\cdots, Q_k$ are Volterra-related smoothing kernels
on $M\times M\times \mathbb{R}^+$ that are of exponential control and that are of order $m'_1, \cdots, m'_k$ and Getzler order along the fixed point set $M^\gamma$ equal to $m_1, \cdots, m_k$.  If $m_1+\cdots +m_k<-2k$,  then for any $f_0\in C^\infty_c(M)$, 
\[
\lim_{t \to 0^+}\Tr^{{\rm geo}}\left(\gamma^{-1} f_0Q_1  Q_2  \ldots  Q_k^\gamma\right) = 0.
\]
\end{theorem}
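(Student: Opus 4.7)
The plan is to reduce to two geometrically separated cases via a partition of unity on $\operatorname{supp}(f_0)$, and in each case invoke already-established ingredients: the exponential-control estimate of the composition (Lemma~\ref{lem:exponential control}) handles the piece of $f_0$ that is supported away from $M^\gamma$, while Lemma~\ref{lem:cutoff} together with Lemma~\ref{prop:moving} handles the piece supported near $M^\gamma$. By linearity of $\Tr^{\rm geo}(\gamma^{-1}\,\cdot\,Q_1\cdots Q_k^\gamma)$ in $f_0$, it suffices to treat each partition element separately.

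First I would treat the case where $\operatorname{supp}(f_0)$ is disjoint from $M^\gamma$. Since $x\mapsto d(x,\gamma x)$ is continuous, strictly positive off $M^\gamma$, and $\operatorname{supp}(f_0)$ is compact, there is a constant $c>0$ with $d(x,\gamma x)\geq c$ on $\operatorname{supp}(f_0)$. Iterating Lemma~\ref{lem:exponential control}, the convolution $Q_1\cdots Q_k$ is again exponentially controlled, so there are constants $\alpha,\beta,\eta>0$ with
\[
|K_{Q_1\cdots Q_k}(x,y,t)|\leq \alpha\, t^{-\beta}\, e^{-\eta\, d(x,y)/t}
\]
for all $t$ sufficiently small. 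Substituting $y=\gamma x$ on $\operatorname{supp}(f_0)$ and integrating yields
\[
\bigl|\Tr^{\rm geo}(\gamma^{-1} f_0 Q_1\cdots Q_k^\gamma)\bigr|\leq C\, t^{-\beta}\, e^{-\eta c/t}\xrightarrow[t\to 0^+]{}0.
\]

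Next I would treat the case where $\operatorname{supp}(f_0)$ sits in a small open subset $U$ of a $\gamma$-invariant tubular chart around a point $x_0\in M^\gamma_a$. Choose $\Psi\in C^\infty_c(M)$ with $\Psi\equiv 1$ on $\operatorname{supp}(f_0)$ and $\operatorname{supp}(\Psi)\subset U$. Since $f_0\Psi=f_0$ pointwise and multiplication operators commute, Lemma~\ref{prop:moving} applied with this $\Psi$ reduces the claim to showing
\[
\lim_{t\to 0^+}\Tr^{\rm geo}\bigl(\gamma^{-1} f_0 Q_1\Psi Q_2 \Psi\cdots \Psi Q_k^\gamma\bigr)=0.
\]
This is precisely the conclusion of Lemma~\ref{lem:cutoff} with the outermost cutoff replaced by $f_0$: the only use made of that outermost $\Psi$ in the proof of Lemma~\ref{lem:cutoff} is to restrict the outer $x$-integration to a single tubular chart, where $J(y,\gamma,t)$ is defined and where the Volterra--Getzler rescaling of Lemma~\ref{lem:symbolest} applies. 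Any compactly supported smooth function supported in the same chart plays this role, and since the Getzler-order hypothesis $\sum m_i<-2k$ continues to force $\sigma[J(y,\gamma,t)]^{(a,l)}=\mathcal{O}(\sqrt{t})$ for every $l\geq 0$, the same short-time estimate gives the vanishing.

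The key technical point, and the step that does require a small argument rather than a direct citation, is the iteration of Lemma~\ref{lem:exponential control}: one must check that the exponential control of the composition is uniform for $t$ in a punctured neighborhood of $0$, with constants depending only on the individual constants of the $Q_i$. This follows inductively from Lemma~\ref{est:exp} applied to the product of Gaussian-type bounds, choosing the decay constant $\eta$ small enough at each step to absorb the convolution integral while preserving exponential decay in the outer variables. With this uniformity in hand, the two cases above combine and the theorem follows.
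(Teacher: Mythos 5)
Your proof is correct and follows essentially the same route as the paper's (which is given there as a one-line citation of Lemmas \ref{lem:Jy}, \ref{lem:symbolest}, \ref{lem:cutoff}, \ref{prop:moving}): you pass through Lemma \ref{prop:moving} to strip the auxiliary cutoffs and observe, as the paper implicitly requires, that the argument of Lemma \ref{lem:cutoff} goes through verbatim with $f_0$ in the outer slot. The only cosmetic difference is that you isolate the sub-case $\operatorname{supp}(f_0)\cap M^\gamma=\emptyset$ and handle it directly from the exponential control, whereas the paper's proof absorbs this into the Ponge--Wang localization-to-$M^\gamma$ step used in Lemma \ref{lem:cutoff}; both are fine.
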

\begin{proof}
This result follows from Lemmas \ref{lem:Jy}, \ref{lem:symbolest}, \ref{lem:cutoff}, \ref{prop:moving}.
\end{proof}

\subsection{Some useful Volterra operators}\label{subsection:useful}
In this subsection we prove that the kernels appearing in the index pairing via the Connes-Moscovici  
projector for $tD$ are indeed Volterra-related and we compute their Getzler order.
Our results are summarized in the table at the end of the subsection.

\medskip
\noindent
 Let us consider the Volterra operator
\begin{equation}\label{the-volterra}
Q = \left(\frac{1}{2}D^2+ \frac{\partial}{\partial t}\right)^{-1}  \circ   \left(\frac{3}{2}D^2+ \frac{\partial}{\partial t}\right)^{-1}.
\end{equation}
Notice that this operator  has Getzler order -4.
We denote by $K_{Q}(x, y, t)$ the distributional kernel of $Q$. We also consider
the operator
	\[
	t\left( \frac{I- e^{-t D^2}}{tD^2} \right) e^{-\frac{t}{2}D^2}
	\]
for $t>0$ and we denote by $\kappa_t(x, y)$ its kernel.

\begin{lemma}
\label{lemma-useful-identity}
The identity
\begin{equation}
\label{useful-identity}
t \,\left( \frac{I- e^{-t D^2}}{tD^2} \right) e^{-\frac{t}{2}D^2} =  \int_{\frac{1}{2}}^{\frac{3}{2}} t\,e^{-s_1tD^2}\; ds_1,
\end{equation}
holds true and its kernel $\kappa_t(x,y)$ is of rapid exponential decay.
\end{lemma}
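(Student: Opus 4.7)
The identity is a direct consequence of the functional calculus applied to the fundamental theorem of calculus, while the decay of the kernel is inherited from standard Gaussian heat-kernel bounds. Here is the plan.

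First I would verify the scalar identity underlying the operator identity: for every $\lambda \geq 0$, a direct computation gives
\[
\int_{1/2}^{3/2} t\, e^{-st\lambda}\, ds \;=\; \frac{e^{-t\lambda/2} - e^{-3t\lambda/2}}{\lambda} \;=\; \Bigl(\frac{1-e^{-t\lambda}}{t\lambda}\Bigr)\, t\, e^{-t\lambda/2},
\]
where at $\lambda = 0$ all three expressions are interpreted by continuous extension and equal $t$. Since $\lambda \mapsto (1-e^{-t\lambda})/(t\lambda)$ and $\lambda \mapsto e^{-st\lambda}$ are bounded continuous functions on $[0,\infty) \supseteq \spec(D^2)$, the bounded functional calculus of the nonnegative self-adjoint operator $D^2$ promotes the scalar identity to the claimed operator identity; the right-hand integral converges in operator norm because $s \mapsto e^{-stD^2}$ is strongly continuous, and the interchange with the spectral resolution is justified by Fubini.

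For the kernel bound I would invoke the standard off-diagonal Gaussian estimates for the heat kernel $k_r(x,y)$ of $e^{-rD^2}$: on our compact manifold there exist constants $C_{\alpha\beta}, c > 0$ such that for $r \in (0,T]$,
\[
|\nabla^{\alpha}_x \nabla^{\beta}_y k_r(x,y)| \;\leq\; C_{\alpha\beta}\, r^{-(n+|\alpha|+|\beta|)/2}\, e^{-c\,d(x,y)^2/r}.
\]
From Step 1 we have $\kappa_t(x,y) = \int_{1/2}^{3/2} t\, k_{st}(x,y)\, ds$, and pulling absolute values inside yields
\[
|\nabla^{\alpha}_x \nabla^{\beta}_y \kappa_t(x,y)| \;\leq\; C'_{\alpha\beta}\, t^{1-(n+|\alpha|+|\beta|)/2}\, e^{-c'\,d(x,y)^2/t}
\]
uniformly for $t$ in any bounded subinterval of $(0,\infty)$. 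A Gaussian bound then dominates every exponential: completing the square gives $c'\,d(x,y)^2/t \geq q\, d(x,y) - q^2 t/(4c')$ for each $q > 0$, so that for every multi-index and every $q$ there is a constant $A_{q,t,\alpha,\beta}$ with
\[
|\nabla^{\alpha}_x \nabla^{\beta}_y \kappa_t(x,y)| \;\leq\; A_{q,t,\alpha,\beta}\, e^{-q\,d(x,y)},
\]
which is precisely the rapid exponential decay required.

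The hard part is not really hard: the only subtleties are, first, that one must handle the functional calculus carefully in case $0$ lies in the spectrum of $D^2$, which is resolved by the bounded continuous extension $(1-e^{-t\lambda})/(t\lambda) \to 1$ as $\lambda \to 0^+$; and second, one must quote an off-diagonal Gaussian bound for $k_r$ that is uniform as $s$ ranges over the compact interval $[1/2,3/2]$, which is standard (see e.g.\ \cite{BGV}).
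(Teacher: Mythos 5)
Your proof is correct and follows essentially the same route as the paper's: verify the scalar identity $(1-e^{-t\lambda})/(t\lambda)\cdot e^{-t\lambda/2}=\int_{1/2}^{3/2}e^{-st\lambda}\,ds$, promote it to an operator identity by the functional calculus for the self-adjoint operator $D^2$ (with a Fubini argument for the interchange of integrals), and then read off $\kappa_t(x,y)$ as an integral of heat kernels over $s\in[1/2,3/2]$. The only stylistic difference is in the last step: the paper simply cites the fact, already in hand, that the heat kernel $K_t(x,y)$ is of rapid exponential decay and pushes the bound through the compact $s$-integral, whereas you re-derive exponential decay from off-diagonal Gaussian estimates via completion of the square. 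Both are valid; your version is a bit more self-contained, the paper's a bit leaner. Note also that the subsection where this lemma lives is stated under the standing hypothesis that $M$ is a closed manifold (see the opening of the section), so your appeal to "our compact manifold" is consistent with the paper's setting, although the result is applied later in the $\Gamma$-proper situation, where the same Gaussian bounds hold by bounded geometry.
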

\begin{proof}
Let
\[
h_t(x):= \left(\frac{1-e^{-tx}}{tx}\right)e^{-\frac{t}{2}x}.
\]
We then have that the operator on the left hand side of \eqref{useful-identity} is equal to $h_t(D^2)$, the bounded operator obtained by applying the Borel functional calculus to the selfadjoint  operator $D^2$.
That is, 
\[
\left<\left(\frac{I-e^{-tD^2}}{tD^2}\right)e^{-\frac{t}{2}D^2}\psi_1,\psi_2\right>=\int_0^\infty h_t(\lambda)dE_{\psi_1,\psi_2}(\lambda),\qquad \psi_1,\psi_2\in L^2(M), 
\]
where $E$ denotes the resolution of the identity on $\mathbb{R}$ deteremined by $D^2$. Because of the identity
\begin{equation}
\label{eq-efr}
\left(\frac{1-e^{-tx}}{tx}\right)e^{-\frac{t}{2}x}=\int_{\frac{1}{2}}^{\frac{3}{2}}e^{-stx}ds, 
\end{equation}
we therefore have 
\[
\left<\left(\frac{I-e^{-tD^2}}{tD^2}\right)e^{-\frac{t}{2}D^2}\psi_1,\psi_2\right>=\int_0^\infty \int_{\frac{1}{2}}^{\frac{3}{2}}e^{-st\lambda}ds dE_{\psi_1,\psi_2}(\lambda).
\]
Write $\mu=ds\times dE_{\psi_1,\psi_2}$ for the product measure on $\mathbb{R}^+\times[\frac{1}{2},\frac{3}{2}]$ and notice that 
\[
\int_{\mathbb{R}^+\times[\frac{1}{2},\frac{3}{2}]}e^{-st\lambda}d|\mu(x,y)|\leq \int_{\mathbb{R}^+\times[\frac{1}{2},\frac{3}{2}]}e^{-\frac{t\lambda}{2}}d|\mu(x,y)|
= \int_0^\infty e^{-\frac{t\lambda}{2}}d|E_{\psi_1,\psi_2}|=\left|\left<e^{-\frac{t}{2}D^2}\psi_1,\psi_2\right>\right|<\infty.
\]
We can therefore apply Fubini's theorem and change the order of integration:
 \[
\left<\left(\frac{I-e^{-tD^2}}{tD^2}\right)e^{-\frac{t}{2}D^2}\psi_1,\psi_2\right>=\int_{\frac{1}{2}}^{\frac{3}{2}}\int_0^\infty e^{-st\lambda} dE_{\psi_1,\psi_2}(\lambda)ds
=\int_{\frac{1}{2}}^{\frac{3}{2}}\left<e^{-stD^2}\psi_1,\psi_2\right>ds.
\]
Denoting by $K_t(x,y)$ the usual heat kernel, we see from this equality that 
\begin{align*}
\left(\frac{I-e^{-tD^2}}{tD^2}\right)e^{-\frac{t}{2}D^2}\psi(x)&=\int_{\frac{1}{2}}^{\frac{3}{2}}(e^{-stD^2}\psi)(x) ds\\
&=\int_{\frac{1}{2}}^{\frac{3}{2}}\int_MK_{st}(x,y)\psi(y)dyds\\
&=\int_M\int_{\frac{1}{2}}^{\frac{3}{2}}K_{st}(x,y)\psi(y)dsdy,
\end{align*}
where we used Fubini again, this time using  the fact that $K_t(x,y)$ is of exponential rapid decay in $y$ and $[1/2,3/2]$ is compact.
This shows that the kernel of the operator in \eqref{useful-identity} is given by
\[
\kappa_t(x,y)=\int_{\frac{1}{2}}^{\frac{3}{2}}K_{st}(x,y)ds.
\]
From this we deduce that
\begin{align*}
\sup_{x,y}\left|e^{qd(x,y)}K'_t(x,y)\right|&=\sup_{x,y}\left|e^{qd(x,y)}\int_{\frac{1}{2}}^{\frac{3}{2}}K_{st}(x,y)ds\right|\\
&\leq \sup_{x,y}\int_{\frac{1}{2}}^{\frac{3}{2}}\left|e^{qd(x,y)}K_{st}(x,y)\right|ds<\infty.
\end{align*}
This proves that the kernel $\kappa_t(x,y)$ is of rapid exponential decay.
\end{proof}

\noindent
Let us go back to the Volterra operator $Q$ in \eqref{the-volterra} and let $K_Q$ be its Volterra
kernel.

\begin{lemma}\label{kernel lem 0.5}
We have that for $t>0$
\[
t  \cdot \kappa_t(x, y) = K_{Q}(x, y, t). 
\]
\begin{proof}
 For any $u \in C^\infty_+(\mathbb{R}, L^2(M, S))$, 
\begin{align*}
  &\left(\frac{1}{2}D^2+ \frac{\partial}{\partial t}\right)^{-1}  \cdot   \left(\frac{3}{2}D^2+ \frac{\partial}{\partial t}\right)^{-1}  u(x, t)\\
  =&\int_0^\infty \int_0^\infty  e^{-t_0 \frac{1}{2}D^2} \cdot e^{-t_1 \frac{3}{2}D^2} u(t - t_0-t_1)dt_0 dt_1 \\	
  &\text{let } \sigma = t_0+t_1, s_0 = \frac{t_0}{\sigma}, s_1 = \frac{t_1}{\sigma} \\
  =&\int_0^\infty \int_0^1 \sigma \cdot  e^{-(1-s_1) \frac{\sigma}{2}D^2} \cdot e^{- s_1 \frac{3\sigma}{2}D^2}  u(t - \sigma) \; ds_1d\sigma\\
    =&\int_0^\infty \int_{\frac{1}{2}}^{\frac{3}{2}} \sigma \cdot  e^{-s_1\sigma D^2}  u(t - \sigma) \; ds_1d\sigma
\end{align*}
The lemma follows from Lemma \ref{lemma-useful-identity}.
\end{proof}
\end{lemma}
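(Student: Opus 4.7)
The plan is to compute $K_Q(x,y,t)$ directly by expressing $Q$ as a time-convolution of heat kernels, and then match the result with $t\cdot\kappa_t(x,y)$ using the semigroup property together with Lemma~\ref{lemma-useful-identity}.

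First, I would identify the Volterra Schwartz kernel of each factor. For any constant $\alpha>0$, the operator $(\alpha D^2+\partial_t)^{-1}$ acts on $C^\infty_+(\mathbb{R}, L^2(M,S))$ by convolution with the rescaled heat semigroup,
\[
[(\alpha D^2+\partial_t)^{-1} u](t)=\int_0^\infty e^{-s\alpha D^2}u(t-s)\,ds,
\]
so its Volterra kernel is supported in $\{t\geq 0\}$ and equals $K_{\alpha t}(x,y)$ there, where $K_t$ denotes the usual heat kernel of $D^2$. Composition in the Volterra calculus corresponds to convolution in $t$ combined with the ordinary spatial composition of integral kernels. Applied to $Q$, this yields
\[
K_Q(x,y,t)=\int_0^t\int_M K_{s/2}(x,z)\,K_{3(t-s)/2}(z,y)\,dz\,ds.
\]

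Next, I would collapse the spatial integral using the heat semigroup property $\int_M K_a(x,z)K_b(z,y)\,dz=K_{a+b}(x,y)$. With $a=s/2$ and $b=3(t-s)/2$ one has $a+b=3t/2-s$, so
\[
K_Q(x,y,t)=\int_0^t K_{3t/2-s}(x,y)\,ds.
\]
The substitution $u=3t/2-s$ followed by the rescaling $u=\sigma t$ rewrites this one-dimensional integral as $t\int_{1/2}^{3/2}K_{\sigma t}(x,y)\,d\sigma$, which by Lemma~\ref{lemma-useful-identity} is exactly $t\cdot\kappa_t(x,y)$.

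Since each step is a routine manipulation of the heat semigroup and of the Volterra convolution, there is no serious obstacle. The only point that requires care is to distinguish the two different roles played by the time variable: on the one hand as the argument encoding Volterra composition as convolution in $t$, and on the other hand as the rescaled heat-kernel time $\alpha t$ inside each individual factor. An application of Fubini to switch the time convolution with the spatial integral also needs a short justification based on the exponential decay of the heat kernel, but this is already implicit in the proof of Lemma~\ref{lemma-useful-identity}.
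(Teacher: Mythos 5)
Your proof is correct and follows essentially the same route as the paper: both reduce to the one-parameter family of rescaled heat kernels $\int_{1/2}^{3/2}e^{-s\sigma D^2}\,ds$ and then invoke Lemma~\ref{lemma-useful-identity}. The only cosmetic difference is that you collapse the spatial integral via the heat semigroup property first and then perform a single one-dimensional change of variables at the kernel level, whereas the paper works at the operator level with a two-dimensional change of variables on $(t_0,t_1)$ (where the factor $\sigma$ arises as the Jacobian); these are equivalent computations.
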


\begin{lemma}\label{lem:getzler}
At any $x_0\in M$, the Getzler order of $[D^2, f]$ is 1 and that of $[D^2, c(df)]$ is 2. 
\begin{proof}
In terms of synchronous normal coordinates, we write 
\[
D = \sum c(e_i) \nabla_{e_i}.
\]
It follows that 
\begin{eqnarray*}
[D^2, f] &=& Dc(df) + c(df)D= 	 \sum c(e_i) \nabla_{e_i} c(df) + c(df)  \sum c(e_i) \nabla_{e_i}\\
&=& 	 \sum c(e_i) c(\nabla_{e_i}df) +   \sum \left(c(df)c(e_i) + c(e_i) c(df)\right) \nabla_{e_i}\\ 
&=&	\sum c(e_i)c(e_j)e_ie_j(f)+\sum e_j(f)\left(c(e_j)c(e_i)+c(c_i)c(e_j)\right)\nabla_{e_j}\\
&=& 	-\operatorname{Hess}(f) -2\sum \left(\partial_i f\right) \cdot \nabla_{e_i}.
\end{eqnarray*}
Here $\operatorname{Hess}(f)=\sum_i e_ie_i(f)$ is a scalar multiplication with Getzler order 0 and $\left(\partial_i f\right) \cdot \nabla_{e_i}$ has order one.\\
For the term $[D^2, c(df)]$, we recall the Weitzenb\"ock identity, 
\[
D^2 =-\sum \nabla_{e_{i}} \nabla_{e_i} + \frac{1}{4}\kappa + F^S. 
\]	
The later two terms commutes with $c(df)$. Thus, 
\begin{multline*}
[D^2, c(df)] = -\sum [\nabla_{e_{i}} \nabla_{e_i}, c(df)]= -\sum \nabla_{e_{i}} \nabla_{e_i}c(df) +\sum c(df)\nabla_{e_{i}} \nabla_{e_i} \\
=-\sum \nabla_{e_{i}} c(df)\nabla_{e_i} - \sum \nabla_{e_{i}} c(\nabla_{e_i}df) +\sum c(df)\nabla_{e_{i}} \nabla_{e_i}\\
=-\sum c(\nabla_{e_i}df)\nabla_{e_{i}}  -\sum \nabla_{e_{i}} c(\nabla_{e_i}df),
\end{multline*}
which shows it has order two. 
\end{proof}

\end{lemma}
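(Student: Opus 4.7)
The plan is to work in synchronous normal coordinates around a point $x_0 \in M^\gamma$, write $D = \sum_i c(e_i)\nabla_{e_i}$ with respect to a local orthonormal frame $\{e_i\}$, and then reduce both commutators to manageable expressions via Leibniz-type identities combined with elementary Clifford algebra. Recall the Getzler assignments relevant here: smooth function multiplication has order $0$, each $\nabla_{e_i}$ contributes order $1$, and each Clifford multiplication $c(e_i)$ also contributes order $1$.

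For $[D^2,f]$, first note that $[D,f]=c(df)$, whence by the Leibniz identity for nested commutators one has
\[
[D^2,f] \;=\; D\,c(df) + c(df)\,D.
\]
The next step is to expand both factors by moving $\nabla_{e_i}$ past the function-valued Clifford multiplication $c(df)$, producing on one hand a purely algebraic piece of the form $\sum_{i} c(e_i)\,c(\nabla_{e_i}df)$ and on the other hand a first-order piece $\sum_i\{c(e_i),c(df)\}\nabla_{e_i}$. The crucial observation is that the Clifford relation $\{c(e_i),c(e_j)\}=-2\delta_{ij}$ forces the purely algebraic piece to collapse to a \emph{scalar} (essentially $-\operatorname{Hess}(f)$), hence to have Getzler order $0$; and it forces $\{c(e_i),c(df)\}=-2\,e_i(f)$, so the second piece is a first-order differential operator with scalar coefficients, carrying Getzler order $1$. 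The Getzler order of $[D^2,f]$ is therefore the maximum, namely $1$.

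For $[D^2,c(df)]$, the strategy is to invoke the Weitzenb\"ock identity $D^2 = -\sum_i \nabla_{e_i}\nabla_{e_i} + \tfrac{1}{4}\kappa + F^S$. Here $\kappa$ is a scalar and $F^S$ is an endomorphism built out of curvature; both commute with the Clifford action $c(df)$ (for the twisting curvature $F^S$ this uses that it acts on the auxiliary bundle rather than on the spinor factor), so these terms drop out of the commutator. One is then reduced to $[-\sum_i \nabla_{e_i}\nabla_{e_i},\,c(df)]$, which by the Leibniz rule becomes $-\sum_i\bigl(\nabla_{e_i}\,c(\nabla_{e_i}df) + c(\nabla_{e_i}df)\,\nabla_{e_i}\bigr)$. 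Each summand is the product of one covariant derivative (Getzler order $1$) with one Clifford multiplication (Getzler order $1$), giving Getzler order $2$.

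The main obstacle here is bookkeeping of Getzler orders: a naive count would give $[D^2,f]$ order $2$ and $[D^2,c(df)]$ order $3$, so the point is to witness the cancellation that lowers each estimate by one. For the first commutator this happens because the symmetric part of $c(e_i)c(e_j)$ in $i,j$ is a scalar (via the Clifford relations); for the second it happens because the zeroth-order curvature pieces in Weitzenb\"ock commute with $c(df)$, leaving only the connection Laplacian to contribute. Once these cancellations are identified, the remaining verification is a routine counting of $\nabla$'s and $c$'s in each residual term.
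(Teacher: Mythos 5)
Your proposal follows the paper's route essentially verbatim: write $D=\sum c(e_i)\nabla_{e_i}$ in synchronous normal coordinates, expand $[D^2,f]=Dc(df)+c(df)D$, use the Clifford relation $\{c(e_i),c(e_j)\}=-2\delta_{ij}$ to collapse the zeroth-order piece to the scalar $-\operatorname{Hess}(f)$ and the first-order piece to $-2\sum(\partial_i f)\nabla_{e_i}$, and for $[D^2,c(df)]$ invoke the Lichnerowicz identity $D^2=\nabla^*\nabla+\tfrac14\kappa+F^S$ and reduce to the commutator with the connection Laplacian.

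One correction to the justification you interposed. You write that $F^S$ commutes with $c(df)$ because ``it acts on the auxiliary bundle rather than on the spinor factor.'' That is not the case: for a twisted Dirac operator on $E=\mathcal{S}\otimes V$, the curvature term in the Lichnerowicz formula is $c(F^V)=\tfrac12\sum F^V(e_i,e_j)\,c(e_i)c(e_j)$, which acts on the spinor factor by degree-two Clifford multiplication and does \emph{not} literally commute with $c(df)$. What actually makes this term harmless is a Getzler-order count: the Clifford commutator $[c(e_i)c(e_j),c(e_k)]$ is either zero or $\pm 2c(e_\ell)$, so $[c(F^V),c(df)]$ is a purely algebraic, derivative-free operator of Clifford degree one, hence of Getzler order at most $1$. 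This is strictly below the order-$2$ contribution coming from $[-\sum\nabla_{e_i}^2,c(df)]$, so the conclusion (Getzler order $\le 2$) is untouched, but the reason is that the offending commutator is of lower order, not that it vanishes. The paper glosses over the same point by simply asserting ``commutes''; your attempt to fill that gap substitutes an incorrect mechanism for the true one.
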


\begin{lemma}\label{lemma:table}
At every $x_0\in M$, the following  table holds true: 
\[
\begin{tabu}{|c|c|c|}
\hline
{\rm Volterra-related ~ operator} &{\rm Volterra ~ operator}& {\rm Getzler ~ order}\\
\hline
e^{-tD^2}f& (D^2+\partial_t)^{-1}\circ f&-2\\
\hline
[e^{-tD^2}, f]&(D^2+\partial_t)^{-1}\circ [D^2, f] \circ (D^2+\partial_t)^{-1}&-3\\
\hline
  t\; \left( \frac{I- e^{-t D^2}}{tD^2} \right) e^{-\frac{t}{2}D^2} f&(\frac{1}{2}D^2+\partial_t)^{-1}\circ (\frac{3}{2} D^2+\partial_t)^{-1}\circ f&-4\\
\hline
&\left(\frac{1}{2}D^2+ \frac{\partial}{\partial t}\right)^{-1}  \circ   \left(\frac{3}{2}D^2+ \frac{\partial}{\partial t}\right)^{-1}\circ &\\
 \left[ t\; \left( \frac{I- e^{-t D^2}}{tD^2} \right) e^{-\frac{t}{2}D^2},f\right]&\circ 
\left(\left[D^2 ,  f \right]\circ \left(\frac{3}{4}D^2 + \frac{\partial}{\partial t}\right) 
+  \left(\frac{3}{4}D^2 + \frac{\partial}{\partial t}\right) \circ \left[D^2 ,  f\right] \right) \circ &-5\\
&
\circ  \left(\frac{1}{2}D^2+ \frac{\partial}{\partial t}\right)^{-1}  \circ   \left(\frac{3}{2}D^2+ \frac{\partial}{\partial t}\right)^{-1}	&\\
\hline
e^{-tD^2} c(df)& (D^2+\partial_t)^{-1}\circ  c(df)&-1\\
\hline
[e^{-tD^2},  c(df)]&(D^2+\partial_t)^{-1}\circ [D^2,  c(df)] \circ D^2+\partial_t)^{-1}&-2\\
\hline
t\;\frac{1-e^{-tD^2}}{tD^2} e^{-\frac{t}{2} D^2} c(df)&(\frac{1}{2}D^2+\partial_t)^{-1}\circ (\frac{3}{2} D^2+\partial_t)^{-1}\circ  c(df)&-3\\
\hline
&\left(\frac{1}{2}D^2+ \frac{\partial}{\partial t}\right)^{-1}  \circ   \left(\frac{3}{2}D^2+ \frac{\partial}{\partial t}\right)^{-1}\circ &\\
 \left[ t\; \left( \frac{I- e^{-t D^2}}{tD^2} \right) e^{-\frac{t}{2}D^2}, c(df)\right]&\circ 
\left(\left[D^2 ,   c(df) \right]\circ \left(\frac{3}{4}D^2 + \frac{\partial}{\partial t}\right) 
+  \left(\frac{3}{4}D^2 + \frac{\partial}{\partial t}\right) \circ \left[D^2 ,   c(df)\right] \right) \circ &-4\\
&
\circ  \left(\frac{1}{2}D^2+ \frac{\partial}{\partial t}\right)^{-1}  \circ   \left(\frac{3}{2}D^2+ \frac{\partial}{\partial t}\right)^{-1}	&\\
\hline
\end{tabu}
\]
\end{lemma}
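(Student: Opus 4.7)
The plan is to verify each of the eight rows of the table separately, observing that the table splits naturally into two halves: rows $1$--$4$ involve multiplication by a smooth function $f$, while rows $5$--$8$ replace $f$ by the Clifford-multiplication operator $c(df)$. Since $c(df)$ is a smooth bundle endomorphism that commutes with $\partial_t$ (exactly as $f$ does), every argument for the first four rows transfers verbatim to the last four, with the single change that the Getzler order of the endomorphism goes up by one (from $0$ to $1$), and correspondingly every Getzler order in rows $5$--$8$ is one larger than in the analogous row $1$--$4$.

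For the first column (identification of the Volterra-related operator with a Volterra operator), I would argue as follows. Row $1$ is the standard identification of the heat semigroup with the restriction to $t>0$ of the Schwartz kernel of $(D^2+\partial_t)^{-1}$. Row $3$ is exactly the content of Lemma~\ref{lemma-useful-identity} combined with Lemma~\ref{kernel lem 0.5}. The commutator rows $2$ and $4$ follow from the algebraic identity $[C^{-1},g]=-C^{-1}[C,g]C^{-1}$ (and its two-factor analogue $[AB,g]=A[B,g]+[A,g]B$) applied to $C=D^2+\partial_t$, respectively to the product $A\circ B$ with $A=(\tfrac12 D^2+\partial_t)^{-1}$ and $B=(\tfrac32 D^2+\partial_t)^{-1}$; since $\partial_t$ commutes with the pointwise action of $f$, one has $[C,f]=[D^2,f]$ and analogously for $A,B$ up to scalar factors, and a short rearrangement puts the result in the symmetrized form displayed in the table. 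Rows $5$--$8$ are obtained by the same computation with $f$ replaced by $c(df)$.

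For the second column (Getzler orders), the key inputs are: (i) any heat-type Volterra operator $(\alpha D^2+\partial_t)^{-1}$ has Volterra order $-2$ and Getzler order $-2$ along any fixed-point set (this is part of the general Volterra-Getzler calculus recalled in the appendix); (ii) multiplication by a smooth function has Getzler order $0$, while $c(df)$ has Getzler order $1$ since its principal Getzler symbol is the $1$-form $df$; (iii) $[D^2,f]$ and $[D^2,c(df)]$ have Getzler orders $1$ and $2$ respectively, by Lemma~\ref{lem:getzler}; (iv) the differential operator $\tfrac34 D^2+\partial_t$ has Getzler order $2$; and (v) Getzler order is additive under composition of Volterra operators. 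Summing the contributions row by row gives the claimed orders: e.g.\ row $4$ yields $(-4)+\bigl(1+2\bigr)+(-4)=-5$, and row $8$ yields $(-4)+\bigl(2+2\bigr)+(-4)=-4$, in agreement with the table.

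The only step that is not purely mechanical is the derivation of the symmetrized expression $[D^2,g]\circ(\tfrac34 D^2+\partial_t)+(\tfrac34 D^2+\partial_t)\circ[D^2,g]$ in rows $4$ and $8$; the coefficients $\tfrac14,\tfrac34$ appear because one has to combine $A[B,g]=-\tfrac32 AB[D^2,g]B$ with $[A,g]B=-\tfrac12 A[D^2,g]AB$ and bring the common factor $AB$ to the outside, which is a short but careful bookkeeping exercise. None of this, however, affects the Getzler-order count, which only uses additivity together with Lemma~\ref{lem:getzler}. Thus the proof is essentially a tabulation, and the main (mild) obstacle is keeping the algebra of the commutator expansions straight in rows $4$ and $8$.
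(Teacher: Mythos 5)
Your proof is correct and follows essentially the same strategy as the paper, which gives only the one-line justification that $(D^2+\partial_t)^{-1}$ has Getzler order $-2$ together with Lemma~\ref{lem:getzler}; you simply spell out the row-by-row Volterra identifications (the commutator identity $[C^{-1},g]=-C^{-1}[C,g]C^{-1}$ producing rows $2$, $4$, $6$, $8$, and Lemmas~\ref{lemma-useful-identity} and~\ref{kernel lem 0.5} for rows $3$, $7$) and the additivity-of-Getzler-order count. Two harmless caveats worth noting: the table suppresses the overall minus signs coming from $[C^{-1},g]=-C^{-1}[C,g]C^{-1}$ (irrelevant for the order count), and the symmetrized middle factor $[D^2,g](\tfrac34 D^2+\partial_t)+(\tfrac34 D^2+\partial_t)[D^2,g]$ in rows $4$ and $8$ is indeed reached from $-\tfrac32 AB[D^2,g]B-\tfrac12 A[D^2,g]AB$ by inserting $A^{-1}A$ and $BB^{-1}$ and observing that $\tfrac34[\,[D^2,g],\tfrac12 D^2+\partial_t]=\tfrac14[\,[D^2,g],\tfrac32 D^2+\partial_t]$, so your ``bookkeeping exercise'' does close.
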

\begin{proof}
The table follows from the fact that $(D^2+\partial_t)^{-1}$ has the Getzler order -2 and Lemma \ref{lem:getzler}. 
\end{proof}
\section{The higher Lefschetz formula}\label{section:higher lefschetz}
\subsection{Index pairing}
Now we  consider 
\[
\Ind_{c} (D):= [P_{\operatorname{fp}}(t)]-[e_1] \in K_0 (\mathcal{A}^c_\Gamma (M,E)). 
\]
We are interested in proving a formula for the pairing of the class $\Ind_ {c} (D)$ with the cyclic cocycle 
\[
\rho(\Psi(c))\in HC^{2p} (\mathcal{A}^{c}_\Gamma (M)). 
\]
Since the $K$-theory class $[P_{\operatorname{fp}}(t)]-[e_1] $ is independent of $t$, we can compute the above pairing by taking the limit $t\to 0$.
\[
\lim_{t\to 0}\langle[P_{\operatorname{fp}}(t)]-[e_1] ,\rho \circ \Psi(c) \rangle
\]
Our computation mainly adapts the method in \cite{CM} to the equivariant setting on Volterra operators.

Recall from Moscovici-Wu that the class defined by $V_{{\rm CM}} (tD)\oplus (V_{{\rm CM}} (tD))^*$ and $e_1\oplus e_1$, $t>0$, is equal to the class
defined by the $4\times 4$ matrix 
\begin{equation*}
R(tD):=\left( \begin{array}{cc} e^{-t D^2} \epsilon & 
\left( \frac{I- e^{-t D^2}}{tD^2} \right) e^{-\frac{t}{2}D^2} \sqrt{t}D \epsilon\\
e^{-\frac{t}{2}D^2} \sqrt{t}D\epsilon &  e^{-tD^2}\epsilon
\end{array} \right)
\end{equation*}
with $\epsilon$ the grading operator on the Clifford bundle $E$.

By (\ref{equ:antisym}) and  Corollary \ref{cor:rfptoR}, 
\begin{multline}\label{equtrgeo}
\left\langle \rho(\Psi(c)), \Ind_{c} (D) \right\rangle = \lim_{t\to 0}\langle\rho \circ \Psi(c), [P_{\operatorname{fp}}(t)]-[e_1]  \rangle\\
=\frac{1}{(2p +1)!}\;  \lim_{t\to 0} \rho\big(\mathcal{C}_{\mathcal{I}_{2p+1}, a}\big)( P_{\operatorname{fp}}(t)]-[e_1], ..., P_{\operatorname{fp}}(t)]-[e_1])	\\
=\frac{1}{(2p +1)!}\;  \lim_{t\to 0}\rho\big(\mathcal{C}_{\mathcal{I}_{2p+1}, a}\big)( R(tD), ..., R(tD))\\
=\frac{1}{(2p +1)!}  \; \sum_{I = (\gamma_0, \ldots, \gamma_{2p}) \in \mathcal{I}_{2p+1}}c(\gamma_0, \cdots, \gamma_{2p})\cdot\chi_\gamma(\gamma_0)\; \lim_{t\to 0}
 \Tr^{{\rm geo}}_a\left(\gamma^{-1} f_{0} R( \sqrt{t}D) f_{1} R( \sqrt{t}D)\cdots f_{2p}R( \sqrt{t}D)^\gamma\right)
\end{multline}
where $A^\gamma(x,y):=A(x,\gamma y), f_i(y_i) = \chi(\gamma_i^{-1}y_i)$, $\mathcal{I}_{2p+1}$ is a finite subset defined in Proposition \ref{prop:finitesum}, and $ \Tr^{{\rm geo}}_a$ is the anti-symmetrized trace defined by 
\begin{multline}\label{anti-expression}
\Tr^{{\rm geo}}_a\left(\gamma^{-1} f_{0} R( \sqrt{t}D) f_{1} R( \sqrt{t}D)\cdots f_{2p}R( \sqrt{t}D)^\gamma\right)	\\
:=  \sum_{\sigma \in S_{2p+1}} \text{sign}(\sigma) \Tr^{{\rm geo}}\left(\gamma^{-1} f_{\sigma(0)} R( \sqrt{t}D) f_{\sigma(1)} R( \sqrt{t}D)\cdots f_{\sigma(2p)}R( \sqrt{t}D)^\gamma\right). 	
\end{multline}
\begin{lemma}\label{lem:symmetry}
The following identity holds:
\begin{multline}
\Tr^{{\rm geo}}_a\left(\gamma^{-1} f_0 R( \sqrt{t}D) f_1 R( \sqrt{t}D)\cdots f_{2p}R( \sqrt{t}D)^\gamma\right) \\
= 	\sum_{\sigma \in S_{2p+1}} \text{sign}(\sigma) \cdot \Tr^{{\rm geo}}\left( \gamma^{-1} f_{\sigma(0)}\left[ R( \sqrt{t}D), f_{\sigma(1)}\right]\ldots \left[ R( \sqrt{t}D), f_{\sigma(2p)}\right]R( \sqrt{t}D)^\gamma\right) 
\end{multline}
\end{lemma}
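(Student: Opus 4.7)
The plan is to establish this as a purely algebraic identity by expanding all commutators on the right-hand side and then showing, via an antisymmetrization/involution argument, that every expanded term except one is cancelled. The essential fact used is that the $f_j$ are multiplication operators by smooth functions, hence mutually commute (although they do not commute with $R(\sqrt{t}D)$, nor with $\gamma^{-1}$).

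Writing $R := R(\sqrt{t}D)$ and $[R, f_j] = R f_j - f_j R$, I would first expand the product
\[
f_{\sigma(0)}\,[R, f_{\sigma(1)}]\,[R, f_{\sigma(2)}]\cdots[R, f_{\sigma(2p)}]\,R
\]
as a sum over $\epsilon = (\epsilon_1, \ldots, \epsilon_{2p}) \in \{0,1\}^{2p}$, where $\epsilon_i = 0$ selects the summand $Rf_{\sigma(i)}$ and $\epsilon_i = 1$ selects $-f_{\sigma(i)}R$. A direct bookkeeping shows that for fixed $\epsilon$ the resulting operator, with overall sign $(-1)^{|\epsilon|}$, has the form
\[
f_{\sigma(0)}\, R^{n_1}\, f_{\sigma(1)}\, R^{n_2}\, f_{\sigma(2)} \cdots f_{\sigma(2p)}\, R^{n_{2p+1}},
\]
with $n_1 = 1 - \epsilon_1$, $n_i = 1 + \epsilon_{i-1} - \epsilon_i$ for $2 \leq i \leq 2p$, and $n_{2p+1} = 1 + \epsilon_{2p}$.

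The key observation will then be that for every nonzero $\epsilon$ there exists an index $j_\epsilon \in \{1, \ldots, 2p\}$ with $n_{j_\epsilon} = 0$: take $j_\epsilon$ to be the smallest index with $\epsilon_{j_\epsilon} = 1$; either $j_\epsilon = 1$ (in which case $n_1 = 0$) or $\epsilon_{j_\epsilon - 1} = 0$ and $\epsilon_{j_\epsilon} = 1$, so $n_{j_\epsilon} = 0$. At this position, $f_{\sigma(j_\epsilon - 1)}$ and $f_{\sigma(j_\epsilon)}$ appear as an adjacent pair, and since these are multiplication operators they commute. Hence the transposition $(j_\epsilon - 1,\ j_\epsilon)$ acts as a fixed-point-free involution on $S_{2p+1}$ which preserves the operator under the trace — $\gamma^{-1}$ sits to the left and $R^\gamma$ to the right, and neither is affected — while flipping $\sgn(\sigma)$. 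The inner sum over $\sigma$ therefore vanishes for every $\epsilon \neq 0$.

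Only the term $\epsilon = 0$ survives, for which the word produced is exactly $f_{\sigma(0)}\, R\, f_{\sigma(1)}\, R \cdots f_{\sigma(2p)}\, R$ with sign $+1$; summing over $\sigma$ and inserting $\gamma^{-1}$ on the left and replacing the final $R$ by $R^\gamma$ reproduces the definition \eqref{anti-expression} of $\Tr^{\rm geo}_a$, yielding the claimed identity. The only mildly delicate step is the exponent count, i.e.\ checking the formula for $n_j$ and that adjacent $f$'s always appear when $\epsilon \neq 0$; once that is in hand, the involution argument is immediate and needs no analytic input.
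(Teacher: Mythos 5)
Your argument is correct and relies on the same cancellation mechanism as the paper's proof: under the antisymmetrization over $\sigma\in S_{2p+1}$, any term in which two multiplication operators $f_{\sigma(i-1)}$ and $f_{\sigma(i)}$ become adjacent vanishes, since they commute while the transposition $(i-1\;\; i)$ flips $\sgn(\sigma)$. The packaging differs: the paper proceeds from the left-hand side, replacing $R(\sqrt{t}D)f_{\sigma(1)}$ by $[R(\sqrt{t}D),f_{\sigma(1)}]+f_{\sigma(1)}R(\sqrt{t}D)$, killing the correction via the adjacency of $f_{\sigma(0)}$ and $f_{\sigma(1)}$, and finishing with ``by repeating the procedure,'' whereas you expand the right-hand side completely over $\epsilon\in\{0,1\}^{2p}$ and cancel everything except $\epsilon=0$ with the single involution $\sigma\mapsto\sigma\circ(j_\epsilon-1\;\;j_\epsilon)$, where $j_\epsilon$ is the smallest index with $\epsilon_{j_\epsilon}=1$ and your formula for the exponents guarantees $n_{j_\epsilon}=0$. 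Your version is, if anything, a cleaner way to make the paper's inductive step fully explicit: beyond the first stage of the iteration the correction still contains the bracket $[R(\sqrt{t}D),f_{\sigma(k-1)}]f_{\sigma(k)}$, whose adjacency becomes visible only after a further expansion, and your parametrization by $\epsilon$ and the choice of $j_\epsilon$ handle this uniformly. Both arguments use the same tacit analytic input, namely that each term under $\Tr^{\rm geo}$ is trace class because the $f_i$ are compactly supported cut-offs and $R(\sqrt{t}D)$ is a smoothing operator of exponential decay.
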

\begin{proof}
Since 
\begin{multline}
 \Tr^{{\rm geo}}\left(\gamma^{-1} f_{\sigma(0)}  f_{\sigma(1)}R( \sqrt{t}D) R( \sqrt{t}D)f_{\sigma(2)}\cdots f_{\sigma(2p)}R( \sqrt{t}D)^\gamma\right)\\=  \Tr^{{\rm geo}}\left(\gamma^{-1} f_{\sigma(1)}  f_{\sigma(0)}R( \sqrt{t}D) R( \sqrt{t}D)f_{\sigma(2)}\cdots f_{\sigma(2p)}R( \sqrt{t}D)^\gamma\right), 
\end{multline}
it follows from  the anti-symmetrization that 
\begin{align}
\sum_{\sigma \in S_{2p+1}} \text{sign}(\sigma) \cdot \Tr^{{\rm geo}}\left(\gamma^{-1} f_{\sigma(0)}  f_{\sigma(1)}R( \sqrt{t}D) R( \sqrt{t}D)f_{\sigma(2)}\cdots f_{\sigma(2p)}R( \sqrt{t}D)^\gamma\right)	= 0. 
\end{align}
Thus, 
\begin{multline}
\Tr^{{\rm geo}}_a\left(\gamma^{-1} f_0 R( \sqrt{t}D) f_1 R( \sqrt{t}D)\cdots f_{2p}R( \sqrt{t}D)^\gamma\right)\\
=\sum_{\sigma \in S_{2p+1}} \text{sign}(\sigma) \cdot \Tr^{{\rm geo}}\left(\gamma^{-1} f_{\sigma(0)} R( \sqrt{t}D) f_{\sigma(1)} R( \sqrt{t}D)\cdots f_{\sigma(2p)}R( \sqrt{t}D)^\gamma\right) \\
= 	\sum_{\sigma \in S_{2p+1}} \text{sign}(\sigma) \cdot \Tr^{{\rm geo}}\left(\gamma^{-1}  f_{\sigma(0)}\left[ R( \sqrt{t}D), f_{\sigma(1)}\right]R( \sqrt{t}D) \ldots R( \sqrt{t}D) f_{\sigma(2p)}R( \sqrt{t}D)^\gamma\right). 
\end{multline}
By repeating the procedure, we can prove the lemma.  
\end{proof}

Our goal is to show that we can take
the limit of this expression at $t\downarrow 0$ and that this limit is given by a higher Atiyah-Segal-Singer formula.
Following \cite{moscovici-wu} we introduce the following 
Volterra-related operators:
$$A^+ := e^{-\frac{t}{2} D^2} D\, \text{ with  Getzler order 0},\quad A^- := 
t\left( \frac{I- e^{-t D^2}}{tD^2} \right) e^{-\frac{t}{2}D^2} D  \text{ with  Getzler order -2}$$
$$ B^+_i :=  e^{-\frac{t}{2} D^2} [D,f_i]\, \text{ with  Getzler order -1},\quad B^-_i:= t\left( \frac{I- e^{-t D^2}}{tD^2} \right) e^{-\frac{t}{2}D^2}[D,f_i] \text{ with  Getzler order -3}$$
$$C^+_i =  [e^{-\frac{t}{2} D^2},f_i] D\,,  \text{ with  Getzler order -1}\quad C^-_i:= \left[  t\left( \frac{I- e^{-t D^2}}{tD^2} \right) e^{-\frac{t}{2}D^2},f_i\right] D  \text{ with Getzler order -3}$$
and then 
$$A= \left( \begin{array}{cc} 0&\frac{A^-}{\sqrt{t}}\\ \sqrt{t}A^+&0
\end{array} \right)\,, \quad B_i= \left( \begin{array}{cc} 0&\frac{B^-_i}{\sqrt{t}}\\ \sqrt{t}B^+_i&0
\end{array} \right)
\,,\quad C_i= \left( \begin{array}{cc} 0&\frac{C^-_i}{\sqrt{t}}\\\sqrt{t}C^+_i&0
\end{array} \right).
$$
We notice once again that $A^\pm$, $B_i^\pm$, and $C_i^\pm$ are all 
Volterra-related operators. In what follows we will use $\operatorname{ord}_G$ to denote the Getzler order of a Volterra operator. We point out that the Getzler order is, in general,  a number depending on $x_0\in M^\gamma$  but for  $A^\pm$, $B_i^\pm$, and $C_i^\pm$ the Getzler order is  actually constant, independent of $x_0$ in $M^\gamma$.

We then have 
$$\Tr^{{\rm geo}}(\gamma^{-1} f_0  [R( \sqrt{t}D), f_1] \cdots [R( \sqrt{t}D),f_{2p}]R( \sqrt{t}D)^\gamma)= \Tr^{{\rm geo}} (\Pi( \sqrt{t}D))$$
with 
$$\Pi ( \sqrt{t}D):= \gamma^{-1}  f_0 \left( \Pi_{i=1}^{2p} \left(\left[   e^{-t D^2} ,f_i\right]\cdot I_2\epsilon +B_i \epsilon+C_i \epsilon\right) \right) ( e^{-t D^2} \cdot I_2\epsilon + A\epsilon)^\gamma,$$ and 
$I_2=\left(\begin{array}{cc}1&0\\ 0&1\end{array}\right)$.

\begin{proposition} \label{prop:trPi}Let $\Tr_s^{\rm geo}$ denote the supertrace on the space of smoothing kernels with finite propagation. 
The terms that contribute to 
\[
\lim_{t \to 0^+}\Tr^{\mathrm{geo}}\!\big(\Pi(\sqrt{t}D)\big)
\]
are precisely the following:
\begin{enumerate}
    \item $\displaystyle 
    \Tr^{\mathrm{geo}}_s\left(\gamma^{-1} f_0\, B_1\epsilon B_2\epsilon \cdots B_{2q-1} \epsilon B_{2q}\epsilon \, (e^{-t D^2})^\gamma \right)=(-1)^q\Tr^{\mathrm{geo}}_s\left(\gamma^{-1} f_0\, B_1B_2\cdots B_{2q-1}B_{2q}\, (e^{-t D^2})^\gamma\right);$

    \item \[
    \begin{split}
    &\Tr^{\mathrm{geo}}_s\left(\gamma^{-1} f_0\, B_1\epsilon B_2\epsilon \cdots  C_j\epsilon \cdots B_{2q-1}\epsilon B_{2q}\epsilon \, (e^{-t D^2})^\gamma \right)\\
    =&(-1)^q \Tr^{\mathrm{geo}}_s\left(\gamma^{-1} f_0\, B_1 B_2 \cdots  C_j\cdots B_{2q-1} B_{2q} \, (e^{-t D^2})^\gamma\right);
    \end{split}
    \]

    \item 
    \[
    \begin{split}
    &\Tr^{\mathrm{geo}}_s\left(\gamma^{-1} f_0\, B_1\epsilon B_2\epsilon \cdots  [e^{-t D^2},f_j]I_2\epsilon \cdots B_{2q-1}\epsilon B_{2q}\epsilon\, A^\gamma \right)\\
    =&(-1)^{q+j+1}\Tr^{\mathrm{geo}}_s\left(\gamma^{-1} f_0\, B_1B_2 \cdots  [e^{-t D^2},f_j]I_2 \cdots B_{2q-1} B_{2q}\, A^\gamma\right).
    \end{split}
    \]
\end{enumerate}

\begin{proof}
In the expansion of $\Tr^{\mathrm{geo}}\!\big(\Pi(\sqrt{t}D)\big)$, 
the \emph{off-diagonal terms} are given by $B_i^\pm$, $C_i^\pm$, and $A^\pm$, 
while the \emph{diagonal terms} are
\[
[e^{-t D^2},f_i] \quad \text{of Getzler order } -3,
\qquad 
e^{-t D^2} \quad \text{of Getzler order } -2.
\]
As there are equal number of $+$ and $-$ terms in the product expansion, we observe that the factor of $t$ in $A$, $B$, $C$ cancel. We will use Theorem \ref{theo:moving} to show that there are only a few types of expression that could survive in the small time limit $t\to 0$. 

\noindent
We distinguish two possibilities.

\smallskip
\noindent\textbf{(I) Terms ending with a diagonal factor $e^{-tD^2}$.}
We observe that such a term must contain an \emph{even}, $2p$, number of diagonal commutators $[e^{-t D^2},f_i]$, whose Getzler order of each commutator is -3, and in terms from $B$ and $C$ the number of $+$ terms appearing must equal the number of $-$ terms appearing. Notice that the $B_i^+$ and $C_i^+$ are 
Volterra-related operators with Getzler order -1 and $B_i^-$ and $C_i^-$ are Volterra-related operators  with Getzler order -3. The total sum of Getzler order of such a term is 
\[
(-3)\times 2p + (-2)\times (2q-2p)-2=-2p -4q-2. 
\]
To maximize the total order, we therefore consider terms with $p=0$. \\
Hence the only possible terms are of the form
\[
\gamma^{-1} f_0 \, (B_1 \text{ or } C_1)\cdots (B_{2q} \text{ or } C_{2q})\, (e^{-tD^2})^\gamma.
\]
Note that each $C_j$ contains one factor of $D$.  
If more than one $C_j$ appears, there will be at least two $D$'s.  
By Theorem~\ref{theo:moving}, we may move these $D$'s together without increasing the Getzler order.  
Since
\[
\operatorname{ord}_G(D^2)=2 
< \operatorname{ord}_G(D)+\operatorname{ord}_G(D)=4,
\]
such terms are strictly of lower order.  
Hence, at most one $C_j$ may appear.  
The total order of the contributing terms is therefore
\[
q\times(-4) + (-2) = -4q - 2.
\]

\smallskip
\noindent\textbf{(II) Terms ending with an off-diagonal factor $A^\pm$.}
Here, the expression must contain an \emph{odd} number, $2p+1$, of diagonal commutators $[e^{-t D^2},f_i]$.  
By the same reasoning as Case (I), the maximal order occurs when exactly one such commutator appears:
\[
\gamma^{-1} f_0 \, (B_1 \text{ or } C_1)\cdots
\big([e^{-tD^2},f_j]\big)\cdots
(B_{2q} \text{ or } C_{2q})\, A^\gamma.
\]
Since $A$ already contains a factor of $D$, moving $D$ using Theorem \ref{theo:moving} shows that the presence of any $C_j^{\pm}$ between $\big([e^{-t D^2},f_j]\big)$ and $A$ would further lower the Getzler order.  Here if $C_j$ appears before $\big([e^{-t D^2},f_j]\big)$, we use the fact that 
\[
\left[D, \left[e^{-t D^2},f_j\right]\right] = \left[e^{-t D^2},\left[D, f_j\right]\right] = \left[e^{-t D^2}, c(df_j)\right]
\]
and 
\[
-2 = \operatorname{ord}_G\left(  \left[e^{-t D^2}, c(df_j)\right]\right) < \operatorname{ord}_G \left(D\left[e^{-t D^2},f_j\right]\right) = -1.
\]
Therefore, only the case with no $C_j$'s contributes:
\[
\gamma^{-1} f_0\, B_1 B_2 \cdots \big([e^{-t D^2},f_j]\big) \cdots B_{2q-1}B_{2q}\, A^\gamma.
\]
Its total order is
\[
-3 + (q-1)\times(-4) + 1 + (-4)
= -3 + (q-1)\times(-4) + 0 + (-3)
= -4q - 2.
\]

\smallskip
Hence, the only terms that contribute to the limit are exactly those listed above.
\end{proof}
\end{proposition}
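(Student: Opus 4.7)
The plan is to expand $\Pi(\sqrt{t}D)$ as a sum of monomials, each indexed by a choice at every one of the $2p+1$ positions: at positions $i=1,\ldots,2p$ one picks either the diagonal commutator $[e^{-tD^2},f_i]\, I_2 \epsilon$, the block $B_i\epsilon$, or the block $C_i\epsilon$; at the trailing position one picks either $e^{-tD^2}\, I_2 \epsilon$ or $A\epsilon$. For each such monomial, the geometric trace of the resulting product of $2p+1$ Volterra-related smoothing operators must be computed in the limit $t\downarrow 0$. The Getzler orders of all the constituent factors are tabulated in Lemma \ref{lemma:table}.

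My approach is then a two-stage selection. The first stage is algebraic: since the supertrace picks out the diagonal of a matrix product, the off-diagonal factors $A^\pm$, $B_i^\pm$, $C_i^\pm$ must pair up so that their $\sqrt{t}^{\pm 1}$-normalizations cancel. The second stage is analytic: Theorem \ref{theo:moving} implies that the trace of any surviving monomial vanishes in the $t\downarrow 0$ limit unless its total Getzler order is at least $-2(2p+1)=-4p-2$.

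I would then split into two cases by trailing factor. In Case I (trailing $e^{-tD^2}$), the middle positions must contain an even number of off-diagonal factors, forcing the number $r$ of diagonal commutators $[e^{-tD^2}, f_i]$ (each of Getzler order $-3$) to be even; balancing this against the order $-4$ contributed by each off-diagonal pair forces $r=0$ for the order to reach $-4p-2$. A further refinement via Theorem \ref{theo:moving} is needed to show that two or more $C_j$'s place two Dirac factors adjacently, whereupon the inequality $\operatorname{ord}_G(D^2)=2 < 2\operatorname{ord}_G(D)=4$ rules out all configurations except those listed in (1) and (2). In Case II (trailing $A$), the middle positions must contain an odd number of off-diagonal factors, forcing $r=1$; the commutator identity
\[
[D,[e^{-tD^2},f_j]]=[e^{-tD^2},[D,f_j]]=[e^{-tD^2}, c(df_j)],
\]
combined with the order comparison $-2 < -1$ between this commutator and $D[e^{-tD^2},f_j]$, then excludes any $C_j$ and produces configuration (3). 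Finally the signs $(-1)^q$ and $(-1)^{q+j+1}$ are obtained by collecting the $\epsilon$-factors in the resulting supertrace evaluation.

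The main obstacle will be the simultaneous bookkeeping of three distinct constraints: the matrix-entry indices selecting diagonal supertrace contributions, the pairwise cancellation of the $\sqrt{t}^{\pm 1}$-normalizations attached to $A$, $B_i$, $C_i$, and the Getzler-order arithmetic that determines which monomials survive via Theorem \ref{theo:moving}. Once all three are tracked in parallel, the proposition reduces to a finite, purely combinatorial enumeration.
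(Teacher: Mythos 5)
Your proposal follows essentially the same path as the paper's proof: the $2\times 2$ block structure forces the off-diagonal factors $A$, $B_i$, $C_i$ to pair (cancelling the $\sqrt{t}^{\pm1}$ normalizations), the case split by trailing factor ($e^{-tD^2}$ vs.\ $A$) controls the parity of the number of diagonal commutators, Theorem~\ref{theo:moving} together with the Getzler-order budget from Lemma~\ref{lemma:table} rules out any diagonal commutators in Case~I and all but one in Case~II, the comparison $\operatorname{ord}_G(D^2)<2\operatorname{ord}_G(D)$ excludes multiple $C_j$'s, and the commutator identity $[D,[e^{-tD^2},f_j]]=[e^{-tD^2},c(df_j)]$ excludes any $C_j$ in Case~II. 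The only remark worth making is notational: you use $p$ for what the paper calls $q$ in the threshold $-4q-2=-2(2q+1)$, but this is consistent within your write-up and does not affect correctness.
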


\begin{definition}\label{defn:4types}
We define the following four types of integrals:
\begin{enumerate}
	\item Type I: 
\[
T_I(\sqrt{t}D) \colon = \gamma^{-1} f_0 B^\mp_1 B^\pm_2 \cdots B_{2q-1}^\mp B_{2q}^\pm  \left(e^{-t D^2}\right)^\gamma;
\]
\item Type II:
\[
T_{II,j}(\sqrt{t}D) \colon=\gamma^{-1} f_0 B^\pm_1 B^\mp_2 \cdots C_j^- \cdots B^\pm_{2q-1} B^\mp_{2q} \left(e^{-t D^2}\right)^\gamma;
\]

\item Type III:
\[
T_{III,j}(\sqrt{t}D) \colon=\gamma^{-1} f_0 B^\pm_1 B^\mp_2 \cdots C_j^+ \cdots B^\pm_{2q-1} B^\mp_{2q}  \left(e^{-t D^2}\right)^\gamma;
\]
\item Type IV:
\[
T_{IV,j}(\sqrt{t}D) \colon=\gamma^{-1}  f_0 B^\mp_1 B^\pm_2 \cdots \left[   e^{-t D^2} ,f_j\right]\cdots B^-_{2q-1} B^+_{2q} \left(A^\pm \right)^\gamma.
\]
\end{enumerate}	
We want to compute the limit of the trace of each term as $t \downarrow 0$.
\end{definition}

\subsection{Computation of type I}
Put
\[
B^+= e^{-\frac{t}{2}D^2}, \quad B^- = t\frac{1-e^{-tD^2}}{tD^2} e^{-\frac{t}{2} D^2}. 
\]
We have that 
\[
B_i^-=B^-\sigma(df_i),\ \ \ B_i^+=B^+ \sigma(df_i). 
\]
\begin{lemma}
\label{moving lemma}
We have that 
\begin{align}
\lim_{t \downarrow 0^+} \Tr_s^{{\rm geo}}\left(T_I(\sqrt{t}D)\right) = \lim_{t \downarrow 0^+} \Tr_s^{{\rm geo}}\left(\gamma^{-1}  f_0c(df_1)\ldots c(df_{2q}) \cdot B^\pm B^\mp \ldots B^\mp \cdot \left(e^{-t D^2}\right)^\gamma\right), 
\end{align}	
\begin{proof}
We write 
\begin{multline}
\Tr_s^{{\rm geo}}\left(\gamma^{-1} f_0  B_1^\pm  B_2^\mp \ldots B_{2q}^\mp  \left(e^{-t D^2}\right)^\gamma\right)=\Tr_s^{{\rm geo}}\left(\gamma^{-1} f_0  B^\pm c(df_1)  B_2^\mp \ldots B_{2q}^\mp  \left(e^{-t D^2}\right)^\gamma\right)\\
=	\Tr_s^{{\rm geo}}\left(\gamma^{-1} f_0  c(df_1)B^\pm   B_2^\mp \ldots B_{2q}^\mp  \left(e^{-t D^2}\right)^\gamma\right) +\Tr_s^{{\rm geo}}\left(\gamma^{-1} f_0  \left[B^\pm, c(df_1)\right]   B_2^\mp \ldots B_{2q}^\mp  \left(e^{-t D^2}\right)^\gamma\right).
\end{multline}
Since 
\[
\operatorname{ord}_G\left(\left[ B^\pm, c(df_1)\right]\right)\leq \operatorname{ord}_G\left(B^\pm\right), 
\]
the second term in the above equation satisfies the assumptions in Theorem \ref{theo:moving}, 
\[
\lim_{t \downarrow 0^+}\Tr_s^{{\rm geo}}\left(\gamma^{-1}f_0  \left[B^\pm, c(df_1)\right]   B_2^\mp \ldots B_{2q}^\mp  \left(e^{-t D^2}\right)^\gamma\right) = 0. 
\]
Hence, 
\[
\lim_{t \downarrow 0^+} \Tr_s^{{\rm geo}}\left(\gamma^{-1}f_0  B^\pm_1  B_2^\mp \ldots B_{2q}^\mp  \left(e^{-t D^2}\right)^\gamma\right) = \lim_{t \downarrow 0^+}\Tr_s^{{\rm geo}}\left(\gamma^{-1}f_0  c(df_1)B^\pm   B_2^\mp \ldots B_{2q}^\mp  \left(e^{-t D^2}\right)^\gamma\right) 
\]
We prove the lemma by repeating the argument. 
\[
\begin{split}
&\lim_{t \downarrow 0^+} \Tr_s^{{\rm geo}}\left(\gamma^{-1}f_0  B^\pm_1  B_2^\mp \ldots B_{2q}^\mp  \left(e^{-t D^2}\right)^\gamma\right) = \lim_{t \downarrow 0^+}\Tr_s^{{\rm geo}}\left(\gamma^{-1}f_0  c(df_1)c(df_2)B^\pm   B^\mp B^{\pm}_3\ldots B_{2q}^\mp  \left(e^{-t D^2}\right)^\gamma\right) \\
=&\cdots=\lim_{t \downarrow 0^+}\Tr_s^{{\rm geo}}\left(\gamma^{-1}f_0  c(df_1)c(df_2)\cdots c(df_{2q})B^\pm   B^\mp B^{\pm}\ldots B^\mp  \left(e^{-t D^2}\right)^\gamma\right)
\end{split}
\]
\end{proof}
\end{lemma}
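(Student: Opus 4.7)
The plan is to prove this lemma by an iterative commutation argument, moving each Clifford multiplication $c(df_i)$ to the left through the $B^{\pm}$ operators, and showing that each commutator produced along the way has strictly lower Getzler order and therefore contributes zero in the limit by Theorem \ref{theo:moving}.

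First, I would use the factorization $B_i^{\pm} = B^{\pm} c(df_i)$, which is immediate from the definitions of $B_i^{\pm}$ and $B^{\pm}$. This rewrites $T_I(\sqrt{t}D)$ as $\gamma^{-1} f_0 B^{\mp} c(df_1) B^{\pm} c(df_2) \cdots B^{\pm} c(df_{2q}) (e^{-tD^2})^\gamma$. Then, working on $c(df_1)$ first, I would write
\[
B^{\mp} c(df_1) = c(df_1) B^{\mp} + [B^{\mp}, c(df_1)].
\]
Inspecting Lemma \ref{lemma:table}, the commutator $[B^{\mp}, c(df_1)]$ has Getzler order one less than $B^{\mp} c(df_1) = B_1^{\mp}$. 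Since the full expression originally has total Getzler order $-4q-2 = -2(2q+1)$ (as established in Proposition \ref{prop:trPi}), replacing $B_1^{\mp}$ by the commutator yields a product of $2q+1$ Volterra-related operators of total Getzler order strictly less than $-2(2q+1)$. By Theorem \ref{theo:moving} together with the exponential control enjoyed by all the relevant heat-type kernels (Lemmas \ref{lem:exponential control} and the explicit table of Lemma \ref{lemma:table}), the trace of the commutator contribution tends to $0$ as $t \downarrow 0$.

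Next, I would iterate this procedure: having moved $c(df_1)$ all the way to the left, I would move $c(df_2)$ past the operator $B^{\pm}$ appearing immediately to its left, picking up a commutator $[B^{\pm}, c(df_2)]$ whose order is again one less than $B_2^{\pm}$, so its trace again vanishes in the limit. Continuing in this fashion through $c(df_3), \ldots, c(df_{2q})$, every step produces a leading commutator term whose trace vanishes and a main term in which one more Clifford multiplication has been pushed to the left. At the end of $2q$ such steps, all Clifford multiplications sit to the right of $f_0$, and one arrives at the asserted identity
\[
\lim_{t \downarrow 0^+} \Tr_s^{\rm geo}(T_I(\sqrt{t}D)) = \lim_{t \downarrow 0^+} \Tr_s^{\rm geo}\bigl(\gamma^{-1} f_0 \, c(df_1)\cdots c(df_{2q}) \cdot B^{\mp} B^{\pm} \cdots B^{\pm} \cdot (e^{-tD^2})^\gamma\bigr).
\]

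The only subtlety I anticipate is bookkeeping: one must make sure that Theorem \ref{theo:moving} applies at each stage, which requires verifying that the factors on either side of the commutator remain Volterra-related and of exponential control, and that the scalar cutoff $f_0 \in C_c^\infty(M)$ is preserved on the left so that the hypothesis of Theorem \ref{theo:moving} is met. Both points are built into the structure: compositions of exponentially controlled families remain exponentially controlled (Lemma \ref{lem:exponential control}), and the Volterra structure together with the Getzler order estimates of Lemma \ref{lemma:table} carries through the commutation unchanged. No separate estimate on $(e^{-tD^2})^\gamma$ is needed, since it is already among the Volterra-related exponentially controlled operators. This is essentially a clean induction, and the conceptual work has already been done in Section \ref{section:volterra-main}.
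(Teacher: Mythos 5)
Your proof is correct and follows essentially the same route as the paper's: factor $B_i^{\pm}=B^{\pm}c(df_i)$, commute each $c(df_i)$ leftward past $B^{\pm}$, observe via the table of Lemma \ref{lemma:table} that each commutator $[B^{\pm},c(df_i)]$ drops the Getzler order by one so the total falls strictly below the critical threshold $-2(2q+1)$, kill the commutator term with Theorem \ref{theo:moving}, and iterate. The only cosmetic difference is that you phrase the order drop relative to $B_i^{\pm}$ while the paper compares the commutator to $B^{\pm}$, but these are equivalent.
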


\begin{lemma}
\label{lem:remove}
We have that 
\begin{align}
\lim_{t \downarrow 0^+} \Tr^{{\rm geo}}\left(T_I(\sqrt{t}D)\right) = \lim_{t \downarrow 0^+}t^q  \Tr_s^{{\rm geo}}\left( \gamma^{-1} f_0c(df_1)\cdots c(df_{2q})B^\gamma\right),
\end{align}	
where
\begin{align}
B = 	\int_{1}^{2}\ldots\int_{1}^{2}e^{-t\left(1+s_1+\ldots+s_q\right)D^2}ds_1\cdots ds_q. 
\end{align}
\end{lemma}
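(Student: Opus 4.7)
The plan is to reduce everything to a straightforward calculation with heat semigroups, once we have removed the obstruction posed by the Clifford multiplications $c(df_i)$. First I would invoke Lemma~\ref{moving lemma} to rewrite
\[
\lim_{t\downarrow 0^+}\Tr^{\mathrm{geo}}_s\!\left(T_I(\sqrt{t}D)\right)
=\lim_{t\downarrow 0^+}\Tr^{\mathrm{geo}}_s\!\left(\gamma^{-1}f_0\,c(df_1)\cdots c(df_{2q})\cdot B^{\pm}B^{\mp}\cdots B^{\mp}\cdot (e^{-tD^2})^\gamma\right),
\]
so that all Clifford generators are collected on the left and the remaining tail consists only of the scalar (in the Clifford sense) operators $B^{+}=e^{-\frac{t}{2}D^2}$, $B^{-}=t\bigl(\tfrac{I-e^{-tD^2}}{tD^2}\bigr)e^{-\frac{t}{2}D^2}$ and $e^{-tD^2}$.

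The key observation is that $B^{+}$, $B^{-}$, and $e^{-tD^2}$ are all Borel functions of the single self-adjoint operator $D^2$, hence commute with one another via the functional calculus. Since the alternating product $B^{\pm}B^{\mp}\cdots B^{\mp}$ contains exactly $q$ factors of each sign, I can therefore rearrange it as $(B^{+})^{q}(B^{-})^{q}$ without changing the operator. Next, applying Lemma~\ref{lemma-useful-identity} gives the integral representation $B^{-}=\int_{1/2}^{3/2}t\,e^{-s t D^2}ds$, so that
\[
(B^{+})^{q}(B^{-})^{q}\,e^{-tD^2}
=e^{-\frac{q t}{2}D^2}\cdot t^{q}\!\int_{1/2}^{3/2}\!\!\cdots\!\!\int_{1/2}^{3/2}\!e^{-(s_1+\cdots+s_q)tD^2}ds_1\cdots ds_q\cdot e^{-tD^2}.
\]
Combining the exponentials and performing the change of variables $u_i=s_i+\tfrac{1}{2}$ (which shifts the integration interval from $[\tfrac{1}{2},\tfrac{3}{2}]$ to $[1,2]$ and absorbs the factor $e^{-\frac{q t}{2}D^2}$) yields
\[
(B^{+})^{q}(B^{-})^{q}\,e^{-tD^2}=t^{q}\!\int_{1}^{2}\!\!\cdots\!\!\int_{1}^{2}\!e^{-(1+s_1+\cdots+s_q)tD^2}ds_1\cdots ds_q = t^{q} B,
\]
which is precisely the operator $B$ in the statement.

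Finally, since for any pair of smoothing operators one has the identity $(K_1K_2)^\gamma=K_1K_2^\gamma$ at the level of kernels (where $A^\gamma(x,y):=A(x,\gamma y)$), the $\gamma$-twist in $(e^{-tD^2})^\gamma$ commutes past the remaining $B^{\pm}$ factors to give the twisted composite $(t^q B)^\gamma=t^q B^\gamma$. Substituting back into the expression produced by Lemma~\ref{moving lemma} yields the desired equality. The argument is essentially a computation rather than an obstruction; the only subtle point is justifying the commutativity of $B^{\pm}$ and $e^{-tD^2}$ via the Borel functional calculus on $D^2$, which is already implicit in the proof of Lemma~\ref{lemma-useful-identity}, and ensuring that the combinatorial bookkeeping of the $q$ pairs of alternating signs matches $(B^+)^q(B^-)^q$.
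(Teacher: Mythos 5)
Your proof is correct and follows essentially the same route as the paper: first apply Lemma~\ref{moving lemma} to extract the Clifford factors, then combine $B^\pm$ via the integral representation \eqref{useful-identity} together with the change of variable $s_i\mapsto s_i+\tfrac12$, and observe that the $\gamma$-twist commutes past untwisted kernels. The only additions in your write-up are the explicit appeal to the Borel functional calculus for the commutativity of $B^+$ and $B^-$ and the kernel identity $(K_1K_2)^\gamma=K_1K_2^\gamma$, both of which the paper uses tacitly.
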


\begin{proof} Recall (\ref{useful-identity}) that 
\[
t \,\left( \frac{I- e^{-t D^2}}{tD^2} \right) e^{-\frac{t}{2}D^2} =  \int_{\frac{1}{2}}^{\frac{3}{2}} t\,e^{-s_1tD^2}\; ds_1. 
\]

Since $B^-$ commutes with $B^+$, we can rewrite 
\begin{multline*}
B^\pm   B^\mp  \ldots B^\mp  \cdot \left(e^{-t D^2}\right)^\gamma\\
=t^{q}\left( \int_{1/2}^{3/2}\ldots\int_{1/2}^{3/2}e^{-t\left(\frac{q}{2}+1+s_1+\ldots+s_q\right)D^2}ds_1\cdots ds_q\right)^\gamma	\\
= t^q\left(\int_{1}^{2}\ldots\int_{1}^{2}e^{-t\left(1+s_1+\ldots+s_q\right)D^2}ds_1\cdots ds_q\right)^\gamma. 	
\end{multline*}
\end{proof}

\begin{lemma}\label{prop:IQ(s)}
\[
\begin{split}
&\lim_{t \downarrow 0^+}t^q  \Tr_s^{{\rm geo}}\left( \gamma^{-1} f_0c(df_1)\cdots c(df_{2q})B^\gamma\right)\\
=&\int_{1}^{2}\ldots\int_{1}^{2} \lim_{t \downarrow 0^+}t^q\Tr_s^{{\rm geo}}\left(\gamma^{-1} f_0c(df_1)\cdots c(df_{2q})e^{-t\left(1+s_1+\ldots+s_q\right)D^2}\right)^\gamma ds_1\cdots ds_q. 
\end{split}
\]
\end{lemma}
\begin{proof}
Given $(s_1, \cdots, s_q)\in [1,2]^{\times q}$, consider the operator $\widetilde{Q}(s_1, \cdots, s_q)$ by 
\[
\widetilde{Q}(s_1, \cdots, s_q)= P \Big((1+s_1+\cdots +s_q)D^2+\partial_t\Big)^{-1},
\]
where $P = f_0c(df_1) \cdots c(df_{2q})$. Since $f_0, \cdots, f_q$ are compactly supported, the operator $\widetilde{Q}(s_1, \cdots, s_q)$ is a trace class operator. Let $\mathcal{L}^1$ be the Banach space of trace class operators. We observe that the $\mathcal{L}^1$-valued function $\widetilde{Q}: [1,2]^{\times q}\to \mathcal{L}^1$ is continuous. Hence, 
\[
\Tr_s^{{\rm geo}}\left( \gamma^{-1} f_0c(df_1)\cdots c(df_{2q})B^\gamma\right)=\int_{1}^{2}\ldots\int_{1}^{2} t^q\Tr^{{\rm geo}}\left(\gamma^{-1} f_0c(df_1)\cdots c(df_{2q})e^{-t\left(1+s_1+\ldots+s_q\right)D^2}\right)^\gamma ds_1\cdots ds_q. 
\]

Notice that the $t$-asymptotic estimate using the Getzler order is uniform on any compact set, c.f. Theorem \ref{thm:PW-local}, \ref{thm local trace}, \ref{GR thm}, Lemma \ref{lem:Jy}, \ref{lem:symbolest}.  Consider $Q=P(D^2+\partial_t)^{-1}$. Then the smoothing (Schwartz) kernels of $Q(s)$ and $Q$ satisfy the following relation, 
\[
Q(s)(x,y,t)=s^{-1}Q(x,y, st), \quad s, t > 0. 
\] 
Accordingly, the $t$-asymptotic estimate of $\widetilde{Q}(s_1, \cdots, s_q)(x,\gamma x,t)$ is uniform on $(s_1, \cdots, s_q)\in [1,2]^{\times q}$ and $x$. This allows us to commute the limit of $t\downarrow 0$ with the integral over $[0,1]^{\times q}$ and get
\[
\begin{split}
&\lim_{t \downarrow 0^+}t^q  \Tr_s^{{\rm geo}}\left( \gamma^{-1} f_0c(df_1)\cdots c(df_{2q})B^\gamma\right)\\
=&\int_{1}^{2}\ldots\int_{1}^{2} \lim_{t \downarrow 0^+}t^q\Tr_s^{{\rm geo}}\left(\gamma^{-1} f_0c(df_1)\cdots c(df_{2q})e^{-t\left(1+s_1+\ldots+s_q\right)D^2}\right)^\gamma ds_1\cdots ds_q. 
\end{split}
\]
\end{proof}

We now evaluate the limit $t\downarrow 0$ of the term
\[
t^q\Tr_s^{{\rm geo}}\left(\gamma^{-1} f_0c(df_1)\cdots c(df_{2q})e^{-t\left(1+s_1+\ldots+s_q\right)D^2}\right)^\gamma.
\]

Set $s=1+s_1+\cdots+s_q$. Consider
\[
Q(s) = P (sD^2+\partial_t)^{-1}, 
\]
where $P = f_0c(df_1) \cdots c(df_{2q})$ is a differential operator with Getzler order $2q$ and 
\[
P_{(2q)} = f_0df_1 \wedge \ldots \wedge f_{2q}. 
\]
Write 
\[
K_{Q(s)}(x, y, t)
\]
the smoothing kernel of $Q(s)$. 

For any $x \in M^\gamma_a$, we have defined 
\[
I_{Q(s)}(x, t, \gamma ):= \int_{N_x^\gamma(\epsilon)}  K_{Q(s)}\left(\exp _x v, \exp _x\left(\gamma v\right), t\right)|d v|.
\]

\begin{proposition} \label{lem:IQ(s)}For $a\geq  2q$,
\[
\begin{aligned}
\sigma\left[\gamma^{-1} I_{Q(s)}(x,t, \gamma)\right]^{(a, 0)}=& (st)^{-q}\frac{(4 \pi )^{-\frac{q}{2}}}{\operatorname{det}^{\frac{1}{2}}\left(1-\gamma|_{N^\gamma}\right)} \cdot\left[f_0df_1\wedge\ldots\wedge df_{2q}\right]^{(2q,0)}
\\& \wedge \left[\operatorname{det}^{\frac{1}{2}}\left(\frac{ R^{\prime} / 2}{\sinh \left( R^{\prime} / 2\right)}\right) \operatorname{det}^{-\frac{1}{2}}\left(1-\gamma|_{N^\gamma} e^{- R^{\prime \prime}}\right)\wedge \gamma^{V} e^{-F^V} \right]^{(a-2q, 0)}+O(t^{-q+\frac{1}{2}}).	
\end{aligned}
\]
\end{proposition}

\begin{proof}By Theorem \ref{thm local trace}, $[\gamma^{-1} I_{Q(s)}(x,t,\gamma)]^{(a,0)}$ can be computed using $\left[\gamma^{-1} I_{P_{(2q)}(H_R+\partial_t)^{-1}}(x,t,\gamma)\right]^{(a,0)}$. Thanks to Theorem \ref{GR thm}, we compute $\sigma\left[\gamma^{-1} I_{Q(s)}(x,t,\gamma)\right]^{(a,0)}$ to be 
\[
\left[fdf_1\cdots df_{2q}\wedge \frac{(4 \pi st)^{-\frac{a}{2}}}{\operatorname{det}^{\frac{1}{2}}\left(1-\gamma|_{N^\gamma}\right)}\operatorname{det}^{\frac{1}{2}}\left(\frac{st R^{\prime} / 2}{\sinh \left(st R^{\prime} / 2\right)}\right) \operatorname{det}^{-\frac{1}{2}}\left(1-\gamma|_{N^\gamma} e^{-st R^{\prime \prime}}\right)\right]^{(a,0)}.
\]
We observe that Theorem \ref{GR thm} is proved on compact manifolds. The same proof generalizes to prove this proposition as $f_0df_1\cdots df_{2q}$ is of compact support. 
\end{proof}

Set 
\[
\beta_q = \int_{1}^{2}\ldots\int_{1}^{2}(1 +  s_1+ \cdots s_q)^{-q} \; ds_1\ldots ds_q.
\]
By  Lemma \ref{moving lemma}, \ref{lem:remove}, and Proposition \ref{prop:IQ(s)}, we conclude the following result:
\begin{theorem}For the Type I terms in Definition \ref{defn:4types}, 
\begin{multline}\label{eq term 1}
\lim_{t\o 0^+}\Tr^{\rm geo}_s(T_I(\sqrt{t}D))\\
=(-i)^{\frac{n}{2}} (2 \pi )^{-\frac{a}{2}}\cdot \beta_q \cdot \left[f_0df_1\wedge\ldots\wedge df_{2q}\right]^{(2q,0)}\wedge \left[\operatorname{det}^{\frac{1}{2}}\left(\frac{ R^{\prime} / 2}{\sinh \left( R^{\prime} / 2\right)}\right) \operatorname{det}^{-\frac{1}{2}}\left(1-\gamma|_{N^\gamma} e^{- R^{\prime \prime}}\right)\wedge \Tr(\gamma e^{-F^V}) \right]^{(a-2q, 0)}.
\end{multline}	
\end{theorem}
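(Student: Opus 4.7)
\medskip
\noindent
\textbf{Proof proposal.} The plan is to assemble the three preparatory lemmas (the moving lemma, Lemma~\ref{lem:remove}, and Lemma~\ref{lem:IQ(s)}) together with Proposition~\ref{prop:IQ(s)}, and then translate the resulting Getzler-symbol computation into the stated differential form. First I would apply the moving lemma to bring all the Clifford multiplications $c(df_i)$ to the left of the string of heat-type operators, at the cost of commutators whose Getzler order is strictly smaller and which therefore do not contribute in the limit $t\downarrow 0$ by Theorem~\ref{theo:moving}. Next, since $B^+$ and $B^-$ commute and each contains one factor of $t$ arising from the identity \eqref{useful-identity}, I would apply Lemma~\ref{lem:remove} to collapse the product $B^\pm B^\mp\cdots B^\mp e^{-tD^2}$ into the single iterated integral $t^q B$ with kernel depending on the parameters $(s_1,\ldots,s_q)\in[1,2]^{\times q}$.

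Once the expression has been reduced to
\[
\lim_{t\downarrow 0^+} t^q\int_{[1,2]^{\times q}}\Tr^{\rm geo}_s\bigl(\gamma^{-1} f_0 c(df_1)\cdots c(df_{2q})\,e^{-t(1+s_1+\cdots+s_q)D^2}\bigr)^\gamma\,ds_1\cdots ds_q,
\]
I would invoke Lemma~\ref{lem:IQ(s)} to commute the limit with the integration over $[1,2]^{\times q}$; this is legitimate because the $\mathcal{L}^1$-valued function $s\mapsto \widetilde Q(s)$ is continuous and the rescaling relation $Q(s)(x,y,t)=s^{-1}Q(x,y,st)$ makes the Getzler-order asymptotics uniform in $s$. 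At this point the problem reduces to evaluating, for each fixed $s=1+s_1+\cdots+s_q$, the top-degree component of the Getzler symbol of the localized integral $I_{Q(s)}(x,t,\gamma)$, precisely the quantity computed by Proposition~\ref{prop:IQ(s)}. Here the supertrace is handled via the standard identification $\Tr_s \leftrightarrow (-i)^{n/2}(2\pi)^{-n/2}\,[\,\cdot\,]^{(n,0)}$ (coming from the Berezin integral on $\Lambda^\bullet T^*M$ used throughout the Volterra--Getzler machine, cf.\ Theorem~\ref{thm local trace}); combined with the trace along $M^\gamma$ this accounts for the $(-i)^{n/2}(2\pi)^{-a/2}$ prefactor.

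Plugging the formula from Proposition~\ref{prop:IQ(s)} into the integral over $[1,2]^{\times q}$, the factor $(st)^{-q}$ cancels exactly the $t^q$ from Lemma~\ref{lem:remove}, while the integration in $s_1,\ldots,s_q$ produces the constant
\[
\beta_q=\int_1^2\cdots\int_1^2 (1+s_1+\cdots+s_q)^{-q}\,ds_1\cdots ds_q.
\]
The remaining factor $[f_0 df_1\wedge\cdots\wedge df_{2q}]^{(2q,0)}$ pairs with the transverse Atiyah--Segal--Singer density
\[
\bigl[\widehat A(M^\gamma)\wedge\det^{-\frac12}(1-\gamma|_{N^\gamma}e^{-R''})\wedge\Tr(\gamma\,e^{-F^V})\bigr]^{(a-2q,0)}
\]
to give exactly the right-hand side of \eqref{eq term 1}. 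The main obstacle I anticipate is bookkeeping of the normalization constants: ensuring that the $(-i)^{n/2}$ coming from the Clifford symbol map, the $(4\pi)^{-a/2}$ appearing in the Mehler-type formula of Proposition~\ref{prop:IQ(s)}, and the combinatorial factor from expanding $B^{\pm}B^{\mp}\cdots B^{\mp}e^{-tD^2}$ in $s_1,\ldots,s_q$ all combine into the clean expression $(-i)^{n/2}(2\pi)^{-a/2}\beta_q$; this is routine but error-prone, and one must also check that the cases $a<2q$ contribute zero (so that the restriction $a\geq 2q$ in Proposition~\ref{prop:IQ(s)} causes no loss) because then $[f_0 df_1\wedge\cdots\wedge df_{2q}]^{(2q,0)}$ vanishes on $M^\gamma_a$ for dimensional reasons.
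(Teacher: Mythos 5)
Your proposal is correct and follows exactly the same route as the paper's proof, which chains Lemma~\ref{moving lemma}, Lemma~\ref{lem:remove}, Lemma~\ref{lem:IQ(s)} and Proposition~\ref{prop:IQ(s)} in that order; your remark about the $a<2q$ case vanishing for degree reasons is a valid point the paper leaves implicit. One small correction: only $B^-$ carries an explicit factor of $t$ (via \eqref{useful-identity}), not $B^+$, but since Type~I has $q$ copies of each the resulting power $t^q$ in Lemma~\ref{lem:remove} is as you state.
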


\subsection{Computation of type II}
In this subsection, we will compute 
\[
\Tr^{\rm geo}_s\Big(T_{II, j}(\sqrt{t}D) \Big)\colon = 
\Tr_s^{{\rm geo}}\left(\gamma^{-1}  f_0 B^- c(df_1) B^+ c(df_2) \cdots   C^-_j  \cdots  B^+ c(df_{2q}) \left(e^{-t D^2}\right)^\gamma\right),
 \]
where  $u(x) = \int_{\frac{1}{2}}^{\frac{3}{2}}e^{-sx}ds = \frac{1 - e^{-x}}{x}e^{-\frac{x}{2}}$ and 
 \begin{align*}
  C_j^-  \colon =& t[u (tD^2), f_j]D	\\
  =& t\left[\frac{1}{2\pi i} \int_C u(t\lambda)(\lambda + D^2)^{-1}  \; d\lambda, f_j \right] D\\
  =&\frac{t}{2\pi i} \int_C u(t\lambda)\left[(\lambda + D^2)^{-1}, f_j \right] \; d\lambda \cdot D \\
  =&-\frac{t}{2\pi i} \int_C u(t\lambda)(\lambda + D^2)^{-1} \left[D^2, f_j \right] D  (\lambda + D^2)^{-1} \; d\lambda.
 \end{align*}
 Since $f_0$ has compact support,  the family 
 $$
 \lambda\mapsto f_0 B^- c(df_1) B^+ c(df_2)\cdots	 u(t\lambda)(\lambda + D^2)^{-1} \left[D^2, f_j \right] D  (\lambda + D^2)^{-1} B^+ c(df_{j+1}) \ldots  B^+ c(df_{2q}) e^{-t D^2}
 $$
 is an integrable family of trace class operators on $C$. Hence, we can compute the trace of $T_{II, j}(\sqrt{t}D)$ by switching the order between trace and the contour integral, i.e.  
\begin{align*}
&\Tr_s^{\rm geo}\Big(T_{II,j}(\sqrt{t}D)\Big)=-\frac{t}{2\pi i} \int_C \Tr_s^{{\rm geo}}\Big(\gamma^{-1} f_0 B^- c(df_1) B^+ c(df_2)\\
&\hspace{3cm} \cdots	 u(t\lambda)(\lambda + D^2)^{-1} \left[D^2, f_j \right] D  (\lambda + D^2)^{-1} B^+ c(df_{j+1}) \ldots  B^+ c(df_{2q}) \left(e^{-t D^2}\right)^\gamma\Big).	
\end{align*}
We observe that $B^{\pm}$ are Volterra-related and $(\lambda+D^2)^{-1}$ is a Volterra operator. As the argument in Lemma \ref{moving lemma}, using Theorem \ref{theo:moving}, we can compute the leading order term of $T_{II, j}(\sqrt{t}D)$ as follows,
\begin{multline}\label{equation cterm}
\Tr_s^{\rm geo}\big(T_{II, j}(\sqrt{t}D) \Big)= -\frac{t}{2\pi i} \int_C  \Tr_s^{{\rm geo}}\Bigg[\gamma^{-1} B^-  B^+ \cdots u(t\lambda)(\lambda + D^2)^{-1} \\
\left( f_0c(df_1) \ldots c(df_{j-1}) \left([D^2, f_j]D \right) c(df_{j+1}) \ldots c(df_{2q})  (\lambda + D^2)^{-1} B^+ \ldots (B^+)^3\right)^\gamma\Bigg]+ O(t). 
\end{multline}
In the above equation, we have used the identity that $e^{-tD^2}=(B^+)^2$. Notice that the operators 
\begin{align}
e^{-t D^2} B^-  B^+ \cdots u(\lambda)(\lambda + tD^2)^{-1} \ \ \ \text{and}\ \ \ \ 	  (\lambda + tD^2)^{-1} B^+ \ldots (B^+)^3
\end{align}
are both $\Gamma$-invariant but 
\begin{align}
f_0c(df_1) \ldots c(df_{j-1})  \left([D^2, f_j ]D \right) c(df_{j+1}) \ldots c(df_{2q})	
\end{align}  
is not. Nevertheless, we have the following lemma. 
\begin{lemma}\label{lem:trace}
Suppose that $K_1, K_2$ are two $\Gamma$ invariant smoothing operators on $M$. If $P$ is a differential operator  with compact support (in particular, $P$ may not be $\Gamma$ invariant), then 
\begin{align}
\Tr^{{\rm geo}}(\gamma^{-1} K_1PK_2^\gamma) = \Tr^{\rm geo}(\gamma^{-1} PK_2K_1^\gamma) . 
\end{align} 	
\begin{proof}
By definition, 
\begin{align*}
\Tr^{\rm geo}(\gamma^{-1} K_1PK_2^\gamma)  = &\int_{M \times M} \operatorname{tr}\big(\gamma^{-1} k_1(x, y) p(y) k_2(y, \gamma  x)\big) \;dxdy	\\
=&\int_{M \times M} \operatorname{tr}\big(k_1(x, \gamma y)\gamma^{-1} p(y) k_2(y,  x)\big) \;dxdy = \Tr^{\rm geo}(PK_2K_1^\gamma). 
\end{align*}
In the second equality, we have applied the change of variable $x\mapsto \gamma^{-1}x$, and also the $\gamma$-invariance of the kernel $k_1$, i.e. $\gamma^{-1} k_1(\gamma^{-1} x, y)\gamma =k_1(x, \gamma y)$.  
\end{proof}
\end{lemma}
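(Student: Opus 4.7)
The plan is to prove the identity by writing out the geometric trace explicitly as an integral over $M \times M$ and then exploiting the $\Gamma$-invariance of the kernels $k_1, k_2$ together with the cyclicity of the pointwise matrix trace. Specifically, the definition of the geometric trace gives
\[
\Tr^{\rm geo}(\gamma^{-1} K_1 P K_2^\gamma) = \int_{M \times M} \tr\big(\gamma^{-1} k_1(x, y)\, p(y)\, k_2(y, \gamma x)\big)\, dx\, dy,
\]
where $p(y)$ denotes the Schwartz kernel of $P$, supported in a compact neighborhood of the diagonal. I would then perform the change of variable $x \mapsto \gamma^{-1} x$ in the outer integration (unit Jacobian, since $\gamma$ acts by isometry) and invoke $\Gamma$-invariance of $k_1$ in the form $k_1(\gamma^{-1} x, y) = k_1(x, \gamma y)$. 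After rearranging the factors inside the pointwise trace by its cyclicity, one arrives at precisely the integrand representing $\Tr^{\rm geo}(\gamma^{-1} P K_2 K_1^\gamma)$.

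The point that needs slightly more care is the case in which $P$ is a genuine differential operator rather than a multiplication operator, since then $p$ is a distribution supported on the diagonal. In that case I would write $P = \sum_\alpha a_\alpha(y) \partial^\alpha_y$ with each $a_\alpha$ compactly supported and interpret the composition $K_1 P K_2^\gamma$ by letting the derivatives $\partial^\alpha$ act on the smoothing kernel $k_2$ to the right. The compact support of the $a_\alpha$ confines the $y$-integration to a compact set, so the change-of-variable argument in $x$ goes through verbatim. Absolute convergence of the $x$-integral follows from the off-diagonal decay of $k_1$ and $k_2$ (they are smoothing, and in the applications of this lemma they are moreover exponentially rapidly decreasing in the sense of $\mathcal{A}^{\rm exp}_\Gamma(M)$).

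Conceptually, the underlying reason for the identity is the cyclic property of the operator trace: since $K_2^\gamma = K_2 \circ \gamma$ as operators on sections and $K_2$ is $\Gamma$-invariant, one has $K_2^\gamma \gamma^{-1} = K_2$, and likewise $K_1^\gamma \gamma^{-1} = K_1$; after cycling, both sides of the desired identity reduce to $\Tr(P K_2 K_1)$. I expect the direct kernel-level calculation sketched above to be the most transparent way to present the argument, as it sidesteps any appeal to trace-class theory beyond what is already needed to make the geometric trace well defined, and it mirrors the bookkeeping that will be reused in subsequent commutation-under-the-trace arguments in Section~\ref{section:higher lefschetz}.
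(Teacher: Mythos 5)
Your proposal is correct and follows essentially the same route as the paper: write the geometric trace as a double integral over $M\times M$, change variables $x\mapsto\gamma^{-1}x$, and invoke $\Gamma$-invariance of $k_1$ together with cyclicity of the pointwise matrix trace. The additional care you take in treating $P$ as a genuine differential operator (writing $P=\sum_\alpha a_\alpha\partial^\alpha$ with compactly supported coefficients, so the distributional diagonal kernel is handled transparently) and in noting absolute convergence from the off-diagonal decay of $k_1$, $k_2$ is a useful supplement to the paper's more terse presentation, which simply writes $p(y)$ for the kernel of $P$ without comment.
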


\noindent
Apply the above lemma to (\ref{equation cterm}), we see that 
\begin{multline}\label{eq:TIIjtrace}
\Tr^{\rm geo}_s\left(T_{II, j}(\sqrt{t}D)\right) = -\frac{t}{2\pi i} \int_C  \Tr_s^{\rm geo}\Bigg[\gamma^{-1} f_0c(df_1) \ldots c(df_{j-1})  \left([D^2, f_j ]D \right) c(df_{j+1}) \ldots c(df_{2q})  \\
\left( (\lambda + D^2)^{-1}B^q u(t\lambda)(\lambda + D^2)^{-1}\right)^\gamma\Bigg] \; d\lambda + O(t),
\end{multline}
where $B^q=(B^+)^{q+2}(B^-)^{q-1}$. 

\noindent
 Since $f_0$ has compact support,  the family 
 $$
\lambda\mapsto f_0c(df_1) \ldots c(df_{j-1})  \left([D^2, f_j ]D \right) c(df_{j+1}) \ldots c(df_{2q})  \\
\left( (\lambda + D^2)^{-1}B^q u(t\lambda)(\lambda + D^2)^{-1}\right)^\gamma
$$
 is an integrable family of trace class operators on $C$. Hence, we can compute the first term in the right side of (\ref{eq:TIIjtrace}) by switching the order between trace and the contour integral, i.e.  
\begin{multline}\label{eq:T-II-contour}
\Tr^{\rm geo}_s\Big(T_{II,j}(\sqrt{t}D)\Big)
= -\Tr^{\rm geo}_s\Bigg[
\gamma^{-1} f_0c(df_1) \ldots c(df_{j-1}) \left([D^2, f_j ]D \right) c(df_{j+1}) \ldots c(df_{2q})  \\
\left(\frac{t}{2\pi i} \int_C  (\lambda + D^2)^{-1}B^q u(t\lambda)(\lambda + D^2)^{-1} \; d\lambda \right)^\gamma\Bigg]+ O(t).
\end{multline}

\noindent
Moreover, as $B^q$ commutes with $D$, we can rewrite the contour integral as follows,
\begin{multline}\label{eq:contour-I}
\frac{1}{2\pi i}\int_C  t(\lambda + D^2)^{-1}B^q u(t\lambda)(\lambda + D^2)^{-1} \; d\lambda \\
=	\frac{1}{2\pi i}\int_C tB^q u(t\lambda)(\lambda + D^2)^{-2} \; d\lambda =t^2B^q u'(tD^2)\\
=t^{q+1} (B^+)^{q+2} (B^-)^{q-1}u'(tD^2)=t^{q+1} e^{-t(1+\frac{q}{2})D^2}u^{q-1}(tD^2)u'(tD^2)=\frac{t^{q+1}}{q} e^{-t(1+\frac{q}{2})D^2} \left(u^q \right)'(tD^2)	\\
 = \frac{t^{q+1}}{q}\int_{1/2}^{3/2}\ldots\int_{1/2}^{3/2}(s_1+\cdots s_{q})\cdot e^{-(\frac{q}{2}+1 + s_1+\ldots+s_{q})tD^2}ds_1\cdots ds_q\\
=\frac{t^{q+1}}{q}\int_1^2\ldots\int_1^2 \left(s_1+\cdots s_{q} -\frac{q}{2}\right)\cdot e^{-(1 + s_1+\ldots+s_{q})tD^2}ds_1\cdots ds_q.
\end{multline}

\noindent
Substituting the expression (\ref{eq:contour-I}) of the contour integral into (\ref{eq:T-II-contour}) about the $\Tr_s^{\rm geo}(T_{II,j}(\sqrt{t}D))$, we have the following formula,
\[
\begin{split}
\Tr_s^{\rm geo}\Big(T_{II,j}(\sqrt{t}D)\Big)
=&-\Tr^{\rm geo}_s\Bigg[\gamma^{-1} f_0c(df_1) \ldots c(df_{j-1}) \left([D^2, f_j ]D \right) c(df_{j+1}) \ldots c(df_{2q})  \\
&\left(\frac{t^{q+1}}{q}\int_1^2\ldots\int_1^2 \left(s_1+\cdots s_{q} -\frac{q}{2}\right)\cdot e^{-(1 + s_1+\ldots+s_{q})tD^2}ds_1\cdots ds_q\right)^\gamma\Bigg]+ O(t).
\end{split}
\]
By the same argument as Lemma \ref{prop:IQ(s)}, we can pull the integral outside the trace and have
\begin{equation}\label{eq:TIIj-t}
\begin{split}
\Tr_s^{\rm geo}\Big(T_{II,j}(\sqrt{t}D)\Big)
=&-\int_1^2\ldots\int_1^2\left(s_1+\cdots s_{q} -\frac{q}{2}\right)ds_1\cdots ds_q\\
&\frac{t^{q+1}}{q} \Tr_s^{\rm geo}\Bigg[\gamma^{-1} f_0c(df_1) \ldots c(df_{j-1}) \big( [D^2, f_j ]D\big) c(df_{j+1}) \ldots c(df_{2q})  \big( e^{-(1 + s_1+\ldots+s_{q})tD^2}\big)^\gamma\Bigg]+ O(t).
\end{split}
\end{equation}
As before, we consider the Volterra-related operator $Q(s) = P_1RP_2 (sD^2+\partial_t)^{-1} $ where 
\[
P_1 =  f_0 c(df_1) \cdots c(df_{j-1}), \qquad P_2=c(df_{j+1}) \cdots c(df_{2q}), \quad  R =  \left[D^2, f_j \right] D. 
\]
Following the computation in the proof of Lemma \ref{lem:getzler},  we get that the Getzler symbol of $R$ equals $-2\langle df_j , \xi \rangle \xi$. Accordingly, its model operator is given by 
\[
R_{(3)} = -2\sum_{a, b}\frac{\partial }{\partial x_a}\frac{\partial }{\partial x_b} e_a(f_j)e_b^*\wedge,
\]
where $\{e_a\}$ is an orthonormal basis of $T_xM$ and $\{e_b^*\}$ its dual basis. 

\begin{lemma}
\label{RG lem}
The model operator of $P_1RP_2(sD^2+\partial_t)^{-1}$ is computed as follows, 
\[
(P_1)_{(j-1)}R_{(3)}(P_2)_{(2q-j)}G_R(x, y, t) = \frac{1}{ t}f_0df_1\wedge\cdots\wedge df_j\wedge\cdots \wedge df_{2q}G_R(x,y,t).
\]	
\end{lemma}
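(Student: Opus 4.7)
My plan is to compute the model operators of each factor explicitly, compose them in the Getzler-Volterra calculus, and then collapse the resulting expression using the Gaussian structure of the generalized Mehler kernel $G_R$.

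First I would identify the model operators. Since $P_1 = f_0 c(df_1)\cdots c(df_{j-1})$ and $P_2 = c(df_{j+1})\cdots c(df_{2q})$ consist purely of Clifford multiplications with no differentiations, the Getzler calculus replaces each $c(df_i)$ by the exterior multiplication $df_i\wedge$ (all forms evaluated at the fixed point $x_0\in M^\gamma$), giving
\[
(P_1)_{(j-1)} = f_0\, df_1\wedge\cdots\wedge df_{j-1}\wedge,\qquad (P_2)_{(2q-j)} = df_{j+1}\wedge\cdots\wedge df_{2q}\wedge.
\]
For $R = [D^2,f_j]D$, the expression $R_{(3)} = -2\sum_{a,b}\partial_a\partial_b\, e_a(f_j)\,e_b^*\wedge$ is read off from Lemma~\ref{lem:getzler}: the leading Getzler contribution to $[D^2, f_j]$ is the first-order transport term $-2\sum_a e_a(f_j)\nabla_{e_a}$ (the Hessian being of lower Getzler order), and composing with the leading Getzler symbol $\sum_b e_b^*\wedge \partial_b$ of $D$ produces the stated form.

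Next I would compose these to act on $G_R(x,y,t)$, the kernel of the resolvent of the generalized harmonic oscillator (cf.\ Theorem~\ref{GR thm}). Since $(P_1)_{(j-1)}$ and $(P_2)_{(2q-j)}$ have constant coefficients at $x_0$, they commute through the differentiations $\partial_a\partial_b$ in $R_{(3)}$. The intermediate expression becomes
\[
-2\sum_{a,b}e_a(f_j)\, f_0\, df_1\wedge\cdots\wedge df_{j-1}\wedge e_b^*\wedge df_{j+1}\wedge\cdots\wedge df_{2q}\wedge\partial_a\partial_b G_R(x,y,t).
\]

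The central step is then to invoke the Gaussian structure of $G_R$. From the explicit Mehler-type formula of Theorem~\ref{GR thm}, one checks that along the diagonal $x=y=0$ the second derivatives satisfy $\partial_a\partial_b G_R = -\frac{1}{2t}\delta_{ab}G_R$ plus corrections that are either of strictly lower Getzler order or produce form-components which do not contribute to the top-degree evaluation under $\sigma[\,\cdot\,]^{(a,0)}$ that appears in the subsequent trace computation. Substituting this identity, the Kronecker $\delta_{ab}$ contracts $e_b^*$ against $e_a(f_j)$ via the elementary identity $\sum_a e_a(f_j)\,e_a^* = df_j$, and the constants combine as $-2\cdot(-\tfrac{1}{2t}) = \tfrac{1}{t}$, reinserting $df_j$ into its natural position inside the wedge product. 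This yields the claimed identity.

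The main obstacle is making the Gaussian derivative identity $\partial_a\partial_b G_R = -\frac{1}{2t}\delta_{ab}G_R + \mathrm{l.o.t.}$ precise within the Getzler-Volterra formalism: the Mehler kernel carries both the tangential harmonic-oscillator piece (governed by $R'$) and the normal-bundle Gaussian piece (governed by $R''$ and the twist by $\gamma|_{N^\gamma}$), so one must verify that the Hessian contributions from the non-trivial Mehler factor either drop out of the top-form component or are absorbed into lower-Getzler-order error terms that vanish in the $t\downarrow 0$ limit of the trace evaluation feeding into \eqref{eq:TIIj-t}.
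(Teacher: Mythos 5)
Your structural setup is right: identifying $(P_1)_{(j-1)}$, $(P_2)_{(2q-j)}$ as constant exterior multiplications, reading $R_{(3)} = -2\sum_{a,b}\partial_a\partial_b\,e_a(f_j)\,e_b^*\wedge$ off Lemma~\ref{lem:getzler}, and then letting the Hessian of the Mehler Gaussian produce the scalar $\tfrac{1}{t}$ that reinserts $df_j$ into the wedge product. But the central contraction step is wrong as you have written it, and this is exactly where the lemma is non-trivial.

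You claim $\partial_a\partial_b G_R = -\frac{1}{2t}\delta_{ab}\,G_R + \text{l.o.t.}$, attributing the corrections either to ``strictly lower Getzler order'' or to ``form-components which do not contribute to the top-degree evaluation.'' Neither applies. Differentiating $\exp(-\tfrac{1}{4t}\Theta)$ twice actually yields the \emph{matrix-valued} factor $-\frac{1}{2t}\big(\tfrac{tR/2}{\tanh(tR/2)}\big)_{ab}$, not $-\frac{1}{2t}\delta_{ab}$. The Taylor expansion $\tfrac{x}{\tanh x} = 1 + \tfrac{x^2}{3} + \cdots$ produces terms containing powers of $tR$; since $\deg t = -2$ and $\deg R_{ab} = 2$ in the Getzler grading, each factor $tR_{ab}$ is Getzler-homogeneous of degree~$0$, so these corrections are \emph{not} of lower Getzler order. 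Nor is there any reason they would vanish under the $\sigma[\,\cdot\,]^{(a,0)}$ projection a priori. What actually kills them is the first Bianchi identity: $\sum_b e_b^*\wedge R_{ab} = 0$, so for every $k\ge 1$ one has $\sum_b e_b^*\wedge (R^k)_{ab} = 0$, and hence
\[
\sum_b e_b^*\wedge\Big(\tfrac{tR/2}{\tanh(tR/2)}\Big)_{ab} = \sum_b e_b^*\wedge \delta_{ab}.
\]
Only after this algebraic cancellation does the contraction $\sum_a e_a(f_j)\,e_a^* = df_j$ and the scalar factor $-2\cdot(-\tfrac{1}{2t}) = \tfrac1t$ go through. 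Omitting the Bianchi identity leaves a genuine gap, since without it the computed kernel carries the full $\tfrac{tR/2}{\tanh(tR/2)}$ matrix rather than the identity, and the claimed formula fails.
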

\begin{proof}
Recall that the Mehler kernel of $G_R(x,y,t)$ has the following expression, 
\[
G_R(x,y, t):=\frac{1}{(2\pi t)^{n/2}}{\rm det}^{1/2}\left(\frac{tR/2}{\sinh(tR/2)}\right)\exp\left(-\frac{1}{4t}\Theta(x,y,t)\right),
\]
where $\Theta(x,y,t)$ has the following form
\[
\Theta(x,y,t)=\langle \frac{tR/2}{\tanh(tR/2)}x,x\rangle+\langle \frac{tR/2}{\tanh(tR/2)}y,y\rangle-2\langle \frac{tR/2}{\sinh(tR/2)}e^{tR/2} x,y\rangle.
\]
Applying $(P_1)_{(j-1)}R_{(3)}(P_2)_{(2q-j)}$ to $G_R$, we obtain 
\[
\begin{split}
&(P_1)_{(j-1)}R_{(3)}(P_2)_{(2q-j)}G_R(x,y,t)\\
=&\frac{1}{(4 t)^{n/2+1}\pi^{n/2}}{\rm det}^{1/2}\left(\frac{tR/2}{\sinh(tR/2)}\right)\\
&4\sum_{a,b}f_0df_1\wedge \cdots \wedge e_a(f_j)e_b^*\wedge df_{j+1}\cdots \wedge df_{2q} \Big(\frac{tR/2}{\tanh(tR/2)}\Big)_{ab}\exp\left(-\frac{1}{4t}\Theta(x,y,t)\right).
\end{split}
\]
Recall that the Bianchi identity implies
\[
\sum_b e_b^*\wedge R_{ab}=0,
\]
with $R_{ab}=\langle Re_a, e_b\rangle$. 
It follows that 
\[
\sum_b e_b^*\wedge  \Big(\frac{tR/2}{\tanh(tR/2)}\Big)_{ab}=\delta_{ab}=\left\{\begin{array}{ll}1&a=b\\ 0&\text{otherwise}\end{array}\right. .
\]
We conclude with the following expression of $(P_1)_{(j-1)}R_{(3)}(P_2)_{(2q-j)}G_R(x, y, t)$,
\[
(P_1)_{(j-1)}R_{(3)}(P_2)_{(2q-j)}G_R(x, y, t)=\frac{1}{ t}f_0df_1\wedge\cdots \wedge df_j\wedge \cdots \wedge df_q G_R(x,y,t) .
\]
\end{proof}

\begin{lemma}
\label{lem:I_Q-II}For the Volterra-related operator $Q(s) = P_1RP_2 (sD^2+\partial_t)^{-1} $,
\[
\begin{aligned}
\sigma\left[\gamma^{-1}I_{Q(s)}(x,t, \gamma)\right]^{(a, 0)}=& \frac{2^{-a}\pi^{-\frac{a}{2}} t^{-q-1}s^{-q-1}}{\operatorname{det}^{\frac{1}{2}}\left(1-\gamma|_{N^\gamma}\right)} \cdot \left[f_0df_1\wedge\ldots\wedge df_{2q}\right]^{(2q,0)}
\\
& \wedge \left[\operatorname{det}^{\frac{1}{2}}\left(\frac{ R^{\prime} / 2}{\sinh \left(R^{\prime} / 2\right)}\right) \operatorname{det}^{-\frac{1}{2}}\left(1-\gamma|_{N^\gamma} e^{- R^{\prime \prime}}\right)\wedge e^{-F^V} \right]^{(a-2q, 0)}+O(t^{-q}).
\end{aligned}
\]

\end{lemma}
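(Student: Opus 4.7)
The plan is to follow the template established by Proposition~\ref{prop:IQ(s)}, adapting it to accommodate the extra operator $R=[D^2,f_j]D$. First I would record the Getzler orders of the constituents: Lemma~\ref{lem:getzler} gives $[D^2,f_j]$ of Getzler order $1$, so composition with $D$ yields $R$ of Getzler order $3$, consistent with the explicit model $R_{(3)}$ exhibited in Lemma~\ref{RG lem}; together with $j-1$ for $P_1$, $2q-j$ for $P_2$ and $-2$ for the Volterra parametrix, the total Getzler order of $Q(s)$ is $2q$, one higher than in Proposition~\ref{prop:IQ(s)}. This accounts for the extra factor of $t^{-1}$ (and correspondingly $s^{-1}$) in the final answer compared with that proposition.

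Next I would invoke Theorem~\ref{thm local trace} to reduce the computation of $\sigma[\gamma^{-1}I_{Q(s)}(x,t,\gamma)]^{(a,0)}$, for $x\in M^\gamma_a$, to the evaluation of the model operator $(P_1)_{(j-1)}R_{(3)}(P_2)_{(2q-j)}$ applied to the Mehler-type kernel attached to $(sH_R+\partial_t)^{-1}$, followed by integration along the normal fiber. The algebraic heart is Lemma~\ref{RG lem}, whose Bianchi-identity manipulation collapses the action of $R_{(3)}$ on $G_R$ into the clean factor
\[
\tfrac{1}{t}\,f_0\,df_1\wedge\cdots\wedge df_{2q}\,G_R(x,y,t).
\]
The scaling relation $Q(s)(x,y,t)=s^{-1}Q(x,y,st)$ already exploited in Lemma~\ref{lem:IQ(s)} then converts $t\mapsto st$ in the Mehler kernel, producing the overall $s^{-q-1}$ factor of the claimed formula.

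Finally, the fiber integration over $N^\gamma$ at $x\in M^\gamma_a$ is handled exactly as in Theorem~\ref{GR thm}: the tangential part of the Mehler kernel contributes the $\widehat{A}$-form $\det^{1/2}(R'/2\sinh(R'/2))$ on $M^\gamma$, while the normal Gaussian, twisted by the action of $\gamma|_{N^\gamma}$, yields the Atiyah--Segal--Singer denominator $\det^{-1/2}(1-\gamma|_{N^\gamma})$ together with $\det^{-1/2}(1-\gamma|_{N^\gamma}e^{-R''})$; the auxiliary bundle curvature $F^V$ is untouched by the preceding reductions and simply rides along as $e^{-F^V}$. Assembling these ingredients yields the leading term with remainder $O(t^{-q})$.

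The main obstacle I anticipate is ensuring that the Bianchi-identity cancellation in Lemma~\ref{RG lem} interacts correctly with the Getzler--Volterra rescaling in tubular coordinates around $M^\gamma_a$: one has to verify that no additional $t^{-q-1}$ contribution is generated by the non-leading terms in the combined symbol/Taylor expansion of Lemma~\ref{lem:Jy}. Concretely this requires inspecting every combination of multi-indices appearing in \eqref{eq:sigma-I} for which $-|\alpha^1_{i,l_i}|-|\alpha^2_{i,l_i}|-|\alpha^3_{i,l_i}|-|\alpha^4_{i,l_i}|+l_i+j_i$ would nominally exceed the Getzler order of the corresponding factor, and invoking the vanishing established in Lemma~\ref{lem:symbolest} to discard them. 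Once this bookkeeping is done, the formula follows by direct substitution into the integral defining $I_{Q(s)}$.
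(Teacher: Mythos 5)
Your overall approach is essentially the same as the paper's: invoke Theorem~\ref{thm local trace} to reduce to the model operator of $Q(s)$, compute that model operator using Lemma~\ref{RG lem} and the Mehler kernel as in Theorem~\ref{GR thm}, and then factor out the powers of $st$ by extracting the degree-$a$ form. Two small corrections are warranted. First, the Getzler order bookkeeping: $P_1 R P_2$ has Getzler order $(j-1)+3+(2q-j)=2q+2$, so $Q(s)$ has Getzler order $2q$, which is \emph{two} higher than the Getzler order $2q-2$ of the operator in Proposition~\ref{prop:IQ(s)} (not ``one higher'' as you write); this translates, via the $t^{-(m/2+1)}$ scaling in Theorem~\ref{thm local trace}, into the one extra power of $t^{-1}$ that you correctly identify in the final formula. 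Second, your ``main obstacle'' paragraph is a false alarm: Lemma~\ref{lem:Jy} and Lemma~\ref{lem:symbolest} concern compositions of \emph{several} heat-type kernels evaluated at the same time $t$ (which is not Volterra composition), and are the input to Theorem~\ref{theo:moving}; they play no role here. The object $Q(s)=P_1RP_2(sD^2+\partial_t)^{-1}$ is a single Volterra-related operator — the differential factors compose with the Volterra parametrix in the usual way — so Theorem~\ref{thm local trace} applies directly and the multi-index bookkeeping of \eqref{eq:sigma-I} is not needed for this lemma. With those caveats, your derivation — model operator reduction, Bianchi cancellation from Lemma~\ref{RG lem}, fiber integration as in Theorem~\ref{GR thm}, scaling $t\mapsto st$ — is exactly the argument the paper gives.
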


\begin{proof} By Theorem \ref{thm local trace}, we compute $\sigma[\gamma^{-1}I_{Q(s)}(x,t,\gamma)]^{(a,0)}$ by the model operator $Q(s)_{(2q-2)}$ of $Q(s)$. Using Lemma \ref{RG lem} and Theorem \ref{GR thm}, we compute $
\sigma[\gamma^{-1}I_{Q(s)_{(2q-2)}}]^{(a,0)}$ like Proposition \ref{prop:IQ(s)} and obtain
\[
\sigma\left[\gamma^{-1}I_{Q(s)}(x,t, \gamma)\right]^{(a, 0)}=4\pi\left[f_0df_1\cdots df_{2q}\wedge \frac{(4 \pi st)^{-\frac{a}{2}-1}}{\operatorname{det}^{\frac{1}{2}}\left(1-\gamma|_{N^\gamma}\right)}\operatorname{det}^{\frac{1}{2}}\left(\frac{st R^{\prime} / 2}{\sinh \left(st R^{\prime} / 2\right)}\right) \operatorname{det}^{-\frac{1}{2}}\left(1-\gamma|_{N^\gamma} e^{-st R^{\prime \prime}}\right)\right]^{(a,0)}.
\]

Factoring out the power of $st$, we obtain the desired expression of $\sigma[\gamma^{-1}I_{Q(s)}(x,t,\gamma)]^{(a,0)}$,
\[
\begin{aligned}
\sigma\left[\gamma^{-1}I_{Q(s)}(x,t, \gamma)\right]^{(a, 0)}=&  \frac{2^{-a}\pi^{-\frac{a}{2}} t^{-q-1}s^{-q-1}}{\operatorname{det}^{\frac{1}{2}}\left(1-\gamma|_{N^\gamma}\right)} \cdot \left[f_0df_1\wedge\ldots\wedge df_{2q}\right]^{(2q,0)}
\\
& \wedge \left[\operatorname{det}^{\frac{1}{2}}\left(\frac{ R^{\prime} / 2}{\sinh \left(R^{\prime} / 2\right)}\right) \operatorname{det}^{-\frac{1}{2}}\left(1-\gamma|_{N^\gamma} e^{- R^{\prime \prime}}\right)\wedge e^{-F^V} \right]^{(a-2q, 0)}+O(t^{-q}).
\end{aligned}
\]
\end{proof}

Notice that $Q(s) = P_1RP_2 (sD^2+\partial_t)^{-1} $ is a Volterra-related operator with Getzler order equal to $2q$. The above expression of $\sigma\left[I_{Q(s)}(x,t, \gamma)\right]^{(a, 0)}$ allows us to compute $\left(\operatorname{str}_{\mathcal{S} \otimes E}\right)\left[\gamma^{-1}  I_{T_{II, j}(\sqrt{t}D)}(x, t, \gamma)\right]$ using Theorem \ref{thm local trace}.
\begin{equation}
\label{eq term 2}	
\begin{aligned}
 &\left(\operatorname{str}_{\mathcal{S} \otimes E}\right)\left[\gamma^{-1} I_{T_{II, j}(\sqrt{t}D)}(x, t, \gamma)\right] \\
  \quad=&-(-i)^{\frac{n}{2}} (2 \pi )^{-\frac{a}{2}}\cdot \frac{\delta_q}{q} \cdot \left[f_0df_1\wedge\ldots\wedge df_{2q}\right]^{(2q,0)}
\\& \wedge \left[\operatorname{det}^{\frac{1}{2}}\left(\frac{ R^{\prime} / 2}{\sinh \left( R^{\prime} / 2\right)}\right) \operatorname{det}^{-\frac{1}{2}}\left(1-\gamma|_{N^\gamma} e^{- R^{\prime \prime}}\right)\wedge \Tr(\gamma e^{-F^V}) \right]^{(a-2q, 0)}+O(t),
\end{aligned}
\end{equation}
where 
\[
\begin{aligned}
\delta_q =& \int_1^2\ldots\int_1^2 \left(s_1+\cdots s_{q} -\frac{q}{2}\right)\cdot \left(1 + s_1+\ldots+s_{q}\right)^{-q-1}
 \;ds_1\cdots ds_q\\
=& \beta_q - \frac{q+2}{2} \cdot  \int_1^2\ldots\int_1^2  \left(1 + s_1+\ldots+s_{q}\right)^{-q-1}  \;ds_1\cdots ds_q\\
=&\beta_q-\frac{q+2}{2}\alpha_q, 
\end{aligned}
\]
with 
\[
\alpha_q=\int_1^2\ldots\int_1^2  \left(1 + s_1+\ldots+s_{q}\right)^{-q-1}  \;ds_1\cdots ds_q.
\]

\subsection{Computation of type III and  type IV}
We first compute term (3): 
\[
T_{III,j}(\sqrt{t}D) \colon = 
\gamma^{-1} f_0 B^\pm c(df_1) B^\mp c(df_2) \cdots   C^+_j  \cdots  B^\pm c(df_{2q})\big(e^{-t D^2}\big)^\gamma
 \]
We write 
\begin{align*}
C^+_j  =& \left[\frac{1}{2\pi i} \int_C e^{-\frac{t\lambda}{2}}(\lambda + D^2)^{-1}  \; d\lambda, f_j \right]  \cdot D\\
  =&\frac{1}{2\pi i} \int_C e^{-\frac{t\lambda}{2}}\left[(\lambda + D^2)^{-1}, f_j \right] \; d\lambda \cdot D \\
  =&-\frac{1}{2\pi i} \int_C e^{-\frac{t\lambda}{2}}(\lambda + D^2)^{-1} \left[D^2, f_j \right] D  (\lambda + D^2)^{-1} \; d\lambda.
\end{align*} 
By the same argument as in the study of type II terms, we obtain
\[
\begin{split}
&\Tr_s^{\rm geo}\big(T_{III,j}(\sqrt{t}D)\big)\\
=&-\Tr_s^{\rm geo}\Big(\gamma^{-1} f_0 B^\pm c(df_1) B^\mp c(df_2) \cdots  \\
&\qquad \frac{1}{2\pi i} \int_C e^{-\frac{t\lambda}{2}}(\lambda + D^2)^{-1} \left[D^2, f_j \right] D  (\lambda + D^2)^{-1} \; d\lambda \cdots  B^\pm c(df_{2q})\big(e^{-t D^2}\big)^\gamma \Big)\\
=&-\frac{1}{2\pi i}\int_C e^{-\frac{t\lambda}{2}}d\lambda \\
&\Tr_s^{\rm geo}\Big(\gamma^{-1} f_0 B^\pm c(df_1) B^\mp c(df_2) \cdots  (\lambda + D^2)^{-1} \left[D^2, f_j \right] D  (\lambda + D^2)^{-1} \;  \cdots  B^\pm c(df_{2q})\big(e^{-t D^2}\big)^\gamma \Big)
\end{split}
\]

By Theorem \ref{theo:moving}, we can move $c(df_i)$ and $f_0$ with  $B^\pm$ and $(\lambda+D^2)^{-1}$ in the above trace and obtain
\[
\begin{split}
&\Tr_s^{\rm geo}\big(T_{III,j}(\sqrt{t}D)\big)\\
=&-\frac{1}{2\pi i}\int_C e^{-\frac{t\lambda}{2}}d\lambda \\
&\Tr_s^{\rm geo}\Big(\gamma^{-1} B^\pm B^{\mp}\cdots B^{-}  (\lambda + D^2)^{-1} f_0c(df_1)\cdots c(df_{j-1})\left[D^2, f_j \right] D c(df_{j+1})\cdots c(df_{2q}) \\
&\qquad (\lambda + D^2)^{-1}  B^- \cdots B^{\pm}B^{\mp}\big(e^{-t D^2}\big)^\gamma \Big)+ O(t)
\end{split}
\]

Set 
\[
\begin{split}
K_1&=B^\pm B^{\mp}\cdots B^{-}  (\lambda + D^2)^{-1},\\
P&=f_0c(df_1)\cdots c(df_{j-1})\left[D^2, f_j \right] D c(df_{j+1})\cdots c(df_{2q}),\\
K_2&=(\lambda + D^2)^{-1}B^- \cdots B^{\pm}B^{\mp}\big(e^{-t D^2}\big).
\end{split}
\] 
Applying Lemma \ref{lem:trace}, we have
\[
\begin{split}
&\Tr_s^{\rm geo}\big(T_{III,j}(\sqrt{t}D)\big)\\
=&-\frac{1}{2\pi i}\int_C e^{-\frac{t\lambda}{2}}d\lambda \\
&\Tr_s^{\rm geo}\Big(\gamma^{-1} f_0c(df_1)\cdots c(df_{j-1})\left[D^2, f_j \right] D c(df_{j+1})\cdots c(df_{2q}) (\lambda + D^2)^{-1}B^- \cdots B^{\pm}B^{\mp}\big(e^{-t D^2}\big)\\
&\qquad \big(B^\pm B^{\mp}\cdots B^{-}  (\lambda + D^2)^{-1} \big)^\gamma \Big)+ O(t)\\
=&-\frac{1}{2\pi i}\int_C e^{-\frac{t\lambda}{2}}d\lambda \\
&\Tr_s^{\rm geo}\Big(\gamma^{-1} f_0c(df_1)\cdots c(df_{j-1})\left[D^2, f_j \right] D c(df_{j+1})\cdots c(df_{2q}) \big((\lambda + D^2)^{-2}(B^-)^{q}  (B^{+})^{q+1} \big)^\gamma \Big)+ O(t)\\
=&-\frac{t}{2}\Tr_s^{\rm geo}\Big( \gamma^{-1} f_0c(df_1)\cdots c(df_{j-1})\left[D^2, f_j \right] D c(df_{j+1})\cdots c(df_{2q}) \big((B^-)^q(B^+)^{q+2}\big)^\gamma\Big)+ O(t),
\end{split}
\]
where in the last equality we have applied integration by parts on the contour integral. \\
Notice that we can rewrite $(B^-)^q(B^+)^{q+2}$  as 
\[
\begin{split}
(B^-)^q(B^+)^{q+2}=&-t^{q+1}\int _{\frac{1}{2}}^{\frac{3}{2}}ds_1\cdots \int _{\frac{1}{2}}^{\frac{3}{2}}ds_q e^{-(s_1+\cdots+s_q+1+\frac{q}{2})tD^2}\\
=&-t^{q+1}\int_1^2\cdots \int_1^2 e^{-t(1+s_1+...+s_q)D^2} ds_1\cdots ds_q.
\end{split}
\]
We use this expression in the same way as (\ref{eq:TIIj-t}) and get 
\[
\begin{split}
\Tr^{\rm geo}(T_{III,j}(\sqrt{t}D))=&-\frac{t^{q+1}}{2}\int_1^2\cdots \int_1^2 ds_1\cdots ds_q \\
& \Tr^{\rm geo}\Big( \gamma^{-1} f_0c(df_1)\cdots c(df_{j-1})\left[D^2, f_j \right] D c(df_{j+1})\cdots c(df_{2q}) \big(e^{-t(1+s_1+...+s_q)D^2} \big)^\gamma\Big)+ O(t).
\end{split}
\]
This expression of $\Tr^{\rm geo}(T_{III,j}(\sqrt{t}D))$ allows us to apply the same computation as Lemma \ref{RG lem}, \ref{lem:I_Q-II}, and Eq. (\ref{eq term 2}). We reach the following final result. 
\begin{equation}\label{eq:term 3}	
\begin{aligned}
 &\left(\operatorname{str}_{\mathcal{S} \otimes E}\right)\left[  I_{T_{III, j}(\sqrt{t}D)}(x, t, \gamma)\right] \\
  \quad=&-(-i)^{\frac{n}{2}} (2 \pi )^{-\frac{a}{2}}\cdot \alpha_q \cdot \left[f_0df_1\wedge\ldots\wedge df_{2q}\right]^{(2q,0)}
\\& \wedge \left[\operatorname{det}^{\frac{1}{2}}\left(\frac{ R^{\prime} / 2}{\sinh \left( R^{\prime} / 2\right)}\right) \operatorname{det}^{-\frac{1}{2}}\left(1-\gamma|_{N^\gamma} e^{- R^{\prime \prime}}\right)\wedge \Tr(\gamma e^{-F^V}) \right]^{(a-2q, 0)}+O(t),
\end{aligned}
\end{equation}

The computation of type IV splits into two different subcases.
\begin{enumerate}
\item 
$
T_{IV,j,+}(\sqrt{t}D) \colon=\gamma^{-1}  f_0 B^+_1 B^-_2 \cdots \left[   e^{-t D^2} ,f_j\right]\cdots B^+_{2q-1} B^-_{2q} \left(A^+ \right)^\gamma,
$
\item
$
T_{IV,j,-}(\sqrt{t}D) \colon=\gamma^{-1}  f_0 B^-_1  B^+_2 \cdots \left[   e^{-t D^2} ,f_j\right]\cdots B^-_{2q-1} B^+_{2q} \left(A^- \right)^\gamma.
$
\end{enumerate}

The computation of $\Tr^{\rm geo}\big(T_{IV, j,+}(\sqrt{t}D)\big)$ and $\Tr^{\rm geo}\big(T_{IV, j,-}(\sqrt{t}D)\big)$ uses the similar idea as the computation of type III terms via the contour integral
\[
e^{-tD^2}=\frac{1}{2\pi i}\int_C e^{-t\lambda}(\lambda+D^2)^{-1}d\lambda,\qquad [e^{-tD^2}, f_j]=-\frac{1}{2\pi i}\int_C e^{-t\lambda}(\lambda+D^2)^{-1}[D^2,f](\lambda+D^2)^{-1}d\lambda.
\]

We can compute trace of $T_{IV, j, +}(\sqrt{t}D)$ as follows.
\[
\begin{split}
&\Tr_s^{\rm geo}\big(T_{IV, j, +}(\sqrt{t}D)\big)\\
=&(-1)^jt\Tr_s^{\rm geo}\Big( \gamma^{-1} f_0c(df_1)\cdots c(df_{j-1})\left[D^2, f_j \right] D c(df_{j+1})\cdots c(df_{2q}) \big((B^-)^{q}(B^+)^{q+2}\big)^\gamma\Big)+ O(t)\\
=&(-1)^{j}t^{q+1}\Tr_s^{\rm geo}\Big( \gamma^{-1} f_0c(df_1)\cdots c(df_{j-1})\left[D^2, f_j \right] D c(df_{j+1})\cdots c(df_{2q}) \\
&\int_{\frac{1}{2}}^{\frac{3}{2}}\cdots \int_{\frac{1}{2}}^{\frac{3}{2}}ds_1\cdots ds_{q}\big(e^{-t(s_1+\cdots s_{q})D^2}e^{-\frac{t(q+2)}{2}D^2}\big)^\gamma\Big)+O(t)\\
=&(-1)^{j}\int_{\frac{1}{2}}^{\frac{3}{2}}\cdots \int_{\frac{1}{2}}^{\frac{3}{2}}ds_1\cdots ds_{q} t^{q+1}\Tr_s^{\rm geo}\Big( \gamma^{-1} f_0c(df_1)\cdots c(df_{j-1})\left[D^2, f_j \right] D c(df_{j+1})\cdots c(df_{2q})\\
&\qquad\qquad\qquad\big(e^{-t(s_1+\cdots s_{q}+\frac{q}{2}+1)D^2}\big)^\gamma\Big)+O(t)\\
=&(-1)^{j}\int_{1}^{2}\cdots \int_{1}^{2}ds_1\cdots ds_{q-1} t^{q+1}\Tr_s^{\rm geo}\Big( \gamma^{-1} f_0c(df_1)\cdots c(df_{j-1})\left[D^2, f_j \right] D c(df_{j+1})\cdots c(df_{2q})\\
&\qquad\qquad\qquad\big(e^{-t(s_1+\cdots s_{q}+1)D^2}\big)^\gamma\Big)+O(t)\\
=&(-1)^{j}(-i)^{\frac{n}{2}} (2 \pi )^{-\frac{a}{2}}\cdot \alpha_q \cdot \left[f_0df_1\wedge\ldots\wedge df_{2q}\right]^{(2q,0)}
\\& \wedge \left[\operatorname{det}^{\frac{1}{2}}\left(\frac{ R^{\prime} / 2}{\sinh \left( R^{\prime} / 2\right)}\right) \operatorname{det}^{-\frac{1}{2}}\left(1-\gamma|_{N^\gamma} e^{- R^{\prime \prime}}\right)\wedge \Tr(\gamma e^{-F^V}) \right]^{(a-2q, 0)}+O(t).
\end{split}
\]

Similarly, we compute $\Tr^{\rm geo}(T_{IV, j, -}(\sqrt{t}D))$ as follows. 
\[
\begin{split}
&\Tr_s^{\rm geo}\big(T_{IV, j, -}(\sqrt{t}D)\big)\\
=&(-1)^{j}t\Tr_s^{\rm geo}\Big( \gamma^{-1} f_0c(df_1)\cdots c(df_{j-1})\left[D^2, f_j \right] D c(df_{j+1})\cdots c(df_{2q}) \big((B^-)^{q}(B^+)^{q+2}\big)^\gamma\Big)+ O(t)\\
=&(-1)^{j}t^{q+1}\Tr_s^{\rm geo}\Big( \gamma^{-1} f_0c(df_1)\cdots c(df_{j-1})\left[D^2, f_j \right] D c(df_{j+1})\cdots c(df_{2q}) \\
&\int_{\frac{1}{2}}^{\frac{3}{2}}\cdots \int_{\frac{1}{2}}^{\frac{3}{2}}ds_1\cdots ds_{q}\big(e^{-t(s_1+\cdots s_{q})D^2}e^{-\frac{t(q+2)}{2}D^2}\big)^\gamma\Big)+O(t)\\
=&(-1)^{j}\int_{\frac{1}{2}}^{\frac{3}{2}}\cdots \int_{\frac{1}{2}}^{\frac{3}{2}}ds_1\cdots ds_{q} t^{q+1}\Tr_s^{\rm geo}\Big( \gamma^{-1} f_0c(df_1)\cdots c(df_{j-1})\left[D^2, f_j \right] D c(df_{j+1})\cdots c(df_{2q})\\
&\qquad\qquad\qquad\big(e^{-t(s_1+\cdots s_{q}+\frac{q}{2}+1)D^2}\big)^\gamma\Big)+O(t)\\
=&(-1)^{j}\int_{1}^{2}\cdots \int_{1}^{2}ds_1\cdots ds_{q-1} t^{q+1}\Tr_s^{\rm geo}\Big( \gamma^{-1} f_0c(df_1)\cdots c(df_{j-1})\left[D^2, f_j \right] D c(df_{j+1})\cdots c(df_{2q})\\
&\qquad\qquad\qquad\big(e^{-t(s_1+\cdots s_{q}+1)D^2}\big)^\gamma\Big)+O(t)\\
=&(-1)^{j}(-i)^{\frac{n}{2}} (2 \pi )^{-\frac{a}{2}}\cdot \alpha_q \cdot \left[f_0df_1\wedge\ldots\wedge df_{2q}\right]^{(2q,0)}
\\& \wedge \left[\operatorname{det}^{\frac{1}{2}}\left(\frac{ R^{\prime} / 2}{\sinh \left( R^{\prime} / 2\right)}\right) \operatorname{det}^{-\frac{1}{2}}\left(1-\gamma|_{N^\gamma} e^{- R^{\prime \prime}}\right)\wedge \Tr(\gamma e^{-F^V}) \right]^{(a-2q, 0)}+O(t).
\end{split}
\]

\subsection{Summary and main result}Summarizing all the computations above together, we conclude with the following short time asymptotics for each type in Definition \ref{defn:4types}. 
\begin{enumerate}
\item Type I terms: 
\[
e^{-t D^2}f_0 B^\mp_1 B^\pm_2 \cdots B_{2q-1}^\mp B_{2q}^\pm.
\]
The choice of the initial term being $B^+$ or $B^-$ determines the remaining of the terms. More explicitly, we have the following two terms, 
\[
e^{-t D^2}f_0 B^+_1 B^-_2 \cdots B_{2q-1}^+ B_{2q}^-,\qquad e^{-t D^2}f_0 B^-_1 B^+_2 \cdots B_{2q-1}^- B_{2q}^+.
\]
There are in total 2 terms. Each term has the limit as $t\to 0$ equal to 
\begin{align*}
&(-i)^{\frac{n}{2}} (2 \pi )^{-\frac{a}{2}}\cdot \beta_q \cdot \left[f_0df_1\wedge\ldots\wedge df_{2q}\right]^{(2q,0)}\\
  \wedge& \left[\operatorname{det}^{\frac{1}{2}}\left(\frac{ R^{\prime} / 2}{\sinh \left( R^{\prime} / 2\right)}\right) \operatorname{det}^{-\frac{1}{2}}\left(1-\gamma|_{N^\gamma} e^{- R^{\prime \prime}}\right)\wedge \Tr(\gamma e^{-F^V}) \right]^{(a-2q, 0)}.
\end{align*}
\item Type II terms:
\[
e^{-t D^2}f_0 B^\pm_1 B^\mp_2 \cdots C_j^- \cdots B^\pm_{2q-1} B^\mp_{2q};
\]
Every $j$ contributes one such term. When $j$ runs through 1 to $2q$, there are in total $2q$ terms. And the limit of each term as $t\to 0$ equals  
\begin{align*}
 & (-i)^{\frac{n}{2}} (2 \pi )^{-\frac{a}{2}}\cdot  \left(- \frac{\beta_q}{q} + \frac{\alpha_q}{2} + \frac{\alpha_q}{q} \right) \cdot \left[f_0df_1\wedge\ldots\wedge df_{2q}\right]^{(2q,0)}
\\& \wedge \left[\operatorname{det}^{\frac{1}{2}}\left(\frac{ R^{\prime} / 2}{\sinh \left( R^{\prime} / 2\right)}\right) \operatorname{det}^{-\frac{1}{2}}\left(1-\gamma|_{N^\gamma} e^{- R^{\prime \prime}}\right)\wedge \Tr(\gamma e^{-F^V} )\right]^{(a-2q, 0)}	.
\end{align*}
\item Type III terms:
\[
e^{-t D^2}f_0 B^\pm_1 B^\mp_2 \cdots C_j^+ \cdots B^\pm_{2q-1} B^\mp_{2q};
\]
Every $j$ contributes one such term. When $j$ runs through $1$ to $2q$, there are in total $2q$ terms.  And the limit of each term as $t\to 0$ equals  

\begin{align*}
& (-i)^{\frac{n}{2}} (2 \pi )^{-\frac{a}{2}}\cdot (-\frac{\alpha_q}{2}) \cdot \left[f_0df_1\wedge\ldots\wedge df_{2q}\right]^{(2q,0)}
\\& \wedge \left[\operatorname{det}^{\frac{1}{2}}\left(\frac{ R^{\prime} / 2}{\sinh \left( R^{\prime} / 2\right)}\right) \operatorname{det}^{-\frac{1}{2}}\left(1-\gamma|_{N^\gamma} e^{- R^{\prime \prime}}\right)\wedge \Tr(\gamma e^{-F^V}) \right]^{(a-2q, 0)}	
\end{align*}

\item Type IV terms:
\[
\begin{split}
T_{IV,j,+}(\sqrt{t}D) \colon&=\gamma^{-1}  f_0 B^+_1 B^-_2 \cdots \left[   e^{-t D^2} ,f_j\right]\cdots B^+_{2q-1} B^-_{2q} \left(A^+ \right)^\gamma,\\
T_{IV,j,-}(\sqrt{t}D) \colon&=\gamma^{-1}  f_0 B^-_1  B^+_2 \cdots \left[   e^{-t D^2} ,f_j\right]\cdots B^-_{2q-1} B^+_{2q} \left(A^- \right)^\gamma.
\end{split}
\]
Every $j$ contributes one such term. When $j$ runs through 1 to $2q$ together with the choice of $+$ and $-$, there are in total $4q$ terms.
And the limit of each term as $t\to 0$ equals  

\begin{align*}
&(-1)^{j}(-i)^{\frac{n}{2}} (2 \pi )^{-\frac{a}{2}}\cdot \alpha_q \cdot \left[f_0df_1\wedge\ldots\wedge df_{2q}\right]^{(2q,0)}
\\
& \wedge \left[\operatorname{det}^{\frac{1}{2}}\left(\frac{ R^{\prime} / 2}{\sinh \left( R^{\prime} / 2\right)}\right) \operatorname{det}^{-\frac{1}{2}}\left(1-\gamma|_{N^\gamma} e^{- R^{\prime \prime}}\right)\wedge \Tr(\gamma e^{-F^V}) \right]^{(a-2q, 0)}+O(t).
\end{align*}
Adding up the contribution from Type I, II, III, IV, we conclude by Prop. \ref{prop:trPi} that
$\lim_{t \to 0^+}\Tr^{\mathrm{geo}}\!\big(\Pi(\sqrt{t}D)\big)$ is equal to 
\begin{align*}
&\left(2\beta_q+2q\big(- \frac{\beta_q}{q} + \frac{\alpha_q}{2} + \frac{\alpha_q}{q}\big)+2q(-\frac{\alpha_q}{2})+4q(\alpha_q)\right)\\
&(-1)^q(-i)^{\frac{n}{2}} (2 \pi )^{-\frac{a}{2}}\cdot \alpha_q \cdot \left[f_0df_1\wedge\ldots\wedge df_{2q}\right]^{(2q,0)}
\\
& \wedge \Tr^E\left[\operatorname{det}^{\frac{1}{2}}\left(\frac{ R^{\prime} / 2}{\sinh \left( R^{\prime} / 2\right)}\right) \operatorname{det}^{-\frac{1}{2}}\left(1-\gamma|_{N^\gamma} e^{- R^{\prime \prime}}\right)\wedge e^{-F^V} \right]^{(a-2q, 0)}+O(t)\\
=&(-1)^q(-i)^{\frac{n}{2}} (2 \pi )^{-\frac{a}{2}}\cdot (4q+2)\alpha_q \cdot \left[f_0df_1\wedge\ldots\wedge df_{2q}\right]^{(2q,0)}
\\
& \wedge \left[\operatorname{det}^{\frac{1}{2}}\left(\frac{ R^{\prime} / 2}{\sinh \left( R^{\prime} / 2\right)}\right) \operatorname{det}^{-\frac{1}{2}}\left(1-\gamma|_{N^\gamma} e^{- R^{\prime \prime}}\right)\wedge \Tr(\gamma e^{-F^V} )\right]^{(a-2q, 0)}+O(t).
\end{align*}
\end{enumerate}
It was  computed in \cite[(3.5)]{CM} that  
\[
\alpha_q = \frac{q!}{(2q+1)!}.
\]
Applying the definitions of the characteristic classes as in \cite[Prop. 3.7]{CM}, i.e. 
\[
\hat{A}(M^\gamma)=\operatorname{det}^{\frac{1}{2}}\left(\frac{ R^{\prime} / 4\pi i }{\sinh \left( R^{\prime} / 4\pi i\right)}\right) ,
\] 
we obtain the following theorem computing the index pairing on $\mathcal{A}^{c}_\Gamma(M)$. 
\begin{theorem}\label{main theorem}
Given $c \in \mathscr{C}^{2q}_\gamma(\Gamma)$, the following identity holds
\[
\begin{split}
&\langle\Ind_{c} (D_V), \Phi(\tau (c))\rangle\\
=& 2(-1)^q \frac{q!}{(2 \pi i)^q (2q)!} \int_{M^\gamma} (-i)^{\frac{n-a}{2}} \chi_\gamma(x)\Psi^\gamma(c)
  \wedge \widehat{A}(M^\gamma) \operatorname{det}^{-\frac{1}{2}}\left(1-\gamma|_{N^\gamma} e^{- \frac{R^{\perp}}{2\pi i}}\right)\wedge \Tr(\gamma e^{-\frac{F^V}{2\pi i}} ), 
\end{split}
\]
where $\chi_\gamma$ is a cutoff function of the $Z_\gamma$ action on $M^\gamma$,  $R^T$ is the curvature on $TM^\gamma$, $R^\perp$ is the curvature of $N^\gamma$, and $F^V$ is the curvature of $V$.\end{theorem}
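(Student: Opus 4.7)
The plan is to compute the pairing $\langle \Ind_{\rm exp}(D_V),\Phi(\tau(c))\rangle$ by representing the index class via the symmetrized Connes--Moscovici projector $V(tD)$ and taking the short-time limit $t \downarrow 0$. The first move is to replace the cyclic cocycle $\Phi(\tau(c))$ on $\mathcal{A}^{\exp}_\Gamma(M)$ by the homotopic cocycle $\rho(\Psi(c))$ coming from the $\gamma$-extended Alexander--Spanier complex, using Theorem \ref{thm:comm-diagram} together with Proposition \ref{prop:pairing-exponentially rapidly decreasing} to guarantee that the homotopy extends to operators of exponential decay. Since the $K$-theory class $[V(tD)]-[e_1\oplus e_1]$ is independent of $t$, it then suffices to analyse the asymptotic behaviour as $t \downarrow 0$ of $\rho(\Psi(c))(V(tD),\ldots,V(tD))$.

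Next, using Definition \ref{defn:EG} and Corollary \ref{main coro} (together with Lemma \ref{lem:finiteterm}), I would truncate the a priori infinite sum over $I=(\gamma_0,\ldots,\gamma_{2q})\in\Gamma^{\times(2q+1)}$ to a finite sum of integrals of anti-symmetrized compactly supported monomials $\mathcal{T}^\gamma_{I,a}$, up to $O(t^\infty)$. For each such $I$ the integrand is built from the cut-off factor $\chi(\gamma_0^{-1}y_0)\cdots\chi(\gamma_{2q}^{-1}y_{2q})$ and the heat-type kernels $A_i(y_i,y_{i+1},t)$. The antisymmetrization trick of Lemma \ref{lem:symmetry} rewrites the geometric trace as a sum indexed by $\sigma\in S_{2q+1}$ of traces involving commutators $[V(tD),f_{\sigma(i)}]$, and expanding $V(tD)$ in the block form with entries $A^{\pm}, B_i^{\pm}, C_i^{\pm}, [e^{-tD^2},f_i]$ produces the expression $\Pi(\sqrt{t}D)$.

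The technical heart is Proposition \ref{prop:trPi}, which uses the Getzler-order table of Lemma \ref{lemma:table} to single out the four families of terms (Types I--IV of Definition \ref{defn:4types}) whose Getzler order equals the critical value $-4q-2$; all other terms vanish in the limit by Theorem \ref{theo:moving}. For each surviving family, I would apply the moving lemma (Theorem \ref{theo:moving}, whose proof rests on Lemmas \ref{lem:Jy}, \ref{lem:symbolest}, \ref{lem:cutoff}, \ref{prop:moving}) to commute compactly supported differential factors such as $c(df_i)$ and $f_0$ past $B^\pm$, $C^\pm$ and resolvents $(\lambda+D^2)^{-1}$, followed by Lemma \ref{lem:trace} to cycle the resulting $\Gamma$-invariant pieces. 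The identity \eqref{useful-identity} rewrites the Connes--Moscovici factor $t(I-e^{-tD^2})/(tD^2)\,e^{-tD^2/2}$ as an integral of heat kernels $\int_{1/2}^{3/2}t e^{-s_1 tD^2}ds_1$, so that after the moving procedure each trace collapses to an integral over $[1,2]^{\times q}$ of a supertrace of $f_0\,c(df_1)\cdots c(df_{2q})\cdot e^{-t(1+s_1+\cdots+s_q)D^2}$. Then Lemma \ref{lem:IQ(s)}, Lemma \ref{RG lem}, Lemma \ref{lem:I_Q-II} and Proposition \ref{prop:IQ(s)} apply the Volterra--Getzler localization of Ponge--Wang (Theorems \ref{thm:PW-local}, \ref{thm local trace}, \ref{GR thm}) on each connected component $M^\gamma_a$ to produce the Atiyah--Segal--Singer integrand $\widehat{A}(M^\gamma)\det^{-1/2}(1-\gamma|_{N^\gamma}e^{-R^\perp/2\pi i})\wedge \Tr(\gamma e^{-F^V/2\pi i})$ wedged with $[f_0\,df_1\wedge\cdots\wedge df_{2q}]^{(2q,0)}$, with an explicit coefficient depending on the integrals $\alpha_q,\beta_q,\delta_q$.

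The main obstacle will be this Volterra-Getzler step: the composition appearing in the index pairing is a fixed-$t$ product of heat-type operators, not the Volterra convolution that underlies the Getzler rescaling mechanism. Overcoming it requires the careful Getzler-order bookkeeping of Lemma \ref{lem:symbolest} combined with the commutation procedure of Theorem \ref{theo:moving}, the passage between tubular and normal coordinates (giving rise to the $y^\alpha w^\beta \partial^\alpha_y\partial^\beta_w$ expansion of symbols in Lemma \ref{lem:Jy}), and Lemma \ref{lemma-useful-identity} which turns the Connes--Moscovici factors into Bochner integrals of heat kernels. Finally, summing the contributions $2\beta_q+2q(-\beta_q/q+\alpha_q/2+\alpha_q/q)-q\alpha_q+4q\alpha_q=(4q+2)\alpha_q$ and using the classical evaluation $\alpha_q=q!/(2q+1)!$ produces the Connes--Moscovici normalization constant $c(q,n)=2(-1)^q q!/((2\pi i)^q (2q)!)$, after which summation over the anti-symmetrized multi-indices $I$ reassembles $\sum_{\gamma_0,\ldots,\gamma_{2q}}c(\gamma_0,\ldots,\gamma_{2q})\chi_\gamma^\Gamma(\gamma_0)\chi(\gamma_0^{-1}x)d\chi(\gamma_1^{-1}x)\cdots d\chi(\gamma_{2q}^{-1}x)|_{M^\gamma}$, which by \eqref{eq:lambdagammapsi} is exactly $\chi_\gamma(x)\Psi^\gamma(c)$ after absorbing the centralizer cut-off.
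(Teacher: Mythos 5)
Your proposal follows the paper's own proof essentially step-for-step: the homotopy replacement $\Phi(\tau(c))\mapsto\rho(\Psi(c))$ via the commutative diagram, the monomial truncation to a finite index set $\mathcal{I}$ using the EG condition, the anti-symmetrization into commutator form, the classification of surviving terms into Types I--IV via Getzler-order bookkeeping, the moving argument of Theorem \ref{theo:moving} combined with Lemma \ref{lem:trace}, the Ponge--Wang Volterra--Getzler localization, and finally the coefficient arithmetic $2\beta_q + 2q\bigl(-\tfrac{\beta_q}{q}+\tfrac{\alpha_q}{2}+\tfrac{\alpha_q}{q}\bigr) - q\alpha_q + 4q\alpha_q = (4q+2)\alpha_q$ with $\alpha_q = q!/(2q+1)!$ and the reassembly of the $\chi$-factors into $\chi_\gamma\,\Psi^\gamma(c)$. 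This is the same argument the paper gives, including all the key lemmas and intermediate reductions.
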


\begin{proof}
By the commutative diagram \ref{diagram}, we have
\[
\langle\Ind_{c} (D_V), \Phi(\tau (c))\rangle=\langle\Ind_{c} (D_V), \rho(\Psi(c))\rangle.
\]
It follows from (\ref{equtrgeo}) that the right hand side equals to 
\[
\frac{1}{(2q +1)!}  \; \sum_{I = (\gamma_0, \ldots, \gamma_{2q}) \in \mathcal{I}_{2q+1}}c(\gamma_0, \cdots, \gamma_{2q})\cdot\chi_\gamma(\gamma_0)\;
 \Tr^{{\rm geo}}_a\left(\gamma^{-1} f_{0} R( \sqrt{t}D) f_{1} R( \sqrt{t}D)\cdots f_{2q}R( \sqrt{t}D)^\gamma\right)+O(t^\infty). 
 \]
where we refer to \eqref{anti-expression} for the definition of $\Tr^{{\rm geo}}_a$. 
With Lemma \ref{lem:symmetry}, Proposition \ref{prop:trPi}, and the above computation of Type I-IV limits, we obtain
\[
\begin{split}
&\sum_{I \in \mathcal{I}_{2q+1}}\chi^\Gamma_\gamma(\gamma_0)  \cdot c(\gamma_0, \cdots, \gamma_{2q})  \lim_{t\to 0}  \Tr^{{\rm geo}}_a\left(\gamma^{-1} f_0 R( \sqrt{t}D) f_1 R( \sqrt{t}D)\cdots f_{2q}R( \sqrt{t}D)^\gamma\right) \\
=&\sum_{I \in \mathcal{I}_{2q+1}}\chi^\Gamma_\gamma(\gamma_0)  \cdot c(\gamma_0, \cdots, \gamma_{2q}) 2(-1)^q \frac{q!}{(2 \pi i)^q (2q)!} \int_{M^\gamma} (-i)^{\frac{n-a}{2}} f_0df_1\cdots df_{2q}\\
&  \qquad \wedge \widehat{A}(M^\gamma) \operatorname{det}^{-\frac{1}{2}}\left(1-\gamma|_{N^\gamma} e^{- \frac{R^{\perp}}{2\pi i}}\right)\wedge \Tr(\gamma e^{-\frac{F^V}{2\pi i}} )\\
=&2(-1)^q \frac{q!}{(2 \pi i)^q (2q)!} \int_{M^\gamma} (-i)^{\frac{n-a}{2}}\sum_{I \in \mathcal{I}_{2q+1}}\chi^\Gamma_\gamma(\gamma_0)  \cdot c(\gamma_0, \cdots, \gamma_{2q})f_0df_1\cdots df_{2q}\\
&\qquad  \wedge \widehat{A}(M^\gamma) \operatorname{det}^{-\frac{1}{2}}\left(1-\gamma|_{N^\gamma} e^{- \frac{R^{\perp}}{2\pi i}}\right)\wedge \Tr(\gamma e^{-\frac{F^V}{2\pi i}} ).
\end{split}
\]

Notice that in Proposition \ref{prop:finitesum},  the finite set $\mathcal{I}_{2q+1}$  was chosen
so that any term $\chi(\gamma_0^{-1}y_0)\otimes \cdots \otimes \chi(\gamma_{2q}^{-1}y_{2q})$ not from the finite set $\mathcal{I}_{2q+1}$ are functions supported away from the $\gamma$-diagonal in $M^{\times (2q+1)}$. Hence, for those $(\gamma_0, \cdots, \gamma_{2q})$ not in $\mathcal{I}_{2q+1}$, 
\[
\chi(\gamma_0^{-1}x)d\chi(\gamma_1^{-1}x)\cdots d\chi(\gamma_{2q}^{-1}x)=0,\ \forall x\in M^\gamma.
\]
It follows from this observation that we have the following equation,
\[
\begin{split}
&\sum_{I \in \mathcal{I}_{2q+1}}\chi^\Gamma_\gamma(\gamma_0)  \cdot c(\gamma_0, \cdots, \gamma_{2q})\chi(\gamma_0^{-1}x)d\chi(\gamma_1^{-1}x)\cdots d\chi(\gamma_{2q}^{-1}x)\\
=&\sum_{}\chi^\Gamma_\gamma(\gamma_0)  \cdot c(\gamma_0, \cdots, \gamma_{2q})\chi(\gamma_0^{-1}x)d\chi(\gamma_1^{-1}x)\cdots d\chi(\gamma_{2q}^{-1}x), 
\end{split}
\]
where the summation ranges over all $\gamma_0, \cdots, \gamma_{2q}$. 

Hence, 
\[
\begin{split}
&\lim_{t\to 0}\rho \circ \Psi(c)(R( \sqrt{t}D),\ldots,R( \sqrt{t}D))\\
=&2(-1)^q \frac{q!}{(2 \pi i)^q (2q)!} \int_{M^\gamma} (-i)^{\frac{n-a}{2}}\left(\sum\chi^\Gamma_\gamma(\gamma_0)  \cdot c(\gamma_0, \cdots, \gamma_{2q})\gamma_0^*\chi \gamma_1^*d\chi \cdots \gamma_{2q}^*d\chi \right)\\
&\qquad  \wedge \widehat{A}(M^\gamma) \operatorname{det}^{-\frac{1}{2}}\left(1-\gamma|_{N^\gamma} e^{- \frac{R^{\perp}}{2\pi i}}\right)\wedge \Tr(\gamma e^{-\frac{F^V}{2\pi i}} )
\end{split}
\]
Recall that $\chi_\gamma$ is the cut-off function for the $Z_\gamma$-action on $M^\gamma$. For any $x \in M^\gamma$, we can insert 
\[
\sum_{\eta \in Z_\gamma} \chi_\gamma(\eta x)  = 1
\] 
as follow 
\[
\begin{split}
& \int_{M^\gamma}(-i)^{\frac{n-a}{2}} \sum_{\gamma_0, \cdots, \gamma_{2q}}\chi^\Gamma_\gamma(\gamma_0)  \cdot c(\gamma_0, \cdots, \gamma_{2q})\cdot \gamma_0^*\chi \gamma_1^*d\chi \cdots \gamma_{2q}^*d\chi\\
&\hskip 2cm \wedge \widehat{A}(M^\gamma) \operatorname{det}^{-\frac{1}{2}}\left(1-\gamma|_{N^\gamma} e^{- \frac{R^{\perp}}{2\pi i}}\right)\wedge \Tr(\gamma e^{-\frac{F^V}{2\pi i}} )\\
=&\int_{M^\gamma}(-i)^{\frac{n-a}{2}}\sum_{\gamma_0, \cdots, \gamma_{2q}} \big(\sum_{\eta \in Z_\gamma} \chi_\gamma(\eta x) \big) \chi^\Gamma_\gamma(\gamma_0)  \cdot c(\gamma_0, \cdots, \gamma_{2q})\cdot \gamma_0^*\chi \gamma_1^*d\chi \cdots \gamma_{2q}^*d\chi\\
&\hskip 2cm \wedge \widehat{A}(M^\gamma) \operatorname{det}^{-\frac{1}{2}}\left(1-\gamma|_{N^\gamma} e^{- \frac{R^{\perp}}{2\pi i}}\right)\wedge \Tr(\gamma e^{-\frac{F^V}{2\pi i}} ).
\end{split}
\]
By the change of variable $ x \mapsto \eta^{-1}x$ and $Z_\gamma$-invariance of $c$, we can rewrite the above integral as
\[
\begin{split}
& \int_{M^\gamma} (-i)^{\frac{n-a}{2}}\sum_{\gamma_0, \cdots, \gamma_{2q}}\chi_\gamma(x)\sum_{\eta \in Z_\gamma} \chi^\Gamma_\gamma(\gamma_0)  \cdot c(\gamma_0, \cdots, \gamma_{2q})\cdot\chi(\gamma_0^{-1}\eta^{-1}x)(\eta\gamma_1)^*d\chi \cdots (\eta\gamma_{2q})^*d\chi\\
&\hskip 2cm \wedge  \widehat{A}(M^\gamma) \operatorname{det}^{-\frac{1}{2}}\left(1-\gamma|_{N^\gamma} e^{- \frac{R^{\perp}}{2\pi i}}\right)\wedge \Tr(\gamma e^{-\frac{F^V}{2\pi i}} )\\
=& \int_{M^\gamma} (-i)^{\frac{n-a}{2}}\sum_{\gamma_0, \cdots, \gamma_{2q}} \chi_\gamma(x)\sum_{\eta \in Z_\gamma} \chi^\Gamma_\gamma(\gamma_0)  \cdot c(\eta \gamma_0, \cdots, \eta\gamma_{2q})\cdot\chi(\gamma_0^{-1}\eta^{-1}x)(\eta\gamma_1)^*d\chi \cdots (\eta\gamma_{2q})^*d\chi\\
&\hskip 2cm \wedge  \widehat{A}(M^\gamma) \operatorname{det}^{-\frac{1}{2}}\left(1-\gamma|_{N^\gamma} e^{- \frac{R^{\perp}}{2\pi i}}\right)\wedge \Tr(\gamma e^{-\frac{F^V}{2\pi i}} ).
\end{split}
\]
Set $\tilde{\gamma}_0=\eta \gamma_0$, $\tilde{\gamma}_1=\eta \gamma_1$, ..., $\tilde{\gamma}_k=\eta\gamma_k$. The above integral can be written as 
\[
\begin{split}
 &\int_{M^\gamma} (-i)^{\frac{n-a}{2}}\sum_{\tilde{\gamma}_0, \cdots, \tilde{\gamma}_{2q}}\chi_\gamma(x)\sum_{\eta \in Z_\gamma} \chi^\Gamma_\gamma(\eta^{-1}\tilde{\gamma}_0)  \cdot c(\tilde{\gamma}_0, \cdots, \tilde{\gamma}_{2q})\cdot\chi(\tilde{\gamma}_0^{-1} x)\tilde{\gamma}_1^*d\chi \cdots \tilde{\gamma}_{2q}^*d\chi\\ 
&\hskip 2cm \wedge \widehat{A}(M^\gamma) \operatorname{det}^{-\frac{1}{2}}\left(1-\gamma|_{N^\gamma} e^{- \frac{R^{\perp}}{2\pi i}}\right)\wedge \Tr(\gamma e^{-\frac{F^V}{2\pi i}} ).
\end{split}
\]
As $\chi^\Gamma_\gamma$ is the cutoff function of the $Z_\gamma$-action on $\Gamma$, we have 
\[
\sum_{\eta \in Z_\gamma} \chi^\Gamma_\gamma(\eta^{-1}\tilde{\gamma}_0)  =1. 
\]
Hence, we can conclude 
\[
\begin{split}
 &\int_{M^\gamma} (-i)^{\frac{n-a}{2}}\sum_{\tilde{\gamma}_0, \cdots, \tilde{\gamma}_{2q}}\chi_\gamma(x)\sum_{\eta \in Z_\gamma} \chi^\Gamma_\gamma(\eta^{-1}\tilde{\gamma}_0)  \cdot c(\tilde{\gamma}_0, \cdots, \tilde{\gamma}_{2q})\cdot\chi(\tilde{\gamma}_0^{-1} x)\tilde{\gamma}_1^*d\chi \cdots \tilde{\gamma}_{2q}^*d\chi\\ 
&\hskip 2cm \wedge \widehat{A}(M^\gamma) \operatorname{det}^{-\frac{1}{2}}\left(1-\gamma|_{N^\gamma} e^{- \frac{R^{\perp}}{2\pi i}}\right)\wedge \Tr(\gamma e^{-\frac{F^V}{2\pi i}} )\\
=& \int_{M^\gamma} (-i)^{\frac{n-a}{2}}\chi_\gamma(x) \sum_{\tilde{\gamma}_0, \cdots, \tilde{\gamma}_{2q}} c(\tilde{\gamma}_0, \cdots, \tilde{\gamma}_{2q})\cdot\chi(\tilde{\gamma}_0^{-1} x)\tilde{\gamma}_1^*d\chi \cdots \tilde{\gamma}_{2q}^*d\chi\\
&\hskip 2cm \wedge \widehat{A}(M^\gamma) \operatorname{det}^{-\frac{1}{2}}\left(1-\gamma|_{N^\gamma} e^{- \frac{R^{\perp}}{2\pi i}}\right)\wedge \Tr(\gamma e^{-\frac{F^V}{2\pi i}} ).
\end{split}
\]
By (\ref{eq:lambdagammapsi}), we have 
\[
\Psi^{\gamma}(c)=\sum_{\tilde{\gamma}_0, \cdots, \tilde{\gamma}_{2q}} c(\tilde{\gamma}_0, \cdots, \tilde{\gamma}_{2q})\cdot\chi(\tilde{\gamma}_0^{-1} x)\tilde{\gamma}_1^*d\chi \cdots \tilde{\gamma}_{2q}^*d\chi. 
\]
And we conclude with the final formula
\[
\begin{split}
&\lim_{t\to 0}\rho \circ \Psi(c)(R( \sqrt{t}D),\ldots,R( \sqrt{t}D))\\
=&2(-1)^q \frac{q!}{(2 \pi i)^q (2q)!} \int_{M^\gamma} (-i)^{\frac{n-a}{2}}\chi_\gamma(x)\Psi^\gamma(c)  \wedge \widehat{A}(M^\gamma) \operatorname{det}^{-\frac{1}{2}}\left(1-\gamma|_{N^\gamma} e^{- \frac{R^{\perp}}{2\pi i}}\right)\wedge \Tr(\gamma e^{-\frac{F^V}{2\pi i}} ).
\end{split}
\]
\end{proof}
\begin{remark}We would like to point out that the extra factor 2 in the formula of $\langle\Phi(\tau (c)), \Ind_{c}(D)\rangle$ comes from the fact that we have followed the construction of \cite{moscovici-wu} to consider the double $V_{{\rm CM}} (tD)\oplus (V_{{\rm CM}} (tD))^*$. 
\end{remark}

\appendix

\section{The Volterra calculus}\label{appendix-section:basic-volterra}
The role of this Appendix  is to recall briefly the basic definitions and results around the theory
of Volterra pseudodifferential operators (briefly, the Volterra calculus). We also give 
the easy extension of the theory to the equivariant context. The articles of Ponge \cite{Ponge} and Ponge-Wang \cite{Ponge-Wang-2}
are a nice introduction to this theory, initiated by Piriou [Pi], Greiner [Gr] and Beals-Greiner-Stanton \cite{BGS}. The treatment of Getzler rescaling within the Volterra
calculus and its use in index theory is due to Ponge \cite{Ponge} for the classic Atiyah-Singer index theorem and to Ponge and Wang \cite{Ponge-Wang-2}  for the Atiyah-Segal-Singer equivariant index theorem. 
We refer to   these two references  for more
on the  material that follows. 

\subsection{Basic definitions and results on the Volterra calculus.}\label{appendix-subsection:basic-volterra}

\noindent
Some of the defining features of the Volterra calculus come in fact from the 
properties  of $(\partial_t + D^2)^{-1}$. First notice that 
$$(\partial_t + D^2)^{-1} u (s)= \int_0^{+\infty} e^{-t D^2} u(s-t) dt , \quad u\in C_+^\infty (M\times\RR, E).$$
If $u$ is supported in $M\times [c,+\infty)$ then 
$$(\partial_t + D^2)^{-1} u (s)=\int_{\{0\leq t\leq s-c\}} e^{-t D^2} u(s-t) dt$$
The Schwartz kernel of $(\partial_t + D^2)^{-1}$, viz. 
$k_{(\partial_t + D^2)^{-1}}(x,s,y,s')$, can be explicitly described in terms of the heat kernel $k_t (x,y)$; indeed, 
\begin{equation}\label{heat-volterra}
k_{(\partial_t + D^2)^{-1}}(x,s,y,s')= \begin{cases} k_{s-s'} (x,y) \quad \text{if} \quad s-s'>0\\
0 \quad \text{if} \quad s-s'<0\end{cases}
\end{equation}
The operator $(\partial_t + D^2)^{-1}$ thus  satisfies the {\it Volterra property}, namely:\\(i) time translation 
invariance;\\
(ii) causality principle: if $u=0$ on $M\times (-\infty,t_0]$ then the same will be true for the
transformed section.

\medskip
\noindent
A continuous operator $Q: C^\infty_c (M\times \RR,E)\to C^\infty (M\times \RR,E)$
satisfying the Volterra property has a Schwartz kernel $k_Q (x,s,y,s')=K_Q (x,y,s-s')$
for some $K_Q (x,y,t)\in C^\infty (M,E)\otimes \mathcal{D}' (M\times \mathbb{R},E)$
with $K_Q (x,y,t)=0$ for $t<0$. The distribution $K_Q (x,y,t)$ is the {\it Volterra kernel}
of $Q$.

\medskip
\noindent
The Volterra property will be a defining feature of Volterra pseudodifferential operators. In order to introduce them we first describe briefly the local theory.

\begin{definition}
  A distribution $G \in \mathcal{S}' (\RR^{n+1})$ is said to be (parabolic) homogeneous of	order $m$, $m\in\mathbb{Z}$, when
 \begin{equation}
 \label{homo}
 G_\lambda = \lambda^m G  	
 \end{equation}
 with $G_\lambda$ defined by 
 $$\langle G_\lambda, u (x,t)\rangle = |\lambda|^{-(n+2)} \langle G, u (\lambda^{-1} x,\lambda^{-2} t)\rangle $$
\end{definition}

\noindent
 If $q\in C^\infty (\mathbb{R}^n\times \mathbb{R}\setminus 0)$ is parabolic homogeneous of order $m$  and, in addition, 
 
 \medskip
  (*) it extends to a continuous function on 
 $\mathbb{R}^n\times \overline{\mathbb{C}}_- \setminus 0$, $\mathbb{C}_- := \{{\rm Im} z <0\}$, in such way to be holomorphic 
 
 with respect to the variable $z\in \mathbb{C}_-$,

 \medskip
 \noindent
 then $q$ admits a {\rm unique} extension  $G\in \mathcal{S}'(\RR^{n+1})$ which is 
 is parabolic homogeneous of order $m$ and such that the inverse Fourier transform $G^\vee$
 vanishes for $t<0$. In the sequel, with small abuse of notation, we shall still denote by 
 $q^\vee$ this inverse Fourier transform.

We now fix an  open neighborhood $U$ in $\mathbb{R}^n$. 
\begin{definition}
$S_{\mathrm{v}}^m\left(U \times \mathbb{R}^{n+1}\right), m \in \mathbb{Z}$, consists of smooth functions $q(x, \xi, \tau)$ on $U \times$ $\mathbb{R}^n \times \mathbb{R}$ with an asymptotic expansion $q \sim \sum_{j \geq 0} q_{m-j}$, where:
\begin{itemize}
	\item $q_k \in C^{\infty}\left(U \times\left[\left(\mathbb{R}^n \times \mathbb{R}\right) \backslash 0\right]\right)$ is a homogeneous Volterra symbol of order k; this means that 
	$q_k$ is parabolic homogeneous of degree $k$ in the last $n+1$ variables and, in addition, it satisfies 
	the extension property (*);
		\item The sign $\sim$ means that, for any integer $N$ and any compact $K \subset U$, there is a constant $C_{N K \alpha \beta k}>0$ such that for $x \in K$ and for $|\xi|+|t|^{\frac{1}{2}}>1$ we have
\begin{align}\label{eq:symbol-growth}
\left|\partial_x^\alpha \partial_{\xi}^\beta \partial_t^k\left(q-\sum_{j<N} q_{m-j}\right)(x, \xi, t)\right| \leq C_{N K \alpha \beta k}\left(|\xi|+|t|^{1 / 2}\right)^{m-N-|\beta|-2 k}
\end{align}
 \end{itemize}
 \end{definition}

\begin{definition}
 $\Psi_{\mathrm{v}}^m(U \times \mathbb{R}), m \in \mathbb{Z}$, consists of continuous operators $Q$ from $C_c^{\infty}\left(U_x \times \mathbb{R}_t\right)$ to $C^{\infty}\left(U_x \times \mathbb{R}_t\right)$ such that 
  \begin{itemize}
 	\item $Q$ has the Volterra property (thus, in particular,  its  distributional Volterra kernel
	 $K_Q(x, y, t)$ vanishes for $t<0$);   	\item We have $Q=q\left(x, D_x, D_t\right)+R$ for some symbol $q$ in $S_{\mathrm{v}}^m(U \times \mathbb{R})$ and some (Volterra) smoothing operator $R$.  Here  $q\left(x, D_x, D_t\right)$ is the operator defined by
	 the Schwartz kernel $q^{\vee} (x, x-y, t)$. \end{itemize}
\end{definition}

\smallskip
\noindent
Any operator $Q\in \Psi_{\mathrm{v}}^m(U \times \mathbb{R})$ has a unique {\it Volterra kernel}
$K_Q(x,y,t)\in C^\infty (U,\mathcal{D}' (U\times\mathbb{R}))$ such that
$$Qu (x,s)= \langle K_Q (x,y,s-s'), u(y, s')\rangle$$
In fact, up to the smoothing operator $R$,
$$K_Q(x,y,t) = q (x, x-y, t)^\vee $$
where we are taking the inverse Fourier transform of $q$ with respect to the last $n + 1$ variables.

\smallskip
\noindent
Let $q_m(x, \xi, t) \in C^\infty(U \times  \RR^{n+1} \setminus 0)$ be a homogeneous Volterra symbol of order $m$. Then the operator
\[
Q = q_m(x, D_x, D_t)
\]
 defined by the distributional kernel $q^{\vee}_m (x, x-y, t)$ is indeed a Volterra pseudodifferential operator with symbol $\sim$ $q_m$. See \cite{Ponge-Wang-2} Lemma 2.16 for the classic argument involving a smoothing of $q_m$ at the origin.

\smallskip
\noindent
A Volterra pseudodifferential operator $Q$ is properly supported if its Schwartz kernel $k_Q$
is  properly supported in the usual sense; for the associated Volterra kernel $K_Q$ this means 
that $K_Q$ is compactly supported in the $t$-variable. We also remark that a smoothing Volterra operator, being smooth
and vanishing for $t<0$,
is $O(t^\infty)$ as $t\downarrow 0^+$ in $C^\infty (U\times U)$ uniformly on compact sets (this is a simple Taylor expansion argument, see for example the proof of Lemma 3.2  in \cite{Ponge-Wang-2}).

\smallskip
\noindent
For Volterra pseudodifferential operators we have all the expected basic results:

(i) pseudolocality

(ii) proper support modulo smoothing operators

(iii) composition formula for properly supported operators

(iv) asymptotic completness

(v) existence of a parametrix: {\em $Q\in \Psi^m_{\mathrm{v}} (U\times \mathbb{R})$ admits a parametrix if and only if its principal symbol is 

nowhere vanishing in $U\times \mathbb{R}^n \times \overline{\mathbb{C}}_-\setminus 0$.}

(vi) diffeomorphism invariance.\\
See Prop. 2.17 in \cite{Ponge-Wang-2} and references therein.
The last property allows for a globalization to manifolds.

A  Volterra pseudodifferential operator 
 $Q\in \Psi_{{\rm v}}^{m} (U)$ with symbol $q\sim \sum_{j\geq 0} q_{m-j}$ is such that 
 its Volterra kernel $K_Q (x,y,t)$ admits the following asymptotic expansion:\\ {\em $\forall N\in\mathbb{N}$ $\exists J$ such that 
 $$K_Q(x,y,t)=\sum_{j\leq J} q^\vee_{m-j} (x, x-y, t) \;\;\;{\rm mod}\;\;\;C^N (U\times U \times \mathbb{R}) \;\;\forall t>0$$}\\
 See Proposition 2.19 in \cite{Ponge-Wang-2}.
 This allows for the following asymptotic expansion in $C^\infty (U)$: for $Q$ as above we have 
 $$K_Q (x,x,t)\sim t^{-\frac{n}{2}+[\frac{m}{2}]+1} \sum_{\ell\geq 0} t^\ell q^\vee_{2[\frac{m}{2}] -2\ell} (x,0, 1)\,.$$
 See Lemma 2 in \cite{Ponge}. More refined properties on asymptotic expansions near $t=0$
 will be given below.
 
The operator $\partial_t + P$, with $P$ an elliptic  differential operator
of order 2  with positive principal symbol, admits a parametrix $Q\in \Psi_{{\rm v}}^{-2} (U)$.
See Example 2.13 in \cite{Ponge-Wang-2}.
Similarly, if $M$ is a smooth compact manifold and $D$ is a Dirac operator acting on the sections of a bundle of Clifford modules $E$, then the Volterra operator $\partial_t + D^2$ 
admits a parametrix $Q\in \Psi_{{\rm v}}^{-2} (M,E)$. Using this fundamental fact and
the composition formula one can prove that  
\begin{equation}\label{inverse}
(\partial_t + D^2)^{-1}\in \Psi_{{\rm v}}^{-2} (M\times\mathbb{R},E).
\end{equation}
Moreover, for $t>0$ we have that the Volterra kernel $K_{(\partial_t + D^2)^{-1}} (x,y,\cdot)$
computed at $t$ is equal to the heat kernel $k_t (x,y)$. 
Let us elaborate  further on \eqref{inverse}, following \cite{BGS}. We know that there exists $Q\in \Psi_{\mathrm{v}}^{-2}(M \times \mathbb{R},E)$ such that
\begin{equation}\label{parametrix-volterra-0}
(\partial_t + D^2)Q=I-R_1\,\quad Q(\partial_t + D^2)=I-R_2\,\quad \quad R_j\in  \Psi_{\mathrm{v}}^{-\infty}(M \times \mathbb{R},E).
\end{equation}
For simplicity, and following \cite{BGS}, we denote by $\widetilde{Q}$ the operator 
$(\partial_t + D^2)^{-1}$. Then using  \eqref{parametrix-volterra-0}  we have $$\widetilde{Q}= Q + \widetilde{Q}R_1\,,\quad\quad \widetilde{Q}= Q +  R_2 \widetilde{Q}.$$
These two equations imply together that 
  $\widetilde{Q} - Q$, that is $(\partial_t + D^2)^{-1}-Q$, is a smoothing Volterra operator.
  See \cite{BGS}, proof of  Theorem (5.16). This establishes \eqref{inverse}.

\subsection{Equivariant Volterra calculus}\label{subsection:volterra-equivariant}
We now consider a smooth manifold $M$ endowed with a cocompact proper action of $\Gamma$, a $\Gamma$-invariant Riemannian metric $g$ and an equivariant bundle $E$ of  
Clifford modules. We extend the action to $M\times\mathbb{R}$ by letting $\Gamma$ act
trivially on $\mathbb{R}$. Following the classic case of pseudodifferential operators
on $\Gamma$-proper manifolds, we can define  $\Psi_{\Gamma,\mathrm{v},c}^m(M \times \mathbb{R})$, $m \in \mathbb{Z}$, the equivariant Volterra operators $Q$  of order $m$, 
that have kernels $k_Q (x,s,y,s')$ that are of compact support in $M \times \mathbb{R} \times M\times \mathbb{R}/\Gamma$,
where $\gamma\cdot (x,s,y,s')=(\gamma x, s, \gamma y, s')$.
  Notice that  the corresponding Volterra kernels $K_Q (x,y,t)$ will be  compactly supported 
in time. The equivariant Volterra calculus of $\Gamma$-compact support can be developed 
in the usual fashion and it will satisfy the expected properties. As this is standard, we shall not enter into the details. In particular 
$\Psi_{\Gamma,\mathrm{v},c}^*(M \times \mathbb{R})$ is a graded algebra. If $D$ 
is a $\Gamma$-equivariant Dirac operator then there exists a  parametrix $Q$ for $\partial_t + D^2$. More precisely:\\
{\em there exists $Q\in \Psi_{\Gamma,\mathrm{v},c}^{-2}(M \times \mathbb{R},E)$ such that
\begin{equation}\label{parametrix-volterra}
(\partial_t + D^2)Q=I-R_1\,\quad Q(\partial_t + D^2)=I-R_2\,\quad \quad R_j\in  \Psi_{\Gamma,\mathrm{v},c}^{-\infty}(M \times \mathbb{R},E).
\end{equation}
}
Notice that the parametrix $Q$ and the remainders
$R_1$, $R_2$ are of $\Gamma$-compact support. As in the compact case 
we can use this parametrix in order to study the Volterra kernel $K_{(\partial_t + D^2)^{-1}}(x,y,t)$ for $t\downarrow 0^+$. 

\begin{proposition}
Consider the Volterra kernel $K_{(\partial_t + D^2)^{-1}}(x,y,t)$ and let
 $Q$ be a parametrix of $\Gamma$-compact support for 
 $\partial_t + D^2$. Then $(\pa_t + D^2)^{-1}-Q$ is $O(t^N)$, for $t\downarrow 0^+$, in $C^\infty$ seminorms uniformy on compact
sets $K\subset M\times M$.
\end{proposition}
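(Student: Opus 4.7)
The plan is to adapt the classical argument from \cite{BGS} recalled in \eqref{inverse} to the $\Gamma$-equivariant setting. First I would observe that $(\partial_t + D^2)^{-1}$ is well defined on $C^\infty_+(M\times\mathbb{R},E)$ via the heat semigroup as in \eqref{heat-volterra}, and that applying it on the left to the first identity in \eqref{parametrix-volterra} yields
\[
Q \;=\; \widetilde Q \;-\; \widetilde Q R_1,\qquad \text{i.e.}\qquad \widetilde Q - Q \;=\; \widetilde Q R_1,
\]
with $\widetilde Q := (\partial_t+D^2)^{-1}$. So it suffices to show that the Volterra kernel of the composition $\widetilde Q R_1$ is $O(t^\infty)$ as $t\downarrow 0^+$ in $C^\infty$ seminorms uniformly on compact sets of $M\times M$.

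Next I would write out the composition in the Volterra calculus, which amounts to convolution in $t$:
\[
K_{\widetilde Q R_1}(x,y,t) \;=\; \int_0^t\!\!\int_M k_{t-s}(x,z)\, K_{R_1}(z,y,s)\, dz\, ds,
\]
where $k_u(x,z)$ is the heat kernel of $D^2$. Since $R_1\in\Psi^{-\infty}_{\Gamma,\mathrm{v},c}(M\times\mathbb{R},E)$ is a Volterra smoothing operator, its kernel is smooth and vanishes for $s\le 0$, so Taylor's theorem in $s$ gives $K_{R_1}(\cdot,\cdot,s)=O(s^N)$ in $C^\infty$ seminorms on any compact set, for every $N$; $\Gamma$-invariance plus $\Gamma$-compact support propagates this bound uniformly on $M\times M$ (since it only needs to be checked on a $\Gamma$-compact fundamental region). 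Moreover, for $(x,y)$ varying in a compact set $K$, the set of $z\in M$ for which $K_{R_1}(z,y,s)\neq 0$ is contained in a fixed compact $K'$, by properness of the action and $\Gamma$-compactness of $\mathrm{supp}(R_1)$.

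I would then bound the $C^k$ norms of the composition. For pointwise (i.e.\ $C^0$) estimates, the heat semigroup $e^{-uD^2}$ is a bounded map $C^0(K')\to C^0(K)$ uniformly for $u\in[0,T]$, so for $s\in(0,t]$,
\[
\bigl\|\,e^{-(t-s)D^2}K_{R_1}(\cdot,y,s)\,\bigr\|_{C^0(K)} \;\le\; C\,\|K_{R_1}(\cdot,y,s)\|_{C^0(K')} \;=\; O(s^N).
\]
For $y$-derivatives one differentiates under the integral sign, using that $\partial_y^\alpha K_{R_1}$ is still $O(s^N)$ and compactly supported in $z$. The subtle point—and the main obstacle—is handling $x$-derivatives, since the heat kernel itself blows up as $u\to 0^+$; here I would use that $D$ commutes with $e^{-uD^2}$ and is elliptic, so $D^k_x\bigl[e^{-uD^2}f\bigr]=e^{-uD^2}[D^kf]$, which shifts all $x$-derivatives onto $K_{R_1}(\cdot,y,s)$ where $O(s^N)$ bounds already hold in $C^\infty$. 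Standard elliptic regularity then converts powers of $D$ into genuine $C^k$ control.

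Combining these estimates,
\[
\bigl\|K_{\widetilde Q R_1}(\cdot,\cdot,t)\bigr\|_{C^k(K)} \;\le\; C_{k,N,K}\int_0^t s^N\,ds \;=\; O(t^{N+1}),
\]
for all $k$ and $N$, which is precisely the desired statement. The remaining work is essentially bookkeeping: making the uniform-in-$u$ bounds for $e^{-uD^2}$ on compactly supported smooth sections of $E$ rigorous (this is a standard consequence of the parametrix construction and semigroup theory for generalized Laplacians) and combining the $x$- and $y$-derivative estimates via Leibniz, neither of which present essential difficulties beyond the one identified above.
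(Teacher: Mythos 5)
Your decomposition and the paper's proof are genuinely different. The paper, using \eqref{parametrix-volterra} from both sides, writes $\widetilde Q - Q = R_2Q + R_2\widetilde QR_1$ and treats each term at the kernel level: $R_2Q$ is a $\Gamma$-compactly-supported smoothing Volterra operator, hence its kernel is smooth up to $t=0$ and vanishes for $t\le0$, while $R_2\widetilde QR_1$ is sandwiched between two $\Gamma$-compactly-supported smoothing factors, so all integrals over $M$ localize to compact sets and the only analysis needed is the remark that convolution in $t$ of a distribution singular only at $t=0$ with smooth compactly supported functions yields a smooth function. You instead keep only the one-sided identity $\widetilde Q - Q = \widetilde QR_1$ and run an operator-level estimate on $e^{-uD^2}$ applied to the compactly supported smooth data $K_{R_1}(\cdot,y,s)$, pushing $x$-derivatives through the semigroup via $D^k_x e^{-uD^2}=e^{-uD^2}D^k$ and invoking elliptic regularity. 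Both approaches reach the same conclusion, but the technical burden is distributed differently. What the paper's sandwich buys is that the heat kernel never has to be estimated directly as $u\to 0^+$: the second factor $R_2$ localizes the $z_1$-variable as well, so only a soft distributional argument in the time variable is needed. In your route, the needed input is that $e^{-uD^2}$ is uniformly bounded on $C^k$ of compact sets for $u\in[0,T]$ on the noncompact $\Gamma$-proper manifold; this is true (bounded geometry plus Gaussian bounds, or your commutation-with-$D$ trick plus elliptic regularity with constants uniform in $u$), but it is precisely the content you defer as ``essentially bookkeeping,'' and it is the part of your argument that the paper is engineered to avoid. You should either carry out that estimate explicitly or import the second remainder $R_2$ as the paper does, which eliminates it.
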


\begin{proof}
Formula \eqref{heat-volterra} describes precisely the kernel
$K_{(\partial_t + D^2)^{-1}}(x,y,t)$
for $t>0$ in term of the heat kernel.
As in the previous subsection we denote  by $\widetilde{Q}$ the operator 
$(\partial_t + D^2)^{-1}$ and we also use this symbol for its Volterra kernel.
Using  \eqref{parametrix-volterra}  we have again
$\widetilde{Q}= Q + \widetilde{Q}R_1$ and  $\widetilde{Q}= Q +  R_2 \widetilde{Q}$
so that
$$\widetilde{Q}= Q +R_2 Q + R_2 \widetilde{Q}R_1\,.$$
We  observe 
that the smooth Volterra kernel $R_2 \widetilde{Q}R_1$ is well defined, given \eqref{heat-volterra}, the rapid exponential decay of the heat kernel and the fact that 
$R_j$ is of $\Gamma$-compact support.
Observe next that $R_2 Q$ is a smoothing Volterra operator of $\Gamma$-compact support; 
thus the corresponding Volterra kernel will be $O(t^N)$ in $C^\infty$ seminorms, as $t\downarrow 0^+$, uniformly on compact sets $K\subset M\times M$.
Next notice that $R_2 \widetilde{Q}R_1$ defines a $\Gamma$-equivariant Volterra smoothing kernel 
in $M\times M\times \mathbb{R}$ (but of course not of $\Gamma$-compact support).
Indeed the only potential singularity is at $t=0$ but the composition in the time variable is by convolution and we know
that the convolution of a distribution (in this case, a distribution with singular support only in one point ($t=0$))
with a smooth function is a smooth function. We observe that $R_2 \widetilde{Q}R_1$
is vanishing for $t \leq 0$. This follows from the fact that the support of the convolution of a distribution
and a function is contained in the Minkowski sum of the two supports.
We can therefore conclude that 
$R_2 \widetilde{Q}R_1$ is smooth in $M \times M \times \RR$ and vanishing for $t\leq 0$
and thus, by the usual Taylor expansion argument, it is $O(t^N)$, for $t\downarrow 0^+$, in $C^\infty$ seminorms uniformy on compact
sets $K\subset M\times M$, as required.
\end{proof}
The above Proposition  will allow us to use the  parametrix $Q$ instead of the global operator $(\pa_t + D^2)^{-1}$
for many questions having to do with asymptotic expansions near $t=0$, $t>0$.\\

\subsection{Fixed point set and Getzler-Volterra rescaling}\label{subsection:getzler-volterra}
Let $Q\in  \Psi_{\Gamma,\mathrm{v},c}^{m}(M \times \mathbb{R},E)$. We fix $\gamma \in \Gamma$ and denote by $M^\gamma$ the fixed point set of $\gamma$ action. We decompose
\[
M^\gamma = \bigsqcup_{0\leq a \leq n} M^\gamma_a,
\]
where $a$ is the dimensional of the connected components. For any $x \in M^\gamma_a$, let $N^\gamma$ be the normal bundle of $M^\gamma \hookrightarrow M$ and $N^\gamma(\epsilon)$ the  the ball bundle of $N^\gamma$ of radius $\epsilon$ around the zero-section. Then we have that 
\[
K_Q\left(\exp_x v, \exp_x \left( \gamma v \right), t\right) = K_Q\left(\exp_x v, \gamma \exp_x v, t\right) 
\]
For $x \in M^\gamma$ and $t>0$ set
\begin{align}
I_Q(x, t, \gamma ):= \int_{N_x^\gamma(\epsilon)} \gamma^{-1}\cdot K_Q\left(\exp _x v, \exp _x\left(\gamma v\right), t\right)|d v|
\end{align}
This defines a smooth section of $\text{End}( E)$ over $M^\gamma \times(0, \infty)$.\\
More generally we shall be concerned with operators of the following type: $PQ$ with $P$ a differential operator
(not necessarily $\Gamma$-equivariant) and $Q\in  \Psi_{\Gamma,\mathrm{v}}^{m}(M \times \mathbb{R},E)$. We can still consider 
\begin{align}
I_{PQ}(x, t, \gamma ):= \int_{N_x^\gamma(\epsilon)} \gamma^{-1}\cdot  K_{PQ} \left(\exp _x v, \exp _x\left(\gamma v\right), t\right)|d v|
\end{align}

\noindent

Recall Definition \ref{def: Volterra-related} (Volterra-related operators) 
and Definition \ref{def: exponential-control-no-appendix} (operators of exponential control).

\noindent

For the following Lemma we remark that if $Q$ is Volterra-related, so is $PQ$ if $P$
is a compactly supported differential operator.

\begin{lemma}\label{lemma:first-reduction}
Assume that $Q$ is Volterra-related and of exponential control. Assume that $P$ is compactly supported, for example $P$ is a 0-th order differential operator 
of compact support.
Then, as $t \rightarrow 0^{+}$, we have
\begin{align}
\int_M \operatorname{str}\left[\gamma^{-1}\cdot  K_{PQ}(x, \gamma x, t)\right]|d x|=\int_{M^\gamma} \operatorname{str}\left[I_{PQ}(x, t, \gamma )\right]|d x|+\mathrm{O}\left(t^{\infty}\right).
\end{align}
\begin{proof}
The proof given in  \cite[Lemma 3.1]{Ponge-Wang-2} can be adapted easily to the present case, given that $P$ is of compact support.
\end{proof}
\end{lemma}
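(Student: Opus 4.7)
My strategy is to use the exponential control of $K_Q$ to show that the supertrace integrand $\operatorname{str}[\gamma^{-1}K_{PQ}(x,\gamma x,t)]$ concentrates, up to $O(t^{\infty})$, in an arbitrarily small tubular neighborhood of $M^\gamma$, and then to identify the local contribution with $I_{PQ}(y,t,\gamma)$ via normal coordinates. First, since $P$ is compactly supported (and acts from the left as multiplication by a compactly supported function in the $0$-th order case), the kernel $K_{PQ}(x,y,t)$ is supported in $K\times M$ with $K:=\operatorname{supp}(P)$ compact, and it inherits the exponential-control bound of $K_Q$ with constants multiplied by $\sup|\phi|$ (for $P$ higher order differential one would analogously use derivative bounds on $K_Q$, which hold for all operators we consider).

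Second, I would fix $\epsilon>0$ smaller than the injectivity radius of the exponential map on $N^\gamma$ and decompose
\[
\int_M\operatorname{str}[\gamma^{-1}K_{PQ}(x,\gamma x,t)]\,|dx|=\int_{K\cap U_\epsilon(M^\gamma)}(\cdots)\,|dx|+\int_{K\setminus U_\epsilon(M^\gamma)}(\cdots)\,|dx|,
\]
with $U_\epsilon(M^\gamma)$ the tubular neighborhood of radius $\epsilon$. The continuous function $x\mapsto d(x,\gamma x)$ vanishes exactly on $M^\gamma$, so on the compact set $K\setminus U_\epsilon(M^\gamma)$ it is bounded below by some $c=c(\epsilon)>0$. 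Applying the exponential-control estimate yields
\[
\Bigl|\int_{K\setminus U_\epsilon(M^\gamma)}\operatorname{str}[\gamma^{-1}K_{PQ}(x,\gamma x,t)]\,|dx|\Bigr|\leq C\,\mathrm{vol}(K)\,t^{-\beta}e^{-\eta c/t}=O(t^{\infty}).
\]

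Third, on $U_\epsilon(M^\gamma)$ I would parametrize $x=\exp_y(v)$ with $y\in M^\gamma$ and $v\in N^\gamma_y(\epsilon)$, absorbing the Jacobian of the exponential map into $|dv|$. Because $\gamma$ acts by isometries and fixes $y$, the exponential map is $\gamma$-equivariant at $y$: $\gamma\exp_y(v)=\exp_y(d\gamma_y(v))=\exp_y(\gamma v)$ with the abuse of notation $\gamma v:=d\gamma_y v$. Fubini then rewrites the integral over $U_\epsilon(M^\gamma)$ as
\[
\int_{M^\gamma}\Bigl[\int_{N^\gamma_y(\epsilon)}\operatorname{str}\bigl[\gamma^{-1}K_{PQ}(\exp_y v,\exp_y(\gamma v),t)\bigr]\,|dv|\Bigr]|dy|=\int_{M^\gamma}\operatorname{str}[I_{PQ}(y,t,\gamma)]\,|dy|,
\]
which is exactly the right-hand side of the claim. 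Combined with the previous estimate, this gives the stated equality.

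The only non-automatic point is the verification that the exponential-control bound on $K_Q$ passes to $K_{PQ}$ with the same exponent $\eta$ in $d(x,y)$; for the $0$-th order case this is immediate, and this is the setting we actually use. The main "obstacle" is really bookkeeping: one must ensure uniformity of the estimate $d(x,\gamma x)\geq c$ over the compact $K$, and that the Jacobian of $\exp$ contributes no $t$-dependence, so that both the far-region estimate and the normal-coordinate identification cooperate to produce a clean $O(t^{\infty})$ remainder. This is routine once $\epsilon$ is fixed and the compactness of $\operatorname{supp}(P)$ is exploited.
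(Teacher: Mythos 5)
Your argument is essentially the same as the one the paper intends: it reconstructs Ponge and Wang's proof of \cite[Lemma 3.1]{Ponge-Wang-2}, and you have correctly identified that the only point requiring adaptation to the non-compact setting is that the compact support of $P$ plays the role of the compactness of $M$ in guaranteeing $d(x,\gamma x)\geq c(\epsilon)>0$ on the region $K\setminus U_\epsilon(M^\gamma)$, so that the far-field contribution is $O(t^\infty)$ by the exponential-control estimate. The decomposition into a tubular $\epsilon$-neighborhood of $M^\gamma$ and its complement, followed by the change to Fermi-type coordinates on the neighborhood, is exactly what Ponge and Wang do (they phrase it with a subordinate partition of unity over charts, but the content is identical). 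One point worth making explicit: your phrase ``absorbing the Jacobian of the exponential map into $|dv|$'' is the right move, but it is a convention rather than an observation. The operator $I_{PQ}(y,t,\gamma)$ in Theorem \ref{thm:PW-local} and onward is defined exactly with that convention in place; if one insisted on reading $|dv|$ as Lebesgue measure on $N^\gamma_y$, the Jacobian factor $J(y,v)=1+O(|v|^2)$ would contribute a correction that is only $O(\sqrt{t})$, not $O(t^\infty)$, relative to the leading asymptotics. Since the paper inherits Ponge--Wang's normalization, your proof is consistent with it, but a reader would benefit from a sentence noting that the same measure convention is carried through to Theorem \ref{thm:PW-local}. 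With that clarification, the argument is complete and matches the paper's intended route.
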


For the local higher index theorem we shall need to determine the asymptotic expansion of $I_{PQ} (x,t, \gamma)$. To this end, always because of the compact support assumption on $P$, we go back to the local theory.
Given a fixed-point $x$ in a submanifold component $M^\gamma_a$, consider some local coordinates $x = (x_1, \cdots x_a)$ and define fiber coordinates $v = (v_1, \cdots v_b)\in N^\gamma_x$. Then we get local coordinates $x_1, \cdots x_a, v_1, \cdots v_b$. We shall refer to this type of coordinates as \emph{tubular coordinates}
and we denote by $V_T$ the open set on which these coordinates are defined. Notice that $V_T$ can be chosen to be  $\cup_{p\in V} N^\gamma_p (\epsilon)$, with $V\subset M^\gamma_a$ and $V$ a chart on 
$M^\gamma_a$ centered in $x$ and with image $U$ in $\mathbb{R}^a$ and $x$ corresponding to the origin. Let $Q$ be Volterra-related.
Then the symbol of the Volterra operator associated to $Q$ in tubular coordinates will have an asymptotic expansion 
\[
q(x, v, \xi, \nu; t) \sim \sum_{j \geq 0} q_{m-j}(x, v, \xi, \nu; t). 
\]

\begin{theorem}(\cite[Proposition 3.4]{Ponge-Wang-2} Asymptotic expansion )\label{thm:PW-local}
As $t \rightarrow 0^{+}$, uniformly on compact subsets of $V=V_T \cap M^\gamma_a$, we have 
\[
I_Q(x, t, \gamma )\sim  \sum_{j \geq 0} t^{\left(-\frac{a}{2}-\lfloor \frac{m}{2}\rfloor+1\right) + j}\cdot  I_{Q, j}(x, \gamma)
\]
where 
\begin{equation}
\label{eq Ij}
I_{Q, j}(x, \gamma) \colon = \sum_{|\alpha| \leq m - 2\lfloor \frac{m}{2}\rfloor + 2j }\int_{\mathbb{R}^{n-a}} \frac{\nu^\alpha}{\alpha!} \left(\partial_\nu^\alpha q_{2\lfloor \frac{m}{2}\rfloor - 2j + \alpha}\right)^\vee\left(x, 0; 0, (1 - \gamma) \nu ; 1\right)\; d\nu	\in \mathrm{End}(\mathcal{S}\otimes E). 
\end{equation}
\end{theorem}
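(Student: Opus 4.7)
The statement is local on $M^\gamma_a$, so the plan is to fix a basepoint $x_0 \in V_T \cap M^\gamma_a$, work in tubular coordinates $(x, v) \in U \times \RR^{n-a}$ with $x_0$ at the origin and $M^\gamma \cap V_T$ corresponding to $\{v = 0\}$, and exploit the symbol expansion $q \sim \sum_{j \geq 0} q_{m-j}$ of $Q$ together with a parabolic rescaling $v \mapsto \sqrt{t}\, v$ in the normal direction. First I would reduce to the case where $Q$ is properly supported in the chart: the contribution from outside a small neighborhood of the $\gamma$-diagonal is smoothing (by pseudolocality together with the fact that $\gamma$ has no other fixed point near $x_0$), and smoothing Volterra kernels are $O(t^\infty)$ uniformly on compacts, so such terms land in the remainder.

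The second step is to substitute the asymptotic expansion of the Volterra kernel, $K_Q(X, Y, t) \equiv \sum_{j < N} q_{m-j}^\vee(X, X-Y, t) \bmod C^N$ (uniformly on compacts) into the definition of $I_Q$. Using the identifications $\exp_{x_0} v \leftrightarrow (0, v)$ and $\exp_{x_0}(\gamma v) \leftrightarrow (0, \gamma v)$ (where $\gamma$ acts linearly on the normal fibre) one obtains $X - Y = (0, (1-\gamma) v)$, so that
\[
I_Q(x_0, t, \gamma) = \sum_{j < N} \int_{N^\gamma_{x_0}(\epsilon)} \gamma^{-1} \cdot q_{m-j}^\vee\big((0, v),\, (0, (1-\gamma) v),\, t\big)\, dv \; + \; O(t^{N'})
\]
with $N' \to \infty$ as $N \to \infty$.

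The core step is the parabolic rescaling $v \mapsto \sqrt{t}\, v$: the Jacobian contributes $t^{(n-a)/2}$, while the parabolic homogeneity $q_{m-j}^\vee(X, \lambda Y, \lambda^2 T) = \lambda^{-(n+2+m-j)} q_{m-j}^\vee(X, Y, T)$, applied with $\lambda = \sqrt{t}$, extracts a factor $t^{-(n+2+m-j)/2}$ and transforms the time argument to~$1$. A further Taylor expansion of the basepoint $X = (0, \sqrt{t}\, v)$ around $(0, 0)$ produces, at order $\alpha$, factors $\tfrac{(\sqrt{t}\, v)^\alpha}{\alpha!}\,\partial_v^\alpha$ and thus additional half-integer powers $t^{|\alpha|/2}$. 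Collecting the resulting half-integer powers and relabelling the symbol degree as $s = 2\lfloor m/2 \rfloor - 2k + |\alpha|$ reorganises the sum into an asymptotic expansion in $t^{1/2}$, in which the fractional-power contributions cancel by parity and the integer-power coefficients are exactly the sum displayed in \eqref{eq Ij}.

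The main technical obstacle is the justification that the rescaled integrals converge absolutely and uniformly in the basepoint on compact subsets of $V$, after extending the domain from $N^\gamma_{x_0}(\epsilon/\sqrt{t})$ to all of $\RR^{n-a}$ and after controlling both the symbol expansion remainder and the Taylor remainder of the basepoint. Absolute convergence rests on the rapid decay of $q_{m-j}^\vee(X, Y, 1)$ in the $Y$-variable, a consequence of the extension property~(*) combined with the symbol estimates~\eqref{eq:symbol-growth}, together with the invertibility of $(1-\gamma)|_{N^\gamma_{x_0}}$, which ensures $|(1-\gamma) v|$ is comparable to $|v|$ and hence $q_{m-j}^\vee(\cdot, (0, (1-\gamma)v), 1)$ is Schwartz in $v$. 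Uniformity in the basepoint is then inherited from the smooth dependence of the homogeneous symbols on the base variable on compacts.
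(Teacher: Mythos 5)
The paper does not in fact prove this theorem—it is Proposition~3.4 of Ponge--Wang \cite{Ponge-Wang-2}, quoted as a black box in the Appendix—so the relevant comparison is with their argument, and your reconstruction is the standard route and the one they use: localize to a tubular chart modulo $O(t^\infty)$ (pseudolocality plus the $O(t^\infty)$ decay of smoothing Volterra kernels), substitute the expansion $K_Q\sim\sum_j q_{m-j}^\vee$, rescale $v\mapsto\sqrt t\,v$ in the normal fibre, invoke the parabolic homogeneity $q_{m-j}^\vee(x,\lambda y,\lambda^2 T)=\lambda^{-(n+2)-(m-j)}q_{m-j}^\vee(x,y,T)$, Taylor-expand the base point $(0,\sqrt t\,v)$ at $(0,0)$, and reorganise by parity. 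Your justification of absolute convergence (rapid decay of $q_{m-j}^\vee$ in the $Y$-variable from the symbol estimates and the extension property, plus invertibility of $(1-\gamma)$ on $N^\gamma_x$) and of uniformity on compacta (smooth base-point dependence of the homogeneous symbols) is sound.

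The one step you leave implicit is the final bookkeeping of the powers of $t$ (``collecting the resulting half-integer powers''), and if you carry it out you will in fact find a discrepancy with the exponent as printed in the paper's statement of the theorem. The Jacobian contributes $t^{(n-a)/2}$, homogeneity contributes $t^{-(n+2+m-j)/2}$, and the $\alpha$-th Taylor term contributes $t^{|\alpha|/2}$, so the $(j,\alpha)$-term carries $t^{(-a-2-m+j+|\alpha|)/2}$. Substituting the reindexing $m-j=2\lfloor m/2\rfloor-2j'+|\alpha|$ collapses this to $t^{-\frac{a}{2}-\lfloor m/2\rfloor -1 +j'}$, i.e.\ the ``$+1$'' in the displayed exponent should read ``$-1$''. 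As a sanity check, take $m=-2$, $a=0$, $Q=(\partial_t+\Delta)^{-1}$, and $\gamma$ a rotation with no nonzero fixed vector; then $I_Q(0,t,\gamma)=|\det(1-\gamma)|^{-1}=O(t^0)$, which matches $-\frac{0}{2}-(-1)-1+0=0$ rather than the stated $-\frac{0}{2}-(-1)+1+0=2$. This does not undermine your approach—on the contrary, it shows why the exponent arithmetic deserves to be made explicit rather than deferred.
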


We now pass to Gezler rescaling in the context of Volterra calculus. Let $\sigma \colon \text{Cliff}(TM) \to \Lambda^\bullet T_\CC M$ be the symbol map. We assign
\begin{equation}
\label{getzler order}
\text{deg} \partial_j= \frac{1}{2}\text{deg} \partial_t  = \text{deg} c(dx_j) = - \text{deg} x_j = 1. 
\end{equation}
We fix $x\in M$ and a coordinate chart centered at $x$ and with image $\mathbb{B}$, with $\mathbb{B}$ a ball in $\mathbb{R}^n$ centred at the origin. The bundle of Clifford modules $E$ restricted to $U$ can be described as the spinor bundle $\mathcal{S}$ of  $U$ tensor an auxiliary bundle of rank $d$ on $U$. Localizing a global Volterra pseudodifferential operator to $U$ we are led to consider a Volterra pseudodifferential operator 
$Q \in \Psi^*_{\mathrm{v}}(\mathbb{B} \times \mathbb{R},S_n\otimes \mathbb{C}^d)$
with $S_n$ the spinor bundle of $\mathbb{R}^n$. Consider its  symbol 
$q \sim \sum_{k \leq m} q_{k}$.  
Then taking components in each subspace $\Lambda^j T_{\mathbb{C}}^* \mathbb{R}^n$ and then using Taylor expansions at $x=0$ gives the formal expansions
$$
\sigma[q(x, \xi, t)] \sim \sum_{j, k} \sigma\left[q_k(x, \xi, t)\right]^{(j)} \sim \sum_{j, k, \alpha} \frac{x^\alpha}{\alpha!} \sigma\left[\partial_x^\alpha q_k(0, \xi, t)\right]^{(j)}
$$
According to (\ref{getzler order}) the symbol $\frac{x^\alpha}{\alpha!} \partial_x^\alpha \sigma\left[q_k(0, \xi, \tau)\right]^{(j)}$ is Getzler homogeneous of degree $k+j-|\alpha|$. So we can expand $\sigma[q(x, \xi, \tau)]$ as
$$
\sigma[q(x, \xi, \tau)] \sim \sum_{j \geq 0} q_{(m-j)}(x, \xi, \tau), \quad q_{(m)} \neq 0,
$$
where $q_{(m-j)}$ is a Getzler homogeneous symbol of degree $m-j$.
\begin{definition}
\label{getzler def}
We set-up the following definitions:
\begin{itemize}
	\item The integer $m$ is the Getzler order of $Q$ at the origin;
	\item  The symbol $q_{(m)}$ is the principal Getzler homogeneous symbol of $Q$ at the origin,
	\item  The operator $Q_{(m)}=q_{(m)}\left(x, D_x, D_t\right)$ is the model operator of $Q$  at the origin.
\end{itemize}	
\end{definition}

We will use the following three theorems to compute the local higher index pairing. 

\begin{theorem}(\cite[Lemma 4.15]{Ponge-Wang-2} Asymptotic expansion using model operators)
\label{as-model-op}
Let  $Q \in \Psi^*_{\mathrm{v}}(\mathbb{B} \times \mathbb{R},S_n\otimes \mathbb{C}^d)$ with Getzler order $m$ and model operator $Q_{(m)}$ (all 
at the origin).
\begin{itemize}
	\item  If $m-j$ is an odd integer, then
\[
\sigma\left[I_Q(0, t, \gamma)\right]^{(j, 0)}=\mathrm{O}\left(t^{\frac{j-m-a-1}{2}}\right) \quad \text { as } t \rightarrow 0^{+} .
\]
\item 
If $m-j$ is an even integer, then
\[
\sigma\left[I_Q(0, t, \gamma)\right]^{(j, 0)}=t^{\frac{j-m-a}{2}-1} \left[I_{Q_{(m)}}(0,1, \gamma)\right]^{(j, 0)}+\mathrm{O}\left(t^{\frac{j-m-a}{2}}\right) \quad \text { as } t \rightarrow 0^{+} .
\]
\end{itemize}
\end{theorem}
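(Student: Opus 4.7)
\textbf{Proof plan for Theorem~\ref{as-model-op}.} The plan is to combine the homogeneous expansion of $I_Q(0,t,\gamma)$ coming from Theorem~\ref{thm:PW-local} with the Getzler filtration on the symbols and isolate the leading $t$-contribution to the form-degree $(j,0)$ part. First I would apply Theorem~\ref{thm:PW-local} at $x=0$ and obtain
\[
I_Q(0,t,\gamma)\;\sim\;\sum_{\ell\geq 0} t^{-\frac{a}{2}-\lfloor m/2\rfloor+1+\ell}\,I_{Q,\ell}(0,\gamma),
\]
where each $I_{Q,\ell}(0,\gamma)$ is the integral \eqref{eq Ij} built from the classical (Volterra-)homogeneous components $q_{2\lfloor m/2\rfloor-2\ell+|\alpha|}$ of the symbol of $Q$. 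Next I would Taylor-expand those components in the $x$-variable at the origin, writing
\[
\partial_\nu^\alpha q_{k}(x,0;0,\nu;\tau)\;\sim\;\sum_{\beta}\tfrac{x^\beta}{\beta!}\,\partial_x^\beta\partial_\nu^\alpha q_k(0,0;0,\nu;\tau),
\]
so that each summand becomes a monomial in $(x,\xi,\tau)$ of Getzler weight $k+|\alpha|-|\beta|$, which is precisely a Getzler-homogeneous piece of the full symbol $\sigma[q]$ appearing in the decomposition $\sigma[q]\sim\sum_{i\geq 0}q_{(m-i)}$.

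Then I would take the form-degree $(j,0)$ component and keep track of how the $c(dx_i)$'s (which contribute $+1$ in Getzler weight and degree $1$ in exterior form) combine with the Clifford action of the normal variables and the automorphism $(1-\gamma)|_{N^\gamma}$. A scaling change of variable $\nu\mapsto t^{1/2}\nu$ in \eqref{eq Ij}, together with the parabolic homogeneity of the anti-Fourier transform (which converts $\tau$-weight $2$ and $\nu$-weight $1$ into a single factor of $t^{1/2}$ per Getzler unit), shows that the contribution of each Getzler-homogeneous layer of weight $m-i$ to $\sigma[I_Q(0,t,\gamma)]^{(j,0)}$ is exactly $t^{(j-m+i-a)/2-1}$ times an explicit integral in which the symbol has been replaced by its Getzler-homogeneous representative. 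Setting $i=0$ identifies the leading term with $t^{(j-m-a)/2-1}\,[I_{Q_{(m)}}(0,1,\gamma)]^{(j,0)}$, and all layers $i\geq 1$ contribute at orders $t^{(j-m-a)/2-1+i/2}$, producing an $O(t^{(j-m-a)/2})$ remainder.

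The parity statement is then read off from this layer-by-layer decomposition. Indeed, under $\nu\mapsto t^{1/2}\nu$ the layer of Getzler weight $m-i$ contributes a power of $t$ whose parity is controlled by $m-j-i \pmod 2$, and when $m-j$ is odd the rank-$0$ layer vanishes identically; by the extension property $(*)$ in the definition of a homogeneous Volterra symbol and the symmetry of the anti-Fourier transform in the parabolic directions, the next layer of the correct parity appears shifted by a half-integer, giving the error estimate $O(t^{(j-m-a-1)/2})$. The main obstacle that I expect is a bookkeeping one: namely, checking that the Taylor remainders in the $x$-expansion of the full symbol together with the terms of strictly smaller Getzler weight combine coherently to the advertised remainder term. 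This requires handling carefully the interaction between the $\nu$-integration over $N^\gamma$, the exponential damping provided by $(1-\gamma)|_{N^\gamma}$, and the parabolic (not elliptic) homogeneity of Volterra symbols, essentially along the lines already developed in~\cite{Ponge, Ponge-Wang-2}; once these technical points are verified, the theorem follows from the explicit matching of leading coefficients described above.
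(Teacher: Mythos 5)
The paper does not give its own proof of Theorem~\ref{as-model-op}: it is stated with the citation $\text{[Ponge--Wang, Lemma 4.15]}$ and no argument is supplied, so there is nothing in-text to compare your sketch against. The ingredient the paper does record (Theorem~\ref{thm:PW-local}, i.e.\ Proposition~3.4 of Ponge--Wang) is indeed the starting point of your proposal, and your overall strategy --- expand $I_Q(0,t,\gamma)$ via Theorem~\ref{thm:PW-local}, Taylor-expand the symbol pieces in $x$, reorganize by Getzler weight, then change variables $\nu\mapsto t^{1/2}\nu$ to extract the $t$-power attached to each layer --- is the right skeleton of the Ponge--Wang argument.

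There is, however, a genuine gap at the step that actually distinguishes the two cases of the theorem. You assert that when $m-j$ is odd the leading Getzler layer "vanishes identically," and you attribute this to the extension property $(*)$ of Volterra symbols together with a "symmetry of the anti-Fourier transform in the parabolic directions." Neither mechanism is responsible. Property $(*)$ is the analytic continuation condition in the time-dual variable $\tau\in\overline{\mathbb{C}}_-$; it encodes causality of the Volterra kernel (vanishing for $t<0$) and has nothing to say about parity in the spatial or form-degree variables. Likewise, parabolic homogeneity of $q^\vee$ fixes its scaling behaviour under $(x,t)\mapsto(\lambda x,\lambda^2t)$ but does not by itself force the $\nu$-integral in $I_{Q,\ell}$ to vanish when $m-j$ is odd. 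In Ponge--Wang the vanishing comes from the compatibility of the exterior-form bigrading $\Lambda^{(j,l)}$ with the Getzler filtration: a Getzler-homogeneous layer of degree $m$ contributes to $\sigma[\cdot]^{(j,0)}$ only when $j\equiv m\pmod 2$, because the Getzler weight of a term is the sum of its form degree and its $(\xi,\tau,x)$-weight, and in the associated graded (where the model operator lives) the Clifford filtration matches the exterior-degree filtration on the nose. It is this algebraic compatibility lemma --- not a Fourier-transform symmetry --- that produces the half-integer $t$-shift and the $O(t^{(j-m-a-1)/2})$ bound in the odd case. Without it, your sketch does not actually explain why the cases split as they do, and since the "bookkeeping" you defer is precisely this degree/parity accounting, the main content of the lemma has been elided rather than proved.
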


\begin{lemma}(\cite[Lemma 4.13]{Ponge-Wang-2} Composition of operators )
\label{lem:compos}
Let  $Q \in \Psi^*_{\mathrm{v}}(\mathbb{B} \times \mathbb{R},S_n\otimes \mathbb{C}^d)$ with Getzler order $m$ and model operator $Q_{(m)}$. In addition, let $P \colon  C^\infty(\mathbb{B},S_n\otimes \mathbb{C}^d) \to  C^\infty(\mathbb{B},S_n\otimes \mathbb{C}^d)$ be a differential operator with Getzler order $m'$ (independent of time $t$). Then $PQ \in \Psi^*_{\mathrm{v}}(\mathbb{B} \times \mathbb{R},S_n\otimes \mathbb{C}^d)$ and 
\[
\sigma\left[PQ\right] = P_{(m')}Q_{(m)} + \text{lower Getzler order terms}. 
\]
\end{lemma}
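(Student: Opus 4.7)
The plan is to reduce the statement to a symbol-level computation using the standard composition formula for a differential operator acting on a (Volterra) pseudodifferential operator, and then to carry out a Getzler degree count that shows all terms conspire to give total Getzler degree $m+m'$ at the top.

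First I would verify that $PQ\in \Psi^*_{\mathrm{v}}(\mathbb{B}\times\mathbb{R},S_n\otimes\CC^d)$. Since $P$ is a differential operator in the spatial variable, independent of $t$, it commutes with the causal structure in time and preserves the Volterra property of kernels. Moreover, writing $P=\sum_{\beta}a_\beta(x)D_x^\beta$, the symbol calculus gives the exact (i.e., non-asymptotic, because $P$ is differential) composition formula
\[
\sigma[PQ](x,\xi,\tau)=\sum_{\alpha}\frac{1}{\alpha!}\,\partial_\xi^\alpha\sigma[P](x,\xi)\cdot D_x^\alpha\,\sigma[Q](x,\xi,\tau),
\]
where the sum is finite because $\sigma[P]$ is polynomial in $\xi$. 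This already places $PQ$ in the Volterra calculus, with classical parabolic order equal to the sum of the classical orders.

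The main step is the Getzler degree bookkeeping. Recall from \eqref{getzler order} and Definition \ref{getzler def} that $x_j$ has Getzler degree $-1$, $\xi_j$ (and $\partial_{x_j}$) degree $+1$, $\tau$ (and $\partial_t$) degree $+2$, and $c(dx_j)$ degree $+1$. A direct scaling check shows that if $f$ is Getzler-homogeneous of degree $k$, then $\partial_{\xi_j}f$ has Getzler degree $k-1$ while $\partial_{x_j}f$ (equivalently $D_{x_j}f$) has Getzler degree $k+1$. Substituting the Getzler expansions $\sigma[P]\sim\sum_j p_{(m'-j)}$ and $\sigma[Q]\sim\sum_k q_{(m-k)}$ into the composition formula, the $\alpha$-th summand of bidegree $(j,k)$,
\[
\tfrac{1}{\alpha!}\,\partial_\xi^\alpha p_{(m'-j)}\cdot D_x^\alpha q_{(m-k)},
\]
has Getzler degree $(m'-j-|\alpha|)+(m-k+|\alpha|)=m+m'-j-k$. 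Thus the degree $|\alpha|$ drops cancel exactly, all top-order terms ($j=k=0$) contribute to the same Getzler degree $m+m'$, and the remaining contributions have strictly smaller Getzler degree.

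Collecting the $j=k=0$ terms, the principal Getzler symbol of $PQ$ is
\[
\sigma[PQ]_{(m+m')}=\sum_{\alpha}\frac{1}{\alpha!}\,\partial_\xi^\alpha p_{(m')}\cdot D_x^\alpha q_{(m)},
\]
which is, by the composition formula applied in the opposite direction, exactly the full symbol of the composition $P_{(m')}Q_{(m)}$ of model operators on the Getzler model space. Hence the model operator of $PQ$ equals $P_{(m')}Q_{(m)}$, and the lemma follows. The only delicate point I expect to have to watch carefully is the role of the Clifford symbol map $\sigma\colon\operatorname{Cliff}(TM)\to\Lambda^\bullet T^*_{\CC}M$: the Clifford degree of the spatial $c(dx_j)$-factors in $P$ must be tracked together with the $(j,k,\alpha)$ counting, but since the symbol map is multiplicative modulo lower Clifford degree and $\deg c(dx_j)=+1$ in the Getzler scheme, it contributes additively to the degree count and does not affect the top-order identification.
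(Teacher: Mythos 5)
The paper does not prove this lemma; it is stated as a citation to \cite[Lemma 4.13]{Ponge-Wang-2}, so there is no internal proof to compare against. Your argument — the exact composition formula $\sigma[PQ]=\sum_\alpha\tfrac{1}{\alpha!}\,\partial_\xi^\alpha\sigma[P]\cdot D_x^\alpha\sigma[Q]$ for a differential operator $P$, followed by the Getzler degree bookkeeping $\deg\partial_\xi^\alpha f=\deg f-|\alpha|$ and $\deg D_x^\alpha g=\deg g+|\alpha|$ so that each term has total degree $m+m'-j-k$ — is correct and is precisely the standard argument used to establish this fact in the Volterra–Getzler literature; your remark on the Clifford symbol map being multiplicative modulo lower Clifford filtration is the right caveat and closes the gap between the endomorphism-level composition and the wedge-product identification of the leading term.
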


\noindent
Let us go back to our cocompact $\Gamma$-proper manifold $M$.
For any $x \in M^\gamma_a$, the tangent bundle decomposes
\[
T_xM \cong T_xM^\gamma_a \oplus N^\gamma_x
\]
Accordingly, the differential form 
\[
\wedge^\bullet T^*_xM \cong \wedge^\bullet \left(T^*_xM^\gamma_a \right) \otimes \wedge^\bullet\left(N^\gamma_x\right) \colon =  \wedge^{k, l}, \quad 1\leq k \leq a, 1 \leq l \leq n-a. 
\]
It is convenient to introduce the following curvature matrices:
\begin{equation}
\label{equ-R}
R^{\prime}:=\left(R_{i j}\right)_{1 \leq i, j \leq a} \quad \text { and } \quad R^{\prime \prime}:=\left(R_{a+i, a+j}\right)_{1 \leq i, j \leq n-a} .	
\end{equation}
Note that the component in $\Lambda^{\bullet, 0}$ of $R^{\prime}$ (resp., $R^{\prime \prime}$ ) is $R^{T M^\gamma}$ (resp., $R^{N^\phi}$ ).

\medskip
\noindent
Let $Q \in \Psi_{\Gamma, \mathrm{v},c}^*\left(M \times \mathbb{R},  E\right)$
with $E$ a bundle of Clifford modules.
We assume, as it is often done in index theory when local computations are involved, 
that $M$ admits a $\Gamma$-invariant spin structure and that $E=\mathcal{S}\otimes V$,
with $V$ a $\Gamma$-equivariant auxiliary complex bundle of rank $d$ and 
with $\mathcal{S}$ denoting the spinor bundle of $M$.
It is possible to give the notion of Getzler order at a point $x\in M$ 
and more generally along a subset of $M$ (see \cite{Ponge-Wang-2}, Definition 4.19 and Remark 4.21).

\begin{theorem}(\cite[Theorem 4.22]{Ponge-Wang-2} Computing the local trace)
\label{thm local trace}
	Let $Q \in \Psi_{\Gamma, \mathrm{v},c}^*\left(M \times \mathbb{R}, E\right)$ have Getzler order $m$ along $ M^\gamma_a$. Let $x_0 \in M^\gamma_a$.  
\begin{enumerate}
	\item If $m$ is an odd integer, then
$$
\operatorname{str}_{E}\left[\gamma^{\mathcal{S} \otimes F} I_Q(x_0, t, \gamma )\right]=\mathrm{O}\left(t^{-\frac{m+1}{2}}\right) \quad \text { as } t \rightarrow 0^{+} .
$$
and this is uniform on compact sets of $M^\gamma$.
 \item  If $m$ is an even integer, then, as $t \rightarrow 0^{+}$, we have

$$
\begin{aligned}
& \left(\operatorname{str}_{E}\right)\left[\gamma^{E}  I_Q(x_0, t, \gamma)\right] \\
& \quad=(-i)^{\frac{n}{2}} t^{-\left(\frac{m}{2}+1\right)} 2^{\frac{a}{2}} \operatorname{det}^{\frac{1}{2}}\left(1-\gamma|_{N^\gamma}\right)\left[\operatorname{tr}_{\mathbb{C}^d}\left[\gamma^d I_{Q_{(m)}}(0,1, \gamma)\right]\right]^{(a, 0)}+\mathrm{O}\left(t^{-\frac{m}{2}}\right)
\end{aligned}
$$
in any  synchronous normal coordinates centered at $x_0$ over which $E=\mathcal{S}\otimes V$ is trivialized
and where $Q_{(m)}$ is the Getzler operator at the origin in that particular coordinate system.
Here $\gamma^d$ is the action of $\gamma$ at the origin on the auxiliary vector space $\mathbb{C}^d$ corresponding to $V$.

\item If $P$ is a differential operator of compact support with constant Getzler order $m$ along 
 an open subset  $W$ of $ M^\gamma_a$ then a similar result holds on $x_0\in W\subset M^\gamma_a$.\\

\end{enumerate}
\end{theorem}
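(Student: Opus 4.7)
The statement is the local equivariant analogue of the Getzler-rescaled short-time expansion, together with an extension to compactly supported differential operators. I would proceed in three steps.

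Step 1 (Local reduction). Fix $x_0\in M^\gamma_a$ and choose $\gamma$-invariant synchronous normal coordinates centered at $x_0$ in which $E=\mathcal{S}\otimes V$ is trivialized. In these coordinates $\gamma$ acts linearly on the base as $\mathrm{id}_{\mathbb{R}^a}\oplus\gamma|_{N^\gamma_{x_0}}$ and on the fiber as $\gamma^{\mathcal{S}}\otimes\gamma^d$, so the fixed-point submanifold becomes the linear subspace $\mathbb{R}^a\times\{0\}\subset \mathbb{R}^a\times\mathbb{R}^{n-a}$. After this reduction, $I_Q(x_0,t,\gamma)$ is a purely local Volterra integral over the normal fiber, and Theorem~\ref{as-model-op} applies directly, giving a formal asymptotic expansion of $\sigma[I_Q(0,t,\gamma)]^{(j,0)}$ in terms of the model operator $Q_{(m)}$.

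Step 2 (Supertrace and Berezin identity). Factor $\operatorname{str}_E=\operatorname{str}_{\mathcal{S}}\otimes\operatorname{tr}_{\mathbb{C}^d}$ and apply it to $\gamma^{E}\cdot I_Q(0,t,\gamma)$. The central computation is an equivariant Berezin identity on the spin factor, of the schematic form
\[
\operatorname{str}_{\mathcal{S}}\!\left(\gamma^{\mathcal{S}}\, a\right)=(-i)^{n/2}\,2^{a/2}\,\operatorname{det}^{\frac{1}{2}}(1-\gamma|_{N^\gamma})\,\sigma(a)^{(a,0)},
\]
which one verifies by block-diagonalizing $\gamma|_{N^\gamma}$ into two-dimensional rotations and carrying out the Berezin integral along the normal Clifford directions. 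Substituting Theorem~\ref{as-model-op} with $j=a$ and tracking the parity of $m$ then yields (1) and (2): when $m$ is odd, the parity of $(a,\ell)$-components forces every contribution that could survive the supertrace to carry an extra half-power of $t$, producing the bound $O(t^{-(m+1)/2})$; when $m$ is even, the leading $(a,0)$-component is exactly $t^{-(m/2+1)}[I_{Q_{(m)}}(0,1,\gamma)]^{(a,0)}$, with the advertised prefactor. Uniformity on compact subsets of $M^\gamma_a$ is inherited from the uniformity of the Volterra symbol estimates in the tubular base variable.

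Step 3 (Extension to $PQ$). For (3), Lemma~\ref{lem:compos} shows that $PQ$ has Getzler order $m=m_P+m_Q$ and model operator $P_{(m_P)}Q_{(m_Q)}$ at each $x\in W$. Because $P$ is compactly supported, Lemma~\ref{lemma:first-reduction} localizes the trace integral (modulo $O(t^\infty)$) to a neighborhood of $\operatorname{supp}(P)\cap M^\gamma_a$, and the analysis of Steps 1--2 runs verbatim pointwise on $W$. \textbf{Main obstacle.} The delicate point is Step 2: correctly carrying out the equivariant Berezin integration in the presence of the tangent/normal splitting of $\gamma^{\mathcal{S}}$. One must verify that contracting Clifford multiplication along the normal directions against the rotation blocks of $\gamma|_{N^\gamma}$ produces exactly $\operatorname{det}^{1/2}(1-\gamma|_{N^\gamma})$, with the correct $2^{a/2}$ normalization and power of $i$. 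This is finite-dimensional linear algebra on the Clifford bundle, but the combinatorial bookkeeping of bi-degrees, orientations, and normalization constants is where errors most easily arise.
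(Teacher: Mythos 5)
The paper does not actually prove this theorem: it cites it verbatim to \cite[Theorem 4.22]{Ponge-Wang-2}, with part (3) a routine extension to compactly supported differential coefficients handled implicitly via the localization of Lemma~\ref{lemma:first-reduction} and the composition Lemma~\ref{lem:compos}. Your sketch reproduces exactly the argument underlying the Ponge–Wang result (local reduction to $\gamma$-invariant synchronous normal coordinates, the asymptotic expansion of Theorem~\ref{as-model-op} in terms of the model operator, and the equivariant Berezin/Patodi computation contracting $\gamma^{\mathcal S}$ against the normal Clifford directions), so there is no divergence in approach.

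One small caution worth flagging: the ``Berezin identity'' you display in Step 2 is not an exact identity in the equivariant setting but a leading-order formula. The supertrace $\operatorname{str}_{\mathcal S}(\gamma^{\mathcal S}a)$ picks out the full top-degree component, which decomposes as $\sigma[\gamma^{\mathcal S}]^{(0,n-a)}\wedge\sigma[a]^{(a,0)}$ plus cross terms $\sigma[\gamma^{\mathcal S}]^{(0,n-a-\ell)}\wedge\sigma[a]^{(a,\ell)}$ with $\ell\geq 1$. It is only after invoking the Getzler-order bookkeeping of Theorem~\ref{as-model-op} that the contributions from $\ell\geq 1$ are seen to be $\mathrm O(t^{-m/2})$ rather than $\mathrm O(t^{-(m/2+1)})$, which is precisely what your ``parity'' remark for odd $m$ and the $(a,0)$-extraction for even $m$ are supposed to deliver. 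Make sure that step is carried out for all bidegrees $(a,\ell)$, not just $(a,0)$; otherwise the $m$ odd case (and the error estimate in the $m$ even case) would not be established.
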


\begin{theorem}[Computation for the model operator]
\label{GR thm}
Let $P \colon C^\infty(M, E) \to C^\infty(M, E)$ be a differential operator whose Getzler order is equal to $m$ along $M^\gamma_a$. Let $x_0 \in M_a^\gamma$. Consider local coordinates and define the Harmonic oscillator
\[
H_R:=-\sum_{1 \leq i \leq n}\left(\partial_i+\sqrt{-1} R_{i j} x^j\right)^2
\]
with $R_{ij}$ defined in (\ref{equ-R}). We have the following
\begin{enumerate}
\item the model operator for $Q(s) = P(sD^2+\partial_t)^{-1}$ at $x_0$ equals
\[
Q(s)_{(m-2)} = P_{(m)}\left(sH_R + \partial_t\right)^{-1}\wedge \exp \left(-t F^V(0)\right),
\]
where $P_{(m)}$ is the model operator of $P$ at $x_0$, and $F^V$ is the curvature of the auxiliary vector bundle $V$.  
\item 
Moreover, 
\[
\gamma I_{\left(sH_R+ \partial_t\right)^{-1}}(x, t, \gamma) =\frac{(4 \pi st)^{-\frac{a}{2}}}{\operatorname{det}^{\frac{1}{2}}\left(1-\gamma|_{N^\gamma}\right)}\operatorname{det}^{\frac{1}{2}}\left(\frac{st R^{\prime} / 2}{\sinh \left(st R^{\prime} / 2\right)}\right) \operatorname{det}^{-\frac{1}{2}}\left(1-\gamma|_{N^\gamma} e^{-st R^{\prime \prime}}\right).
\]
\item The smoothing kernel $K_{Q(s)_{(m-2)}}$ equals:
	\[
	K_{Q(s)_{(m-2)}}(x, y, t)=\left(P_{(m)} G_R\right)(x, y,st) \wedge e^{-stF^V}. 
		\]
	where $G_R$ is the heat kernel for the harmonic oscillator given by Mehler's formula:
\[
G_R(x,y, t):=\frac{1}{(4\pi t)^{n/2}}{\rm det}^{1/2}\left(\frac{tR/2}{\sinh(tR/2)}\right)\exp\left(-\frac{1}{4t}\Theta(x,y,t)\right),
\]
where $\Theta(x,y,t)$ has the following form
\[
\Theta(x,y,t)=\langle \frac{tR/2}{\tanh(tR/2)}x,x\rangle+\langle \frac{tR/2}{\tanh(tR/2)}y,y\rangle-2\langle \frac{tR/2}{\sinh(tR/2)}e^{tR/2} x,y\rangle.
\]
Here the operator $P_{(m)}$ acts only on the factor $\exp\left(-\frac{1}{4t}\Theta(x,y,t)\right)$.
\end{enumerate}
	\begin{proof} 
	The first two claims are proved in \cite[Lemma 4.17]{Ponge-Wang-2}, while (3) follows from \cite[(4.38)]{Ponge-Wang-2}.
\end{proof}
\end{theorem}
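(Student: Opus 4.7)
The plan is to reduce the statement to a Volterra-Getzler rescaling computation that is essentially Mehler's formula for a shifted harmonic oscillator, localized along the normal bundle $N^\gamma$ by Gaussian integration.

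First, to establish (1), I would rely on Lemma \ref{lem:compos}: if $Q'=(sD^2+\partial_t)^{-1}$ has Getzler order $-2$ with model operator $Q'_{(-2)}$, then $PQ'$ has model $P_{(m)} Q'_{(-2)}$ modulo lower-order terms, so it suffices to identify $Q'_{(-2)}$. For this, apply the Lichnerowicz--Weitzenb\"ock formula $D^2 = -\sum \nabla_{e_i}^2 + \tfrac14 \mathrm{scal} + F^V$ in synchronous normal coordinates at $x_0\in M^\gamma_a$, noting that $F^V$ is a $2$-form acting on the auxiliary bundle. Under Getzler rescaling with the weights \eqref{getzler order}, the scalar curvature term is of lower order; the rescaling limit of $\nabla_{e_i}$ is $\partial_i + \tfrac{\sqrt{-1}}{2} R_{ij}x^j$ (the standard Getzler computation), so the model of $-\sum \nabla_{e_i}^2$ is the harmonic oscillator $H_R$, while $F^V$ contributes its value $F^V(0)$ at the basepoint as a constant matrix-valued $2$-form. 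Since $F^V(0)$ commutes with $H_R$, the Volterra inverse factors as $(sH_R+\partial_t)^{-1}\wedge \exp(-tF^V(0))$, giving the stated model.

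For (2), the key point is that $(sH_R+\partial_t)^{-1}$ is given explicitly by Mehler's formula evaluated at time $st$; this gives the Gaussian kernel in the statement of (3). To extract $\gamma I_{(sH_R+\partial_t)^{-1}}(x,t,\gamma)$ I would split coordinates along $TM^\gamma_a \oplus N^\gamma$, observe that $\gamma$ acts trivially on the first factor and as a rotation (commuting with $R''$) on the second, and then perform the fiber integral over $N^\gamma$ of the Mehler kernel at $(v,\gamma v)$. The quadratic form in the exponent becomes, on $N^\gamma$, a nondegenerate Gaussian in $v$ whose determinant is $\det(1-\gamma|_{N^\gamma} e^{-stR''})$ up to normalization; performing this Gaussian integral produces the factor $\det^{-1/2}(1-\gamma|_{N^\gamma}e^{-stR''})$, and the $\det^{-1/2}(1-\gamma|_{N^\gamma})$ comes from the Jacobian relating the ball coordinates to the Gaussian normalization. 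On the $M^\gamma_a$-factor one is left with the ordinary Mehler prefactor $\det^{1/2}\!\bigl(\tfrac{stR'/2}{\sinh(stR'/2)}\bigr)$ and the Euclidean volume normalization $(4\pi st)^{-a/2}$.

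Finally, (3) is then immediate: from (1) the kernel of $Q(s)_{(m-2)}$ is obtained by applying the rescaled differential operator $P_{(m)}$ to the Mehler kernel and multiplying by the commuting factor $\exp(-stF^V(0))$; this is precisely the formula displayed, with $\Theta(x,y,st)$ being the standard Mehler quadratic form.

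The main obstacle in this scheme is the careful bookkeeping of the fiber integration over $N^\gamma$ in step (2): one must check that the Gaussian on the normal bundle induced by Mehler's formula paired with the $\gamma$-twist on the diagonal really produces the determinant $\det^{-1/2}(1-\gamma|_{N^\gamma}e^{-stR''})$, including signs and branches of the square root, and that the remaining $a$-form lives on $\Lambda^\bullet T^*M^\gamma_a$ with the correct normalization. Once this Gaussian computation is correctly set up, the three statements follow by specialization and invocation of Lemma \ref{lem:compos} and the asymptotic results of Theorem \ref{as-model-op}.
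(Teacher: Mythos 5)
Your plan is a correct high-level expansion of what the paper simply cites: Theorem~\ref{GR thm} is proved in the text by a one-line reference to \cite[Lemma 4.17 and (4.38)]{Ponge-Wang-2}, and your outline reproduces the argument underlying that lemma (Lichnerowicz formula plus Getzler rescaling for part (1), Mehler's formula and Gaussian fiber integration over $N^\gamma$ for part (2), and reading off the kernel for part (3)) rather than taking a different route. Two small cautions: the prefactor you quote for the rescaled connection, $\partial_i + \tfrac{\sqrt{-1}}{2}R_{ij}x^j$, is not quite consistent with $H_R = -\sum_i (\partial_i + \sqrt{-1}\,R_{ij}x^j)^2$ as stated in the theorem, so you would need to match the normalization of $R_{ij}$ in \eqref{equ-R} to whichever convention you adopt for the curvature of the rescaled spin connection before the two formulas agree; and, as you note yourself, the Gaussian integration in step (2) --- in particular separating the $\det^{-1/2}(1-\gamma|_{N^\gamma})$ Jacobian factor from the $\det^{-1/2}(1-\gamma|_{N^\gamma}e^{-stR''})$ contribution of the twisted Mehler kernel --- is the genuinely delicate computation, and it is precisely the content of the cited lemma rather than something that follows formally from the rest of the sketch.
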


\begin{bibdiv}
\begin{biblist}

\bib{atiyah-gamma}{article}{
   author={Atiyah, M. F.},
   title={Elliptic operators, discrete groups and von Neumann algebras},
   conference={
      title={Colloque ``Analyse et Topologie'' en l'Honneur de Henri Cartan},
      address={Orsay},
      date={1974},
   },
   book={
      series={Ast\'erisque},
      volume={No. 32-33},
      publisher={Soc. Math. France, Paris},
   },
   date={1976},
   pages={43--72},
   review={\MR{0420729}},
}

\bib{AB1}{article}{
   author={Atiyah, M. F.},
   author={Bott, R.},
   title={A Lefschetz fixed point formula for elliptic complexes. I},
   journal={Ann. of Math. (2)},
   volume={86},
   date={1967},
   pages={374--407},
   issn={0003-486X},
   review={\MR{0212836}},
   doi={10.2307/1970694},
}

\bib{AB2}{article}{
   author={Atiyah, M. F.},
   author={Bott, R.},
   title={A Lefschetz fixed point formula for elliptic complexes. II.
   Applications},
   journal={Ann. of Math. (2)},
   volume={88},
   date={1968},
   pages={451--491},
   issn={0003-486X},
   review={\MR{0232406}},
   doi={10.2307/1970721},
}

\bib{ASII}{article}{
   author={Atiyah, M. F.},
   author={Segal, G. B.},
   title={The index of elliptic operators. II},
   journal={Ann. of Math. (2)},
   volume={87},
   date={1968},
   pages={531--545},
   issn={0003-486X},
   review={\MR{0236951}},
   doi={10.2307/1970716},
}
	
	\bib{ASIII}{article}{
   author={Atiyah, M. F.},
   author={Singer, I. M.},
   title={The index of elliptic operators. III},
   journal={Ann. of Math. (2)},
   volume={87},
   date={1968},
   pages={546--604},
   issn={0003-486X},
   review={\MR{0236952}},
   doi={10.2307/1970717},
}

\bib{BGS}{article}{
   author={Beals, Richard},
   author={Greiner, Peter C.},
   author={Stanton, Nancy K.},
   title={The heat equation on a CR manifold},
   journal={J. Differential Geom.},
   volume={20},
   date={1984},
   number={2},
   pages={343--387},
   issn={0022-040X},
   review={\MR{0788285}},
}

\bib{benameur}{article}{
   author={Benameur, Moulay-Tahar},
   title={A longitudinal Lefschetz theorem in $K$-theory},
   journal={$K$-Theory},
   volume={12},
   date={1997},
   number={3},
   pages={227--257},
   issn={0920-3036},
   review={\MR{1480591}},
   doi={10.1023/A:1007747731149},
}

\bib{bh-1}{article}{
   author={Benameur, Moulay-Tahar},
   author={Heitsch, James L.},
   title={The higher fixed point theorem for foliations. I. Holonomy
   invariant currents},
   journal={J. Funct. Anal.},
   volume={259},
   date={2010},
   number={1},
   pages={131--173},
   issn={0022-1236},
   review={\MR{2610382}},
   doi={10.1016/j.jfa.2010.03.023},
}	

\bib{bh-2}{article}{
   author={Benameur, Moulay Tahar},
   author={Heitsch, James L.},
   title={The higher fixed point theorem for foliations: applications to
   rigidity and integrality},
   journal={Ann. Funct. Anal.},
   volume={15},
   date={2024},
   number={4},
   pages={Paper No. 79, 44},
   issn={2639-7390},
   review={\MR{4784100}},
   doi={10.1007/s43034-024-00383-4},
}	
		\bib{BGV}{book}{
   author={Berline, Nicole},
   author={Getzler, Ezra},
   author={Vergne, Mich\`ele},
   title={Heat kernels and Dirac operators},
   series={Grundlehren der mathematischen Wissenschaften [Fundamental
   Principles of Mathematical Sciences]},
   volume={298},
   publisher={Springer-Verlag, Berlin},
   date={1992},
   pages={viii+369},
   isbn={3-540-53340-0},
   review={\MR{1215720}},
   doi={10.1007/978-3-642-58088-8},
}
		
		\bib{BV}{article}{
   author={Berline, Nicole},
   author={Vergne, Mich\`ele},
   title={A computation of the equivariant index of the Dirac operator},
   language={English, with French summary},
   journal={Bull. Soc. Math. France},
   volume={113},
   date={1985},
   number={3},
   pages={305--345},
   issn={0037-9484},
   review={\MR{0834043}},
}
	\bib{bismut-lefschetz}{article}{
   author={Bismut, Jean-Michel},
   title={The Atiyah-Singer theorems: a probabilistic approach. II. The
   Lefschetz fixed point formulas},
   journal={J. Funct. Anal.},
   volume={57},
   date={1984},
   number={3},
   pages={329--348},
   issn={0022-1236},
   review={\MR{0756173}},
   doi={10.1016/0022-1236(84)90106-X},
}	
	\bib{bismut-two}{article}{
   author={Bismut, Jean-Michel},
   title={The Atiyah-Singer index theorem for families of Dirac operators:
   two heat equation proofs},
   journal={Invent. Math.},
   volume={83},
   date={1986},
   number={1},
   pages={91--151},
   issn={0020-9910},
   review={\MR{0813584}},
   doi={10.1007/BF01388755},
}

	\bib{bismut-jdg}{article}{
   author={Bismut, Jean-Michel},
   title={Equivariant immersions and Quillen metrics},
   journal={J. Differential Geom.},
   volume={41},
   date={1995},
   number={1},
   pages={53--157},
   issn={0022-040X},
   review={\MR{1316553}},
}

\bib{burghelea}{article}{
   author={Burghelea, Dan},
   title={The cyclic homology of the group rings},
   journal={Comment. Math. Helv.},
   volume={60},
   date={1985},
   number={3},
   pages={354--365},
   issn={0010-2571},
   review={\MR{0814144}},
   doi={10.1007/BF02567420},
}

		\bib{CareyGRS}{article}{
   author={Carey, Alan L.},
   author={Gayral, Victor},
   author={Rennie, Adam},
   author={Sukochev, Feller A.},
   title={Index theory for locally compact noncommutative geometries},
   journal={Mem. Amer. Math. Soc.},
   volume={231},
   date={2014},
   number={1085},
   pages={vi+130},
   issn={0065-9266},
   isbn={978-0-8218-9838-3},
   review={\MR{3221983}},
}

\bib{CWW}{article}{
   author={Carrillo Rouse, P.},
   author={Wang, B. L.},
   author={Wang, H.},
   title={Topological $K$-theory for discrete groups and index theory},
   journal={Bull. Sci. Math.},
   volume={185},
   date={2023},
   pages={Paper No. 103262, 70},
   issn={0007-4497},
   review={\MR{4583695}},
   doi={10.1016/j.bulsci.2023.103262},
}

\bib{CM}{article}{
    AUTHOR = {Alain Connes} 
    AUTHOR ={Henri Moscovici},
     TITLE = {Cyclic cohomology, the {N}ovikov conjecture and hyperbolic
              groups},
   JOURNAL = {Topology},
  FJOURNAL = {Topology. An International Journal of Mathematics},
    VOLUME = {29},
      YEAR = {1990},
    NUMBER = {3},
     PAGES = {345--388},
      ISSN = {0040-9383},
   MRCLASS = {58G12 (19D55 19K56 46L80 57R20)},
  MRNUMBER = {1066176},
MRREVIEWER = {Jeffrey\ Fox},
       DOI = {10.1016/0040-9383(90)90003-3},
       URL = {https://doi.org/10.1016/0040-9383(90)90003-3},
}

\bib{CS}{article}{
   author={Connes, A.},
   author={Skandalis, G.},
   title={The longitudinal index theorem for foliations},
   journal={Publ. Res. Inst. Math. Sci.},
   volume={20},
   date={1984},
   number={6},
   pages={1139--1183},
   issn={0034-5318},
   review={\MR{0775126}},
   doi={10.2977/prims/1195180375},
}

\bib{crainic}{article}{
    AUTHOR = {Crainic, Marius},
     TITLE = {Differentiable and algebroid cohomology, van {E}st
              isomorphisms, and characteristic classes},
   JOURNAL = {Comment. Math. Helv.},
  FJOURNAL = {Commentarii Mathematici Helvetici},
    VOLUME = {78},
      YEAR = {2003},
    NUMBER = {4},
     PAGES = {681--721},
      ISSN = {0010-2571},
   MRCLASS = {58H05 (53D17 57R20)},
  MRNUMBER = {2016690},
MRREVIEWER = {Rui Loja Fernandes},
       URL = {https://doi.org/10.1007/s00014-001-0766-9},
}

\bib{donnelly-p}{article}{
 author={Donnelly, Harold},
 author={Patodi, V. K.},
   title={Spectrum and the fixed point sets of isometries. II},
   journal={Topology},
   volume={16},
   date={1977},
   number={1},
   pages={1--11},
   issn={0040-9383},
   review={\MR{0433513}},
   doi={10.1016/0040-9383(77)90027-1},
}

\bib{duistermaat}{book}{
  author={Duistermaat, J. J.},
   title={The heat kernel Lefschetz fixed point formula for the spin-$c$
   Dirac operator},
   series={Modern Birkh\"auser Classics},
   note={Reprint of the 1996 edition},
   publisher={Birkh\"auser/Springer, New York},
   date={2011},
   pages={xii+247},
   isbn={978-0-8176-8246-0},
   review={\MR{2809491}},
   doi={10.1007/978-0-8176-8247-7},
}

\bib{getzler-CMP}{article}{
 author={Getzler, Ezra},
   title={Pseudodifferential operators on supermanifolds and the
   Atiyah-Singer index theorem},
   journal={Comm. Math. Phys.},
   volume={92},
   date={1983},
   number={2},
   pages={163--178},
   issn={0010-3616},
   review={\MR{0728863}},
}

\bib{gilkey-equivariant}{article}{
 author={Gilkey, Peter B.},
   title={Lefschetz fixed point formulas and the heat equation},
   conference={
      title={Partial differential equations and geometry},
      address={Proc. Conf., Park City, Utah},
      date={1977},
   },
   book={
      series={Lect. Notes Pure Appl. Math.},
      volume={48},
      publisher={Dekker, New York},
   },
   isbn={0-8247-6775-6},
   date={1979},
   pages={91--147},
   review={\MR{0535591}},
}

\bib{g-dK-n}{article}{
 author={Gorokhovsky, Alexander},
   author={de Kleijn, Niek},
   author={Nest, Ryszard},
   title={Equivariant algebraic index theorem},
   journal={J. Inst. Math. Jussieu},
   volume={20},
   date={2021},
   number={3},
   pages={929--955},
   issn={1474-7480},
   review={\MR{4260645}},
   doi={10.1017/S1474748019000380},
}

\bib{greiner}{article}{
   author={Greiner, Peter},
   title={An asymptotic expansion for the heat equation},
   journal={Arch. Rational Mech. Anal.},
   volume={41},
   date={1971},
   pages={163--218},
   issn={0003-9527},
   review={\MR{0331441}},
   doi={10.1007/BF00276190},
}

\bib{Hochs-Song-Tang}{article}{
    AUTHOR = {Hochs, Peter}
    AUTHOR=  {Song, Yanli}
    AUTHOR =  {Tang, Xiang},
     TITLE = {An index theorem for higher orbital integrals},
   JOURNAL = {Math. Ann.},
  FJOURNAL = {Mathematische Annalen},
    VOLUME = {382},
      YEAR = {2022},
    NUMBER = {1-2},
     PAGES = {169--202},
      ISSN = {0025-5831,1432-1807},
   MRCLASS = {22E30 (19K56)},
  MRNUMBER = {4377301},
MRREVIEWER = {Tyrone\ Crisp},
       DOI = {10.1007/s00208-021-02233-3},
       URL = {https://doi-org.libproxy.washu.edu/10.1007/s00208-021-02233-3},
}

\bib{Hochs-Wang}{article}{
    AUTHOR = {Hochs, Peter}
    AUTHOR ={ Wang, Hang},
     TITLE = {A fixed point formula and {H}arish-{C}handra's character
              formula},
   JOURNAL = {Proc. Lond. Math. Soc. (3)},
  FJOURNAL = {Proceedings of the London Mathematical Society. Third Series},
    VOLUME = {116},
      YEAR = {2018},
    NUMBER = {1},
     PAGES = {1--32},
      ISSN = {0024-6115,1460-244X},
   MRCLASS = {58J20 (19K56 22E46 46L80)},
  MRNUMBER = {3747042},
MRREVIEWER = {Vicumpriya\ S.\ Perera},
       DOI = {10.1112/plms.12066},
       URL = {https://doi.org/10.1112/plms.12066},
}

\bib{haj-liu-loizides}{article}{
  author={Haj Saeedi Sadegh, Ahmad Reza},
   author={Liu, Shiqi},
   author={Loizides, Yiannis},
   author={Sanchez, Jesus},
   title={A fixed-point formula for Dirac operators on Lie groupoids},
   journal={J. Funct. Anal.},
   volume={287},
   date={2024},
   number={11},
   pages={Paper No. 110624, 64},
   issn={0022-1236},
   review={\MR{4788683}},
   doi={10.1016/j.jfa.2024.110624},
}

\bib{hormander-3}{book}{
   author={H\"ormander, Lars},
   title={The analysis of linear partial differential operators. III},
   series={Grundlehren der mathematischen Wissenschaften [Fundamental
   Principles of Mathematical Sciences]},
   volume={274},
   note={Pseudodifferential operators},
   publisher={Springer-Verlag, Berlin},
   date={1985},
   pages={viii+525},
   isbn={3-540-13828-5},
   review={\MR{0781536}},
}

\bib{john-sheagan}{article}{
   author={John, Sheagan A. K. A.},
   title={Secondary higher invariants and cyclic cohomology for groups of
   polynomial growth},
   journal={J. Noncommut. Geom.},
   volume={16},
   date={2022},
   number={4},
   pages={1283--1335},
   issn={1661-6952},
   review={\MR{4542386}},
   doi={10.4171/jncg/456},
}
\bib{LYZ}{article}{
  author={Lafferty, John D.},
   author={Yu, Yan Lin},
   author={Weiping, Zhang},
   title={A direct geometric proof of the Lefschetz fixed point formulas},
   journal={Trans. Amer. Math. Soc.},
   volume={329},
   date={1992},
   number={2},
   pages={571--583},
   issn={0002-9947},
   review={\MR{1022168}},
   doi={10.2307/2153952},
}

\bib{liu-ma}{article}{
  author={Liu, Kefeng},
   author={Ma, Xiaonan},
   title={On family rigidity theorems. I},
   journal={Duke Math. J.},
   volume={102},
   date={2000},
   number={3},
   pages={451--474},
   issn={0012-7094},
   review={\MR{1756105}},
   doi={10.1215/S0012-7094-00-10234-7},
}

\bib{Loday}{article}{
   author={Perrot, Denis},
   title={The equivariant index theorem in entire cyclic cohomology},
   journal={J. K-Theory},
   volume={3},
   date={2009},
   number={2},
   pages={261--307},
   issn={1865-2433},
   review={\MR{2496449}},
   doi={10.1017/is008004027jkt047},
}
 \bib{lott-gafa}{article}{
   author={Lott, J.},
   title={Superconnections and higher index theory},
   journal={Geom. Funct. Anal.},
   volume={2},
   date={1992},
   number={4},
   pages={421--454},
   issn={1016-443X},
   review={\MR{1191568}},
   doi={10.1007/BF01896662},
}

\bib{LottII}{article}{
  author={Lott, John},
  title={Higher eta-invariants},
  journal={$K$-Theory},
  volume={6},
  date={1992},
  number={3},
  pages={191--233},
  issn={0920-3036},
  review={\MR {1189276}},
  doi={10.1007/BF00961464},
}

\bib{tapsit}{book}{
   author={Melrose, Richard B.},
   title={The Atiyah-Patodi-Singer index theorem},
   series={Research Notes in Mathematics},
   volume={4},
   publisher={A K Peters, Ltd., Wellesley, MA},
   date={1993},
   pages={xiv+377},
   isbn={1-56881-002-4},
   review={\MR{1348401}},
   doi={10.1016/0377-0257(93)80040-i},
}

\bib{meyer-combable}{article}{
   author={Meyer, Ralf},
   title={Combable groups have group cohomology of polynomial growth},
   journal={Q. J. Math.},
   volume={57},
   date={2006},
   number={2},
   pages={241--261},
   issn={0033-5606},
   review={\MR{2237601}},
   doi={10.1093/qmath/hai003},
}

\bib{moscovici-wu}{article}{
   author={Moscovici, H.},
   author={Wu, F.-B.},
   title={Localization of topological Pontryagin classes via finite
   propagation speed},
   journal={Geom. Funct. Anal.},
   volume={4},
   date={1994},
   number={1},
   pages={52--92},
   issn={1016-443X},
   review={\MR{1254310}},
   doi={10.1007/BF01898361},
}

\bib{perrot}{article}{
   author={Perrot, Denis},
   title={The equivariant index theorem in entire cyclic cohomology},
   journal={J. K-Theory},
   volume={3},
   date={2009},
   number={2},
   pages={261--307},
   issn={1865-2433},
   review={\MR{2496449}},
   doi={10.1017/is008004027jkt047},
}
\bib{PPST}{article}{
    AUTHOR = {Piazza, Paolo},
    author = {Posthuma, Hessel},
    author = {Song, Yanli}, 
    author = {Tang, Xiang},
     TITLE = {Higher orbital integrals, rho numbers and index theory},
   JOURNAL = {Math. Ann.},
  FJOURNAL = {Mathematische Annalen},
    VOLUME = {391},
      YEAR = {2025},
    NUMBER = {3},
     PAGES = {3687--3763},
      ISSN = {0025-5831,1432-1807},
   MRCLASS = {58J20 (19K56 58B34 58J22 58J42)},
  MRNUMBER = {4865228},
MRREVIEWER = {Thomas\ Schick},
       DOI = {10.1007/s00208-024-03008-2},
       URL = {https://doi-org.libproxy.washu.edu/10.1007/s00208-024-03008-2},
}

\bib{ppt-jdg}{article}{
   author={Pflaum, Markus J.},
   author={Posthuma, Hessel},
   author={Tang, Xiang},
   title={The transverse index theorem for proper cocompact actions of Lie
   groupoids},
   journal={J. Differential Geom.},
   volume={99},
   date={2015},
   number={3},
   pages={443--472},
   issn={0022-040X},
   review={\MR{3316973}},
}

\bib{PP1}{article}{
   author={Piazza, Paolo},
   author={Posthuma, Hessel B.},
   title={Higher genera for proper actions of Lie groups},
   journal={Ann. K-Theory},
   volume={4},
   date={2019},
   number={3},
   pages={473--504},
   issn={2379-1683},
   review={\MR{4043466}},
   doi={10.2140/akt.2019.4.473},
}
	
	\bib{PP2}{article}{
   AUTHOR = {Piazza, Paolo}
   AUTHOR = {Posthuma, Hessel B.},
     TITLE = {Higher genera for proper actions of {L}ie groups, {II}: {T}he
              case of manifolds with boundary},
   JOURNAL = {Ann. K-Theory},
  FJOURNAL = {Annals of K-Theory},
    VOLUME = {6},
      YEAR = {2021},
    NUMBER = {4},
     PAGES = {713--782},
      ISSN = {2379-1683},
   MRCLASS = {58J20 (19K56 58J22 58J42)},
  MRNUMBER = {4382801},
       DOI = {10.2140/akt.2021.6.713},
       URL = {https://doi-org.libproxy.wustl.edu/10.2140/akt.2021.6.713},
}

\bib{PSZ}{article}{
   author={Piazza, Paolo},
   author={Schick, Thomas},
   author={Zenobi, Vito Felice},
   title={Mapping analytic surgery to homology, higher rho numbers and
   metrics of positive scalar curvature},
   journal={Mem. Amer. Math. Soc.},
   volume={309},
   date={2025},
   number={1562},
   pages={v+145},
   issn={0065-9266},
   isbn={978-1-4704-7317-4; 978-1-4704-8155-1},
   review={\MR{4920201}},
   doi={10.1090/memo/1562},
}

\bib{Piriou}{article}{
author={Piriou, Alain},
title={Une classe d'operateurs pseudo-differentiels du type de Volterra}, 
journal={Ann. Inst. Fourier (Grenoble)},
volume={20},
date={1970},
number={1},
pages={77--94},
}

\bib{Ponge}{article}{
    AUTHOR = {Rapha\"{e}, Ponge},
     TITLE = {A new short proof of the local index formula of
              {A}tiyah-{S}inger},
 BOOKTITLE = {Noncommutative geometry and number theory},
    SERIES = {Aspects Math.},
    VOLUME = {E37},
     PAGES = {361--372},
 PUBLISHER = {Friedr. Vieweg, Wiesbaden},
      YEAR = {2006},
      ISBN = {3-8348-0170-4},
   MRCLASS = {58J20 (19K56 46L80 58J42)},
  MRNUMBER = {2327313},
MRREVIEWER = {Evgeniy\ V.\ Troitski\u{\i}},
       DOI = {10.1007/978-3-8348-0352-8\_17},
       URL = {https://doi.org/10.1007/978-3-8348-0352-8_17},
}	

\bib{Ponge-2}{article}{
   author={Ponge, Rapha\"el},
   title={On the asymptotic completeness of the Volterra calculus},
   note={With an appendix by H. Mikayelyan and the author},
   journal={J. Anal. Math.},
   volume={94},
   date={2004},
   pages={249--263},
   issn={0021-7670},
   review={\MR{2124462}},
   doi={10.1007/BF02789049},
}

\bib{Ponge-Wang-2}{article}{
    AUTHOR = {Rapha\"{e}l Ponge}
    AUTHOR = {Hang Wang},
     TITLE = {Noncommutative geometry and conformal geometry, {II}.
              {C}onnes-{C}hern character and the local equivariant index
              theorem},
   JOURNAL = {J. Noncommut. Geom.},
  FJOURNAL = {Journal of Noncommutative Geometry},
    VOLUME = {10},
      YEAR = {2016},
    NUMBER = {1},
     PAGES = {307--378},
      ISSN = {1661-6952,1661-6960},
   MRCLASS = {58J20 (58B34 58J35)},
  MRNUMBER = {3500823},
MRREVIEWER = {Yong\ Wang},
       DOI = {10.4171/JNCG/235},
       URL = {https://doi.org/10.4171/JNCG/235},
}
\bib{Roe-lectures}{book}{
   author={Roe, John},
   title={Lectures on coarse geometry},
   series={University Lecture Series},
   volume={31},
   publisher={American Mathematical Society, Providence, RI},
   date={2003},
   pages={viii+175},
   isbn={0-8218-3332-4},
   review={\MR{2007488}},
   doi={10.1090/ulect/031},
}

\bib{Rudin}{book}{
   author={Rudin, Walter},
   title={Functional analysis},
   series={McGraw-Hill Series in Higher Mathematics},
   publisher={McGraw-Hill Book Co., New York-D\"usseldorf-Johannesburg},
   date={1973},
   pages={xiii+397},
   review={\MR{0365062}},
}

\bib{segal}{article}{
   author={Segal, Graeme},
   title={Equivariant $K$-theory},
   journal={Inst. Hautes \'Etudes Sci. Publ. Math.},
   number={34},
   date={1968},
   pages={129--151},
   issn={0073-8301},
   review={\MR{0234452}},
}

\bib{Song-Tang}{article}{
    AUTHOR = {Song, Yanli}
    AUTHOR = {Tang, Xiang},
     TITLE = {Higher orbital integrals, cyclic cocycles and noncommutative
              geometry},
   JOURNAL = {Forum Math. Sigma},
  FJOURNAL = {Forum of Mathematics. Sigma},
    VOLUME = {13},
      YEAR = {2025},
     PAGES = {Paper No. e37, 43},
      ISSN = {2050-5094},
   MRCLASS = {43A65 (19K56 22E30 43A80 46L80)},
  MRNUMBER = {4861742},
       DOI = {10.1017/fms.2024.115},
       URL = {https://doi-org.libproxy.washu.edu/10.1017/fms.2024.115},
}

\bib{wang-jncg}{article}{
   author={Wang, Hang},
   title={$L^2$-index formula for proper cocompact group actions},
   journal={J. Noncommut. Geom.},
   volume={8},
   date={2014},
   number={2},
   pages={393--432},
   issn={1661-6952},
   review={\MR{3275037}},
   doi={10.4171/JNCG/160},
}

\bib{wang-wang-jncg}{article}{
   author={Wang, Hang},
   author={Wang, Zijing},
   title={Index theorem for homogeneous spaces of Lie groups},
   journal={J. Noncommut. Geom.},
   volume={19},
   date={2025},
   number={4},
   pages={1443--1513},
   issn={1661-6952},
   review={\MR{4959797}},
   doi={10.4171/jncg/593},
}

\bib{wang-wang}{article}{
   author={Wang, Bai-Ling},
   author={Wang, Hang},
     TITLE = {Localized index and {$L^2$}-{L}efschetz fixed-point formula
              for orbifolds},
   JOURNAL = {J. Differential Geom.},
  FJOURNAL = {Journal of Differential Geometry},
    VOLUME = {102},
      YEAR = {2016},
    NUMBER = {2},
     PAGES = {285--349},
      ISSN = {0022-040X,1945-743X},
   MRCLASS = {58J22 (57R18)},
  MRNUMBER = {3454548},
MRREVIEWER = {Jes\'us\ A.\ \'Alvarez L\'opez},
       URL = {http://projecteuclid.org.libproxy.washu.edu/euclid.jdg/1453910456},
}

\bib{yu}{article}{
   author={Yu, Yan Lin},
   title={Local index theorem for Dirac operator},
   journal={Acta Math. Sinica (N.S.)},
   volume={3},
   date={1987},
   number={2},
   pages={152--169},
   issn={1000-9574},
   review={\MR{0913775}},
   doi={10.1007/BF02565028},
}
\end{biblist}
\end{bibdiv}

\end{document}